\newcommand{\Mod}[1]{\mathrm{Mod}(#1)}
\renewcommand{\mod}[1]{\mathrm{mod}(#1)}
\newcommand{\Qcoh}[1]{\mathrm{Qcoh}(#1)}
\newcommand{\D}[1]{\mathrm{D}(#1)}
\newcommand{\Db}[1]{\mathrm{D}^b(\mod{#1})}
\newcommand{\Hom}{\mathrm{Hom}}
\newcommand{\End}{\mathrm{End}}
\newcommand{\Ext}{\mathrm{Ext}}
\renewcommand{\t}{\mathbf{t}}
\newcommand{\inc}{\mathsf{inc}}
\newcommand{\fp}{\mathrm{fp}}
\newcommand{\id}{\mathrm{id}}
\newcommand{\cone}[1]{\mathrm{cone}(#1)}
\renewcommand{\ker}[1]{\mathrm{Ker}(#1)}
\newcommand{\coker}[1]{\mathrm{Coker}(#1)}
\newcommand{\im}[1]{\mathrm{Im}(#1)}
\newcommand{\essim}[1]{\mathsf{Im}(#1)}
\newcommand{\gen}[1]{\mathrm{gen}(#1)}
\newcommand{\Gen}[1]{\mathrm{Gen}(#1)}
\newcommand{\Cogen}[1]{\mathrm{Cogen}(#1)}
\newcommand{\Prod}[1]{\mathrm{Prod}(#1)}
\newcommand{\Add}[1]{\mathrm{Add}(#1)}
\newcommand{\add}[1]{\mathrm{add}(#1)}
\newcommand{\Inj}[1]{\mathrm{Inj}\left(#1\right)}
\newcommand{\Proj}[1]{\mathrm{Proj}\left(#1\right)}
\newcommand{\filt}[1]{\mathrm{filt}(#1)}
\newcommand{\Filt}[1]{\mathrm{Filt}(#1)}
\newcommand{\Spec}[1]{\mathrm{Spec}(#1)}
\newcommand{\Ind}[1]{\mathrm{Ind}(#1)}
\newcommand{\tors}[1]{\mathrm{tors}(#1)}
\newcommand{\Cosilt}[1]{\mathrm{Cosilt}(#1)}
\newcommand{\CosiltP}[1]{\mathrm{Cosilt}_*(#1)}
\newcommand{\tube}{\ensuremath{\mathbf{t}}}
\newcommand{\Acal}{\mathcal{A}}
\newcommand{\Bcal}{\mathcal{B}}
\newcommand{\Ccal}{\mathcal{C}}
\newcommand{\Dcal}{\mathcal{D}}
\newcommand{\Ecal}{\mathcal{E}}
\newcommand{\Fcal}{\mathcal{F}}
\newcommand{\Gcal}{\mathcal{G}}
\newcommand{\Hcal}{\mathcal{H}}
\newcommand{\Ical}{\mathcal{I}}
\newcommand{\Lcal}{\mathcal{L}}
\newcommand{\Mcal}{\mathcal{M}}
\newcommand{\Ncal}{\mathcal{N}}
\newcommand{\Pcal}{\mathcal{P}}
\newcommand{\Qcal}{\mathcal{Q}}
\newcommand{\Rcal}{\mathcal{R}}
\newcommand{\Scal}{\mathcal{S}}
\newcommand{\Tcal}{\mathcal{T}}
\newcommand{\Ucal}{\mathcal{U}}
\newcommand{\Vcal}{\mathcal{V}}
\newcommand{\Wcal}{\mathcal{W}}
\newcommand{\Xcal}{\mathcal{X}}
\newcommand{\Ycal}{\mathcal{Y}}
\newcommand{\Zcal}{\mathcal{Z}}
\newcommand{\Dscr}{\mathscr{D}}
\newcommand{\Escr}{\mathscr{E}}
\newcommand{\Pscr}{\mathscr{P}}
\newtheorem{theorem}{Theorem}[section]
\newtheorem{lemma}[theorem]{Lemma}
\newtheorem{corollary}[theorem]{Corollary}
\newtheorem{proposition}[theorem]{Proposition}
\newtheorem*{theorem*}{Theorem}
\newtheorem*{theoremA}{Theorem A}
\newtheorem*{theoremB}{Theorem B}
\newtheorem*{theoremC}{Theorem C}
\theoremstyle{definition}
\newtheorem{definition}[theorem]{Definition}
\newtheorem{setup}[theorem]{Setup}
\newtheorem{example}[theorem]{Example}
\theoremstyle{remark}
\newtheorem{remark}[theorem]{Remark}
\title{Mutation and torsion pairs}
\author{Lidia Angeleri H\"ugel, Rosanna Laking, Jan {\v S}{\v{t}}ov{\'{\i}}{\v{c}}ek,  Jorge Vit\'oria}
\address{Lidia Angeleri H\"ugel, Dipartimento di Informatica - Settore di Matematica, Universit\`a degli Studi di Verona, Strada le Grazie 15 - Ca' Vignal, I-37134 Verona, Italy} 
\email{lidia.angeleri@univr.it}
\address{Rosanna Laking, Dipartimento di Informatica - Settore di Matematica, Universit\`a degli Studi di Verona, Strada le Grazie 15 - Ca' Vignal, I-37134 Verona, Italy} 
\email{rosanna.laking@univr.it}
\address{Jan {\v S}{\v{t}}ov{\'{\i}}{\v{c}}ek, Charles University, Faculty of Mathematics and Physics, Department of Algebra, Sokolovsk\'a 83, 186 75 Praha, Czech Republic}
\email{stovicek@karlin.mff.cuni.cz}
\address{Jorge Vit\'oria, Dipartimento di Matematica e Informatica, Universit\'a degli Studi di Cagliari, Palazzo delle Scienze, Via Ospedale, 72, 09124, Cagliari, Italy}
\email{jorge.vitoria@unica.it}
\thanks{Acknowledgments:
The second-named author was supported by the European Union's Horizon 2020 research and innovation programme under the Marie Sklodowska-Curie Grant Agreement No. 797281.
The third-named author was supported by the Czech Science Foundation grant number 20-13778S.
The authors also acknowledge support from the project \textit{REDCOM: Reducing complexity in algebra, logic, combinatorics}, financed by the programme  \textit{Ricerca Scientifica di Eccellenza 2018} of the Fondazione Cariverona.}
\date{\today}                                           
\begin{document}
\begin{abstract}
Mutation of compact silting objects is a fundamental operation in the representation theory of finite-dimensional algebras due to its connections to cluster theory and to the lattice of torsion pairs in module or derived categories. In this paper  we develop a  theory of mutation in the broader framework of silting or
cosilting t-structures in triangulated categories. We show that mutation of pure-injective cosilting objects encompasses the classical concept of mutation for compact silting complexes. As an application we prove that any minimal inclusion
of torsion classes in the category of finitely generated modules over
an artinian ring corresponds to an irreducible mutation. This generalises a well-known result for functorially finite torsion classes.
\end{abstract} 
\maketitle

\tableofcontents

\section{Introduction}

The operation of mutation has a long history in representation theory and algebraic geometry. The aim of a mutation is to create a new object from an old one, changing a designated part of it and keeping the other part. This operation has been around for at least thirty years, for example in the study of exceptional collections of sheaves (\cite{GR}) or in the combinatorial study of tilting modules (\cite{RS,HU}). Mutation also plays a central role in the foundations of cluster theory which were set up by  Fomin and Zelevinsky from the early 2000's. Their work, together with the categorification results of \cite{BMRRT}, reinforced the importance of mutation in contemporary representation theory. 

As shown by Aihara and Iyama  in \cite{AI}, the right framework to study mutation in derived categories of finite-dimensional algebras is provided by silting complexes, a significant generalisation of tilting modules which was introduced in \cite{KV}. Indeed, the categorification of cluster algebras allows us to interpret clusters as silting complexes and cluster mutation as an operation that produces a new silting complex from a given one by exchanging a summand. It turns out that mutation of silting objects also provides a deep insight into the structure of the lattice $\tors A$ of torsion pairs in the category $\mod A$ of finite-dimensional modules over a finite-dimensional algebra $A$. In \cite{AIR} it is shown that minimal inclusions of functorially finite torsion classes correspond, in a suitable way, to a mutation operation on associated silting complexes. The following question was the initial motivation for this paper.\\

\noindent\textbf{Question:} Do minimal inclusions of arbitrary torsion classes correspond to an operation of mutation?\\

In order to answer this question, we are forced to leave the realm of finite-dimensional modules, but fortunately we can benefit from well-developed model theoretic methods~\cite{Prest,L}.
We approach the above question via cosilting theory, where small silting complexes are replaced by pure-injective cosilting objects. More recently, this concept has also appeared in the literature under the name derived injective (or just injective) object and turned out to be indispensable in various other contexts: in spectral algebraic geometry~\cite[Appendix C.5.7]{LurSAG}, deformation theory of dg categories~\cite{GLVdB} or derived commutative algebra~\cite{Sha}.

Our starting point here is an observation based on \cite{CBlfp,BZ,WZ}: every ``small'' torsion pair in $\mod A$ corresponds bijectively to a ``large'' torsion pair in the category $\Mod A$ of all $A$-modules, and the latter is determined by a large (i.e.~not necessarily compact) pure-injective two-term cosilting complex $\sigma$. In other words, while two-term silting complexes in the sense of \cite{AIR} only detect functorially finite torsion pairs in $\mod A$, two-term cosilting complexes detect all torsion pairs in $\mod A$. In fact, pure-injective cosilting complexes give rise to a class of t-structures in the derived category that encompasses those associated to compact silting complexes.\\

\noindent \textbf{Summary of main results.} We first develop a general framework to study mutation of silting and cosilting objects in triangulated categories, without any compactness or pure-injectivity assumptions. Our Definitions~\ref{def:mut} and~\ref{def:siltmut} extend the concept of mutation in \cite{AI} to the non-compact case, adopting the notion of (co)silting from \cite{PV,NSZ}. 

Every cosilting object $\sigma$ gives rise to a t-structure $\mathbb{T}_\sigma$ and to an abelian category $\Hcal_\sigma$, the heart of $\mathbb{T}_\sigma$. Moreover, every subset $\Escr$ of $\Prod{\sigma}$ induces a set of injectives ${H^0_\sigma(\mathscr{E})}$  in $\Hcal_\sigma$ and thus a hereditary torsion pair $(\Scal, \Rcal)=({}^{\perp_0}H^0_\sigma(\mathscr{E}),\,\Cogen{H^0_\sigma(\mathscr{E})})$. It turns out that such  torsion pairs control the process of mutation via HRS-tilting.

\begin{theoremA}[Theorem~\ref{thm:mutation as HRS-tilt}]
Let $\Dcal$ be a triangulated category with products, and let $\sigma$ and $\sigma'$ be two cosilting objects in $\Dcal$. Denote $\Escr=\Prod{\sigma}\cap\Prod{\sigma'}$, and 
let $(\Scal, \Rcal)$ be the torsion pair in $\Hcal_\sigma$ cogenerated by~$H^0_\sigma(\Escr)$.
\begin{enumerate}
\item $\sigma'$ is a right mutation of $\sigma$ if and only if $\sigma$ admits an $\Escr$-precover and
$\mathbb{T}_{\sigma'}$ is the right HRS-tilt of $\mathbb{T}_{\sigma}$ at the torsion pair $(\Scal,\Rcal)$ in $\Hcal_\sigma$. 
\item $\sigma'$ is a left mutation of $\sigma$  if and only if the  class $\Scal$ is closed under products (hence a TTF-class) in $\Hcal_\sigma$ and $\mathbb{T}_{\sigma'}$ is the left HRS-tilt of $\mathbb{T}_{\sigma}$ at the torsion pair $(\Tcal, \Scal)$ in $\Hcal_\sigma$. 
\end{enumerate}
\end{theoremA}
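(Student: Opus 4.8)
\section*{Proof proposal}

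The plan is to recast the statement in terms of $t$-structures and HRS-tilting inside the heart $\Hcal_\sigma$. I would invoke three facts that should already be available: (i) the bijection between cosilting objects and cosilting $t$-structures, under which $\sigma$ is an injective cogenerator of $\Hcal_\sigma$, the functor $H^0_\sigma$ restricts to an equivalence $\Prod{\sigma}\xrightarrow{\sim}\Inj{\Hcal_\sigma}$, and $H^0_\sigma$ preserves products; (ii) the correspondence between $t$-structures intermediate between $\mathbb{T}_\sigma$ and one of the shifts $\mathbb{T}_\sigma[\pm 1]$ and torsion pairs of $\Hcal_\sigma$, together with a description of the injective objects of an HRS-tilted heart; and (iii) the criterion, established earlier, that the right HRS-tilt of a cosilting $t$-structure at an arbitrary torsion pair is again cosilting, while the left HRS-tilt at $(\Tcal,\Scal)$ is cosilting if and only if $\Scal$ is closed under products. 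Granting these, each item splits into: (a) showing that a mutation triangle forces $\sigma'$ into two consecutive $H^*_\sigma$-degrees, so $\mathbb{T}_{\sigma'}$ is intermediate and hence an HRS-tilt; (b) identifying the torsion pair at which one tilts as the one cogenerated by $H^0_\sigma(\Escr)$; and (c) running the converse, building $\sigma'$ from the tilt and recognising it as a mutation.

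For item (1), suppose $\sigma'$ is a right mutation of $\sigma$. Unwinding the definition --- and first checking that the $\Prod$-class appearing there may be enlarged to the maximal common part $\Escr=\Prod{\sigma}\cap\Prod{\sigma'}$ --- produces a triangle $\sigma'\to E\xrightarrow{e}\sigma\to\sigma'[1]$ with $E\in\Prod{\Escr}$ and $e$ an $\Escr$-precover of $\sigma$. Since $E,\sigma\in\Hcal_\sigma$, the long exact sequence of $H^*_\sigma$ gives $H^0_\sigma(\sigma')=\ker{e}$, $H^1_\sigma(\sigma')=\coker{e}$, and all other cohomologies vanish; hence $\mathbb{T}_{\sigma'}$ is intermediate, so by (ii) it is the right HRS-tilt of $\mathbb{T}_\sigma$ at a torsion pair $(\Xcal,\Ycal)$, with $\Xcal$ the torsion class, $\coker{e}\in\Xcal$ and $\ker{e}\in\Ycal$. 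One inclusion of the identification $(\Xcal,\Ycal)=(\Scal,\Rcal)$ is easy: $\ker{e}$ embeds into $E\in\Prod{\Escr}\subseteq\Inj{\Hcal_\sigma}$, so $\ker{e}\in\Cogen{H^0_\sigma(\Escr)}=\Rcal$, and as $H^0_\sigma(\Escr)\subseteq\Ycal$ we get $\Rcal\subseteq\Ycal$, hence $\Xcal\subseteq\Scal$. The reverse inclusion $\Ycal\subseteq\Rcal$ is the substantial part, and relies on the injective description from (ii): an injective of $\Hcal_\sigma$ survives into $\Hcal_{\sigma'}$ precisely when it is torsion-free for $(\Xcal,\Ycal)$; this class of injectives is exactly $\Prod{\sigma}\cap\Prod{\sigma'}=H^0_\sigma(\Escr)$; and $H^0_\sigma(\Escr)$ cogenerates $\Ycal$. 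For the converse, assume $\sigma$ admits an $\Escr$-precover $E\to\sigma$ and $\mathbb{T}_{\sigma'}$ is the right HRS-tilt at $(\Scal,\Rcal)$; by (iii) this tilt is cosilting, its injective cogenerator is computed from the $\Escr$-precover of $\sigma$ via the formula for cosilting objects of HRS-tilts, and unravelling that formula recovers the triangle above, so $\sigma'$ is a right mutation of $\sigma$.

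Item (2) is the formal dual, with $\Escr$-preenvelopes in place of $\Escr$-precovers. An $\Escr$-preenvelope $\sigma\to E$ always exists --- the canonical evaluation map into a product of copies of objects of $H^0_\sigma(\Escr)$ --- with kernel the reject of $H^0_\sigma(\Escr)$ in $\sigma$, which, since $\sigma$ is injective and $(\Scal,\Rcal)$ hereditary, coincides with the torsion subobject $t_\Scal(\sigma)\in\Scal$. The mutation triangle $\sigma\to E\to\sigma'\to\sigma[1]$ then places $\sigma'$ into two consecutive degrees with $H^{-1}_\sigma(\sigma')=t_\Scal(\sigma)$, so $\mathbb{T}_{\sigma'}$ is a left HRS-tilt of $\mathbb{T}_\sigma$ at a torsion pair whose torsion-free class is identified with $\Scal$ as in (1); by (iii) this tilt is cosilting if and only if $\Scal$ is closed under products, i.e.\ a TTF-class, in which case the torsion pair is $(\Tcal,\Scal)$, and the converse again runs the HRS-tilt construction and reads off its cosilting object. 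I expect the main obstacle to be tasks (b) and (c): the precise matching of injective objects across $\Hcal_\sigma$ and $\Hcal_{\sigma'}$ that pins the tilting torsion pair down to the one cogenerated by $H^0_\sigma(\Escr)$, the verification that the cosilting object delivered by the HRS-tilt formula is genuinely a mutation, and the bookkeeping needed to replace the ambient $\Prod$-class in the definition of mutation by the maximal common part $\Escr$.
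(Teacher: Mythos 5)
Your high-level strategy — apply Proposition~\ref{prop:intermediate} to recognise $\mathbb{T}_{\sigma'}$ as an HRS-tilt of $\mathbb{T}_\sigma$, then identify the tilting torsion pair by comparing injectives — is the same as the paper's, and steps such as $\Inj{\Hcal_\sigma}\cap\Ycal=H^0_\sigma(\Escr)$ and the use of $H^0_\sigma\colon\Prod{\sigma}\xrightarrow{\sim}\Inj{\Hcal_\sigma}$ are exactly what the paper exploits. However, your alleged fact (iii) is false as stated: the right HRS-tilt of a cosilting t-structure at an arbitrary torsion pair need \emph{not} be cosilting. (If it were, every torsion pair of the form $({}^{\perp_0}H^0_\sigma(\Escr),\Cogen{H^0_\sigma(\Escr)})$ would give a mutation, whereas Theorem~\ref{existence right mutation} and Example~\ref{expl:krone} show that right mutations can fail to exist.) You happen not to be harmed by this in the converse direction because $\sigma'$ is already a cosilting object by hypothesis, but the false statement masks where the real work lies.

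That real work is in two places and both are passed over. First, in the forward direction you assert ``$H^0_\sigma(\Escr)$ cogenerates $\Ycal$'' (equivalently, $\Ycal\subseteq\Cogen{H^0_\sigma(\Escr)}$). This is not immediate: it would follow if the torsion pair at which one tilts were already known to be hereditary, but that is precisely what is being proved. The paper closes this gap by first showing that the $\Escr$-precover $\Phi$ yields, after applying $H^0_\sigma$, a $\Cogen{H^0_\sigma(\Escr)}$-precover of the injective cogenerator $H^0_\sigma(\sigma)$ (Lemma~\ref{lem:approx prop}), and then deducing the equality of torsion-free classes from the fact that $H^0_\sigma(\sigma)$ cogenerates the heart. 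Second, your converse direction stops at ``unravelling that formula recovers the triangle, so $\sigma'$ is a right mutation''. There is no general ``formula'' to unravel: one must build the candidate $\tilde\sigma=\gamma_0\oplus\gamma_1$ from the approximation triangle and then \emph{prove} it is a cosilting object equivalent to $\sigma'$. The paper does this by checking $\tilde\sigma\in\Ycal'\cap(\Ycal'[-1])^{\perp_0}=\Prod{\sigma'}$ and using that $\tilde\sigma$ cogenerates $\Dcal$ together with Lemma~\ref{lem:(co)generate}(1)(b) to upgrade the inclusion $\Prod{\tilde\sigma}\subseteq\Prod{\sigma'}$ to an equality. Without these verifications, the ``only if'' and ``if'' halves of both items remain open.
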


An important difference with respect to the theory developed in \cite{AI} is that, even for finite-dimensional algebras, mutation of large silting or cosilting objects is not always possible (Examples~\ref{expl:krone} and~\ref{expl:krone3}). Nevertheless, we show that cosilting mutation is, in itself, a generalisation of compact silting mutation. Indeed, any mutation of a compact silting object corresponds to a mutation of a pure-injective cosilting object  (see Theorem~\ref{thm:compact silting mutation}). The class of pure-injective cosilting objects, which includes bounded cosilting complexes in the derived category of a ring (see Example \ref{Ex: bdd pure-inj}), will play a distinguished role. The big advantage of working with a pure-injective cosilting object $\sigma$ lies in the fact that the heart $\Hcal_\sigma$ is a Grothendieck category (\cite{AMV3}) --- and in this case we can characterise the \textbf{existence of mutations} as follows.

\begin{theoremB}[Theorem~\ref{existence right mutation}]
Let $\sigma$ be a pure-injective cosilting object in a compactly generated triangulated category $\Dcal$, and $\Escr=\Prod\Escr$ a  subcategory of $\Prod{\sigma}$. Let $(\Scal,\Rcal)$ be the torsion pair in $\Hcal_\sigma$ cogenerated by~$H^0_\sigma(\Escr)$. The following statements are equivalent.
\begin{enumerate}
\item $\sigma$ admits a  right mutation $\sigma'$  with respect to $\Escr$.
\item The torsion-free class $\Rcal=\Cogen{H^0_\sigma(\Escr)}$ in $\Hcal_\sigma$ is closed under direct limits.
\item The cosilting object $\sigma$ admits an $\Escr$-cover.
\end{enumerate}

\smallskip \noindent
Dually, the following statements are equivalent.
\begin{enumerate}
\item $\sigma$ admits a   left mutation $\sigma'$  with respect to $\Escr$.
\item The torsion class $\Scal={}^{\perp_0}H^0_\sigma(\Escr)$ in $\Hcal_\sigma$ is closed under products (that is, it is a TTF class).
\item The object $\varepsilon_0\oplus \varepsilon_1$  arising from an $\Escr$-envelope 
 $\sigma\to \varepsilon_0$ and its cone $\varepsilon_1$ is a cosilting object.
\end{enumerate}
In both cases, if the equivalent conditions are satisfied, any  mutation $\sigma'$ as in \emph{(1)} is pure-injective.
\end{theoremB}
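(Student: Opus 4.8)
The plan is to reduce the statement, via Theorem~\ref{thm:mutation as HRS-tilt}, to understanding when the relevant HRS-tilt of $\mathbb{T}_\sigma$ is again a pure-injective cosilting t-structure, and then to read off conditions (2) and (3) from the theory of covers and envelopes in Grothendieck categories. Throughout we use that $\Hcal_\sigma$ is a Grothendieck category since $\sigma$ is pure-injective (\cite{AMV3}); that $H^0_\sigma$ restricts to a fully faithful functor $\Prod{\sigma}\to\Inj{\Hcal_\sigma}$ under which $\Escr$, which satisfies $\Escr=\Prod{\Escr}$, corresponds to a class of injective objects of $\Hcal_\sigma$; and that $(\Scal,\Rcal)$ is a hereditary torsion pair, being cogenerated by the injectives $H^0_\sigma(\Escr)$, so that $\Rcal$ is closed under subobjects, extensions and products while $\Scal$ is closed under subobjects, quotients, extensions and coproducts.

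For the \textbf{right mutation} equivalences I would argue $(1)\Rightarrow(2)\Rightarrow(3)\Rightarrow(1)$. For $(1)\Rightarrow(2)$: a right mutation $\sigma'$ has, by Theorem~\ref{thm:mutation as HRS-tilt}(1), $\mathbb{T}_{\sigma'}$ equal to the right HRS-tilt of $\mathbb{T}_\sigma$ at $(\Scal,\Rcal)$, whence $\Rcal$ lies in the cosilting heart $\Hcal_{\sigma'}$ as (a shift of) a torsion class; since direct limits in $\Hcal_{\sigma'}$ restrict to those in $\Hcal_\sigma$ on $\Rcal$ and cosilting hearts have exact direct limits, $\Rcal$ is closed under direct limits. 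For $(2)\Rightarrow(3)$: if $\Rcal$ is closed under direct limits it is a covering class in the Grothendieck category $\Hcal_\sigma$, and translating an appropriate cover of the injective cogenerator $H^0_\sigma(\sigma)$ back along $H^0_\sigma$ produces an $\Escr$-cover of $\sigma$. For $(3)\Rightarrow(1)$: from an $\Escr$-cover of $\sigma$ one forms the mutation triangle of Definition~\ref{def:mut}; the point is that an $\Escr$-cover, as opposed to a mere precover, makes the resulting t-structure have a Grothendieck heart, so by the preceding analysis of HRS-tilts of pure-injective cosilting t-structures (together with the Parra--Saor\'in criterion, which says the right HRS-tilt at $(\Scal,\Rcal)$ has a Grothendieck heart exactly when $\Rcal$ is closed under direct limits) the cone gives a pure-injective cosilting object $\sigma'$ with $\Escr\subseteq\Prod{\sigma'}$, i.e.\ a right mutation.

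For the \textbf{left mutation} equivalences, $(1)\Rightarrow(2)$ is immediate from Theorem~\ref{thm:mutation as HRS-tilt}(2). For $(2)\Rightarrow(1)$: $\Scal$ is already closed under subobjects and extensions, so closure under products is exactly the condition for $\Scal$ to be the torsion-free class of a torsion pair $(\Tcal,\Scal)$ in $\Hcal_\sigma$, i.e.\ a TTF class; moreover $\Scal$, being a torsion class in the cocomplete category $\Hcal_\sigma$, is automatically closed under direct limits, so by the preceding results the left HRS-tilt of $\mathbb{T}_\sigma$ at $(\Tcal,\Scal)$ is a pure-injective cosilting t-structure $\mathbb{T}_{\sigma'}$ with $\Escr\subseteq\Prod{\sigma'}$, hence a left mutation by Theorem~\ref{thm:mutation as HRS-tilt}(2). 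For $(2)\Leftrightarrow(3)$ I would make the $\Escr$-envelope $\sigma\to\varepsilon_0$ and its cone $\varepsilon_1$ explicit --- the preenvelope exists because $\Escr=\Prod{\Escr}$ and $\Dcal$ has products, and it can be taken minimal --- and show by a direct computation in the defining triangle that $\Prod{\varepsilon_0\oplus\varepsilon_1}=\Prod{\sigma'}$ for the object $\sigma'$ of $(2)$; thus $\varepsilon_0\oplus\varepsilon_1$ is cosilting precisely when $\Scal$ is a TTF class.

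Finally, the last assertion is immediate: in both cases the proof of $(2)\Rightarrow(1)$ exhibits $\Hcal_{\sigma'}$ as a Grothendieck category, so every mutation $\sigma'$ is pure-injective by~\cite{AMV3}. I expect the crux of the argument to be the passage from the abstract HRS-tilt to a genuine cosilting object: one must know that an HRS-tilt of the pure-injective cosilting t-structure $\mathbb{T}_\sigma$ whose heart is a Grothendieck category is itself a pure-injective cosilting t-structure, and that the realising cosilting object can be chosen so as to keep $\Escr$ --- so that it is an honest mutation of $\sigma$ rather than merely some cosilting object with the same t-structure. Once this (ideally isolated as a separate statement before the theorem) is available, the rest is bookkeeping with torsion pairs plus standard facts: exactness of direct limits in cosilting hearts, the Parra--Saor\'in criterion for HRS-tilts, and the existence of covers and preenvelopes in Grothendieck categories.
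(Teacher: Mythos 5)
Your outline is sound and tracks the paper's strategy closely: reduce via Theorem~\ref{thm:mutation as HRS-tilt} to a question about HRS-tilts, characterise Grothendieck tilts via direct limits, and translate covers and envelopes along $H^0_\sigma$. You also correctly anticipate that one must upgrade a Grothendieck HRS-tilt to an honest cosilting t-structure with $\Escr=\Prod{\sigma}\cap\Prod{\sigma'}$; this is isolated in Proposition~\ref{prop:general existence of mutations}. However, the proposal misplaces the crux. In $(3)\Rightarrow(1)$ you assert that ``an $\Escr$-cover, as opposed to a mere precover, makes the resulting t-structure have a Grothendieck heart,'' attributing this to the Parra--Saor\'in criterion. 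But Theorem~\ref{thm:HRS-tilt AB5} (the Parra--Saor\'in-type result) only yields ``Grothendieck heart of the tilt $\Leftrightarrow$ $\Rcal$ closed under direct limits''; it says nothing about covers. The implication ``$\sigma$ admits an $\Escr$-cover $\Rightarrow$ $(\Scal,\Rcal)$ is of finite type'' is the genuine technical core of the theorem, and it is established in Proposition~\ref{prop:cover vs finite type}, $(3)\Rightarrow(4)\Rightarrow(1)$: one explicitly builds a quasi-cotilting object $C=F\oplus K$ from an $\Rcal$-cover of the injective cogenerator (via Wakamatsu's Lemma and a pullback argument), and then invokes \cite[Theorem~A]{PSV} to conclude finite type. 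This is not bookkeeping, and the proposal supplies no argument for it.

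A related, subtler issue occurs in $(1)\Rightarrow(2)$, where you invoke ``cosilting hearts have exact direct limits.'' That is false without pure-injectivity---cosilting hearts have exact coproducts, but they are AB5 if and only if the cosilting object is pure-injective (Theorem~\ref{thm:purity and silting})---and the pure-injectivity of $\sigma'$ is precisely the final assertion of the theorem, so you cannot assume it. The paper sidesteps this potential circularity by proving in Proposition~\ref{prop:cover vs finite type} that, inside a Grothendieck cosilting heart, ``cosilting torsion pair $\Leftrightarrow$ finite type $\Leftrightarrow$ pure-injective cosilting torsion pair'' all at once (the step ``cosilting $\Rightarrow$ quasi-cotilting'' going through Lemma~\ref{lem:quasi-cotilting}). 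Finally, in $(2)\Rightarrow(3)$ you should also justify that $\Rcal$-covers of injectives have injective domains---this uses heredity of $(\Scal,\Rcal)$ together with Lemmas~\ref{lem: covers and envelopes} and~\ref{rem:approximate every injective}.
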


We also provide an interpretation of mutation of pure-injective cosilting objects in terms of localisation theory (see Section~\ref{sec:local}), which essentially states that the operation of mutation can be understood as a three-step process: first restrict the associated t-structures to certain subcategories; then shift one of the restricted t-structures; finally glue them back together.

The whole machinery of cosilting mutation in triangulated categories leads to an \textbf{answer to our motivating question} concerning the study of the lattice of torsion pairs. In \cite{DIRRT} and \cite{BCZ}, it was shown that minimal inclusions of torsion classes (not necessarily functorially-finite) in $\mod A$, for a finite-dimensional algebra $A$, are parametrised by bricks. These bricks turn out to correspond to certain indecomposable summands of the associated (two-term) cosilting objects. We show that minimal inclusions of torsion classes then correspond to swapping precisely this indecomposable summand. We call this irreducible mutation. This result generalises the phenomenon that is well-understood for minimal inclusions of functorially-finite torsion classes. While we prove the theorem below in a more general setting (see Setup \ref{setup: length} and Corollary \ref{arrows}), here we state it in the setting of artinian rings. 

\begin{theoremC}[Corollary \ref{arrows}]
Let $A$ be an artinian ring. Consider two cosilting torsion pairs $\mathfrak{t}=(\Tcal,\Fcal)$ and $\mathfrak{u}=(\Ucal,\Vcal)$ in $\mod{A}$ such that $\Ucal\subseteq \Tcal$. Let  $\sigma_{\mathfrak t}$ and $\sigma_{\mathfrak u}$ be the two-term cosilting complexes associated to $\mathfrak{t}$ and $\mathfrak{u}$, respectively. Then the following statements are equivalent.
\begin{enumerate}
\item $\sigma_{\mathfrak t}$ is an {irreducible right mutation} of $\sigma_\mathfrak{u}$.
\item $\sigma_\mathfrak{u}$ is an irreducible left mutation of $\sigma_\mathfrak{t}$.
\item The class $\Scal=\Tcal\cap\Vcal$ coincides with $\filt{M}$ for a brick $M$ in $\mod{A}$.
\item The inclusion $\Ucal\subseteq \Tcal$ is a minimal inclusion of torsion classes.
\end{enumerate}
\end{theoremC}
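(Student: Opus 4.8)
The plan is to reduce Theorem C to Theorems A and B together with the known brick-labelling of minimal inclusions of torsion classes from \cite{DIRRT} and \cite{BCZ}, exploiting the fact that over an artinian ring $A$ the two-term cosilting complexes associated to torsion pairs in $\mod A$ are pure-injective (more precisely, the relevant "large" cosilting objects in $\D{A}$ are pure-injective, so that the hearts $\Hcal_{\sigma_{\mathfrak t}}$ and $\Hcal_{\sigma_{\mathfrak u}}$ are Grothendieck categories). First I would set up the dictionary: a cosilting torsion pair $\mathfrak t = (\Tcal,\Fcal)$ in $\mod A$ corresponds to a two-term pure-injective cosilting object $\sigma_{\mathfrak t}$ whose heart is (equivalent to) a HRS-tilt, and the inclusion $\Ucal \subseteq \Tcal$ means precisely that $\sigma_{\mathfrak t}$ and $\sigma_{\mathfrak u}$ are ordered in the cosilting poset, so that $\Escr := \Prod{\sigma_{\mathfrak t}} \cap \Prod{\sigma_{\mathfrak u}}$ is a nonzero $\Prod$-closed subcategory and the induced hereditary torsion pair $(\Scal,\Rcal)$ in $\Hcal_{\sigma_{\mathfrak t}}$ has torsion class $\Scal = \Tcal \cap \Vcal$ (identifying $\mod A$ with the appropriate part of the heart). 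This identification of $\Scal$ with $\Tcal\cap\Vcal$ is the geometric heart of the argument and is essentially the translation, via HRS-tilting, of the decomposition of the interval $[\mathfrak u,\mathfrak t]$ in $\tors A$.

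Next I would establish (1) $\Leftrightarrow$ (2). By Theorem A, $\sigma_{\mathfrak t}$ being a right mutation of $\sigma_{\mathfrak u}$ at $\Escr$ is equivalent to $\sigma_{\mathfrak u}$ being a left mutation of $\sigma_{\mathfrak t}$ at the same $\Escr$, once one checks that the existence conditions in Theorem B hold on both sides; since everything is pure-injective and $\Hcal$ is Grothendieck, the torsion-free class $\Rcal$ being closed under direct limits and the torsion class $\Scal$ being a TTF class are the relevant criteria, and in the artinian/length setting these are automatic for the torsion pairs coming from a single brick. The adjective "irreducible" should be built into the setup (Definition referenced in Setup~\ref{setup: length}) as the requirement that the swapped $\Prod$-summand is indecomposable; I would note that irreducibility of the right mutation and of the corresponding left mutation are equivalent because they concern the same object $\Escr$ and the same complementary summand.

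Then I would prove (3) $\Leftrightarrow$ (4) and (3) $\Leftrightarrow$ (1). For (3) $\Leftrightarrow$ (4): by \cite{DIRRT,BCZ}, a minimal inclusion $\Ucal \subsetneq \Tcal$ of torsion classes in $\mod A$ is labelled by a brick $M$, and $\Tcal\cap\Vcal = \Tcal\cap\mathcal{V}$ is precisely the "wide interval" subcategory, which is $\filt M$ exactly when the inclusion is minimal (a covering relation in $\tors A$); conversely if $\Scal = \filt M$ for a brick then the interval $[\mathfrak u,\mathfrak t]$ has no strictly intermediate torsion class because $\filt M$ contains no nontrivial torsion subclass. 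For (3) $\Leftrightarrow$ (1): using the identification $\Scal = \Tcal\cap\Vcal$ inside the Grothendieck heart $\Hcal_{\sigma_{\mathfrak t}}$, the hereditary torsion pair $(\Scal,\Rcal)$ has $\Scal = \filt M$ for a brick $M$ if and only if $\Scal$ is generated by a single simple object of the heart, which is exactly the condition that the HRS-tilt changes $\sigma_{\mathfrak t}$ in a single indecomposable $\Prod$-summand — i.e., that the right HRS-tilt at $(\Scal,\Rcal)$ is realised by an \emph{irreducible} mutation in the sense of Setup~\ref{setup: length}. Here one uses that simple objects of the Grothendieck heart $\Hcal_\sigma$ correspond to indecomposable injective envelopes via $H^0_\sigma$, matching simples-of-the-heart with indecomposable summands of $\sigma$.

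The main obstacle I anticipate is the precise matching in the last equivalence: showing that "$\Scal = \filt M$ for a brick $M$" translates exactly to "the minimal $\Prod$-closed subcategory $\Escr$ one must pass through has corank one", i.e., that the brick $M$ of \cite{DIRRT,BCZ} is the same datum as the indecomposable summand being exchanged. This requires knowing that in the Grothendieck heart $\Hcal_{\sigma_{\mathfrak t}}$ the brick $M \in \mod A$ becomes a simple object whose injective envelope is the indecomposable summand $\varepsilon$ of $\sigma_{\mathfrak t}$ not lying in $\Escr$, and dually for $\sigma_{\mathfrak u}$; this is a statement about how the functor $H^0_\sigma$ interacts with the embedding $\mod A \hookrightarrow \Hcal_\sigma$ and with indecomposable decompositions of pure-injective cosilting objects, and I expect it to rest on the length/artinian hypotheses ensuring both that $\filt M$ is a minimal nonzero torsion class and that the relevant injectives in the heart are indecomposable. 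Once this dictionary entry is in place, the four-way equivalence follows by assembling Theorems A and B with the cited brick-labelling, with only routine checks of the existence conditions (closure under direct limits / TTF) remaining.
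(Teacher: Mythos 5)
Your plan matches the paper's proof in essentials: establish the dictionary between cosilting torsion pairs in $\mod A$ and pure-injective two-term cosilting objects in $\D{A}$, identify $\Scal=\Tcal\cap\Vcal$ with the torsion class of a hereditary torsion pair controlling the mutation, bring in the brick-labelling of minimal inclusions from \cite{DIRRT,BCZ}, and translate bricks to simple objects in the Grothendieck heart and thence to indecomposable injective envelopes, i.e.\ indecomposable summands of the cosilting object. This is exactly the route via Theorem~\ref{Thm: mutations are wide}, Theorem~\ref{thm: ringel}, Lemma~\ref{lem: simple bricks = ME}, Lemma~\ref{lem: indecomp mutation} and Proposition~\ref{prop: HRS-tilt triangle}.

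Two small imprecisions are worth flagging. First, the torsion pair $(\Scal,\Rcal)$ sits in the heart associated to $\sigma_{\mathfrak u}$ (more precisely in $\overrightarrow{\Hcal_{\mathfrak u^-}}=\Hcal_{\sigma_\mathfrak{u}}$), not in $\Hcal_{\sigma_{\mathfrak t}}$ as you write; it is the torsion pair at which one right-HRS-tilts to pass from $\mathbb T_{\sigma_\mathfrak{u}}$ to $\mathbb T_{\sigma_\mathfrak{t}}$, whereas $\Hcal_{\sigma_\mathfrak{t}}$ carries the tilted pair $(\Rcal,\Scal[-1])$. Second, the symmetry of irreducibility in $(1)\Leftrightarrow(2)$ is not simply that both sides involve ``the same $\Escr$ and the same complementary summand'': the complements $\Ind{\sigma_\mathfrak{u}}\setminus\Ind\Escr$ and $\Ind{\sigma_\mathfrak{t}}\setminus\Ind\Escr$ consist of \emph{different} objects, and one needs the bijection between them furnished by Proposition~\ref{prop: bij ind summands} (constructed via $\Escr$-covers and $\Escr$-envelopes of the individual indecomposables) to conclude that their cardinalities agree. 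Once those two points are tidied up, the assembly you describe is the right one.
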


\smallskip

In forthcoming work \cite{AHL, ALS}, we will specialise to the case of two-term cosilting complexes in the derived category of a finite-dimensional algebra $A$ in order to obtain a more explicit description of the operation of mutation involving the approximation theory and the Ziegler spectrum of $\Mod{A}$. 

\medskip

\noindent \textbf{Structure of the paper.} The paper is organised as follows.

\begin{itemize}
\item In Section~\ref{sec:prelim} we collect the necessary background on t-structures, HRS-tilts, silting and cosilting objects.

\item In Section~\ref{sec:mutation} we define cosilting mutation and interpret these operations in terms of HRS-tilts. In particular, we prove Theorem~A above (Theorem~\ref{thm:mutation as HRS-tilt}).

\item In Section~\ref{sec:mutation and purity} we specialize to pure-injective cosilting objects in compactly generated triangulated categories. In this setting the situation becomes more tractable, and we are able to provide necessary and sufficient conditions of the existence of cosilting mutation, proving Theorem~B (Theorem~\ref{existence right mutation}). 

\item Section~\ref{sec:silting mutation} is devoted to the dual situation of silting objects and we prove there that cosilting mutation generalises the notion of mutation for compact silting objects, as set up in \cite{AI} (see Theorem~\ref{thm:compact silting mutation}).

\item In Section~\ref{sec:local}, we interpret mutation from the point of view of localisation theory of triangulated categories and we observe that the operation of mutation can be broken into 3 parts: restriction, shifting and gluing (see Theorem~\ref{existence right mutation2}). 

\item In Section~\ref{sec:mutation and torsion pairs} we consider general mutation of cosilting objects associated with torsion pairs in the heart of a pure-injective cosilting t-structure and we characterise  this situation in terms of wide subcategories of the heart (Theorem~\ref{thm: mutation general}).

\item Section~\ref{Sec Mut in small} clarifies the bijection between torsion pairs in ``small'' and ``large'' triangulated categories and describes mutation of ``small'' torsion pairs (Theorem~\ref{Thm: mutations are wide}).

\item In the final Section~\ref{sec:mutation and simples}, we focus on mutation of torsion pairs in abelian length categories. It is in this framework that we prove, in particular, Theorem~C (Corollary~\ref{arrows}).
\end{itemize}

\section{Preliminaries} \label{sec:prelim}
\subsection{Notation}
All subcategories considered are strict and full. Given an object  $X$ in an additive category $\Acal$, we denote by $\Add{X}$ the subcategory whose objects are summands of existing coproducts of copies of $X$. Dually, we write $\Prod{X}$ for the subcategory whose objects are summands of existing products of copies of $X$.  We denote the isomorphism classes of indecomposable objects in $\Acal$ by $\Ind\Acal$.

 Let $\Xcal$ be a class of objects in a complete and cocomplete abelian category $\Acal$. We denote by  $\Gen{\Xcal}$ (respectively, $\mathrm{gen}(\Xcal)$)  the subcategory formed by all epimorphic images of coproducts (respectively, of finite coproducts) of objects in $\Xcal$, and by  $\Cogen{\Xcal}$ the subcategory formed by all subobjects of products of objects in $\Xcal$. We  further write $\varinjlim\Xcal$ for the subcategory of $\Acal$ formed by direct limits of objects in $\Xcal$ and {$\fp\Xcal$} for the collection of finitely presented objects in $\Xcal$.  
Furthermore, we denote by $\Filt\Xcal$ the class of all  objects $M$ which admit an ascending chain $(M_\lambda, \lambda \leq \mu)$ of subobjects indexed over an ordinal number $\mu$ where $M_0 = 0$,  all consecutive factors $M_{\lambda+1}/M_\lambda$ with $\lambda<\mu$ belong to $\mathcal X$, and  $M=\bigcup _{\lambda \le \mu} M_\lambda$. The class of objects with a finite filtration of this form is denoted by $\filt{\Xcal}$.

For a pair of full subcategories $\Mcal$ and $\Ncal$ of an abelian (respectively, triangulated) category $\Ccal$, we use the notation $\Mcal \star \Ncal$ for the full subcategory of $\Ccal$ consisting of objects $X$ such that there exists a short exact sequence $0\to M \to X \to N \to 0$
(respectively, a triangle $ M \to X \to N \to M[1]$) with $M\in \Mcal$ and $N\in \Ncal$.  

Let $\Mcal$ be a class of objects in a triangulated category $\Dcal$. Given a set of integers $I$ (which is often expressed by symbols such as $>n$,  $\neq n$, or just $n$),
 we write
${}^{\perp_I}\Mcal$ for the orthogonal class given by the objects $X$ satisfying $\Hom_\Dcal(X,M[i])=0$ for all $M\in\Mcal$ and $i\in I$,
while  ${\Mcal}^{\perp_I}$ consists of the objects $X$ such that $\Hom_\Dcal(M,X[i])=0$ for all $M\in\Mcal$ and  $i\in I$.
If $\Mcal$ is a class of objects in an abelian category $\Acal$ and $I$ is a set of natural numbers, we similarly denote by ${}^{\perp_I}\Mcal$ the class given by the objects $X\in\Acal$ such that $\Ext^i_\Acal(X,M)=0$ and by ${\Mcal}^{\perp_I}$ the class of objects $X$ satisfying $\Ext^i_\Acal(M,X)=0$ for all $M\in\Mcal$ and  $i\in I$.

Finally,
when $R$ is a ring, $\Mod{R}$ denotes the category of all left $R$-modules and $\D{R}$ the unbounded derived category of $\Mod{R}$. If $R$ is left coherent, then $\mod{R}$ denotes the abelian subcategory of finitely presented left $R$-modules and $\mathrm{D}^b(\mod{R})$ its bounded derived category. 

\subsection{Torsion pairs, t-structures and HRS-tilts}

Recall that a \textbf{torsion pair} in an abelian (respectively, triangulated) category $\Ccal$ is a pair of idempotent-complete subcategories $\mathfrak{t}:=(\Tcal,\Fcal)$ such that $\Hom_\Acal(T,F)=0$ for all $T$ in $\Tcal$ and $F$ in $\Fcal$ and, furthermore, with the property that $\Ccal=\Tcal\star\Fcal$. The subcategory $\Tcal$ is often called a \textbf{torsion class}  while the subcategory $\Fcal$ is often referred to as a \textbf{torsion-free class}. If a torsion-free class $\Fcal$ is again a torsion class with respect to another torsion pair $(\Fcal,\Gcal)$, then we say that $\Fcal$ is a \textbf{torsion torsion-free class} (or TTF class, for short).

A torsion pair $(\Tcal,\Fcal)$ in an abelian or triangulated category $\Ccal$ is said to be:
\begin{itemize}
\item \textbf{cogenerated by a subcategory $\Scal$} if $\Tcal={}^{\perp_0}\Scal$; and
\item \textbf{generated by a subcategory $\Scal$} if $\Fcal=\Scal^{\perp_0}$.
\end{itemize}
If $\Ccal$ is abelian, we say that $(\Tcal,\Fcal)$ is
\begin{itemize}
\item \textbf{hereditary} if $\Tcal$ is closed under subobjects; and
\item \textbf{cohereditary} if $\Fcal$ is closed under quotient objects.
\end{itemize}
If $\Ccal$ is furthermore AB5 (i.e., a cocomplete abelian category with exact direct limits), then   $(\Tcal,\Fcal)$  is  \begin{itemize}\item \textbf{of finite type} if  $\Fcal$ is closed under {direct limits in $\Ccal$}.\end{itemize}
A subcategory $\Mcal$ of an abelian (respectively, triangulated category) will be said to be \textbf{extension-closed} if $\Mcal\star \Mcal\subseteq \Mcal$. An extension-closed subcategory $\Xcal$ of a triangulated category $\Dcal$ is said to be \textbf{suspended} (respectively, \textbf{cosuspended}) if $\Xcal[1]\subseteq \Xcal$ (respectively, if $\Xcal[-1]\subseteq \Xcal$).

A torsion pair $\mathbb{T}:=(\Xcal,\Ycal)$ for which $\Xcal$ is suspended is called a \textbf{t-structure}. {Then $\Xcal$ is called the \textbf{aisle} and $\Ycal$ the \textbf{coaisle}  of $\mathbb{T}$.} Such torsion pairs give rise to an abelian subcategory of $\Dcal$, the  \textbf{heart} of the t-structure, which can be obtained as $\Hcal_\mathbb{T}:=\Xcal[-1]\cap\Ycal.$ Furthermore, there is a cohomological functor associated to $\mathbb{T}$, i.e.~a functor $H^0_\mathbb{T}\colon \Dcal\longrightarrow \Hcal_\mathbb{T}$ that sends triangles to long exact sequences, and we denote by  $H^i_\mathbb{T}:\Dcal\longrightarrow \Hcal_\mathbb{T}$  the functor given by  $H^i_\mathbb{T}(X)=H^0_\mathbb{T}(X[i])$, for any $i$ in $\mathbb{Z}$. The following useful lemma is a direct consequence of the construction of this cohomological functor. 

\begin{lemma}\label{lem:cohomol1}
Let $\mathbb{T}=(\Xcal,\Ycal)$ be a  t-structure in a triangulated category $\Dcal$. Let $W$ be an object in the heart $\Hcal$, and let $Y$ be in $\Ycal$ and $Z$ in $\Xcal[-1]$. Then we have that  $\Hom_\Dcal(W,Y)\cong \Hom_\Hcal(W, H^0_\mathbb{T}(Y))$  and $\Hom_\Dcal(Z,W)\cong \Hom_\Hcal(H^0_\mathbb{T}(Z), W).$
\end{lemma}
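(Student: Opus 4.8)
The plan is to unwind the construction of the cohomological functor $H^0_\mathbb{T}$ through the truncation functors of the t-structure and then read off the two isomorphisms from the resulting long exact sequences. Recall that $H^0_\mathbb{T}$ is obtained by composing the truncations attached to $\mathbb{T}$, and that this produces, for an object $Y$ in the coaisle $\Ycal$, a triangle $H^0_\mathbb{T}(Y)\to Y\to Y'\to H^0_\mathbb{T}(Y)[1]$ in which $H^0_\mathbb{T}(Y)\in\Hcal$ and $Y'\in\Ycal[-1]$ (apply the torsion-pair decomposition of $\mathbb{T}$ to $Y[1]$ and shift back; since $Y$ already lies in $\Ycal$ its ``$\Xcal$-part'' vanishes, and extension-closedness of $\Ycal$ forces $H^0_\mathbb{T}(Y)\in\Ycal$). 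Dually, for an object $Z$ in $\Xcal[-1]$ one has a triangle $Z''\to Z\to H^0_\mathbb{T}(Z)\to Z''[1]$ with $H^0_\mathbb{T}(Z)\in\Hcal$ and $Z''\in\Xcal$. Conceptually, the lemma records that these triangles exhibit $H^0_\mathbb{T}$ as restricting to a right adjoint of the inclusion $\Hcal\hookrightarrow\Ycal$ and to a left adjoint of the inclusion $\Hcal\hookrightarrow\Xcal[-1]$.

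For the first isomorphism I would apply the cohomological functor $\Hom_\Dcal(W,-)$ to the triangle $H^0_\mathbb{T}(Y)\to Y\to Y'\to H^0_\mathbb{T}(Y)[1]$. Since $W\in\Hcal\subseteq\Xcal[-1]$, and $Y',Y'[-1]\in\Ycal[-1]$ (the coaisle of a t-structure being closed under $[-1]$, as $\Hom_\Dcal(X,Y[-1])=\Hom_\Dcal(X[1],Y)=0$ for $X\in\Xcal$), the orthogonality $\Hom_\Dcal(\Xcal[-1],\Ycal[-1])=\Hom_\Dcal(\Xcal,\Ycal)=0$ yields $\Hom_\Dcal(W,Y')=0$ and $\Hom_\Dcal(W,Y'[-1])=0$. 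The associated long exact sequence then collapses to an isomorphism $\Hom_\Dcal(W,H^0_\mathbb{T}(Y))\cong\Hom_\Dcal(W,Y)$. As $W$ and $H^0_\mathbb{T}(Y)$ both lie in the full subcategory $\Hcal$, the left-hand side coincides with $\Hom_\Hcal(W,H^0_\mathbb{T}(Y))$, which is the claimed identification; it is moreover natural in $W$ (and in $Y$), although naturality is not part of the statement.

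The second isomorphism is proved in the dual manner, by applying $\Hom_\Dcal(-,W)$ to the triangle $Z''\to Z\to H^0_\mathbb{T}(Z)\to Z''[1]$: here $W\in\Hcal\subseteq\Ycal$, while $Z''\in\Xcal$ and $Z''[1]\in\Xcal$ since the aisle is suspended, so $\Hom_\Dcal(Z'',W)=\Hom_\Dcal(Z''[1],W)=0$ by $\Hom_\Dcal(\Xcal,\Ycal)=0$, and the long exact sequence gives $\Hom_\Dcal(H^0_\mathbb{T}(Z),W)\cong\Hom_\Dcal(Z,W)$, the left-hand side being $\Hom_\Hcal(H^0_\mathbb{T}(Z),W)$ because both objects belong to $\Hcal$. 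I do not expect a genuine obstacle here: the only points requiring care are the bookkeeping of the t-structure conventions — in particular, knowing which truncation recovers $H^0_\mathbb{T}$ on objects of the aisle and which on objects of the coaisle — and the (routine) observation that an isomorphism of $\Hom$-groups computed in $\Dcal$ between two objects of the heart is automatically an isomorphism of $\Hom$-groups computed in $\Hcal$, since $\Hcal$ is a full subcategory.
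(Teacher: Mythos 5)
Your proof is correct and is exactly the routine unwinding that the paper has in mind when it calls the lemma ``a direct consequence of the construction of the cohomological functor'': decompose $Y$ (resp.\ $Z$) by the shifted torsion pair $(\Xcal[-1],\Ycal[-1])$ (resp.\ $(\Xcal,\Ycal)$), note that the $\Hcal$-term of the triangle is precisely $H^0_\mathbb{T}(Y)$ (resp.\ $H^0_\mathbb{T}(Z)$), and collapse the long exact $\Hom$-sequence using the orthogonality of aisle and coaisle together with the fact that $\Hcal$ is full in $\Dcal$. One small gloss: the aside ``since $Y$ already lies in $\Ycal$ its $\Xcal$-part vanishes'' is not what forces $H^0_\mathbb{T}(Y)\in\Hcal$ — the relevant fact, which you do state, is that $H^0_\mathbb{T}(Y)\in\Xcal[-1]$ by construction and lies in $\Ycal$ because it is an extension of $Y\in\Ycal$ by $Y'[-1]\in\Ycal[-2]\subseteq\Ycal$ and $\Ycal$ is extension-closed.
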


{A t-structure $\mathbb{T}=(\Xcal,\Ycal)$ is said to be
\begin{itemize}
\item \textbf{nondegenerate} if $\bigcap_{n\in\mathbb{Z}}\Xcal[n]=0=\bigcap_{n\in\mathbb{Z}}\Ycal[n]$.
\end{itemize}
It is easy to check that if $\mathbb{T}$ is nondegenerate, then the aisle $\Xcal$ consists of the objects $X$ with ${H}^k_{\mathbb{T}}(X)=0$ for all $k\ge0$, and the coaisle  $\Ycal$ of those with ${H}^k_{\mathbb{T}}(X)=0$ for all $k< 0$.

If $\Dcal$ is a  triangulated category with coproducts (respectively, products), then we say that $\mathbb{T}$ is
\begin{itemize}\item \textbf{smashing}  if the coaisle $\Ycal$ is closed under coproducts; and
\item  \textbf{cosmashing} if the aisle $\Xcal$ is closed under products.
\end{itemize}
A nondegenerate t-structure
 $\mathbb{T}$ is smashing (respectively, cosmashing) if and only if
  the $H^0_\mathbb{T}$ preserves coproducts (respectively, products), see \cite[Lemma 3.3]{AMV3}.}
For more details on t-structures we refer to \cite{BBD}.

For a torsion pair $\mathfrak{t}:=(\Tcal,\Fcal)$ in the heart $\Hcal_\mathbb{T}$ of a t-structure $\mathbb{T}=(\Xcal,\Ycal)$ we can build a new t-structure {according to \cite{HRS}. It is called} the \textbf{{left} HRS-tilt} at $\mathfrak{t}$, and it is defined as follows:
$$\mathbb{T}_{\mathfrak{t}^+}:=(\Xcal_{\mathfrak{t}^+}:=\Xcal[1]\star\Tcal[1],\,\Ycal_{\mathfrak{t}^+}:=\Fcal[1]\star\Ycal).$$
The corresponding heart is then given by {$\Hcal_{\mathfrak{t}^+}=\Fcal[1]\star\Tcal$, it is equipped with a torsion pair $(\Fcal[1],\Tcal)$ and the {cohomological functor}  $H^0_{\mathfrak{t}^+}\colon\Dcal\longrightarrow \Hcal_{\mathfrak{t}^+}$}.
Dually, we denote the \textbf{{right} HRS-tilt} at $\mathfrak{t}$ by
$$\mathbb{T}_{\mathfrak{t}^-}:=(\Xcal_{\mathfrak{t}^-}:=\Xcal\star\Tcal,\,\Ycal_{\mathfrak{t}^-}:=\Fcal\star(\Ycal[-1])).$$
The corresponding heart is then given by {$\Hcal_{\mathfrak{t}^-}=\Fcal\star\Tcal[-1]$, it is equipped with a torsion pair $(\Fcal,\Tcal[-1])$ and the {cohomological functor}  $H^0_{\mathfrak{t}^-}\colon\Dcal\longrightarrow \Hcal_{\mathfrak{t}^-}$}.

\begin{remark}\label{rem:inversetilt}
Note that $\mathbb{T}_{\mathfrak{t}^+}=\mathbb{T}_{t^-}[1]${, and $\mathbb{T}=(\mathbb{T}_{\mathfrak{t}^-})_{\mathfrak{s}^+}$ for the torsion pair $\mathfrak{s}=(\Fcal,\Tcal[-1])$ in $\Hcal_{\t^-}$}. Furthermore, if $\mathbb{T}$ is nondegenerate, then the right HRS-tilt is given by
$$\Xcal_{\mathfrak{t}^-}=\{X\in\Dcal\,\mid\, H^{0}_\mathbb{T}(X)\in\Tcal\text{ and }   H^k_\mathbb{T}(X)=0 \text{ for all } k> 0\},$$
$$\Ycal_{\mathfrak{t}^-}=\{X\in\Dcal\,\mid\, H^{0}_\mathbb{T}(X)\in\Fcal\text{ and }   H^k_\mathbb{T}(X)=0 \text{ for all } k<0\},$$
and the corresponding statement holds true for the left HRS-tilt.
\end{remark}

By construction, we have that $\Ycal\subseteq \Ycal_{\mathfrak{t}^+}\subseteq \Ycal[1]$ and $\Ycal[-1]\subseteq \Ycal_{\mathfrak{t}^-}\subseteq \Ycal$. These properties completely characterise $\mathbb{T}_{\mathfrak{t}^+}$ as a left HRS-tilt and  $\mathbb{T}_{\mathfrak{t}^-}$ as a right HRS-tilt.

\begin{proposition}\cite[Lemma 1.1.2]{Pol}\cite[Proposition 2.1]{Woolf}\label{prop:intermediate}
Let $\mathbb{T}=(\Xcal,\Ycal)$ be a t-structure in a triangulated category $\Dcal$ with heart $\Hcal$.
\begin{enumerate}

\item The assignment $\mathfrak{t}\mapsto \mathbb{T}_{\mathfrak{t}^+}$ defines a bijection between torsion pairs in $\Hcal$ and t-structures $\mathbb{T'}=(\Xcal',\Ycal')$ in $\Dcal$ with $\Ycal\subseteq \Ycal'\subseteq \Ycal[1]$. The inverse assignment
takes a t-structure $\mathbb{T'}$  with heart $\Hcal'$
 to the torsion pair $\mathfrak{t}:=(\Tcal,\Fcal)$ in  $\Hcal$ given by $\Tcal=\Hcal\cap\Hcal'$ and $\Fcal=\Hcal\cap\Hcal'[-1]$.

\item The assignment $\mathfrak{t}\mapsto \mathbb{T}_{\mathfrak{t}^-}$ defines a bijection between torsion pairs in $\Hcal$ and t-structures $\mathbb{T'}=(\Xcal',\Ycal')$ in $\Dcal$ with $\Ycal[-1]\subseteq \Ycal'\subseteq \Ycal$. The inverse assignment
takes a t-structure $\mathbb{T'}$  with heart $\Hcal'$ to the torsion pair $\mathfrak{t}:=(\Tcal,\Fcal)$ in  $\Hcal$ given by $\Tcal=\Hcal\cap\Hcal'[1]$ and $\Fcal=\Hcal\cap\Hcal'$.
\end{enumerate}
\end{proposition}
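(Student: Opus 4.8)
The plan is to prove part~(1) in detail and obtain part~(2) from it by conjugating with the shift autoequivalence. Indeed, by Remark~\ref{rem:inversetilt} one has $\mathbb{T}_{\mathfrak{t}^-}=\mathbb{T}_{\mathfrak{t}^+}[-1]$, and $[-1]$ restricts to a bijection between the t-structures with $\Ycal\subseteq\Ycal'\subseteq\Ycal[1]$ and those with $\Ycal[-1]\subseteq\Ycal'\subseteq\Ycal$, carrying hearts to (shifted) hearts; so the full statement of~(2) follows formally from~(1). For~(1), I would first record that, by passing to left $\Hom$-orthogonals, the condition $\Ycal\subseteq\Ycal'\subseteq\Ycal[1]$ on a t-structure $\mathbb{T}'=(\Xcal',\Ycal')$ is equivalent to $\Xcal[1]\subseteq\Xcal'\subseteq\Xcal$; these inclusions drive everything below. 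That $\mathbb{T}_{\mathfrak{t}^+}$ is a t-structure is the HRS construction~\cite{HRS}, and its coaisle $\Fcal[1]\star\Ycal$ visibly satisfies $\Ycal\subseteq\Ycal_{\mathfrak{t}^+}\subseteq\Ycal[1]$ because $\Fcal[1]\subseteq\Hcal[1]\subseteq\Ycal[1]$. Thus $\mathfrak{t}\mapsto\mathbb{T}_{\mathfrak{t}^+}$ is well defined, and it remains to show that the proposed inverse is well defined and that the two assignments are mutually inverse.

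For the inverse, fix an intermediate $\mathbb{T}'$ and set $\Tcal=\Hcal\cap\Hcal'$, $\Fcal=\Hcal\cap\Hcal'[-1]$; both are closed under summands since hearts are. Orthogonality is immediate: from $\Tcal\subseteq\Hcal'$ and $\Fcal[1]\subseteq\Hcal'$ one gets $\Hom_\Hcal(\Tcal,\Fcal)=\Hom_\Dcal(\Tcal[1],\Fcal[1])\subseteq\Hom_\Dcal(\Hcal'[1],\Hcal')\subseteq\Hom_\Dcal(\Xcal',\Ycal')=0$. The step requiring care is the decomposition of an arbitrary $W\in\Hcal$: since $W\in\Hcal\subseteq\Ycal'$, the $\mathbb{T}'$-truncation triangle of $W$ itself is trivial and carries no information, so instead I would truncate $W[1]\in\Hcal[1]=\Xcal\cap\Ycal[1]$ with respect to $\mathbb{T}'$, obtaining $A\to W[1]\to B\to A[1]$ with $A\in\Xcal'$, $B\in\Ycal'$. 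Rotating this triangle both ways and using $\Xcal[1]\subseteq\Xcal'\subseteq\Xcal$, $\Ycal\subseteq\Ycal'\subseteq\Ycal[1]$ together with the suspension/cosuspension and extension-closedness of $\Xcal$, $\Ycal$, a short chase places $A$ in $\Hcal[1]\cap\Hcal'[1]$ and $B$ in $\Hcal[1]\cap\Hcal'$. Desuspending yields a triangle in $\Dcal$ all of whose vertices lie in $\Hcal$, i.e.\ a short exact sequence $0\to A[-1]\to W\to B[-1]\to 0$ in $\Hcal$ with $A[-1]\in\Tcal$ and $B[-1]\in\Fcal$; hence $(\Tcal,\Fcal)$ is a torsion pair in $\Hcal$.

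It then remains to check that the two constructions are mutually inverse. Starting from a torsion pair $\mathfrak{t}$, one uses $\Hcal_{\mathfrak{t}^+}=\Fcal[1]\star\Tcal$: applying the cohomological functor $H^0_\mathbb{T}$ to a defining triangle $F'\to X\to T'\to F'[1]$ of an object $X\in\Hcal\cap\Hcal_{\mathfrak{t}^+}$ (with $F'\in\Fcal[1]$, $T'\in\Tcal$) and reading the long exact sequence forces $X\xrightarrow{\sim}T'$, so $\Hcal\cap\Hcal_{\mathfrak{t}^+}=\Tcal$; the dual computation gives $\Hcal\cap\Hcal_{\mathfrak{t}^+}[-1]=\Fcal$. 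Starting from an intermediate $\mathbb{T}'$ with $(\Tcal,\Fcal)$ as above, one shows $\Ycal_{\mathfrak{t}^+}=\Fcal[1]\star\Ycal$ coincides with $\Ycal'$: the inclusion ``$\subseteq$'' holds because $\Fcal[1]=\Hcal[1]\cap\Hcal'\subseteq\Ycal'$, $\Ycal\subseteq\Ycal'$ and $\Ycal'$ is extension-closed, while for ``$\supseteq$'' one takes $Y'\in\Ycal'$, forms its $\mathbb{T}$-truncation triangle $X''\to Y'\to Y''\to X''[1]$, and checks by the same kind of chase that $X''\in\Hcal[1]\cap\Hcal'=\Fcal[1]$ and $Y''\in\Ycal$. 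Equality of coaisles gives $\mathbb{T}_{\mathfrak{t}^+}=\mathbb{T}'$, closing the loop.

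The classical HRS fact (that $\mathbb{T}_{\mathfrak{t}^+}$ is a t-structure) is available off the shelf, so the actual work lies entirely in the orthogonality-and-truncation bookkeeping of the previous two paragraphs. I expect the \emph{main obstacle} to be organising those cohomology chases cleanly --- in particular keeping straight which shift of $\Hcal$, $\Hcal'$, $\Xcal$ or $\Ycal$ each truncation term lands in --- rather than any conceptual difficulty. The one genuinely non-obvious move worth flagging is that the torsion decomposition of $W\in\Hcal$ must be extracted from the $\mathbb{T}'$-truncation of $W[1]$, since $W$ itself already lies in $\Ycal'$ and its own $\mathbb{T}'$-truncation is trivial.
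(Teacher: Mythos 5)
Your argument is correct. The paper does not prove this proposition but cites Polishchuk and Woolf, so there is no in-paper proof to compare against; your strategy (prove~(1), deduce~(2) by shifting, and then do the torsion-decomposition and mutual-inverse checks by truncation chases using $\Xcal[1]\subseteq\Xcal'\subseteq\Xcal$ and $\Ycal\subseteq\Ycal'\subseteq\Ycal[1]$) is the standard route taken in those references. In particular the key move you flag --- truncating $W[1]$ rather than $W$ with respect to $\mathbb{T}'$ to extract the torsion decomposition, since $W$ itself already lies in $\Ycal'$ --- is exactly the right one, and the subsequent placement of $A$ in $\Hcal[1]\cap\Hcal'[1]$ and $B$ in $\Hcal[1]\cap\Hcal'$ indeed needs only the stated inclusions together with extension- and shift-closure of aisles and coaisles, no Hom-vanishing.
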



\subsection{Silting and cosilting t-structures}\label{sec: silting and cosilting}
Recall that an object $\sigma$ of a triangulated category $\Dcal$ with (set-indexed) coproducts is said to be \textbf{silting} if the pair $({\sigma}^{\perp_{\geq 0}},{\sigma}^{\perp_{<0}})$ is a t-structure in $\Dcal$. Dually, if $\Dcal$ has (set-indexed) products, an object $\sigma$ of $\Dcal$ is said to be \textbf{cosilting} if $({}^{\perp_{\leq 0}}\sigma, {}^{\perp_{>0}}\sigma)$ is a t-structure in $\Dcal$. In both cases we denote the associated (silting/cosilting) t-structure by $\mathbb{T}_\sigma$, its heart by $\Hcal_\sigma$ and its associated cohomological functor by $H^0_\sigma\colon\Dcal\longrightarrow \Hcal_\sigma$.
Two (co)silting objects are said to be \textbf{equivalent} if they give rise to the same t-structure.
 
\begin{remark}
Note that it is our convention that the heart of a t-structure is contained in the coaisle, not {in} the aisle. This justifies the slight adaptation (by a shift) of the definition of the t-structure associated to a silting {object} presented above (compare with \cite{PV}).
\end{remark}

We say that a subcategory $\Mcal$ \textbf{generates} $\Dcal$ if ${\Mcal}^{\perp_{\mathbb Z}}=0$ and $\Mcal$ \textbf{cogenerates} $\Dcal$ if ${}^{\perp_{\mathbb Z}}\Mcal=0$. It follows from the definition that if $\sigma$ is a silting object, then $\Add{\sigma}$ generates $\Dcal$, and if $\sigma$ is a cosilting objects, then $\Prod{\sigma}$ cogenerates $\Dcal$. We recall the following properties of (co)silting t-structures.

\begin{proposition}\cite[Proposition 4.3 and Lemma 4.5]{PV},\cite[Lemma 2.8, Theorem 3.5, and Corollary 3.8]{AMV3}\label{prop:summary cosilting}
Let $\sigma$ be an object in a triangulated category $\Dcal$.
\begin{enumerate}
\item If $\Dcal$ has coproducts and $\sigma$ is a silting object, then
 the associated heart $\Hcal_\sigma=\sigma^{\perp_{\neq 0}}$ is an abelian category with enough projectives, and the functor $H^0_\sigma$ induces an equivalence of categories between $\Add{\sigma}$ and $\Proj{\Hcal_\sigma}$ and a natural {isomorphism} $\Hom_\Dcal(\sigma,-)\cong \Hom_\Dcal(H^0_\sigma(\sigma),H^0_\sigma(-)).$ Furthermore, if $\Xcal={\sigma}^{\perp_{> 0}}$, then
 $\Add{\sigma}={}^{\perp_1}\Xcal\cap\Xcal$.

\item If $\Dcal$ has products and $\sigma$ is a cosilting object, then
 the associated heart $\Hcal_\sigma={}^{\perp_{\neq 0}}\sigma$ is an abelian category with enough injectives, and
 the functor $H^0_\sigma$ induces an equivalence of categories between $\Prod{\sigma}$ and $\Inj{\Hcal_\sigma}$ and a natural {isomorphism} $\Hom_\Dcal(-,\sigma)\cong \Hom_\Dcal(H^0_\sigma(-),H^0_\sigma(\sigma)).$ Furthermore, if $\Ycal={}^{\perp_{>0}}\sigma$, then  $\Prod{\sigma}=\Ycal\cap\Ycal^{\perp_1}$.
\end{enumerate}
\end{proposition}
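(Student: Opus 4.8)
The plan is to reduce the two statements to one another by passing to the opposite category, and then to separate a formal part (which follows from Lemma~\ref{lem:cohomol1} together with truncation) from the one genuinely substantial part (which is exactly what \cite{PV} and \cite{AMV3} supply). Since $\Dcal^{\mathrm{op}}$ has coproducts precisely when $\Dcal$ has products, passing to $\Dcal^{\mathrm{op}}$ turns a cosilting object of $\Dcal$ into a silting object of $\Dcal^{\mathrm{op}}$ (the orthogonal classes ${}^{\perp_\bullet}\sigma$ and $\sigma^{\perp_\bullet}$ exchange roles, and the associated t-structures correspond up to a single shift, in keeping with the convention recalled in Section~\ref{sec: silting and cosilting}), sends the heart $\Hcal_\sigma$ to $(\Hcal_\sigma)^{\mathrm{op}}$, and exchanges $\Prod{\sigma}$ with $\Add{\sigma}$ and $\Inj{\Hcal_\sigma}$ with $\Proj{\Hcal_\sigma}$. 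So statements (1) and (2) are equivalent, and I would prove only (2).

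The formal part of (2) goes as follows. The equality $\Hcal_\sigma={}^{\perp_{\neq 0}}\sigma$ is immediate from the definition of $\mathbb{T}_\sigma$, and $\Hcal_\sigma$ is abelian by \cite{BBD}. In the truncation triangle $X_\sigma\to\sigma\to Y_\sigma\to X_\sigma[1]$ of $\sigma$ relative to $\mathbb{T}_\sigma$ the morphism $X_\sigma\to\sigma$ vanishes because $X_\sigma\in\Xcal={}^{\perp_{\leq 0}}\sigma$, so $\sigma$ is a direct summand of $Y_\sigma\in\Ycal$; hence $\Prod{\sigma}\subseteq\Ycal$, the coaisle being closed under products and summands. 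From the definition one also reads off $\Hom_\Dcal(\Xcal,\sigma)=0=\Hom_\Dcal(\Ycal[-1],\sigma)$, so in fact $\Prod{\sigma}\subseteq\Ycal\cap\Ycal^{\perp_1}$. Using these, for any $D\in\Dcal$ two successive truncations of $D$ with respect to $\mathbb{T}_\sigma$ give natural isomorphisms $\Hom_\Dcal(D,\sigma)\cong\Hom_\Dcal(H^0_\sigma(D),\sigma)\cong\Hom_{\Hcal_\sigma}(H^0_\sigma(D),H^0_\sigma(\sigma))$, the last one by Lemma~\ref{lem:cohomol1}; this is the claimed natural isomorphism, and running the same computation with an arbitrary $E\in\Prod{\sigma}\subseteq\Ycal\cap\Ycal^{\perp_1}$ in place of $\sigma$ shows that $H^0_\sigma$ is full and faithful on $\Prod{\sigma}$. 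Next, $H^0_\sigma(E)$ is injective in $\Hcal_\sigma$ for every $E\in\Prod{\sigma}$: by Lemma~\ref{lem:cohomol1}, extending a morphism $W'\to H^0_\sigma(E)$ in $\Hcal_\sigma$ along a monomorphism $W'\hookrightarrow W$ is the same as extending the corresponding morphism $W'\to E$ in $\Dcal$, and applying $\Hom_\Dcal(-,E)$ to the triangle $W'\to W\to W''\to W'[1]$ attached to a short exact sequence $0\to W'\to W\to W''\to 0$ in $\Hcal_\sigma$ shows the only obstruction lies in $\Hom_\Dcal(W'',E[1])$, which vanishes since $W''\in{}^{\perp_{\neq 0}}\sigma$ and $\Hom_\Dcal(W'',-)$ commutes with products. (A similar truncation argument shows, more generally, that $H^0_\sigma(E)$ is injective for every $E\in\Ycal\cap\Ycal^{\perp_1}$.)

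What remains is to prove that $\Hcal_\sigma$ has enough injectives, that $\Inj{\Hcal_\sigma}=\essim{H^0_\sigma|_{\Prod{\sigma}}}$, and the reverse inclusion $\Ycal\cap\Ycal^{\perp_1}\subseteq\Prod{\sigma}$ (dually, ${}^{\perp_1}\Xcal\cap\Xcal\subseteq\Add{\sigma}$). This is where the hypothesis that $\Prod{\sigma}$ cogenerates $\Dcal$ --- a consequence of the nondegeneracy of $\mathbb{T}_\sigma$ --- is used essentially, together with the existence of products in $\Dcal$. For enough injectives I would use that $H^0_\sigma(\sigma)$ is injective (previous paragraph) and cogenerates $\Hcal_\sigma$ (immediate from the cogeneration hypothesis and the natural isomorphism, since $\Hom_\Dcal(W,\sigma[i])=0$ for $W\in\Hcal_\sigma$ and $i\neq 0$): then for $A\in\Hcal_\sigma$ the canonical morphism into a product of copies of $H^0_\sigma(\sigma)$ is a monomorphism, and one identifies that product with some $H^0_\sigma(\sigma^{\kappa})$, $\sigma^{\kappa}\in\Prod{\sigma}$. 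Combined with full faithfulness and idempotent-completeness this yields $\Inj{\Hcal_\sigma}=\essim{H^0_\sigma|_{\Prod{\sigma}}}$, and for $E\in\Ycal\cap\Ycal^{\perp_1}$ one then compares $E$ with the object of $\Prod{\sigma}$ realising the injective $H^0_\sigma(E)$, via the natural isomorphism and Yoneda, to conclude $E\in\Prod{\sigma}$. Rather than redoing these computations I would import the precise statements from \cite[Proposition 4.3 and Lemma 4.5]{PV} and \cite[Lemma 2.8, Theorem 3.5 and Corollary 3.8]{AMV3}.

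I expect this last block to be the only real obstacle. The formal part is a clean consequence of Lemma~\ref{lem:cohomol1}; the delicate point is checking that a cosilting object genuinely realises all of $\Inj{\Hcal_\sigma}$ --- equivalently, that products of copies of $\sigma$ compute products in $\Hcal_\sigma$ and yield enough injectives --- and it is precisely there that one must use that $\Prod{\sigma}$ cogenerates $\Dcal$, and not merely that $\sigma$ satisfies the orthogonality conditions alone.
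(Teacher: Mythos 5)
The paper gives no proof of this proposition: it is stated as a summary of background results and the references \cite{PV} and \cite{AMV3} in the label \emph{are} the proof. Your reconstruction of the formal half (heart $={}^{\perp_{\ne0}}\sigma$, $\Prod\sigma\subseteq\Ycal\cap\Ycal^{\perp_1}$, the natural isomorphism via two truncations and Lemma~\ref{lem:cohomol1}, injectivity of $H^0_\sigma(E)$) is correct, and you rightly defer to the same references for the substantial half (enough injectives, essential surjectivity onto $\Inj{\Hcal_\sigma}$, and $\Ycal\cap\Ycal^{\perp_1}\subseteq\Prod\sigma$), so you are in effect following the same route as the paper. One small correction: cogeneration of $\Dcal$ by $\Prod\sigma$ is not a consequence of nondegeneracy but of the torsion-pair property alone — ${}^{\perp_{\mathbb Z}}\sigma\subseteq\Xcal\cap\Ycal=0$ for any t-structure — which is exactly what the paper means when it says this follows ``from the definition.''
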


\begin{remark}\label{rem:torsion torsion-free characterisation}
If $\Dcal$ is a triangulated category with coproducts, then every heart is known to be cocomplete (\cite[Proposition 3.2]{PS}). Moreover, if a given t-structure is associated to a silting object, then the corresponding heart $\Hcal$ has a projective generator by the proposition above. Then Freyd's adjoint functor theorem implies that $\Hcal$ is also complete (see \cite[Proposition 6.4]{Fa}) and products are necessarily exact. Moreover, $\Hcal$ has the property of being \textbf{well-powered}, i.e.~every object has a set of subobjects (see \cite[Proposition IV.6.6]{Stenstrom}).
Dually, if $\Dcal$ is a triangulated category with products, then the heart $\Hcal$ of any t-structure associated to a cosilting object  is complete, cocomplete, with exact coproducts and well-powered.

It follows in both cases from \cite{Dickson} that torsion classes are precisely those closed under coproducts, extensions and quotients, and torsion-free classes are precisely those closed under products, extensions and subobjects. Furthermore, in any complete, cocomplete and well-powered abelian category $\Acal$, we then have that:
\begin{itemize}
\item if $\Escr$ is a family of injective objects in $\Acal$, then $({}^{\perp_0}\Escr,\Cogen{\Escr})$ is a hereditary torsion pair in $\Acal$; and
\item if $\Pscr$ is a family of projective objects in $\Acal$, then $(\Gen{\Pscr},\Pscr^{\perp_0})$ is a cohereditary torsion pair in $\Acal$.
\end{itemize}
\end{remark}

We close the section with a brief review of the notion of purity in triangulated categories.
Assume now that $\Dcal$ admits (set-indexed) coproducts. Recall that an object $X$ in $\Dcal$ is said to be \textbf{compact} if the functor $\Hom_\Dcal(X,-)$ commutes with coproducts. If the subcategory $\Dcal^c$ of compact objects is skeletally small and  generates $\Dcal$, then $\Dcal$ is said to be \textbf{compactly generated}. It is well-known that $\Dcal$ then also admits products.

When $\Dcal$ is a compactly generated triangulated category, the category of additive (contravariant) functors $(\Dcal^c)^\mathrm{op}\longrightarrow \Mod{\mathbb{Z}}$, denoted by $\Mod{\Dcal^c}$, is a locally coherent Grothendieck category with enough projectives. Recall that a Grothendieck category $\Gcal$ is said to be \textbf{locally coherent} if its subcategory of finitely presented objects $\fp\Gcal$ is an abelian subcategory and generates $\Gcal$. 

Moreover, the functor $\mathbf{y}\colon \Dcal\longrightarrow \Mod{\Dcal^c}$, defined by $\mathbf{y}X:=\Hom_{\Dcal}(-,X)_{|\Dcal^c}$, sends triangles to long exact sequences. A \textbf{pure triangle} is a triangle in $\Dcal$ of the form
$$\Delta\colon \ \ \ \xymatrix{X\ar[r]^f&Y\ar[r]^g&Z\ar[r]^h&X[1]}$$
such that $\mathbf{y}\Delta$ is a short exact sequence, i.e.\ $\mathbf{y}f$ is a monomorphism (and, hence, $f$ is called a \textbf{pure monomorphism}), $\mathbf{y}g$ is an epimorphism (and, hence, $g$ is called a \textbf{pure epimorphism}) and $\mathbf{y}h=0$ (and, hence, $h$ is called a \textbf{phantom map}). We say that an object $X$ of $\Dcal$ is \textbf{pure-injective} (respectively, \textbf{pure-projective}) if $\mathbf{y}X$ is an injective (respectively, projective) object in $\Mod{\Dcal^c}$. Equivalently, an object $X$ of $\Dcal$ is pure-injective (respectively, pure-projective) if and only if every pure triangle starting in $X$ (respectively, ending in $X$) splits. It is well-known that the pure-projective objects coincide precisely with $\Add{\Dcal^c}$ (\cite[Lemma 8.1]{Bel}). The following theorem explains the benefits of considering pure-injective/pure-projective cosilting/silting objects.

\begin{theorem}\label{thm:purity and silting}\cite{AMV3,NSZ}
Let $\Dcal$ be a compactly generated triangulated category.
\begin{enumerate}
\item There is a bijection between equivalence classes of cosilting objects and smashing nondegenerate t-structures whose heart has an injective cogenerator.  Furthermore, a cosilting object is pure-injective if and only if the heart of the associated t-structure is a Grothendieck category.

\item There is a bijection between equivalence classes of silting objects and cosmashing nondegenerate t-structures whose heart has a projective generator. Furthermore, a silting object is pure-projective if and only if the associated t-structure is smashing and its heart is a Grothendieck category with a projective generator.
\end{enumerate}
\end{theorem}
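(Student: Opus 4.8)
The plan is to prove~(1) in full and to deduce~(2) by running the parallel argument in a triangulated category with coproducts; the two cases are \emph{not} literally dual, since the opposite of a compactly generated category need not be compactly generated, but the abelian-category inputs of Remark~\ref{rem:torsion torsion-free characterisation} and Proposition~\ref{prop:summary cosilting} are symmetric enough to transfer. Statement~(1) splits into two independent halves — the bijection and the characterisation of pure-injectivity — which I would treat separately. For the bijection, the forward assignment is $\sigma\mapsto\mathbb{T}_\sigma=({}^{\perp_{\leq 0}}\sigma,{}^{\perp_{>0}}\sigma)$, which is well defined on equivalence classes by the definition of a (co)silting object, while the inverse assignment sends a smashing nondegenerate t-structure $\mathbb{T}=(\Xcal,\Ycal)$ whose heart $\Hcal$ admits an injective cogenerator~$E$ to the equivalence class of $E$, viewed as an object of~$\Dcal$.

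First I would check the forward direction. If $\sigma$ is cosilting then $\Prod\sigma$ cogenerates $\Dcal$, so $\bigcap_{n\in\mathbb{Z}}({}^{\perp_{>0}}\sigma)[n]={}^{\perp_{\mathbb{Z}}}\sigma=0=\bigcap_{n\in\mathbb{Z}}({}^{\perp_{\leq 0}}\sigma)[n]$ and $\mathbb{T}_\sigma$ is nondegenerate; it is smashing because $\Hom_\Dcal(\coprod_j X_j,\sigma[i])\cong\prod_j\Hom_\Dcal(X_j,\sigma[i])$, so the coaisle is closed under coproducts. By Proposition~\ref{prop:summary cosilting}(2) the object $H^0_\sigma(\sigma)$ is injective in $\Hcal_\sigma$, and it is a cogenerator: for $0\neq W\in\Hcal_\sigma$, cogeneration of $\Dcal$ yields some $i$ with $\Hom_\Dcal(W,\sigma[i])\neq 0$, and since $W$ lies in the heart this forces $i=0$, whence $\Hom_{\Hcal_\sigma}(W,H^0_\sigma(\sigma))\neq 0$ by Lemma~\ref{lem:cohomol1}.

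The substance lies in the inverse direction, and this is where I expect the main obstacle. Given $\mathbb{T}=(\Xcal,\Ycal)$ smashing, nondegenerate, with heart $\Hcal$ and injective cogenerator $E$, one first observes that $\Hcal$ is complete (Remark~\ref{rem:torsion torsion-free characterisation}), hence has enough injectives with $\Inj\Hcal=\Prod E$ (embed $X$ into $E^{\Hom_\Hcal(X,E)}$ and split). One must then show that $E$, as an object of $\Dcal$, is a cosilting object with $\mathbb{T}_E=\mathbb{T}$, i.e.\ $\Xcal={}^{\perp_{\leq 0}}E$ and $\Ycal={}^{\perp_{>0}}E$. The inclusion $\Xcal\subseteq{}^{\perp_{\leq 0}}E$ is formal, since $E[i]\in\Hcal[i]\subseteq\Ycal$ for $i\leq 0$ and $\Hom_\Dcal(\Xcal,\Ycal)=0$; but the two inclusions $\Ycal\subseteq{}^{\perp_{>0}}E$ and ${}^{\perp_{\leq 0}}E\subseteq\Xcal$ are genuinely hard. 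For instance $\Ycal\subseteq{}^{\perp_{>0}}E$ already requires $\Hom_\Dcal(W,E[i])=0$ for all $W\in\Hcal$ and $i>0$, which for $i=1$ is just $\Ext^1_\Hcal(W,E)=0$ but for $i\geq 2$ is \emph{not} formal, because the truncation tower of a general object of $\Dcal$ need not terminate and so no finite dévissage is available. The way round this is to use that $\mathbb{T}$ is smashing and nondegenerate — by \cite[Lemma 3.3]{AMV3} this forces $H^0_\mathbb{T}$ to commute with coproducts — together with the compact generation of $\Dcal$, in order to compare a general object with the homotopy colimit of its $\mathbb{T}$-truncations and propagate the relevant Hom-vanishing along the tower. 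Once $\mathbb{T}_E=\mathbb{T}$ is established the two assignments are seen to be mutually inverse, since a t-structure is determined by its coaisle and, by Proposition~\ref{prop:summary cosilting}(2), $H^0_\sigma(\sigma)$ is an injective cogenerator of $\Hcal_\sigma$ representing the equivalence class of~$\sigma$.

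For the pure-injectivity characterisation, suppose first that $\Hcal_\sigma$ is a Grothendieck category; I would show that every pure triangle $\sigma\xrightarrow{f}Y\to Z\to\sigma[1]$ splits. Since $f$ is a pure monomorphism, applying $H^0_\sigma$ produces a monomorphism $H^0_\sigma(\sigma)\hookrightarrow H^0_\sigma(Y)$ in $\Hcal_\sigma$ (the connecting term coming from the phantom part of the triangle dies); as $H^0_\sigma(\sigma)$ is injective (Proposition~\ref{prop:summary cosilting}(2)) this monomorphism splits in $\Hcal_\sigma$, and the natural isomorphism $\Hom_\Dcal(-,\sigma)\cong\Hom_{\Hcal_\sigma}(H^0_\sigma(-),H^0_\sigma(\sigma))$ of Proposition~\ref{prop:summary cosilting}(2) transports the retraction back to a retraction of $f$ in $\Dcal$, so the triangle splits and $\sigma$ is pure-injective. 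Conversely, if $\sigma$ is pure-injective, then by Remark~\ref{rem:torsion torsion-free characterisation} and the forward direction $\Hcal_\sigma$ is already complete, cocomplete, well-powered, has exact coproducts and an injective cogenerator, so it remains to produce a generator and verify the AB5 condition; for this I would use the restricted Yoneda functor $\mathbf{y}\colon\Dcal\to\Mod{\Dcal^c}$, showing that pure-injectivity of $\sigma$ makes the torsion pair attached to $\mathbb{T}_\sigma$ in the locally coherent Grothendieck category $\Mod{\Dcal^c}$ of finite type, so that $\Hcal_\sigma$ is recovered as a Giraud (localisation) subcategory of $\Mod{\Dcal^c}$ and is therefore itself Grothendieck. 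Finally,~(2) follows by the same scheme in a category with coproducts; the one genuinely new point is that a silting t-structure is automatically cosmashing — its aisle is closed under products — but not automatically smashing, so that pure-projectivity of $\sigma$, equivalently $\sigma\in\Add{\Dcal^c}$, is exactly the extra input making the coaisle closed under coproducts, while ``Grothendieck with a projective generator'' forces the heart to be a module category, which feeds into the analogue of the argument above.
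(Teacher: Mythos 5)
The paper records this statement as a citation to \cite{AMV3,NSZ} and supplies no proof of its own, so there is no in-paper argument to compare against; I can only assess your sketch on its merits.

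The forward direction of the bijection and the implication ``Grothendieck heart $\Rightarrow$ pure-injective'' are both sound (the latter glosses over why $H^0_\sigma$ takes pure triangles to short exact sequences, which uses that a coproduct-preserving cohomological functor into an AB5 category does so, cf.\ \cite[Corollary 2.5]{Krause-TC} as invoked in the proof of Theorem~\ref{thm:HRS-tilt AB5}). You also honestly flag that the d\'{e}vissage needed for the reverse inclusions and the Giraud-subcategory argument for the converse purity implication are left as gestures.

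However, the proposed inverse assignment $\mathbb{T}\mapsto E$ is wrong: the injective cogenerator $E$ of the heart, viewed as an object of $\Dcal$, is in general \emph{not} a cosilting object giving back $\mathbb{T}$. The reason is one you could have extracted from Proposition~\ref{prop:summary cosilting}(2) itself: the cosilting objects realising $\mathbb{T}=(\Xcal,\Ycal)$ are exactly those $\sigma$ with $\Prod\sigma=\Ycal\cap\Ycal^{\perp_1}$, whereas $E$ lies in $\Hcal=\Xcal[-1]\cap\Ycal$. Since $H^0_\sigma$ restricts to an equivalence $\Prod\sigma\to\Inj{\Hcal_\sigma}$ sending $\sigma\mapsto H^0_\sigma(\sigma)=E$, if $E$ also belonged to $\Prod\sigma$ then faithfulness of that equivalence (together with $H^0_\sigma(E)=E$) would force $E\cong\sigma$ in $\Dcal$, i.e.\ $\sigma\in\Hcal_\sigma$, which is equivalent to $\Hom_\Dcal(\sigma,\sigma[j])=0$ for all $j<0$ --- the \emph{cotilting} condition, strictly stronger than cosilting. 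Concretely: $E\in\Ycal^{\perp_1}$ if and only if $\Ycal\subseteq{}^{\perp_1}E$ if and only if $\sigma$ is cotilting, so for any non-cotilting cosilting object (these exist; e.g.\ Brown--Comenetz duals of non-tilting two-term silting complexes) your claimed inclusion $\Ycal\subseteq{}^{\perp_{>0}}E$ already fails at $i=1$. The homotopy-colimit/d\'{e}vissage machinery you propose cannot rescue a false inclusion. The construction in \cite{NSZ} instead works directly with the subcategory $\Ycal\cap\Ycal^{\perp_1}$ and uses compact generation (Brown representability) to exhibit it as $\Prod\sigma$ for a suitable $\sigma$; this is the missing step, and it is qualitatively different from taking $E$.
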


\begin{example}\cite{WZ,MV}\label{Ex: bdd pure-inj} An object  $\sigma$ in the category $\mathsf{K}^b(\mathrm{Inj}(R))$ of bounded complexes of injective   $R$-modules is  cosilting  in $\D R$ if and only if $\Hom_{\D R}(\sigma^I,\sigma[i])=0$ for all sets $I$ and $i>0$, and  $\mathsf{K}^b(\mathrm{Inj}(R))$  is the smallest triangulated subcategory of $\D R$ containing $\Prod\sigma$. Such cosilting complexes are pure injective objects of $\D R$  and thus give rise to  t-structures with Grothendieck heart.
\end{example}

\section{The concept of mutation}\label{sec:mutation}

Silting  mutation was introduced by Aihara and Iyama in \cite{AI}. In this section, we define mutation for large silting or  cosilting objects, and in Theorem~\ref{thm:compact silting mutation} we will show that it extends Aihara-Iyama's silting mutation.   We prove   that a (co)silting object $\sigma'$ is a mutation (left or right) of another (co)silting object $\sigma$ if and only if the associated (co)silting t-structures are related by a suitable (left or right) HRS-tilt. The torsion pair at which the HRS-tilt is performed  is determined by the intersection $\Ecal=\Prod{\sigma}\cap\Prod{\sigma'}$ in the cosilting case, and by $\Ecal=\Add{\sigma}\cap\Add{\sigma'}$ in the silting case.

We begin with a useful lemma. Recall that, given an object $C$ in an additive category $\Ccal$, we say that  a morphism $g\colon M\to C$ with $M$ in $\Mcal$  is an \textbf{$\Mcal$-precover} (or a \textbf{right $\Mcal$-approximation}) if any other morphism $M'\to C$ with $M'$ in $\Mcal$ factors through $g$. If, in addition, any endomorphism $h\colon M\to M$ with $gh=g$ is an isomorphism, then $g$ is called an \textbf{$\Mcal$-cover}. Dually, one defines \textbf{$\Mcal$-preenvelopes} (or \textbf{left $\Mcal$-approximations}) and \textbf{$\Mcal$-envelopes}.

 \begin{lemma}\label{lem:cosilting enveloping}\label{lem:(co)generate}
Let $\sigma$ be an object in a triangulated category $\Dcal$.
\begin{enumerate}
\item If $\Dcal$ has products, $\sigma$ is cosilting and {$\Escr=\Prod\Escr$ is a subcategory} of $\Prod{\sigma}$, then:
\begin{enumerate}
\item $\Escr$ is preenveloping in $\Prod{\sigma}$; and 
\item if $\Escr$ cogenerates $\Dcal$, then ${\Escr}=\Prod{\sigma}$.
\end{enumerate}
\item If $\Dcal$ has coproducts, $\sigma$ is silting and {$\Pscr=\Add\Pscr$ is a subcategory} of $\Add{\sigma}$, then:
\begin{enumerate}
\item $\Pscr$ is precovering in $\Add{\sigma}$; and
\item if $\Pscr$ generates $\Dcal$, then $\Pscr=\Add{\sigma}$.
\end{enumerate}
\end{enumerate}
\end{lemma}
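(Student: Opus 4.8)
The statement is self-dual, so it suffices to prove part (1); part (2) then follows by passing to the opposite category $\Dcal^{\mathrm{op}}$, which is triangulated with coproducts and in which $\sigma$ becomes a silting object with $\Prod{\sigma}$ turning into $\Add{\sigma}$. So the plan is to focus on the cosilting case.

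For (1)(a), I would argue through the heart. By Proposition~\ref{prop:summary cosilting}(2), the cohomological functor $H^0_\sigma$ restricts to an equivalence $\Prod{\sigma}\xrightarrow{\sim}\Inj{\Hcal_\sigma}$, and it intertwines $\Hom_\Dcal(-,\sigma)$ with $\Hom_{\Hcal_\sigma}(H^0_\sigma(-),H^0_\sigma(\sigma))$; in fact, via Lemma~\ref{lem:cohomol1}, for any $W$ in the heart and any $P$ in $\Prod{\sigma}\subseteq\Ycal$ we have $\Hom_\Dcal(W,P)\cong\Hom_{\Hcal_\sigma}(W,H^0_\sigma(P))$, and this is enough to see that $H^0_\sigma$ sends a product in $\Prod\sigma$ to the corresponding product of injectives in $\Hcal_\sigma$ (products in $\Hcal_\sigma$ are computed as in $\Dcal$ on the coaisle). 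Hence finding an $\Escr$-preenvelope of an object $Q\in\Prod\sigma$ inside $\Prod{\sigma}$ is equivalent, under the equivalence $H^0_\sigma$, to finding an $H^0_\sigma(\Escr)$-preenvelope of the injective object $H^0_\sigma(Q)$ inside $\Inj{\Hcal_\sigma}$. Now $\Escr=\Prod\Escr$ means $H^0_\sigma(\Escr)$ is a class of injectives closed under products in $\Hcal_\sigma$; by Remark~\ref{rem:torsion torsion-free characterisation} the pair $({}^{\perp_0}H^0_\sigma(\Escr),\Cogen{H^0_\sigma(\Escr)})$ is a hereditary torsion pair, and the torsion-free class $\Cogen{H^0_\sigma(\Escr)}$ is closed under products and subobjects. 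The preenvelope of $H^0_\sigma(Q)$ is then obtained from the canonical sequence $0\to t(H^0_\sigma(Q))\to H^0_\sigma(Q)\to H^0_\sigma(Q)/t(H^0_\sigma(Q))\to 0$ followed by an injective envelope of the torsion-free quotient inside $\Cogen{H^0_\sigma(\Escr)}$: since that quotient lies in $\Cogen{H^0_\sigma(\Escr)}$, it embeds into a product of objects of $H^0_\sigma(\Escr)$, and any such product is again injective (products of injectives are injective) and lies in $\add$ of $H^0_\sigma(\Escr)$-products; one checks the composite $H^0_\sigma(Q)\to E$ with $E\in H^0_\sigma(\Escr)$-products is an $H^0_\sigma(\Escr)$-preenvelope using $\Hom_{\Hcal_\sigma}(-,E')=0$ on the torsion part for every $E'$ in the torsion-free class. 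Transporting back along $H^0_\sigma$ gives the $\Escr$-preenvelope of $Q$ in $\Prod\sigma$.

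For (1)(b), suppose $\Escr$ cogenerates $\Dcal$, i.e.\ ${}^{\perp_{\mathbb{Z}}}\Escr=0$. I want $\Escr=\Prod\sigma$. Take any $Q\in\Prod\sigma$ and let $g\colon Q\to E$ be the $\Escr$-preenvelope constructed in (a), with cone $\xymatrix{Q\ar[r]^g&E\ar[r]&C\ar[r]&Q[1]}$. The key point is to show $C=0$, which forces $g$ to be a split monomorphism (it is already split because $E\to C$ is zero once $C=0$) and hence $Q\in\Prod\Escr=\Escr$. To see $C=0$ it suffices, by the cogeneration hypothesis, to show $C\in{}^{\perp_{\mathbb{Z}}}\Escr$, i.e.\ $\Hom_\Dcal(C,E'[i])=0$ for all $E'\in\Escr$ and all $i\in\mathbb{Z}$. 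Applying $\Hom_\Dcal(-,E'[i])$ to the triangle and using that $g$ is an $\Escr$-preenvelope (so $\Hom_\Dcal(E,E')\to\Hom_\Dcal(Q,E')$ is surjective), together with the vanishing $\Hom_\Dcal(\Prod\sigma,\sigma[i])=0=\Hom_\Dcal(\sigma[-1],\Prod\sigma)$ coming from $\Prod\sigma\subseteq\Hcal_\sigma={}^{\perp_{\neq 0}}\sigma$ for $i\neq 0$ (and the fact that both $Q$ and $E$ lie in the heart, which is closed under the relevant orthogonality), one gets that $\Hom_\Dcal(C,E'[i])=0$ for $i\neq 0,1$ immediately, for $i=0$ from surjectivity of $\Hom_\Dcal(E,E')\to\Hom_\Dcal(Q,E')$, and for $i=1$ from $\Hom_\Dcal(E,E'[1])=0$ (injectivity of $H^0_\sigma(E')$ in the heart kills $\Ext^1$, translated back to $\Dcal$ via Proposition~\ref{prop:summary cosilting}(2) and Lemma~\ref{lem:cohomol1}). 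Hence $C\in{}^{\perp_{\mathbb{Z}}}\Escr=0$.

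\textbf{Main obstacle.}
The routine part is the homological algebra in the heart; the delicate point is bookkeeping the orthogonality conditions when translating between $\Dcal$ and $\Hcal_\sigma$ — specifically making sure that $\Ext^1_{\Hcal_\sigma}$ vanishing against an injective really does correspond to $\Hom_\Dcal(-,-[1])$ vanishing, and that the cone $C$ of the preenvelope genuinely sits in a position where only degrees $0$ and $1$ of $\Hom_\Dcal(C,\Escr[-])$ can be nonzero. I expect this degree analysis of the cone (using that $Q,E\in\Hcal_\sigma$ and $\Hcal_\sigma\subseteq \Ycal={}^{\perp_{>0}}\sigma$, hence $C\in\Ycal\star\Ycal[1]$) to be where the real care is needed; everything else is a transport-of-structure argument along the equivalence $H^0_\sigma\colon\Prod\sigma\xrightarrow{\sim}\Inj{\Hcal_\sigma}$.
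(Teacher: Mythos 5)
Your part (1)(a) is essentially the paper's argument: construct the preenvelope in the heart via the hereditary torsion pair cogenerated by $H^0_\sigma(\Escr)$ (torsion-free quotient followed by an embedding into a product of objects of $H^0_\sigma(\Escr)$) and transport back along the equivalence $\Prod\sigma \simeq \Inj{\Hcal_\sigma}$; the paper states this for $\sigma$ itself and leaves the reduction to arbitrary $Q \in \Prod\sigma$ implicit, but the substance is identical. Your appeal to duality for part (2) is also fine and matches the paper's ``the silting case is analogous.''

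Your argument for (1)(b), however, has a genuine gap, and it is not merely a presentation issue: the target claim ``$C=0$'' is simply false. In general $g\colon Q \to E$ is a split monomorphism with nonzero cokernel, so $E \cong Q \oplus C$ with $C \neq 0$, and what you actually need is precisely the splitting, not vanishing of $C$. Even under the cogeneration hypothesis, the preenvelope is built from a monomorphism into some (possibly large) product of objects of $\Escr$, and there is no reason for that monomorphism to be an isomorphism. The degree analysis goes wrong at $i=0$: applying $\Hom_\Dcal(-,E')$ to the triangle $Q \xrightarrow{g} E \to C \to Q[1]$ gives the exact sequence
\[
0 \to \Hom_\Dcal(C,E') \to \Hom_\Dcal(E,E') \xrightarrow{\;g^*\;} \Hom_\Dcal(Q,E') \to \Hom_\Dcal(C,E'[1]) \to 0,
\]
where the outer zeros come from $\Hom_\Dcal(Q,E'[-1])=0$ and $\Hom_\Dcal(E,E'[1])=0$. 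Surjectivity of $g^*$ (the preenvelope property) therefore yields $\Hom_\Dcal(C,E'[1])=0$, \emph{not} $\Hom_\Dcal(C,E')=0$; the latter group is $\ker g^*$, which is generically nonzero (take any nonzero map $E \to E'$ factoring through the projection $E \to C$). So $C$ does not lie in ${}^{\perp_{\mathbb Z}}\Escr$ in general, and the strategy fails. The paper's route is simpler: if $\Escr$ cogenerates $\Dcal$, then $H^0_\sigma(\Escr)$ is a cogenerating class of injectives in $\Hcal_\sigma$, so the torsion part in the construction of the preenvelope vanishes and $H^0_\sigma(\Psi)$ is a monomorphism between injectives, hence split; transporting the splitting back along the equivalence $\Prod\sigma \simeq \Inj{\Hcal_\sigma}$ shows $\sigma$ is a direct summand of $E \in \Escr = \Prod\Escr$, whence $\Prod\sigma \subseteq \Escr$. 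You should replace your cone argument with this splitting argument.
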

\begin{proof}
We prove the cosilting case; the silting case is analogous. (1)(a): Consider the torsion pair $\mathfrak{s}:=({}^{\perp_0}H^0_\sigma(\Escr),\Cogen{H^0_\sigma(\Escr)})$ and take $a\colon  H^0_\sigma(\sigma)\longrightarrow A$ to be the epimorphism to a torsion-free object $A$ with a torsion kernel (with respect to $\mathfrak{s}$). Consider then a monomorphism $\pi\colon A\longrightarrow H^0_\sigma(E)$, for some $E$ in $\Escr$. It is easy to observe that $\pi\circ a$ is an $H^0_\sigma(\Escr)$-preenvelope of $H^0_\sigma(\sigma)$, and by  Proposition~\ref{prop:summary cosilting} the map $\Psi\colon \sigma\longrightarrow E$ such that $H^0_\sigma(\Psi)=\pi\circ a$ is a $\Escr$-preenvelope of $\sigma$.

(1)(b): Let $\Psi\colon \sigma\longrightarrow E$ be the preenvelope obtained above. If $\Escr$ cogenerates $\Dcal$, then $H^0_\sigma(\Escr)$ is a {cogenerating} class of injective objects in $\Hcal_\sigma$. In particular, $H^0_\sigma(\Psi)$ must be a monomorphism and, thus, a split map. Therefore also $\Psi$ splits and we conclude that $\Escr=\Prod{\sigma}$.
\end{proof}

We will first discuss mutation of cosilting objects. Later we will restrict our attention to pure-injective cosilting objects, for which we can use some approximation-theoretic tools to simplify the definition below.

\begin{definition}\label{def:mut} Let $\Dcal$ be a triangulated category with products. Let
$\sigma$ and $\sigma'$ be two cosilting objects in $\Dcal$,  and let $\Escr=\Prod{\sigma}\cap\Prod{\sigma'}$. We say that
\begin{enumerate}
\item  $\sigma'$ is a \textbf{left mutation} of $\sigma$
if there is a triangle $\xymatrix{\sigma\ar[r]^\Phi& \varepsilon_0\ar[r]& \varepsilon_1\ar[r]& \sigma[1]}$  such that:
 \begin{itemize}
 \item $\Phi$ is an $\Escr$-preenvelope  of $\sigma$ in $\Dcal$; and
 \item $\varepsilon_0\oplus \varepsilon_1$ is a cosilting object equivalent to $\sigma'$.
 \end{itemize}
 \item $\sigma'$ is a \textbf{right mutation} of $\sigma$ if there is a triangle $\xymatrix{\sigma[-1]\ar[r]& \gamma_1\ar[r]& \gamma_0\ar[r]^\Phi&\sigma}$ such that:
 \begin{itemize}
 \item $\Phi$ is an $\Escr$-precover  of $\sigma$ in $\Dcal$; and  
 \item $\gamma_0\oplus \gamma_1$ is a cosilting object equivalent to $\sigma'$.
 \end{itemize}
 \end{enumerate}
 We will also say that $\sigma'$ is a \textbf{left (or right) mutation} of $\sigma$ \textbf{with respect to $\Escr$}.
\end{definition}

Our first aim is to clarify the relation between mutations and HRS-tilts. This will allow us to see, in particular, that mutation is defined up to equivalence of cosilting objects. For that purpose, we will make use of the following lemma.

\begin{lemma}\label{lem:approx prop}
Let $\sigma$ be a  cosilting object in a  triangulated category $\Dcal$ with products, and let {$\Escr=\Prod\Escr$} be a  subcategory of $\Prod{\sigma}$. Denote by $$\mathfrak{t}=(\Scal, \Rcal)=({}^{\perp_0}H^0_\sigma(\Escr),\,\Cogen{H^0_\sigma(\Escr)})$$ the torsion pair in $\Hcal_{\sigma}$ cogenerated by the set of injective objects $H^0_\sigma(\Escr)$, and let
$\mathbb{T}=(\Xcal',\Ycal')$ be the right HRS-tilt of $\mathbb{T}_{\sigma}$ at $\mathfrak{t}$. The following statements are equivalent for a map $\Phi\colon E\longrightarrow \sigma$ with $E$ in $\Escr$. 
\begin{enumerate}
\item $\Phi$ is an $\Escr$-precover of $\sigma$ in $\Dcal$.
\item $\phi:=H^0_\sigma(\Phi)$ is an $\Rcal$-precover of $H_\sigma^0(\sigma)$ in $\Hcal_\sigma$.
\item $\Phi$ is a $\Ycal'$-precover of $\sigma$ in $\Dcal$.
\end{enumerate}
\end{lemma}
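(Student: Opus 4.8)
The plan is to prove the chain of equivalences $(1)\Leftrightarrow(2)\Leftrightarrow(3)$, using the natural isomorphism $\Hom_\Dcal(-,\sigma)\cong\Hom_{\Hcal_\sigma}(H^0_\sigma(-),H^0_\sigma(\sigma))$ from Proposition~\ref{prop:summary cosilting}(2) to translate back and forth between $\Dcal$ and the heart $\Hcal_\sigma$, together with the description of the right HRS-tilt coaisle $\Ycal'$ from Remark~\ref{rem:inversetilt}. The underlying point is that $H^0_\sigma$ restricts to an equivalence $\Prod\sigma\xrightarrow{\sim}\Inj{\Hcal_\sigma}$, and $\Escr$ corresponds under this equivalence to the injective class $H^0_\sigma(\Escr)$ that cogenerates the torsion-free class $\Rcal$.

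For $(1)\Leftrightarrow(2)$: since every object of $\Escr$ lies in $\Prod\sigma\subseteq\Ycal_\sigma$ and every object of $\Hcal_\sigma$ lies in $\Ycal_\sigma$, Lemma~\ref{lem:cohomol1} gives $\Hom_\Dcal(E',\sigma)\cong\Hom_{\Hcal_\sigma}(H^0_\sigma(E'),H^0_\sigma(\sigma))$ naturally for all $E'\in\Escr$ (this is also exactly the natural isomorphism in Proposition~\ref{prop:summary cosilting}(2)). Hence a map $\Psi\colon E'\to\sigma$ factors through $\Phi\colon E\to\sigma$ if and only if $H^0_\sigma(\Psi)$ factors through $\phi=H^0_\sigma(\Phi)$, because both factorisation problems are the ``same'' problem transported across the equivalence $H^0_\sigma\colon\Escr\to H^0_\sigma(\Escr)$. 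Now $\phi$ being an $\Rcal$-precover of $H^0_\sigma(\sigma)$ is equivalent to $\phi$ being a $\Cogen{H^0_\sigma(\Escr)}$-precover, and since $H^0_\sigma(\Escr)$ consists of injectives cogenerating $\Rcal$, a standard approximation argument shows that testing the precover property against all of $\Rcal$ is equivalent to testing it against $H^0_\sigma(\Escr)$ alone; combined with the transport above this yields $(1)\Leftrightarrow(2)$. One should spell out carefully that $H^0_\sigma$ is full and faithful on $\Prod\sigma$ (not merely essentially surjective onto injectives), so that the factorisation morphisms lift.

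For $(2)\Leftrightarrow(3)$: by Remark~\ref{rem:inversetilt}, since $\mathbb{T}_\sigma$ is nondegenerate, the coaisle of the right HRS-tilt is $\Ycal'=\{X\mid H^0_\sigma(X)\in\Rcal\text{ and }H^k_\sigma(X)=0\text{ for }k<0\}$. Note $\sigma\in\Ycal_\sigma$ and $E\in\Prod\sigma\subseteq\Ycal_\sigma$, so $\Phi$ is a morphism in $\Ycal_\sigma$; also $E\in\Ycal'$ since $H^0_\sigma(E)\in H^0_\sigma(\Escr)\subseteq\Rcal$ and $E$ has no negative cohomology. One then shows $\Phi$ is a $\Ycal'$-precover of $\sigma$ if and only if $\phi$ is an $\Rcal$-precover of $H^0_\sigma(\sigma)$: given any $Y'\in\Ycal'$ and map $Y'\to\sigma$, apply $H^0_\sigma$ (using Lemma~\ref{lem:cohomol1} again, as $Y',\sigma\in\Ycal_\sigma$) to land in $\Hom_{\Hcal_\sigma}(H^0_\sigma(Y'),H^0_\sigma(\sigma))$ with $H^0_\sigma(Y')\in\Rcal$; conversely any object of $\Rcal\subseteq\Hcal_\sigma\subseteq\Ycal'$ is already in $\Ycal'$, so the two precover conditions test against the same class of sources after applying the (fully faithful on coaisle-to-heart, by Lemma~\ref{lem:cohomol1}) functor $H^0_\sigma$. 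The lift of factorisations from the heart back to $\Dcal$ again uses that $\Hom_\Dcal(Y',\sigma)\cong\Hom_{\Hcal_\sigma}(H^0_\sigma(Y'),H^0_\sigma(\sigma))$ and $\Hom_\Dcal(Y',E)\cong\Hom_{\Hcal_\sigma}(H^0_\sigma(Y'),H^0_\sigma(E))$ — the latter because $E\in\Prod\sigma$ so $E$ plays the role of an injective and the same Lemma~\ref{lem:cohomol1} isomorphism applies with $E$ in place of $\sigma$.

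The main obstacle I expect is the bookkeeping around \emph{which} Hom-isomorphisms are available and natural: Lemma~\ref{lem:cohomol1} only gives $\Hom_\Dcal(Z,W)\cong\Hom_{\Hcal}(H^0(Z),W)$ when $W$ lies in the heart and $Z$ in the coaisle, whereas here the target $\sigma$ (or $E$) is an \emph{injective} object of the heart rather than an arbitrary heart object — so one must invoke the stronger statement of Proposition~\ref{prop:summary cosilting}(2), namely that $\Hom_\Dcal(-,\sigma)\cong\Hom_\Dcal(H^0_\sigma(-),H^0_\sigma(\sigma))$ as functors on all of $\Dcal$, and its evident variant with $E\in\Prod\sigma$ in place of $\sigma$. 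Once these natural isomorphisms are fixed, each implication is a routine diagram chase transporting a factorisation along an equivalence or a fully faithful functor; the only genuinely substantive input beyond formal nonsense is the reduction ``precover against $\Rcal$ $\Leftrightarrow$ precover against the injective cogenerators $H^0_\sigma(\Escr)$ of $\Rcal$'', which rests on the fact that every object of $\Rcal$ embeds in a product of objects of $H^0_\sigma(\Escr)$ and precovers are closed under the relevant limiting operations.
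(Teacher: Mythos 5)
Your argument follows the paper's proof essentially step for step, using the same ingredients: the equivalence $\Prod{\sigma}\simeq\Inj{\Hcal_\sigma}$ together with the natural Hom-isomorphism from Proposition~\ref{prop:summary cosilting}(2) (and its variant for targets in $\Prod{\sigma}$), the description of the tilted coaisle $\Ycal'$ from Remark~\ref{rem:inversetilt}, and the embedding of objects of $\Rcal$ into products of objects of $H^0_\sigma(\Escr)$. The one imprecision worth fixing is your justification for passing from an $H^0_\sigma(\Escr)$-precover to an $\Rcal$-precover: the operative fact is not closure of precovers under limiting operations but rather the injectivity of the target $H^0_\sigma(\sigma)$, which lets you extend any $g\colon X\to H^0_\sigma(\sigma)$ with $X\in\Rcal$ along a monomorphism $X\hookrightarrow H^0_\sigma(E')$ for some $E'\in\Escr$ and then factor the extended map through $\phi$.
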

\begin{proof}
(1)$\Rightarrow$(2): The map $\phi$ is an $H^0_\sigma(\Escr)$-precover
because $H_\sigma^0$ induces an equivalence of categories between $\Prod{\sigma}$ and $\Inj{\Hcal_\sigma}$.
 Let  $g\colon X\longrightarrow H^0_\sigma(\sigma)$ be a map in $\Hcal_\sigma$ with $X$ in $\Rcal=\Cogen{H^0_\sigma(\Escr)}$. Since $H^0_\sigma(\sigma)$ is injective, $g$ extends along any monomorphism  $h\colon X\longrightarrow H^0_\sigma(E')$ with  $E'$ in $\Escr$, that is, there is $t\colon H^0_\sigma(E')\longrightarrow H_\sigma^0(\sigma)$ such that $t\circ h=g$. But then $t$ must factor through $\phi$, so there is $\psi\colon H^0_\sigma(E')\longrightarrow  H^0_\sigma(E)$ such that $\phi\circ \psi=t$. This shows that $g=\phi\circ\psi\circ h$, as wanted.

(2)$\Rightarrow$(3): Suppose that $\phi$ is an {$\Rcal$-pre}cover of $H_\sigma^0(\sigma)$ in $\Hcal_\sigma$ and let $f\colon X\longrightarrow \sigma$ be a map with $X$ in $\Ycal'$. Applying $H_\sigma^0$ to the triangle induced by {$f$} we get an exact sequence
$$\xymatrix{0\ar[r]&H^0_\sigma(K)\ar[r]&H^0_\sigma(X)\ar[r]^{H_\sigma^0(f)}&H^0_\sigma(\sigma)}.$$
{Now $H^0_\sigma(X)$ lies in $\Rcal$ by Remark~\ref{rem:inversetilt}, so the map} $H^0_\sigma(f)$ factors through $\phi$, that is, there is a map $\psi\colon H^0_\sigma(X)\to H^0_\sigma(E)$ such that $H^0_\sigma(f)=\phi\circ\psi$. Since $E$ lies in $\Prod{\sigma}$, we know from Proposition~\ref{prop:summary cosilting}(2) that the functor $H^0_\sigma$  induces an isomorphism $\Hom(X,E)\cong \Hom(H^0_\sigma(X),H^0_\sigma(E))$. Hence there is a unique map $\Psi\colon X\to E$ such that $H^0_\sigma(\Psi)=\psi$. Clearly, it follows that $f=\Phi\Psi$ as wanted.

(3)$\Rightarrow$(1): This is clear from the fact that $\Escr$ is contained in $\Ycal'$ and $E$, by assumption, lies in $\Escr$.
\end{proof}

\begin{remark}
It is clear that the  class $\Rcal$ in the torsion pair $\mathfrak{t}=(\Scal, \Rcal)$ above  is enveloping (as every torsion-free class is!), but without further assumptions on the cosilting object  it may be hard to say whether it is precovering or not. We will discuss the approximation properties of such torsion pairs in Section~\ref{sec:mutation and purity} (compare also with the silting case in Remark~\ref{rem:silting is preenveloping}).
\end{remark}

We are now ready for our first theorem.

\begin{theorem}\label{thm:mutation as HRS-tilt}
Let $\Dcal$ be a triangulated category with products. Let
$\sigma$ and $\sigma'$ be two cosilting objects in $\Dcal$,  and let
$\Escr=\Prod{\sigma}\cap\Prod{\sigma'}$.
\begin{enumerate}
\item  $\sigma'$ is a {left mutation} of $\sigma$  if and only if
${}^{\perp_0}H^0_\sigma(\mathscr{E})$ is closed under products in $\Hcal_\sigma$ and
$\mathbb{T}_{\sigma'}$ is the left HRS-tilt of $\mathbb{T}_{\sigma}$ at the torsion pair $\mathfrak{t}=(\Tcal, {}^{\perp_0}H^0_\sigma(\mathscr{E}))$ in $\Hcal_\sigma$. 
\item $\sigma'$ is a {right mutation} of $\sigma$ if and only if $\sigma$ admits an $\Escr$-precover and
$\mathbb{T}_{\sigma'}$ is the right HRS-tilt of $\mathbb{T}_{\sigma}$ at the torsion pair $\mathfrak{t}=({}^{\perp_0}H^0_\sigma(\mathscr{E}),\,\Cogen{H^0_\sigma(\mathscr{E})})$ in $\Hcal_\sigma$. 
\end{enumerate}
In both cases, the torsion pairs involved do not depend on the triangle in Definition~\ref{def:mut}.
\end{theorem}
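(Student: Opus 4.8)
The plan is to prove the two statements in tandem with their duals --- (1) is the dual of (2) via $\sigma \leftrightarrow \sigma[-1]$ and the passage from a t-structure to its shift --- so it suffices to focus on statement (2) concerning right mutation, and I would signal at the outset that (1) follows by the dual argument (using Remark~\ref{rem:inversetilt} to convert left HRS-tilts into right HRS-tilts of shifted t-structures and Lemma~\ref{lem:cosilting enveloping} for the preenveloping property replacing the precovering property).

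For the forward direction of (2), suppose $\sigma'$ is a right mutation of $\sigma$. By definition there is a triangle $\sigma[-1]\to\gamma_1\to\gamma_0\xrightarrow{\Phi}\sigma$ with $\Phi$ an $\Escr$-precover (so in particular $\sigma$ admits an $\Escr$-precover, giving the first half of the claim) and $\gamma_0\oplus\gamma_1$ a cosilting object equivalent to $\sigma'$. The first step is to identify $\mathbb{T}_{\sigma'}$. I would show that the coaisle $\Ycal'$ of $\mathbb{T}_{\sigma'}$ satisfies $\Ycal[-1]\subseteq\Ycal'\subseteq\Ycal$, where $\Ycal = {}^{\perp_{>0}}\sigma$ is the coaisle of $\mathbb{T}_\sigma$; by Proposition~\ref{prop:intermediate}(2) and the characterisation at the end of that subsection, this already forces $\mathbb{T}_{\sigma'}$ to be \emph{some} right HRS-tilt of $\mathbb{T}_\sigma$, and it remains only to pin down the torsion pair. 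The inclusion $\Ycal[-1]\subseteq\Ycal'$ should follow from the triangle: $\gamma_0,\gamma_1\in\Prod{\sigma}\subseteq\Ycal$, so from $\sigma[-1]\to\gamma_1\to\gamma_0\to\sigma$ and the fact that $\Ycal$ is closed under extensions and under $[1]$ we get control, and dually $\Prod{\sigma'}$ contains $\gamma_0,\gamma_1$ and these generate enough of $\Ycal'$; concretely I expect $\Ycal' = {}^{\perp_{>0}}(\gamma_0\oplus\gamma_1)$ and one checks $\Ycal[-1]\subseteq{}^{\perp_{>0}}(\gamma_i)\subseteq\Ycal$ directly using $\gamma_i\in\Ycal$ and the approximation triangle. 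Once $\mathbb{T}_{\sigma'} = (\mathbb{T}_\sigma)_{\mathfrak{s}^-}$ for a torsion pair $\mathfrak{s}=(\Scal_0,\Rcal_0)$ in $\Hcal_\sigma$, the inverse assignment of Proposition~\ref{prop:intermediate}(2) tells us $\Rcal_0 = \Hcal_\sigma\cap\Hcal_{\sigma'}$ and $\Scal_0=\Hcal_\sigma\cap\Hcal_{\sigma'}[1]$. The heart of a right HRS-tilt is $\Rcal_0\star\Tcal[-1]$ with torsion pair $(\Rcal_0,\Scal_0[-1])$, and its injectives — which by Proposition~\ref{prop:summary cosilting}(2) are $\Prod{\sigma'}=H^0_{\sigma'}(\Prod{\sigma'})$ — should be computed to be exactly $H^0_\sigma(\Escr)$: on the one hand $\Escr=\Prod\sigma\cap\Prod{\sigma'}$ lands in both hearts hence in $\Rcal_0$ and is injective there; on the other hand Lemma~\ref{lem:approx prop} shows $\Phi$ is also a $\Ycal'$-precover, which forces $\Escr$ to cogenerate enough. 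From the injectives one recovers the torsion pair as $({}^{\perp_0}H^0_\sigma(\Escr),\Cogen H^0_\sigma(\Escr))$, which is exactly $\mathfrak{t}$ --- and since this torsion pair is manifestly independent of the choice of triangle, the final clause of the theorem drops out.

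For the converse direction of (2), assume $\sigma$ admits an $\Escr$-precover $\Phi_0\colon E\to\sigma$ and that $\mathbb{T}_{\sigma'}$ is the right HRS-tilt of $\mathbb{T}_\sigma$ at $\mathfrak{t}$. I would build the required triangle explicitly: complete $\Phi_0$ to a triangle $\sigma[-1]\to\gamma_1\to E\xrightarrow{\Phi_0}\sigma$, set $\gamma_0 := E$, and show $\gamma_0\oplus\gamma_1$ is a cosilting object with $\mathbb{T}_{\gamma_0\oplus\gamma_1}=\mathbb{T}_{\sigma'}$; equivalence with $\sigma'$ then follows since equivalence classes of cosilting objects correspond bijectively to their t-structures. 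To verify $\gamma_0\oplus\gamma_1$ is cosilting with the right t-structure, the cleanest route is: (i) show $\gamma_1\in\Prod\sigma$ by a standard argument — $\gamma_1$ is the "kernel" of the precover, and using that $\Prod\sigma=\Ycal\cap\Ycal^{\perp_1}$ (Proposition~\ref{prop:summary cosilting}(2)) together with the long exact sequence obtained by applying $\Hom(-,\sigma)$ to the triangle, or more directly by observing $H^0_\sigma(\gamma_1)=\ker\phi\in\Rcal$ is injective because it embeds in the injective $H^0_\sigma(E)$ and $\Rcal$ is hereditary-torsion-free; (ii) both $\gamma_0=E$ and $\gamma_1$ then lie in $\Escr$ (being in $\Prod\sigma$ with images in the hereditary torsion-free class), so $H^0_\sigma(\gamma_0\oplus\gamma_1)$ is a set of injectives in $\Hcal_\sigma$; (iii) invoke the characterisation of the right HRS-tilt in Remark~\ref{rem:inversetilt} together with Proposition~\ref{prop:summary cosilting}(2) applied to the heart $\Hcal_{\sigma'}$ to see that $H^0_\sigma(\gamma_0)\oplus H^0_\sigma(\gamma_1)$, viewed inside $\Hcal_{\sigma'}$, is an injective cogenerator, which by Theorem~\ref{thm:purity and silting}(1) (or directly by the bijection in Proposition~\ref{prop:summary cosilting}) identifies $\gamma_0\oplus\gamma_1$ as a cosilting object for $\mathbb{T}_{\sigma'}$. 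Finally, Lemma~\ref{lem:approx prop} upgrades $\Phi_0$ from an $\Escr$-precover to a $\Ycal'$-precover, confirming the triangle meets all requirements of Definition~\ref{def:mut}(2).

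\textbf{Main obstacle.} The delicate point is step (iii) of the converse — proving that the object $\gamma_0\oplus\gamma_1$ manufactured from the precover triangle is genuinely cosilting (not merely that its $H^0$ gives injectives in $\Hcal_\sigma$) and that the associated t-structure is precisely $\mathbb{T}_{\sigma'}$ rather than some unrelated t-structure. This requires carefully matching the coaisle ${}^{\perp_{>0}}(\gamma_0\oplus\gamma_1)$ with the HRS-tilted coaisle $\Ycal'$, which in turn hinges on the precise interaction between the approximation triangle and the cohomological functor $H^0_\sigma$; the argument that $H^0_\sigma(\gamma_0)\oplus H^0_\sigma(\gamma_1)$ is a \emph{cogenerator} of $\Hcal_{\sigma'}$ (and not just a family of injectives) is where one must use that $\Phi_0$ is a precover and not merely a map from $\Escr$, and this is the crux where the hypothesis on $\mathbb{T}_{\sigma'}$ being the HRS-tilt at $\mathfrak{t}$ gets fully consumed. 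On the forward side, the analogous subtlety is verifying the two-sided inclusion $\Ycal[-1]\subseteq\Ycal'\subseteq\Ycal$, but that should be comparatively routine given $\gamma_0,\gamma_1\in\Prod\sigma\cap\Prod{\sigma'}$.
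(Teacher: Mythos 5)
Your outline of the forward direction and your identification of the right lemmas (the two-sided-inclusion criterion for HRS-tilts and the upgrading of the $\Escr$-precover to a $\Ycal'$-precover via Lemma~\ref{lem:approx prop}) match the structure of the paper's proof. However, your treatment of the converse direction of (2) contains a genuine error, and the duality claim connecting (1) to (2) glosses over a real asymmetry.

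The converse gap. You assert in step (i) that $\gamma_1$ (the fibre of the $\Escr$-precover $\gamma_0\to\sigma$) lies in $\Prod\sigma$, and you justify this by claiming that $H^0_\sigma(\gamma_1)$ is injective in $\Hcal_\sigma$ because it embeds in the injective $H^0_\sigma(\gamma_0)$ and $\Rcal$ is the torsion-free class of a hereditary torsion pair. This is false on two counts. First, subobjects of injectives are not injective; being torsion-free for a hereditary torsion pair does not rescue this. Second, even if $H^0_\sigma(\gamma_1)$ were injective, that would not place $\gamma_1$ in $\Prod\sigma$ unless you separately knew $\gamma_1\in\Hcal_\sigma$, which is not clear ($\gamma_1$ is only guaranteed to lie in $\Ycal$, and $\sigma$ itself need not lie in $\Hcal_\sigma$ for a non-cotilting cosilting object). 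The conclusion $\gamma_1\in\Prod\sigma$, hence $\gamma_1\in\Escr$, is in fact false in general: in the Kronecker example of the paper (Example~\ref{expl:krone}), the precover kernel is a product of adic modules, which is not a summand of any product of copies of $\sigma_\varnothing$. What the paper actually shows is that $\gamma_0\oplus\gamma_1\in\Ycal'\cap(\Ycal'[-1])^{\perp_0}=\Prod{\sigma'}$ — membership in $\Prod{\sigma'}$, not in $\Prod\sigma$ — via a truncation triangle for $\gamma_0$ and an extension argument for $\gamma_1$, and then invokes Lemma~\ref{lem:(co)generate}(1)(b) to upgrade the inclusion $\Prod{\tilde\sigma}\subseteq\Prod{\sigma'}$ to equality using that $\tilde\sigma$ cogenerates $\Dcal$. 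Your steps (ii) and (iii) build on the false (i), so the whole converse argument would need to be rebuilt along the paper's lines.

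The duality claim. You propose to derive (1) from (2) ``by the dual argument'' using a shift $\sigma\leftrightarrow\sigma[-1]$. This is not a literal duality: shifting $\sigma'$ changes $\Prod{\sigma'}$ by a shift and hence changes $\Escr=\Prod\sigma\cap\Prod{\sigma'}$ entirely, so the approximation data does not transport. More importantly, the auxiliary hypotheses in (1) and (2) are of genuinely different kinds — ``${}^{\perp_0}H^0_\sigma(\Escr)$ closed under products'' (a structural condition ensuring the torsion pair in (1) exists) versus ``$\sigma$ admits an $\Escr$-precover'' (an approximation condition) — reflecting the asymmetry that $\Escr$-preenvelopes of $\sigma$ always exist by Lemma~\ref{lem:cosilting enveloping} while $\Escr$-precovers need not. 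The paper proves the two parts separately, and a genuinely parallel proof of (1) would also require the identification ${}^{\perp_0}H^0_\sigma(\varepsilon_0)={}^{\perp_0}H^0_\sigma(\Escr)$ (which the paper argues via a comparison of torsion-free classes and the computation $\Inj{\Hcal_\sigma}\cap\Fcal^{\perp_0}=H^0_\sigma(\Escr)$), a step your proposal does not supply.
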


\begin{proof} Let $\mathbb{T}=\mathbb{T}_{\sigma}=(\Xcal, \Ycal)$ be the cosilting t-structure associated to $\sigma$ with heart $\Hcal=\Hcal_\sigma$, and $\mathbb{T'}=\mathbb{T}_{\sigma'}=(\Xcal', \Ycal')$  the cosilting t-structure associated to $\sigma'$ with heart $\Hcal'=\Hcal_{\sigma'}$.

(1): Suppose first that $\sigma'$ is a {left mutation} of $\sigma$. Let $\Phi$ be an $\Escr$-preenvelope  of $\sigma$ in $\Dcal$ and consider the   cosilting object $\tilde{\sigma}=\varepsilon_0\oplus \varepsilon_1$, equivalent to $\sigma'$, where $\varepsilon_1$ is defined via the triangle
\begin{equation}\tag{$\Delta_1$}\label{eq:triangle preenvelope}
\xymatrix{\sigma\ar[r]^\Phi& \varepsilon_0\ar[r]& \varepsilon_1\ar[r]& \sigma[1]}
\end{equation}
Then $\mathbb{T'}=({}^{\perp_{\le 0}}\tilde{\sigma},{}^{\perp_{>0}}\tilde{\sigma})$, and it is easy to see that
 $\Ycal\subseteq \Ycal'\subseteq \Ycal[1]$. From Proposition \ref{prop:intermediate}(1) we infer that
$\mathbb{T'}= \mathbb{T}_{\mathfrak{t}^+}$ is the left HRS-tilt of $\mathbb{T}$ at the torsion pair $\mathfrak{t}:=(\Tcal,\Fcal)$ in  $\Hcal$ given by  $\Fcal=\Hcal\cap\Hcal'[-1]=\Hcal\cap\Ycal'[-1]$.
Note that $\mathfrak{t}$ only depends on $\sigma$ and $\sigma'$. It remains to verify  that $\Fcal={}^{\perp_0}H^0_\sigma(\mathscr{E})$.

In fact, we first observe that $\Fcal={}^{\perp_0}H^0_\sigma(\varepsilon_0)$. An object $X$ of $\Hcal$ lies in $\Fcal$ if and only if $X[1]$ lies in $\Ycal'$. In particular, $\Hom_\Dcal(X[1],\tilde{\sigma}[1])=0$ and, thus, $\Hom_\Dcal(X,\tilde{\sigma})=0$. By Lemma~\ref{lem:cohomol1} we conclude that $\Hom_\Hcal(X,H^0_\sigma(\varepsilon_0))\cong\Hom_\Dcal(X,\varepsilon_0)=0$. Conversely, if $X$ belongs to ${}^{\perp_0}H^0_\sigma(\varepsilon_0)$, that is, $\Hom_\Dcal(X,\varepsilon_0)=0$, then applying $\Hom_\Dcal(X[1],-)$ to the triangle above and keeping in mind that $X$ lies in $\Ycal={}^{\perp_{>0}}{\sigma}$ and $\varepsilon_0$ lies in $\Prod\sigma$, we see that $X[1]$ lies indeed in ${}^{\perp_{>0}}\tilde{\sigma}=\Ycal'$, which amounts to $X$ lying in $\Fcal$.

Finally, we show that ${}^{\perp_0}H^0_\sigma(\varepsilon_0)={}^{\perp_0}H^0_\sigma(\mathscr{E})$, thus proving our claim. Note that, being left orthogonal to (classes of) injective objects in $\Hcal$, both these classes are torsion (see Remark~\ref{rem:torsion torsion-free characterisation}), and it is clear that ${}^{\perp_0}H^0_\sigma(\mathscr{E})\subseteq {}^{\perp_0}H^0_\sigma(\varepsilon_0)$. So, in order to prove the desired equality, it is enough to show that the corresponding torsion-free classes satisfy $({}^{\perp_0}H^0_\sigma(\mathscr{E}))^{\perp_0}\subseteq ({}^{\perp_0}H^0_\sigma(\varepsilon_0))^{\perp_0}$, and for that it suffices to show that $H^0_\sigma(\mathscr{E})$ lies in $({}^{\perp_0}H^0_\sigma(\varepsilon_0))^{\perp_0}=\Fcal^{\perp_0}$. Recall that  $\Ycal'=\Fcal[1]\star \Ycal$,  and that  $\Prod\sigma=\Ycal\cap(\Ycal[-1])^{\perp_0}$ by Proposition~\ref{prop:summary cosilting}. Therefore, we have  
$$\Prod\sigma\cap \Fcal^{\perp_0}=\Ycal\cap(\Ycal[-1])^{\perp_0}\cap\Fcal^{\perp_0}=\Ycal\cap(\Fcal\star\Ycal[-1])^{\perp_0}=\Ycal\cap(\Ycal'[-1])^{\perp_0}$$
But the latter class coincides with $\Escr$. Indeed, since $\Ycal\subseteq\Ycal'$ (and, thus, $(\Ycal[-1])^{\perp_0}\supseteq(\Ycal'[-1])^{\perp_0}$) we have
$$\Ecal= \Prod\sigma\cap\Prod{\sigma'}=(\Ycal\cap(\Ycal[-1])^{\perp_0})\cap(\Ycal'\cap(\Ycal'[-1])^{\perp_0})=\Ycal\cap(\Ycal'[-1])^{\perp_0}.$$
So we have shown that, in fact, $\Inj{\Hcal}\cap \Fcal^{\perp_0}=H^0_\sigma(\mathscr{E})$.

Conversely, assume now that $\mathbb{T}'$ is the left HRS-tilt of $\mathbb{T}$ at the torsion pair  $\mathfrak{t}=(\Tcal, \Fcal)$ in $\Hcal_\sigma$ where $\Fcal={}^{\perp_0}H^0_\sigma(\mathscr{E})$. Note here that since the hereditary torsion class ${}^{\perp_0}H^0_\sigma(\mathscr{E})$ is assumed to be closed under products, it is also a torsion-free class (see Remark~\ref{rem:torsion torsion-free characterisation}), so the torsion pair $\mathfrak{t}$ is well-defined and the condition makes sense.
Consider an $\Escr$-preenvelope $\Phi\colon \sigma\longrightarrow\varepsilon_0$ of $\sigma$ in $\Dcal$ (see Lemma~\ref{lem:cosilting enveloping} for the existence of such a map). We complete $\Phi$ to a triangle of the form \eqref{eq:triangle preenvelope} and set  $\tilde{\sigma}=\varepsilon_0\oplus \varepsilon_1$. We will show that $\Prod{\sigma'}=\Prod{\tilde{\sigma}}$. Note that since $\sigma$ cogenerates $\Dcal$, so does $\tilde{\sigma}$. Hence, if we show that $\Prod{\tilde{\sigma}}$ is contained in $\Prod{\sigma'}$, Proposition~\ref{lem:(co)generate}(1)(b) guarantees the equality. For this purpose we show that $\tilde{\sigma}$ lies in $\Ycal'\cap\Ycal'[-1]^{\perp_0}$, which coincides with $\Prod{\sigma'}$ by Proposition~\ref{prop:summary cosilting}. Recall from Remark~\ref{rem:inversetilt} that
$$\Ycal'=\Ycal_{\mathfrak{t}^+}= \Fcal[1]\star\Ycal= \{X\in\Dcal\,\mid\, H_\sigma^{-1}(X)\in\Fcal, \textrm{ and } H_\sigma^k(X)=0 \text{ for all } k<-1\}.$$
It is clear that $\varepsilon_0$ lies in $\Ycal\subset\Ycal'$. We claim that also $\varepsilon_1$ lies in $\Ycal'$. Indeed, we infer from the triangle above that
$H_\sigma^k(\varepsilon_1)=0$ for all $k<-1$, and moreover, since $H^0_\sigma(\Phi)$ is an $H^0_\sigma(\Escr)$-preenvelope of $H^0_\sigma(\sigma)$, we also have that $H_\sigma^{-1}(\varepsilon_1)$ lies in ${}^{\perp_0}H^0_\sigma(\Escr)=\Fcal$. Hence, $\tilde{\sigma}$ belongs to $\Ycal'$.
Next, we pick an object $Y$ in $\Ycal'$. Then $H_\sigma^{-1}(Y)$ lies in $\Fcal$ and, thus, $\Hom_\Dcal(Y,\varepsilon_0[1])\cong \Hom_\Dcal(H_\sigma^{-1}(Y),H_\sigma^{0}(\varepsilon_0))=0$. Moreover, as $\Ycal'\subset \Ycal[1]$, we also have $\Hom_\Dcal(Y,\sigma[2])=0$. Thus, we infer from a rotation of the triangle above that $\Hom_\Dcal(Y,\varepsilon_1[1])=0$. This shows that $\tilde{\sigma}$ belongs to $\Ycal'^{\perp_1}$ and
completes the proof.

(2): Suppose that $\sigma'$ is a {right mutation} of $\sigma$. By definition, there is an $\Escr$-precover $\Phi\colon\gamma_0\longrightarrow\sigma$ in $\Dcal$ and $\tilde{\sigma}=\gamma_0\oplus \gamma_1$ is a cosilting object equivalent to $\sigma'$, where $\gamma_1$ is defined by the triangle
\begin{equation} \tag{$\Delta_2$}\label{eq:triangle precover}
\xymatrix{\sigma[-1]\ar[r]& \gamma_1\ar[r]& \gamma_0\ar[r]^\Phi&\sigma.}
\end{equation}
As in the previous part, one verifies that $\Ycal[-1]\subseteq\Ycal'\subseteq\Ycal$ and $\mathbb{T'}= \mathbb{T}_{\mathfrak{t}^-}$ is the right HRS-tilt of $\mathbb{T}$ at a torsion pair $\mathfrak{t}=(\Tcal,\Fcal)$ in $\Hcal$ where $\Tcal={}^{\perp_0}H^0_\sigma(\gamma_0)$. Consequently, $\Fcal=\Cogen {H^0_\sigma(\gamma_0)}$ by Remark~\ref{rem:torsion torsion-free characterisation}. It remains to check that $\Fcal=\Cogen {H^0_\sigma(\Escr)}$. However, we know from Lemma~\ref{lem:approx prop} that $H^0_\sigma(\Phi)$ is a $\Cogen {H^0_\sigma(\Escr)}$-precover of $H^0_\sigma(\sigma)$.  Since $H^0_\sigma(\sigma)$ is an injective cogenerator of $\Hcal$, it follows that indeed $\Fcal=\Cogen {H^0_\sigma(\Escr)}$.

Conversely, suppose that $\sigma$ admits an $\mathscr{E}$-precover  $\Phi\colon \gamma_0\longrightarrow \sigma$ and that $\mathbb{T}_{\sigma'}$ is the right HRS-tilt of $\mathbb{T}_{\sigma}$ at the torsion pair $\mathfrak{t}$ in $\Hcal_\sigma$ given by $\mathfrak{t}=({}^{\perp_0}H^0_\sigma(\Escr),\Cogen{H^0_\sigma(\Escr)})$. Complete $\Phi$ to a triangle of the form \eqref{eq:triangle precover} and set  $\tilde{\sigma}=\gamma_0\oplus \gamma_1$. We have to show that $\Prod{\sigma'}=\Prod{\tilde{\sigma}}$. As in (1),  it will suffice to show that $\tilde{\sigma}$ belongs to $\Ycal'\cap(\Ycal'[-1])^{\perp_0}$. Since $\gamma_0$ lies in $\Ycal$, there is a truncation triangle of the form
$$\xymatrix{H^0_\sigma(\gamma_0)\ar[r]&\gamma_0\ar[r]& Y[-1] \ar[r]& H^0_\sigma(\gamma_0)[1]}$$
with  $Y$ in $\Ycal$ and, thus, $\gamma_0$ lies in $\Fcal\star \Ycal[-1]=\Ycal_{\mathfrak{t}^-}=\Ycal'$. Since $\gamma_1$ is an extension of $\sigma[-1]$ (which lies in $\Ycal[-1]\subseteq \Ycal'$) and $\gamma_0$, we conclude that also $\gamma_1$ (and, hence, $\tilde{\sigma}$) belongs to $\Ycal'$. Next, we pick an object $Y$ in $\Ycal'$ and  apply  $\Hom_\Dcal(Y,-)$ to the following rotation of the triangle \eqref{eq:triangle precover}  
$$\xymatrix{\gamma_0\ar[r]^\Phi&\sigma\ar[r]& \gamma_1[1]\ar[r]&\gamma_0[1].}$$
Since, from Lemma~\ref{lem:approx prop}, $\Phi$ is an $\Ycal'$-precover, and since $\Ycal'\subseteq\Ycal={}^{\perp_{>0}}\sigma$ and $\gamma_0$ lies in $\Prod\sigma$, we conclude that $\tilde{\sigma}$ belongs to $(\Ycal'[-1])^{\perp_0}$, completing the proof.
\end{proof}

\begin{remark}\label{rem:different kinds of torsion} Note that the HRS-tilts discussed above involve torsion pairs in $\Hcal_\sigma$ of different flavours:
 in the case of {left mutation}, $\mathbb{T}_{\sigma'}$ is the HRS-tilt {of} $\mathbb{T}_\sigma$ at a torsion pair $(\Tcal,\Fcal)$ for which $\Fcal$ is a TTF class;
in the case of {right mutation}, $\mathbb{T}_{\sigma'}$ is the HRS-tilt {of} $\mathbb{T}_\sigma$ at a hereditary torsion pair $(\Scal,\Rcal)$. Moreover, if there exists both a left and a right mutation with respect to $\mathscr{E}$, then $(\Tcal,\Fcal)$ is left adjacent to $(\Scal,\Rcal)$, i.e. $\Scal=\Fcal(={}^{\perp_0}H_\sigma^0(\mathscr{E}))$.
\end{remark}

\begin{corollary}\label{cor:inverse mutations}
Let $\Dcal$ be a triangulated category with products. Then both left and right mutation of cosilting objects in $\Dcal$ are well-defined up to equivalence. Moreover, the induced operations on equivalence classes of cosilting objects are inverse of each other.
\end{corollary}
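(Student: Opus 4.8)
The plan is to derive both assertions from Theorem~\ref{thm:mutation as HRS-tilt}, which rephrases each mutation as an HRS-tilt of $\mathbb{T}_\sigma$ at an explicit torsion pair depending only on $\Escr=\Prod{\sigma}\cap\Prod{\sigma'}$, together with the fact recorded in Proposition~\ref{prop:summary cosilting} that two cosilting objects are equivalent exactly when they induce the same t-structure. For the \emph{well-definedness}, I would fix $\sigma$ and a subcategory $\Escr=\Prod{\Escr}\subseteq\Prod{\sigma}$: if $\sigma_1'$ and $\sigma_2'$ are both right mutations of $\sigma$ with $\Prod{\sigma}\cap\Prod{\sigma_i'}=\Escr$, then Theorem~\ref{thm:mutation as HRS-tilt}(2) identifies $\mathbb{T}_{\sigma_1'}$ and $\mathbb{T}_{\sigma_2'}$ with one and the same t-structure, the right HRS-tilt of $\mathbb{T}_\sigma$ at $({}^{\perp_0}H^0_\sigma(\Escr),\Cogen{H^0_\sigma(\Escr)})$; hence $\sigma_1'$ and $\sigma_2'$ are equivalent. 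The left case is identical via part~(1). So left and right mutation induce well-defined partial operations $(\sigma,\Escr)\mapsto\sigma'$ on equivalence classes of cosilting objects.

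To prove that these operations are mutually inverse I would first observe that the subcategory $\Escr$ attached to a mutation is symmetric in $\sigma$ and $\sigma'$, so it suffices to show that $\sigma'$ is a right mutation of $\sigma$ with respect to $\Escr$ if and only if $\sigma$ is a left mutation of $\sigma'$ with respect to the same $\Escr$. On the level of t-structures the equivalence is automatic: by Proposition~\ref{prop:intermediate} and Remark~\ref{rem:inversetilt}, $\mathbb{T}_{\sigma'}$ is a right HRS-tilt of $\mathbb{T}_\sigma$ if and only if $\mathbb{T}_\sigma$ is a left HRS-tilt of $\mathbb{T}_{\sigma'}$. What remains is to check that the torsion pair at which one tilts back is precisely the one prescribed by Theorem~\ref{thm:mutation as HRS-tilt} for the reverse mutation, and — in the direction whose conclusion is a right mutation — to produce the $\Escr$-precover demanded by Theorem~\ref{thm:mutation as HRS-tilt}(2).

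Concretely, if $\sigma'$ is a right mutation of $\sigma$ then $\mathbb{T}_{\sigma'}=(\mathbb{T}_\sigma)_{\mathfrak{t}^-}$ with $\mathfrak{t}=(\Scal,\Rcal)=({}^{\perp_0}H^0_\sigma(\Escr),\Cogen{H^0_\sigma(\Escr)})$, hence by Remark~\ref{rem:inversetilt} $\mathbb{T}_\sigma=(\mathbb{T}_{\sigma'})_{\mathfrak{s}^+}$ for the torsion pair $\mathfrak{s}=(\Rcal,\Scal[-1])$ in $\Hcal_{\sigma'}=\Rcal\star\Scal[-1]$. To apply the ``if'' part of Theorem~\ref{thm:mutation as HRS-tilt}(1) to the pair $(\sigma',\sigma)$ I would verify that $\Scal[-1]$ equals ${}^{\perp_0}H^0_{\sigma'}(\Escr)$ and is closed under products in $\Hcal_{\sigma'}$; the latter is automatic, as $\Scal[-1]$ is a torsion-free class (Remark~\ref{rem:torsion torsion-free characterisation}). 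For the identification I would use that each object of $\Escr$ lies both in $\Prod{\sigma}\subseteq\Hcal_\sigma$ and in $\Prod{\sigma'}\subseteq\Hcal_{\sigma'}$, so that both orthogonals are computed inside $\Dcal$ as $\Hcal_\sigma\cap{}^{\perp_0}\Escr$ and $\Hcal_{\sigma'}\cap{}^{\perp_0}\Escr$ respectively: one inclusion follows from $\Prod{\sigma}\subseteq\Hcal_\sigma^{\perp_1}$ (Proposition~\ref{prop:summary cosilting}(2)), the other by splitting a member of $\Hcal_{\sigma'}\cap{}^{\perp_0}\Escr$ along $\mathfrak{s}$, using the injectivity of $\Escr$ in $\Hcal_{\sigma'}$ and the orthogonality $\Scal\cap\Rcal=0$ in $\Hcal_\sigma$ to kill its torsion part. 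The reverse implication is dual: a left mutation gives $\mathbb{T}_\sigma=(\mathbb{T}_{\sigma'})_{\mathfrak{s}^-}$ with $\mathfrak{s}=(\Fcal[1],\Tcal)$ for $\Fcal={}^{\perp_0}H^0_\sigma(\Escr)$, one identifies $\Fcal[1]$ with ${}^{\perp_0}H^0_{\sigma'}(\Escr)$ by the symmetric computation (now exploiting $\Hom_\Dcal(\Hcal_\sigma[1],\Prod{\sigma})=0$), and one reads off an $\Escr$-precover of $\sigma'$ from the defining triangle $\sigma\to\varepsilon_0\to\varepsilon_1\to\sigma[1]$ of Definition~\ref{def:mut}(1): since $\varepsilon_0\in\Escr$ and $\Hom_\Dcal(E,\sigma[1])=0$ for every $E\in\Escr\subseteq\Hcal_\sigma$, the evident morphism $\varepsilon_0\oplus\varepsilon_0\to\varepsilon_0\oplus\varepsilon_1$ built from $\id_{\varepsilon_0}$ and the map $\varepsilon_0\to\varepsilon_1$ in the triangle is an $\Escr$-precover, while $\varepsilon_0\oplus\varepsilon_1$ is equivalent to $\sigma'$. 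Theorem~\ref{thm:mutation as HRS-tilt} then closes both directions.

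The main obstacle is the identification $\Scal[-1]={}^{\perp_0}H^0_{\sigma'}(\Escr)$ and its dual: this forces one to transport an orthogonality condition across the two distinct abelian categories $\Hcal_\sigma$ and $\Hcal_{\sigma'}$, and it is exactly here that the asymmetry between the hereditary torsion pair and the TTF situation on the two sides (Remark~\ref{rem:different kinds of torsion}) and the injectivity of the objects of $\Escr$ in the respective hearts come into play. By contrast, the well-definedness and all the manipulations with Remark~\ref{rem:inversetilt} are purely formal.
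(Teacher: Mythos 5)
Your first paragraph (well-definedness) is correct and matches the paper's argument: Theorem~\ref{thm:mutation as HRS-tilt} expresses mutation purely in terms of the two t-structures and $\Escr=\Prod{\sigma}\cap\Prod{\sigma'}$, all of which are invariants of the equivalence classes, and the precover condition is preserved under equivalence because products of $\Escr$-precovers are $\Escr$-precovers.

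For the inverse statement you take a genuinely different route from the paper, and it is worth spelling out the trade-off. The paper argues directly at the level of Definition~\ref{def:mut}: starting from the $\Escr$-preenvelope triangle $\sigma\to\varepsilon_0\overset{\Psi}\to\varepsilon_1\to\sigma[1]$, it observes that $\Psi\oplus 1_{\varepsilon_0}$ is an $\Escr$-precover of $\tilde\sigma=\varepsilon_0\oplus\varepsilon_1$ with cone $\sigma[1]$, which immediately exhibits $\sigma$ as a right mutation of $\tilde\sigma$; the other direction is the exact dual. This avoids any comparison of torsion pairs across the two hearts. You instead invoke Theorem~\ref{thm:mutation as HRS-tilt} for the pair $(\sigma',\sigma)$, which forces you to identify the torsion pair in $\Hcal_{\sigma'}$ prescribed by the theorem with the one you obtain from Remark~\ref{rem:inversetilt}, namely to prove $\Scal[-1]={}^{\perp_0}H^0_{\sigma'}(\Escr)$ (and its dual). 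This identification is correct and can be pushed through with Lemma~\ref{lem:cohomol1} and the torsion decomposition as you sketch, but it is an extra step that the paper's direct triangle construction sidesteps entirely. You also end up constructing the precover explicitly anyway in the second direction — so in effect you perform the paper's argument plus the heart-comparison. Nothing is wrong in outline, but the paper's proof is strictly shorter.

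One slip you should fix: you repeatedly assert $\Prod{\sigma}\subseteq\Hcal_\sigma$ (and $\Escr\subseteq\Hcal_\sigma$). This is false in general: $\Hcal_\sigma={}^{\perp_{\neq0}}\sigma$, while membership in $\Prod{\sigma}$ only forces vanishing of $\Hom_\Dcal(-,\sigma[i])$ for $i>0$, not for $i<0$. Indeed, $\Prod{\sigma}\subseteq\Hcal_\sigma$ holds exactly when $\sigma$ is cotilting, and Proposition~\ref{prop:summary cosilting}(2) only gives an \emph{equivalence} $H^0_\sigma\colon\Prod{\sigma}\simeq\Inj{\Hcal_\sigma}$, not an inclusion. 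Fortunately the consequences you draw from this false claim (namely $\Hom_\Dcal(W,E)\cong\Hom_{\Hcal_\sigma}(W,H^0_\sigma(E))$ for $W\in\Hcal_\sigma$, $E\in\Escr$, and $\Hom_\Dcal(E,\sigma[1])=0$) are all true, but for a different reason: they follow from Lemma~\ref{lem:cohomol1} (which only needs $E\in\Ycal$) and from $\Prod{\sigma}\subseteq\Ycal={}^{\perp_{>0}}\sigma$. If you replace the incorrect appeals to $\Escr\subseteq\Hcal_\sigma$ with these justifications, your argument goes through.
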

\begin{proof}
Theorem~\ref{thm:mutation as HRS-tilt} characterises the fact that two cosilting objects $\sigma$ and $\sigma'$ are mutations of each other in terms of properties of the associated cosilting t-structures and of the class $\Escr=\Prod{\sigma}\cap\Prod{\sigma'}$. These t-structures and $\Escr$ depend exclusively on the equivalence class of the cosilting objects considered. Moreover, using the fact that products of $\Escr$-precovers are $\Escr$-precovers, we note that if $\sigma$ admits an $\Escr$-precover, then so does any cosilting object equivalent to $\sigma$.

For the second statement, we observe that $\sigma'$ is a left mutation of $\sigma$ if and only if $\sigma$ is a right mutation of $\sigma'$.  Indeed, if  $\Escr=\Prod{\sigma}\cap\Prod{\sigma'}$, consider the triangle induced by an $\Escr$-preenvelope $\Phi$ as follows
$$\xymatrix{\sigma\ar[r]^\Phi& \varepsilon_0\ar[r]^\Psi&\varepsilon_1\ar[r]& \sigma[1],}$$
and the cosilting object $\tilde{\sigma}=\varepsilon_0\oplus\varepsilon_1$ which, by assumption, is equivalent to $\sigma'$. It then follows that $\Psi$ is an $\Escr$-precover of $\varepsilon_1$ (since $\Hom_\Dcal(\mathscr{E},\sigma[1])=0$), and $\theta=\Psi\oplus 1_{\varepsilon_0}$ is an $\Escr$-precover of $\tilde{\sigma}$. Therefore, the induced triangle
$$\xymatrix{\sigma\ar[r]&\varepsilon_0\oplus \varepsilon_0\ar[r]^{\ \ \ \theta}& \tilde{\sigma}\ar[r]& \sigma[1]}$$
shows that $\sigma$ is a right mutation of $\tilde\sigma$. The other implication is obtained by dual arguments.
\end{proof}

The torsion pairs appearing the theorem above are determined by the cosilting objects involved in the mutation process. In general it is difficult to characterise when a torsion pair arises in this way, although we will see later that this is possible in the setting of Section~\ref{sec:mutation and purity}. Nevertheless, the following definition is helpful in characterising when mutation is possible.

\begin{definition}\label{def: Cosilt}
Let $\Dcal$ be a triangulated category with products and $\mathbb{T}$ a t-structure with heart $\Hcal$. A torsion pair $\mathfrak{t}:=(\Tcal,\Fcal)$ in $\Hcal$ is said to be a \textbf{cosilting torsion pair} if and only if there is a cosilting object $\sigma$ in $\Dcal$ such that $\mathbb{T}_{\mathfrak{t}^-}=\mathbb{T}_\sigma$. We denote the collection of cosilting torsion pairs in $\Hcal$ by $\Cosilt\Hcal$.  If $\Dcal$ is compactly generated, then we denote the set of cosilting torsion pairs in $\Hcal$ arising from pure-injective cosilting objects by $\CosiltP\Hcal$.
\end{definition}

\begin{example}\label{expl:cosiltR}
Let $R$ be a ring. The modules  $C=H^0(\sigma)$ arising as zero cohomologies of a cosilting complex $\sigma\colon I^0\to I^1$ of length two, concentrated in degrees $0$ and $1$, are precisely the \textbf{cosilting modules} introduced in \cite{BP}. The t-structure $\mathbb{T}_\sigma$ then coincides with the right HRS-tilt of the standard t-structure $\mathbb{T}=(\mathrm{D}^{\le -1},\mathrm{D}^{\ge 0})$ of $\D R$ at the torsion pair $\mathfrak{t}:=({}^{\perp_0}C,\,\Cogen C)$ in $\Mod{R}$  cogenerated by $C$.
In other words, $\mathfrak{t}$ is contained in $\Cosilt{R} := \Cosilt{\Mod R}$.

Furthermore, any cosilting torsion pair in $\Mod R$ is actually of this form. Indeed, let $\sigma\in\D R$ be  an associated cosilting complex  and $\mathbb{T}_\sigma=(\Xcal,\Ycal)$ the cosilting t-structure. Then $\sigma\in\Ycal\subseteq\mathrm{D}^{\ge0}$
is isomorphic to a complex of injectives concentrated in non-negative cohomological degrees. As on the other hand $\mathrm{D}^{\ge1}\subseteq\Ycal$, we have
$$ \sigma\in\Ycal^{\perp_1}\subseteq(\mathrm{D}^{\ge1})^{\perp_1}=(\mathrm{D}^{\ge2})^{\perp_0}, $$
which implies that such a complex of injectives is homotopy equivalent to a 2-term complex concentrated in degrees $0$ and $1$ (see e.g.\ \cite[Lemma 4.12]{PSV}).

Recall further that all cosilting complexes of length two in $\D R$ are pure-injective (Example~\ref{Ex: bdd pure-inj}). Thus $\Cosilt{R}= \CosiltP{\Mod R}$. In fact, it follows from \cite{BZ,WZ} that a torsion pair in $\Mod{R}$ is cosilting  if and only if it is of finite type. We are going to see in Proposition~\ref{prop:cover vs finite type} that this is a special instance of a general phenomenon.
\end{example}

The following proposition explains the relation between cosilting torsion pairs and mutation, providing a criterion for the existence of mutation.

\begin{proposition}\label{prop:general existence of mutations}
Let {$\Dcal$} be a triangulated category with products.
  Let $\sigma$ be a cosilting object and $\Escr=\Prod\Escr$  a subcategory of $\Prod{\sigma}$. Then we have:
\begin{enumerate}
\item $\sigma$ admits a   left mutation $\sigma'$  with respect to $\Escr$ if and only if there is  a cosilting torsion pair of the form $\mathfrak{t}:=(\Tcal,{}^{\perp_0}H^0_\sigma(\Escr))$ in $\Hcal_\sigma$; and
\item $\sigma$ admits a   right mutation $\sigma'$  with respect to $\Escr$ if and only if $\sigma$ admits an $\mathscr{E}$-precover and the  torsion pair $\mathfrak{t}=(\Scal,\Rcal)=({}^{\perp_0}H^0_\sigma(\Escr),\,\Cogen{H^0_\sigma(\Escr)})$ in $\Hcal_\sigma$  is a cosilting torsion pair.
\end{enumerate}
\end{proposition}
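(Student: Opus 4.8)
The plan is to deduce both statements from Theorem~\ref{thm:mutation as HRS-tilt}, by translating the clause ``$\mathbb{T}_{\sigma'}$ is the (left or right) HRS-tilt of $\mathbb{T}_\sigma$ at the relevant torsion pair $\mathfrak t$'' into the language of Definition~\ref{def: Cosilt}. Part~(2) is the more direct one: by Theorem~\ref{thm:mutation as HRS-tilt}(2), saying that $\sigma$ admits a right mutation with respect to $\Escr$ means exactly that $\sigma$ has an $\Escr$-precover and $\mathbb{T}_{\sigma'}$ is the right HRS-tilt of $\mathbb{T}_\sigma$ at $\mathfrak t=(\Scal,\Rcal)$, i.e.\ $\mathbb{T}_{\mathfrak t^-}=\mathbb{T}_{\sigma'}$ --- and this last equality is precisely the assertion that $\mathfrak t$ is a cosilting torsion pair. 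So the forward implication of~(2) is immediate. For the converse I would fix a cosilting $\sigma'$ with $\mathbb{T}_{\sigma'}=\mathbb{T}_{\mathfrak t^-}$, keep the hypothesised $\Escr$-precover of $\sigma$, and apply the ``if'' direction of Theorem~\ref{thm:mutation as HRS-tilt}(2); the only point to check first is that $\Prod\sigma\cap\Prod{\sigma'}=\Escr$, since ``with respect to $\Escr$'' in Definition~\ref{def:mut} builds this equality into the notion of mutation.

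Part~(1) runs along the same lines, with the extra ingredient being the passage between left and right HRS-tilts. Here I would use the identity $\mathbb{T}_{\mathfrak t^+}=\mathbb{T}_{\mathfrak t^-}[1]$ from Remark~\ref{rem:inversetilt} together with the observation that a shift of a cosilting t-structure is again cosilting (if $\sigma$ is cosilting then so is $\sigma[n]$, and $\mathbb{T}_{\sigma[n]}=\mathbb{T}_\sigma[n]$). Consequently $\mathbb{T}_{\mathfrak t^-}$ is a cosilting t-structure if and only if $\mathbb{T}_{\mathfrak t^+}$ is, so ``$\mathfrak t=(\Tcal,{}^{\perp_0}H^0_\sigma(\Escr))$ is a cosilting torsion pair'' is equivalent to ``the left HRS-tilt of $\mathbb{T}_\sigma$ at $\mathfrak t$ is of the form $\mathbb{T}_{\sigma'}$ for some cosilting $\sigma'$''. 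Combining this with Theorem~\ref{thm:mutation as HRS-tilt}(1) gives the forward implication of~(1) at once --- noting that for a torsion pair $(\Tcal,{}^{\perp_0}H^0_\sigma(\Escr))$ the class ${}^{\perp_0}H^0_\sigma(\Escr)$ is automatically closed under products, being the torsion-free class, which is exactly the hypothesis Theorem~\ref{thm:mutation as HRS-tilt}(1) asks for. For the converse I would set $\mathbb{T}_{\sigma'}:=\mathbb{T}_{\mathfrak t^+}$ (a cosilting t-structure, by the above), recall that an $\Escr$-preenvelope $\sigma\to\varepsilon_0$ exists by Lemma~\ref{lem:cosilting enveloping}(1)(a), complete it to a triangle as in Definition~\ref{def:mut}, and invoke the ``if'' direction of Theorem~\ref{thm:mutation as HRS-tilt}(1), once again after verifying $\Prod\sigma\cap\Prod{\sigma'}=\Escr$.

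The main obstacle, and the only computation genuinely beyond citing Theorem~\ref{thm:mutation as HRS-tilt}, is this identity $\Prod\sigma\cap\Prod{\sigma'}=\Escr$ for the cosilting object $\sigma'$ reconstructed from $\mathfrak t$. The computation already carried out inside the proof of Theorem~\ref{thm:mutation as HRS-tilt} shows, using only the HRS-tilt relation between $\mathbb{T}_\sigma$ and $\mathbb{T}_{\sigma'}$ and Proposition~\ref{prop:summary cosilting}, that $\Prod\sigma\cap\Prod{\sigma'}=\Prod\sigma\cap({}^{\perp_0}H^0_\sigma(\Escr))^{\perp_0}$, and by Lemma~\ref{lem:cohomol1} the right-hand side equals $\{X\in\Prod\sigma\mid H^0_\sigma(X)\in\Cogen{H^0_\sigma(\Escr)}\}$. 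It then remains to record the elementary fact that, in a complete, cocomplete and well-powered abelian category (such as $\Hcal_\sigma$, by Remark~\ref{rem:torsion torsion-free characterisation}), an injective object lying in $\Cogen{\Escr'}$, for a class $\Escr'=\Prod{\Escr'}$ of injectives, already belongs to $\Escr'$: it embeds into a product of objects of $\Escr'$, which again lies in $\Escr'$, and the embedding splits by injectivity. Applying this with $\Escr'=H^0_\sigma(\Escr)$ and transporting along the equivalence $H^0_\sigma$ between $\Prod\sigma$ and $\Inj{\Hcal_\sigma}$ --- which is product-preserving, so that $\Escr=\Prod\Escr$ corresponds to a $\Prod$-closed class of injectives --- yields $\Prod\sigma\cap\Prod{\sigma'}=\Escr$, and both converses then go through.
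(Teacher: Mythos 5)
Your proof is correct and follows essentially the same strategy as the paper: reduce to Theorem~\ref{thm:mutation as HRS-tilt}, reconstruct a cosilting $\sigma'$ from the given torsion pair (using the shift $\mathbb{T}_{\mathfrak t^+}=\mathbb{T}_{\mathfrak t^-}[1]$ in the left case, which the paper leaves implicit), verify that $\Escr=\Prod\sigma\cap\Prod{\sigma'}$, and then invoke the theorem. The only cosmetic difference is that you obtain the identity $\Escr=\Prod\sigma\cap\Prod{\sigma'}$ in one sweep by reusing the orthogonality calculation $\Prod\sigma\cap\Prod{\sigma'}=\Prod\sigma\cap({}^{\perp_0}H^0_\sigma(\Escr))^{\perp_0}$ from the proof of Theorem~\ref{thm:mutation as HRS-tilt} together with the injective-splitting observation, whereas the paper verifies the two inclusions separately (one via truncation triangles, the other via the same splitting argument); both rest on the same ingredients, namely Proposition~\ref{prop:summary cosilting}, Lemma~\ref{lem:cohomol1}, and the fact that an injective object in $\Cogen{H^0_\sigma(\Escr)}$ already lies in $H^0_\sigma(\Escr)$.
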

\begin{proof}
(1): By Theorem~\ref{thm:mutation as HRS-tilt}, we only need to prove the if-part. Suppose  that there is a cosilting torsion pair  $\mathfrak{t}=(\Tcal, \Fcal)$  in $\Hcal_\sigma$ where $\Fcal={}^{\perp_0}H^0_\sigma(\Escr)$. By definition, there is a cosilting object $\sigma'$ such that $\mathbb{T}_{\sigma'}$ coincides with the left HRS-tilt $\mathbb{T}_{\mathfrak{t}^+}=(\Xcal',\Ycal')$ of the t-structure $\mathbb{T}=\mathbb{T}_\sigma=(\Xcal,\Ycal)$ associated to $\sigma$.
As in the proof of Theorem~\ref{thm:mutation as HRS-tilt}, we see that $\Prod{\sigma'}$ coincides with $\Prod{\tilde{\sigma}}$ where  
$\tilde{\sigma}=\varepsilon_0\oplus \varepsilon_1$ is obtained from a triangle
$$\xymatrix{\sigma\ar[r]^\Phi& \varepsilon_0\ar[r]& \varepsilon_1\ar[r]& \sigma[1]}$$
where $\Phi$ is an $\Escr$-preenvelope. By Theorem~\ref{thm:mutation as HRS-tilt}, it thus only remains to show that $\Escr=\Prod{\sigma}\cap\Prod{\sigma'}$. For the inclusion $\Escr\subseteq \Prod{\sigma}\cap\Prod{\sigma'}$ we have to show that $\Escr\subseteq\Prod{\sigma'}=\Ycal'\cap\Ycal'^{\perp_1}$. Of course, $\Escr$ is contained in $\Ycal\subseteq\Ycal'$. Now pick $E$ in $\Escr$ and $X$ in $\Ycal'$, and consider a decomposition triangle with respect to the t-structure $\mathbb{T}$
 $$\xymatrix{A\ar[r]&X\ar[r]&B\ar[r]&A[1]}$$
where $A$ in $\Xcal$ and $B$ in $\Ycal\subseteq \Ycal'$. In particular, we have that  $A$ lies in $\Xcal\cap\Ycal'=\Fcal[1]$,  because $\Ycal'=\Ycal_{\mathfrak{t}^+}=\Fcal[1]\star\Ycal$. Applying $\Hom_\Dcal(-,E[1])$ to the triangle, it follows that
 $$\Hom_\Dcal(A,E[1])\cong \Hom_{\Hcal_\sigma}(A[-1],H^0_\sigma(E))=0.$$
 Further, since $E$ lies in $\Prod{\sigma}$, we have $\Hom_\Dcal(B,E[1])=0$ and, thus, $\Hom_\Dcal(X,E[1])=0$. To prove the other inclusion, let $X$ be an object in $\Prod{\sigma}\cap\Prod{\sigma'}$. Clearly, $H^0_\sigma(X)$ is injective in $\Hcal_\sigma$, because $X$ lies in $\Prod{\sigma}$. Furthermore, if $F$ is an object in $\Fcal$, since $F[1]$ lies in $\Ycal'={}^{\perp_{>0}}\sigma'$ and $X$ lies in $\Prod{\sigma'}$, we have
 $$\Hom_\Dcal(F,H^0_\sigma(X))\cong \Hom_\Dcal(F,X)\cong \Hom_\Dcal(F[1],X[1])=0.$$  
 It follows that $H^0_\sigma(X)$ lies in $\Fcal^{\perp_0}=\Cogen{H^0_\sigma(\Escr)}$. We conclude that $H^0_\sigma(X)$ lies in $H^0_\sigma(\Escr)$ and, thus, $X$ lies in $\Escr$.

(2): By Theorem~\ref{thm:mutation as HRS-tilt}, we only need to prove the if-part. Suppose  that $\mathfrak{t}=({}^{\perp_0}H^0_\sigma(\Escr),\,\Cogen{H^0(\Escr))}$ is  a cosilting torsion pair  in $\Hcal_\sigma$. Then there is a cosilting object $\sigma'$ such that $\mathbb{T}_{\sigma'}$ coincides with the right HRS-tilt $\mathbb{T}_{\mathfrak{t}^-}=(\Xcal',\Ycal')$ of   the t-structure $\mathbb{T}=(\Xcal,\Ycal)$ associated to $\sigma$.
By Theorem~\ref{thm:mutation as HRS-tilt}, it only remains to show that $\Escr=\Prod{\sigma}\cap\Prod{\sigma'}$. Now $\Escr$ is  contained in $\Prod{\sigma}$, and also in $\Ycal'$, which consists of those objects $X$ in $\Ycal$ for which $H^0_\sigma(X)$ lies in $\Cogen{H^0_\sigma(\Escr)}$ by Remark~\ref{rem:inversetilt}.
Moreover, given  $E$ in $\Escr$ and  $X$ in $\Ycal'$, we have
$\Hom_\Dcal(X,E[1])=0$ since $X$ lies in $\Ycal={}^{\perp_{>0}}\sigma$ and $E$ lies in $\Prod\sigma$.
We conclude that $E$ is an object of $\Ycal'\cap\Ycal'^{\perp_1}=\Prod{\sigma'}$.
Conversely, take $X$  in $\Prod{\sigma}\cap\Prod{\sigma'}$. Then $H^0_\sigma(X)$ is injective in $\Hcal_\sigma$, because $X$ lies in $\Prod{\sigma}$. Furthermore, it lies in $\Cogen{H^0_\sigma(\Escr)}$, because $X$ lies in $\Prod{\sigma'}\subseteq\Ycal'$. Hence $H^0_\sigma(X)$ lies in $H^0_\sigma(\Escr)$ and, thus, $X$ lies in $\Escr$.
\end{proof}


\section{Mutation and purity}\label{sec:mutation and purity}
As recalled in Theorem~\ref{thm:purity and silting}, pure-injective cosilting objects 
or pure-projective silting objects 
give rise to Grothendieck hearts. In order to investigate mutations of these objects, we need to understand when HRS-tilts are Grothendieck categories. Recall that a cocomplete abelian category is said to be \textbf{AB4} if coproducts are exact and it is said to be \textbf{AB5} if all direct limits are exact.

\subsection{HRS-tilts with Grothendieck heart}
{Our {aim in this subsection} is to generalise~\cite[Theorem 1.2]{PS-add}, which characterises when the heart of an HRS-tilt is a Grothendieck category.
This requires some preparation, however. Suppose that $\Hcal$ is an AB4 abelian category, $I$ a directed poset and $\Xcal=(X_i, f_{ij}\colon X_i\to X_j \mid i,j\in I, i\ge j)$ a direct system in $\Hcal$. This system gives rise to a complex
$$
B\Xcal_\mathsf{aug}\colon\quad
\xymatrix{
\cdots \ar[r] &
\coprod\limits_{i_0<i_1<i_2} X_{i_0} \ar[r]^-{d_2} &
\coprod\limits_{i_0<i_1} X_{i_0} \ar[r]^-{d_1} &
\coprod\limits_{i_0\in I} X_{i_0} \ar[r]^-{d_0} &
\varinjlim \Xcal \ar[r] &
0.
}
$$
in $\Hcal$, where $d_0$ is the canonical morphism and $d_n\colon \coprod_{i_0<\dots<i_n}X_{i_0} \to \coprod_{j_0<\dots<j_{n-1}}X_{j_0}$ for $n\ge1$ are described as follows: if $(j_0<\dots<j_{n-1})$ is obtained from $(i_0<\dots<i_n)$ by removing $i_k$ for some $0\le k\le n$, then the component of $d_n$ between the corresponding summands of the coproducts is equal to $(-1)^k f_{i_0j_0}\colon X_{i_0} \to X_{j_0}$ (we stress that $f_{i_0j_0}=\id_{X_{i_0}}$ if $k>0$ as then $i_0=j_0$). All the other components of $d_n$ vanish by definition.

We denote by $B\Xcal$ the complex obtained from $B\Xcal_\mathsf{aug}$ by deleting the last term with the direct limit. Hence $B\Xcal$ is a complex over $\Hcal$ which consists just of coproducts of objects in $\Xcal$ and we place the term $\coprod_{i_0\in I} X_{i_0}$ in cohomological degree 0. Now we define for $n\ge 0$ functors
$$
\varinjlim\nolimits_n\colon \Hcal^I\longrightarrow \Hcal
$$
by putting $\varinjlim_n\Xcal := H^{-n}(B\Xcal)$. We summarise some well-known properties of these functors.

\begin{lemma}\label{lem:derived colim basic}
Given a short exact sequence $0\to\Xcal\to\Ycal\to\Zcal\to 0$ of $I$-shaped direct systems in an AB4 abelian category $\Hcal$, there is a natural long exact sequence
$$
\cdots \to {\varinjlim}_2\Xcal \to {\varinjlim}_2\Ycal \to {\varinjlim}_2\Zcal \to {\varinjlim}_1\Xcal \to {\varinjlim}_1\Ycal \to {\varinjlim}_1\Zcal \to \varinjlim\Xcal \to \varinjlim\Ycal \to \varinjlim\Zcal \to 0.
$$
Moreover, the following statements are equivalent:
\begin{enumerate}
\item $\Hcal$ is AB5,
\item $\varinjlim_1\Xcal=0$ for each directed poset $I$ and an $I$-shaped direct system $\Xcal$,
\item $\varinjlim_n\Xcal=0$ for each directed poset $I$, an $I$-shaped direct system $\Xcal$ and $n>0$.
\end{enumerate}
\end{lemma}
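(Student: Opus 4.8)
The plan is to read off the long exact sequence from a short exact sequence of bar complexes, and to deal with the three equivalent conditions by formal homological algebra, the one exception being $(1)\Rightarrow(3)$, which I would prove by comparing $B\Xcal_{\mathsf{aug}}$ with manifestly contractible bar complexes attached to sub-systems that have a largest index.

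For the long exact sequence I would first note that a short exact sequence $0\to\Xcal\to\Ycal\to\Zcal\to 0$ of $I$-shaped direct systems is the same as a short exact sequence $0\to X_i\to Y_i\to Z_i\to 0$ in $\Hcal$ for every $i\in I$, compatible with the structure maps. Since in cohomological degree $-n$ the complex $B\Xcal$ is the coproduct, over the chains $i_0<\dots<i_n$ in $I$, of the objects $X_{i_0}$, and since $\Hcal$ is AB4 (so coproducts of short exact sequences are short exact), the induced sequence $0\to B\Xcal\to B\Ycal\to B\Zcal\to 0$ is a short exact sequence of complexes over $\Hcal$. Its long exact cohomology sequence is the one asserted, once one observes that these complexes are concentrated in non-positive cohomological degrees (so $H^{k}=0$ for $k>0$ and $H^{-n}=\varinjlim_n$ for $n\ge 0$) and that $\varinjlim_0\Xcal=H^0(B\Xcal)$ is the cokernel of $d_1$, which is precisely the standard presentation of $\varinjlim\Xcal$ because the component of $d_1$ on the summand indexed by $i_0<i_1$ is $f_{i_0i_1}-\id_{X_{i_0}}$. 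Naturality of the sequence is just naturality of the connecting homomorphisms.

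The equivalences are then handled as follows. The implication $(3)\Rightarrow(2)$ is trivial. For $(2)\Rightarrow(1)$: given a monomorphism $\Xcal\hookrightarrow\Ycal$ in $\Hcal^I$ with cokernel $\Zcal$, the tail $\varinjlim_1\Zcal\to\varinjlim\Xcal\to\varinjlim\Ycal$ of the long exact sequence shows that $\varinjlim\Xcal\to\varinjlim\Ycal$ is a monomorphism as soon as $\varinjlim_1\Zcal=0$; combined with the right-exactness of $\varinjlim$ (it is a left adjoint), this says that $\varinjlim$ over every directed poset is exact, i.e.\ $\Hcal$ is AB5. The substance is $(1)\Rightarrow(3)$, the exactness of $B\Xcal_{\mathsf{aug}}$ when $\Hcal$ is AB5. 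For $m\in I$ let $I_{\le m}$ be the down-set of $m$; it has $m$ as greatest element, so $\varinjlim(\Xcal|_{I_{\le m}})=X_m$ and the augmented bar complex $B(\Xcal|_{I_{\le m}})_{\mathsf{aug}}$ is null-homotopic, the contracting homotopy being the extra degeneracy that sends the summand indexed by a chain $i_0<\dots<i_n$ in $I_{\le m}$ identically to the summand indexed by $i_0<\dots<i_n<m$ when $i_n<m$, and to $0$ when $i_n=m$. As $m$ ranges over the directed poset $I$ these complexes form a direct system whose colimit is $B\Xcal_{\mathsf{aug}}$: degreewise one takes the union of the sub-coproducts over chains with top term $\le m$, which exhausts all chains of $I$, and on the augmentation term one recovers $\varinjlim_m X_m=\varinjlim\Xcal$. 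Since $\Hcal$ is AB5, a filtered colimit of exact complexes is exact, so $B\Xcal_{\mathsf{aug}}$ is exact and $\varinjlim_n\Xcal=H^{-n}(B\Xcal)=0$ for all $n>0$.

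The only genuinely computational point, and thus the main obstacle, lies in the last argument: verifying the simplicial identities that make ``insert the largest index'' a contracting homotopy of $B(\Xcal|_{I_{\le m}})_{\mathsf{aug}}$, and checking that the colimit over $m$ of these complexes is $B\Xcal_{\mathsf{aug}}$ degreewise and compatibly with the differentials. Both are routine, but the sign bookkeeping in the homotopy should be done carefully; everything else is formal.
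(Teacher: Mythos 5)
Your proof is correct and follows the same overall strategy as the paper: AB4 gives a short exact sequence of bar complexes (hence the long exact sequence), $(3)\Rightarrow(2)$ is trivial, $(2)\Rightarrow(1)$ follows from the tail of the long exact sequence together with right-exactness of $\varinjlim$, and $(1)\Rightarrow(3)$ is proved by expressing $B\Xcal_{\mathsf{aug}}$ as a filtered colimit of contractible bar complexes and applying AB5. The one place where you diverge is in the choice of subfamily indexing the filtered colimit: you take the down-sets $I_{\le m}$ for $m\in I$, which are indexed by $I$ itself, whereas the paper instead takes all finite directed subsets $F\subseteq I$, ordered by inclusion. Both work: any poset with a greatest element (whether or not it is directed or finite) gives a contractible augmented bar complex via the ``insert the top element'' extra degeneracy, and in both cases the colimit is computed degreewise as a union of sub-coproducts. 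Your version has the small cosmetic advantage that the indexing poset of the colimit is just $I$; the paper's finite $F$ have the minor merit that each $B(\Xcal|_F)_{\mathsf{aug}}$ involves only finite coproducts. One thing worth making explicit rather than deferring to ``careful sign bookkeeping'': the extra degeneracy needs the sign $(-1)^n$ in degree $-n$ (as in the paper) for the homotopy identities $d s + s d = \id$ to hold; without the sign the computation in low degrees fails, as a quick check on $s_0 d_0 + d_1 s_1$ shows.
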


\begin{proof}
The short exact sequence $0\to\Xcal\to\Ycal\to\Zcal\to 0$ yields a short exact sequence of complexes $0\to B\Xcal\to B\Ycal\to B\Zcal\to 0$ since we assume $\Hcal$ to be AB4.
The moreover part is standard: The implications $(3)\Rightarrow(2)\Rightarrow(1)$ are straightforward. Regarding $(1)\Rightarrow(3)$, one obeserves that $B\Xcal_\mathsf{aug}$ is a direct limit of contractible complexes, hence exact assuming~$(1)$. Indeed, $I$ is easily seen to be a direct union of finite directed subsets $F\subseteq I$, and each such $F$ has a unique maximal element  $\max(F)\in F$ (which is an upper bound of all the elements in $F$). The corresponding complex for the restricted direct system $\Xcal|_F$ is canonically identified with a subcomplex of $B\Xcal_\mathsf{aug}$ of the form
$$
B(\Xcal|_F)_\mathsf{aug}\colon\quad
\xymatrix{
\cdots \ar[r] &
\coprod\limits_{i_0<i_1<i_2} X_{i_0} \ar[r]^-{d_2} &
\coprod\limits_{i_0<i_1} X_{i_0} \ar[r]^-{d_1} &
\coprod\limits_{i_0\in F} X_{i_0} \ar[r]^-{d_0} &
X_{\max(F)} \ar[r] &
0
}
$$
and is contractible. A particular nullhomotopy is given by $(s_n)_{n\ge0}$, where
\[ s_n\colon \coprod_{j_0<\dots<j_{n-1}}X_{j_0} \longrightarrow \coprod_{i_0<\dots<i_n}X_{i_0} \]
with the components $(-1)^n\id_{X_{j_0}}$ if
$(i_0<\dots<i_{n-1}<i_n)=(j_0<\dots<j_{n-1}<\max(F))$, and zero maps otherwise.
The special case of $s_0\colon X_{\max(F)}\to\coprod_{i_0\in F} X_{i_0}$ is the obvious split inclusion.
Finally, one observes that $B\Xcal_\mathsf{aug}=\varinjlim_F B(\Xcal|_F)_\mathsf{aug}$, where $F$ runs over all finite directed subsets of $I$, ordered by inclusion.
\end{proof}

Now we focus on how the $\varinjlim_n$-functors interact with HRS tilting.

\begin{lemma}\label{lem:HRS and derived colim}
Let $\Dcal$ be a triangulated category with coproducts, $\mathbb{T}$ be a smashing t-structure with an AB5 heart $\Hcal$, and let $\mathfrak{t}=(\Tcal,\Fcal)$ be a torsion pair {in $\Hcal$}.
We recall that the heart $\Hcal_{\mathfrak{t}^-}$ of the right HRS-tilt has a torsion pair $(\Fcal,\Tcal[-1])$.
Then the following hold:
\begin{enumerate}
\item If $\Xcal$ is a direct system in $\Tcal$ and $T=\varinjlim \Xcal$ in $\Hcal$, then $T[-1]$ is the direct limit of $\Xcal[-1]$ in $\Hcal_{\t^-}$ and $\varinjlim_n\Xcal[-1]=0$ in $\Hcal_{\t^-}$ for each $n>0$.
\item Assume moreover that $(\Tcal,\Fcal)$ is of finite type in $\Hcal$. If $\Xcal$ is a direct system in $\Fcal$ and $F=\varinjlim \Xcal$ in $\Hcal$, then $F$ is also the direct limit of $\Xcal$ in $\Hcal_{\t^-}$ and $\varinjlim_n\Xcal=0$ in $\Hcal_{\t^-}$ for each $n>0$. The 
\end{enumerate}
\end{lemma}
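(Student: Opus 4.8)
The plan is to prove both parts of Lemma~\ref{lem:HRS and derived colim} by using Remark~\ref{rem:inversetilt}, which tells us that $\Ycal'[-1]\subseteq\Ycal'\subseteq\Ycal$, together with the explicit description of $\Xcal_{\t^-}$, $\Ycal_{\t^-}$ in terms of the cohomologies $H^k_\mathbb{T}$ (valid since a smashing cosilting-type t-structure is nondegenerate). Throughout, write $H^0_{\t^-}$ for the cohomological functor of the right HRS-tilt; recall that on the coaisle $\Ycal$ one has $H^0_{\t^-}$ computed from the truncation triangles so that for $M\in\Hcal$ one gets $H^0_{\t^-}(M)=M$ if $M\in\Fcal$ and $H^0_{\t^-}(M[-1])=M[-1]$ if $M\in\Tcal$. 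The key structural input is that $H^0_{\t^-}$, being the cohomological functor of a smashing t-structure, preserves coproducts (by \cite[Lemma 3.3]{AMV3}), hence it sends the complex $B\Xcal$ (formed in $\Dcal$, or rather its image under a degreewise application) to the complex $B$ of the corresponding direct system in $\Hcal_{\t^-}$; this is the mechanism by which we can compare $\varinjlim_n$ in $\Hcal$ with $\varinjlim_n$ in $\Hcal_{\t^-}$.

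For part (1): given a direct system $\Xcal=(X_i)$ in $\Tcal$ with $T=\varinjlim\Xcal$ in $\Hcal$, I would first observe that $\Xcal[-1]=(X_i[-1])$ is a direct system in the torsion class $\Tcal[-1]$ of $\Hcal_{\t^-}$. Form the complex $B(\Xcal[-1])$ in $\Hcal_{\t^-}$; I claim it is, up to the shift $[-1]$ applied degreewise in $\Dcal$, the same as $B\Xcal[-1]$, because the terms are coproducts of the $X_i[-1]$ and coproducts in $\Hcal_{\t^-}$ restricted to the torsion class $\Tcal[-1]$ agree with those in $\Hcal$ shifted (since $H^0_{\t^-}$ preserves coproducts and is the identity-after-shift on $\Tcal$). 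Now $B\Xcal$ has $\varinjlim_n\Xcal=H^{-n}(B\Xcal)=0$ for $n>0$ since $\Hcal$ is AB5 (Lemma~\ref{lem:derived colim basic}), and $H^0(B\Xcal)=\varinjlim\Xcal=T$. Applying the cohomological functor $H^{*}_{\t^-}$ to the exact complex $B\Xcal_\mathsf{aug}$ (which is exact in $\Hcal$, hence a long exact sequence of triangles in $\Dcal$), and using that each term lies in $\Tcal$ so that $H^k_{\t^-}$ of each term is concentrated in degree $-1$, a dimension-shift / spectral-sequence bookkeeping argument identifies $H^{-n}$ of the shifted complex in $\Hcal_{\t^-}$ with $H^{-n-1}$ data from $B\Xcal$; concretely, $\varinjlim_n(\Xcal[-1])=H^{-n}\bigl(B\Xcal[-1]\bigr)=H^{-n-1}(B\Xcal)$ computed in $\Hcal_{\t^-}$, which is $0$ for $n>0$ and gives $T[-1]$ for $n=0$. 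Hence $T[-1]=\varinjlim(\Xcal[-1])$ in $\Hcal_{\t^-}$ and the higher $\varinjlim_n$ vanish.

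For part (2), the extra hypothesis is that $(\Tcal,\Fcal)$ is of finite type, i.e.\ $\Fcal$ is closed under direct limits in $\Hcal$. Given a direct system $\Xcal$ in $\Fcal$ with $F=\varinjlim\Xcal$ taken in $\Hcal$, finite type ensures $F\in\Fcal$, so $F$ lies in the torsion-free class of $\Hcal_{\t^-}$ and $H^0_{\t^-}$ is the identity on $\Fcal$ and on the coproducts of objects of $\Fcal$ appearing in $B\Xcal$. As before, form $B\Xcal$ in $\Hcal$: it has $H^{-n}(B\Xcal)=\varinjlim_n\Xcal$, which vanishes for $n>0$ and equals $F$ for $n=0$, by Lemma~\ref{lem:derived colim basic}. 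Because $H^0_{\t^-}$ is exact on the relevant subcategory (it is the restriction of an exact functor on $\Ycal\cap\Hcal\cap$ coproducts thereof to $\Fcal$, where it is the identity) and preserves coproducts, it carries this exact augmented complex to the corresponding exact augmented complex in $\Hcal_{\t^-}$ with direct limit $F$; therefore $\varinjlim\Xcal=F$ in $\Hcal_{\t^-}$ as well and $\varinjlim_n\Xcal=0$ there for $n>0$.

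The main obstacle I anticipate is the careful identification of the complex $B(\Xcal[-1])$ (respectively $B\Xcal$) formed intrinsically in the tilted heart $\Hcal_{\t^-}$ with the shift of the complex formed in $\Hcal$: one must verify that coproducts in $\Hcal_{\t^-}$ of objects lying in the torsion class $\Tcal[-1]$ (resp.\ the torsion-free class $\Fcal$) coincide with the corresponding coproducts computed in $\Hcal$ (after the shift). This rests on the fact that $H^0_{\t^-}$ preserves coproducts --- which in turn needs the t-structure $\mathbb{T}_{\t^-}$ to be smashing --- and this is where the hypotheses "smashing" and "of finite type" are genuinely used (the latter to keep $F$ inside $\Fcal$, so that no correction term from the torsion part appears and $H^0_{\t^-}$ really is the identity on the augmented complex). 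Once that identification is in place, everything reduces to the degree-shift bookkeeping in Lemma~\ref{lem:derived colim basic} and the vanishing of higher $\varinjlim_n$ in the AB5 category $\Hcal$.
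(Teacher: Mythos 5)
Your high-level strategy coincides with the paper's: realize $B\Xcal_\mathsf{aug}$ (respectively its termwise shift in part~(1)) as a complex in $\Hcal_{\t^-}$, using the smashing hypothesis to align coproducts with those in $\Dcal$, and using the finite-type hypothesis to keep the augmentation term $F$ inside $\Fcal$. The genuine gap is in the step where you transfer exactness from $\Hcal$ to $\Hcal_{\t^-}$. The fact that $H^0_{\t^-}$ is the identity on $\Fcal$ and preserves coproducts does not by itself imply that the complex is exact in $\Hcal_{\t^-}$: $\Fcal$ is not an abelian category, and kernels and images of the differentials need not be computed the same way in $\Hcal$ and $\Hcal_{\t^-}$ unless one shows they lie in $\Fcal$. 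The paper's proof handles this by decomposing $B\Xcal_\mathsf{aug}$ into short exact sequences $0\to Z_{n+1}\to\coprod X_{i_0}\to Z_n\to 0$ in $\Hcal$ and checking that every image $Z_n=\im{d_n}$ lies in $\Fcal$: for $n\geq 1$ because $\Fcal$ is closed under coproducts (AB5) and under subobjects, and for $n=0$ because $Z_0=F$ and $\t$ is of finite type. Each such short exact sequence gives a triangle in $\Dcal$ with all three vertices in $\Fcal\subseteq\Hcal_{\t^-}$, hence again a short exact sequence in $\Hcal_{\t^-}$; that is exactly what is required to conclude the same complex is acyclic in $\Hcal_{\t^-}$. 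Your phrase ``$H^0_{\t^-}$ is exact on the relevant subcategory'' cannot do this work, since $H^0_{\t^-}$ is cohomological rather than exact, and the inclusion $\Fcal\hookrightarrow\Hcal_{\t^-}$ is not a functor between abelian categories. The same verification is needed in part~(1) with $\Tcal$ in place of $\Fcal$, where it holds because $\Tcal$ is closed under quotients (giving $Z_n\in\Tcal$ for $n\geq 1$) and under direct limits (giving $Z_0=T\in\Tcal$).

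There is also a concrete error in your part~(1): the claimed identity $\varinjlim_n(\Xcal[-1])=H^{-n}(B\Xcal[-1])=H^{-n-1}(B\Xcal)$ is incorrect. Applying the suspension $[-1]$ of $\Dcal$ termwise to $B\Xcal$ does not change the chain-complex degree of any term, so there is no index shift; the correct identification is $H^{-n}(B(\Xcal[-1]))\cong H^{-n}(B\Xcal)[-1]$ as objects of $\Hcal_{\t^-}$, where $H^{-n}(B\Xcal)$ is computed in $\Hcal$. Once the short-exact-sequence decomposition above is in place, the ``spectral-sequence bookkeeping'' you allude to is unnecessary: part~(1) follows from the same elementary argument as part~(2), with $\Fcal$ replaced by $\Tcal$ and all objects shifted by $[-1]$.
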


\begin{proof}
We prove (2) only as the proof of (1) is completely analogous. Since the complex $B\Xcal_\mathsf{aug}$ is acyclic in $\Hcal$ by Lemma~\ref{lem:derived colim basic}, we have induced triangles in $\Dcal$,
$$
\xymatrix{
Z_{n+1} \ar[r] & \coprod_{i_0<\dots<i_n}X_{i_0} \ar[r] & Z_n \ar[r] & Z_{n+1}[1]
}
$$
for all $n\ge 0$. Here, $Z_n=\im{d_n}$ are the images in $\Hcal$ and the coproducts can be taken equally well in $\Hcal$ and in $\Dcal$ as $\mathbb{T}$ is smashing. Since $\Fcal$ is closed under coproducts ($\Hcal$ is AB5) and subobjects, we have $Z_n\in\Fcal$ for each $n>1$. We also have $Z_0\in\Fcal$ since we assume $\t=(\Tcal,\Fcal)$ to be of finite type. Consequently, the triangles above induce short exact sequences in $\Hcal_{\t^-}$.
Note further that $\mathbb{T}_{\t^-}$ is a smashing t-structure and $\Hcal_{\t^-}$ is closed under coproducts in $\Dcal$. It follows that $\Hcal_{\t^-}$ is AB4, that the coproducts in the triangles above are also coproducts in $\Hcal_{\t^-}$. Thus, the complexes $B\Xcal_\mathsf{aug}$ taken in $\Hcal$ and $\Hcal_{\t^-}$ coincide and are both acyclic in the corresponding hearts.
\end{proof}

Now we can give the promised generalisation of~\cite[Theorem 1.2]{PS-add}.

\begin{theorem}\label{thm:HRS-tilt AB5}
Let $\Dcal$ be a triangulated category with coproducts, $\mathbb{T}$ be a smashing t-structure with an AB5 heart $\Hcal$, and let $\mathfrak{t}=(\Tcal,\Fcal)$ be a torsion pair {in $\Hcal$}. Then the torsion pair $(\Fcal,\Tcal[-1])$ is of finite type in the right HRS-tilt $\Hcal_{\t^-}$ and the following statements are equivalent.
\begin{enumerate}
\item $\Hcal_{\t^-}$ is AB5.
\item The torsion pair $\mathfrak{t}=(\Tcal,\Fcal)$ is of finite type in $\Hcal$.
\end{enumerate}
{If $\Dcal$ is compactly generated, the conditions above are further equivalent to
\begin{enumerate}
\item[(3)]  $\Hcal_{\t^-}$ is a Grothendieck category.
\end{enumerate}}
\end{theorem}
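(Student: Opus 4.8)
# Proof Proposal for Theorem~\ref{thm:HRS-tilt AB5}

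\textbf{The plan is to} establish the two "automatic" parts first, then prove the equivalence of (1) and (2), and finally fold in the compactly generated case as a consequence of known results. To begin, I would verify that the torsion pair $(\Fcal,\Tcal[-1])$ is always of finite type in $\Hcal_{\t^-}$. The torsion-free class here is $\Tcal[-1]$; to show it is closed under direct limits in $\Hcal_{\t^-}$ I would take a direct system $\Xcal$ in $\Tcal$, form $T=\varinjlim\Xcal$ in $\Hcal$, and invoke Lemma~\ref{lem:HRS and derived colim}(1) to conclude that $T[-1]$ is the direct limit of $\Xcal[-1]$ in $\Hcal_{\t^-}$. Since $T\in\Tcal$ (torsion classes are closed under quotients, hence under direct limits in an AB5 category — or simply because $\Tcal$ is closed under direct limits as it is closed under quotients and coproducts), we get $T[-1]\in\Tcal[-1]$, as required. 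Note this part uses only that $\Hcal$ is AB5 and $\mathbb{T}$ is smashing.

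\textbf{Next I would prove (1)$\Rightarrow$(2).} Assume $\Hcal_{\t^-}$ is AB5. I want to show $\Fcal$ is closed under direct limits in $\Hcal$. Given a direct system $\Xcal$ in $\Fcal$ with $F=\varinjlim\Xcal$ computed in $\Hcal$, I first observe that the same colimit, computed in $\Hcal_{\t^-}$, lands in $\Fcal$: indeed $\Fcal$ is the torsion class of the tilted torsion pair $(\Fcal,\Tcal[-1])$, so it is closed under direct limits in the AB5 category $\Hcal_{\t^-}$, and the colimit of $\Xcal$ taken in $\Hcal_{\t^-}$ exists and lies in $\Fcal$. The point is then to identify this colimit with $F$. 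Using the complex $B\Xcal_\mathsf{aug}$ and the triangles $Z_{n+1}\to\coprod X_{i_0}\to Z_n\to Z_{n+1}[1]$ as in the proof of Lemma~\ref{lem:HRS and derived colim}, all the syzygy objects $Z_n$ for $n\ge1$ lie in $\Fcal$ (closure under subobjects and coproducts in the AB5 category $\Hcal$), so these triangles are short exact sequences in $\Hcal_{\t^-}$ as well; hence $B\Xcal_\mathsf{aug}$ is acyclic in $\Hcal_{\t^-}$ if and only if $Z_0=\im d_0\in\Fcal$. But acyclicity in $\Hcal_{\t^-}$ holds because $\Hcal_{\t^-}$ is AB5 (apply Lemma~\ref{lem:derived colim basic} in $\Hcal_{\t^-}$), and $Z_0=F$ since $d_0$ is epic in $\Hcal$. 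Therefore $F\in\Fcal$.

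\textbf{For (2)$\Rightarrow$(1)}, this is essentially Lemma~\ref{lem:HRS and derived colim}(2): assuming $(\Tcal,\Fcal)$ is of finite type, for any direct system in $\Hcal_{\t^-}$ I would use the torsion pair $(\Fcal,\Tcal[-1])$ to reduce — via the long exact sequence of $\varinjlim_n$ in Lemma~\ref{lem:derived colim basic} and a dévissage on the torsion/torsion-free parts — to checking $\varinjlim_1=0$ separately on systems in $\Fcal$ and on systems in $\Tcal[-1]$, both of which are handled by the two parts of Lemma~\ref{lem:HRS and derived colim}. Since $\varinjlim_1$ vanishes on all direct systems in $\Hcal_{\t^-}$, Lemma~\ref{lem:derived colim basic} gives that $\Hcal_{\t^-}$ is AB5. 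Finally, for the compactly generated case: $\Hcal_{\t^-}$ always has an injective cogenerator (it is the heart of a cosilting t-structure whenever $\t$ is a cosilting torsion pair — but more robustly, one can argue that the HRS-tilt of a smashing t-structure with Grothendieck heart at a finite-type torsion pair is again smashing with a generator), and a cocomplete abelian AB5 category with a generator is Grothendieck; conversely a Grothendieck category is AB5 by definition, giving (3)$\Leftrightarrow$(1). I expect the main obstacle to be the careful bookkeeping in (1)$\Rightarrow$(2), namely confirming that the colimit computed in the tilted heart genuinely agrees with $F$ rather than some proper subobject or quotient — this is where the exactness of the syzygy triangles in both hearts, and the fact that they share the same coproducts (as $\mathbb{T}$ and $\mathbb{T}_{\t^-}$ are both smashing), must be used precisely.
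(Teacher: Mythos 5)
Your overall structure tracks the paper: the ``automatic'' claim about $(\Fcal,\Tcal[-1])$, the direction (2)$\Rightarrow$(1) via Lemma~\ref{lem:HRS and derived colim} and the torsion/torsion-free d\'evissage, and the compactly generated addendum are all in the same spirit as the paper's proof (for the latter, note that what is needed is a \emph{generator}, not an injective cogenerator, and $\t$ is not assumed cosilting — the paper produces the generator concretely as $\coprod H^0_{\t^-}(C)$ over compacts $C$, using that $H^0_{\t^-}$ sends pure epimorphisms to epimorphisms).

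The direction (1)$\Rightarrow$(2), however, has a genuine gap. You correctly flag the obstacle — that the colimit of $\Xcal$ computed in $\Hcal_{\t^-}$ lands in $\Fcal$, and the task is to identify it with $F=\varinjlim^{\Hcal}\Xcal$ — but the step ``acyclicity in $\Hcal_{\t^-}$ holds because $\Hcal_{\t^-}$ is AB5'' is circular. Lemma~\ref{lem:derived colim basic}, applied in the AB5 category $\Hcal_{\t^-}$, gives acyclicity of the complex $B\Xcal$ augmented by $\varinjlim^{\Hcal_{\t^-}}\Xcal$, which is the $\Fcal$-part $f(F)$ of $F$. It says nothing about the complex augmented by $F$ itself; if $F\notin\Fcal$ that latter object is not even in $\Hcal_{\t^-}$, so your biconditional, though formally true, cannot be combined with the AB5 hypothesis in the way you propose — you are silently conflating two different augmented complexes, i.e.\ presupposing $F=f(F)$.

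There are two ways to close the gap. The paper's route is a clean bootstrap and avoids the computation entirely: apply the ``automatic'' first paragraph of the theorem to the (smashing) t-structure $\mathbb{T}_{\t^-}$, whose heart $\Hcal_{\t^-}$ is now AB5 by hypothesis, with the torsion pair $(\Fcal,\Tcal[-1])$. By Remark~\ref{rem:inversetilt} the right HRS-tilt of $\mathbb{T}_{\t^-}$ at $(\Fcal,\Tcal[-1])$ is $\mathbb{T}[-1]$ with heart $\Hcal[-1]$, and the conclusion is that $(\Tcal[-1],\Fcal[-1])$ is of finite type there, i.e.\ $\Fcal$ is closed under direct limits in $\Hcal$. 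Alternatively, to salvage your direct approach you must actually compute $\varinjlim_1^{\Hcal_{\t^-}}\Xcal$: applying $H^{\ast}_{\mathbb{T}_{\t^-}}$ to the syzygy triangle $Z_1\to B^0\to F\to Z_1[1]$, and using $H^{0}_{\mathbb{T}_{\t^-}}(F)=f(F)$ and $H^{-1}_{\mathbb{T}_{\t^-}}(F)=t(F)[-1]$, yields a four-term exact sequence
$0\to t(F)[-1]\to Z_1\to B^{0}\to f(F)\to 0$ in $\Hcal_{\t^-}$, which together with $\im^{\Hcal_{\t^-}}(d_2)=Z_2$ identifies $\varinjlim_1^{\Hcal_{\t^-}}\Xcal$ with $t(F)[-1]$. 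Only at that point does AB5-ness of $\Hcal_{\t^-}$ force $t(F)=0$, hence $F\in\Fcal$. Finally, a small imprecision: the parenthetical reason you cite for $Z_n\in\Fcal$ ($n\ge1$), namely closure of $\Fcal$ under coproducts ``in the AB5 category $\Hcal$'', is not a consequence of $\Hcal$ being AB5 alone; the correct reason in this direction is that $\Fcal$ is a torsion class in $\Hcal_{\t^-}$, hence closed under coproducts there, and these coincide with coproducts in $\Hcal$.
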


\begin{proof}
The torsion pair $(\Fcal,\Tcal[-1])$ is of finite type in $\Hcal_{\t^-}$ thanks to Lemma~\ref{lem:HRS and derived colim}(1). If $(\Tcal,\Fcal)$ is of finite type in $\Hcal$, then $\varinjlim_1$ vanishes in $\Hcal_{\t^-}$ on all direct systems in $\Fcal$ or $\Tcal[-1]$ by Lemma~\ref{lem:HRS and derived colim}. If $\Xcal$ is any direct system in $\Hcal_{\t^-}$, we obtain a short exact sequence of direct systems as follows
$$\xymatrix{0 \ar[r]& \Xcal_\Fcal \ar[r]& \Xcal \ar[r]& \Xcal_{\Tcal[-1]} \ar[r]& 0,}$$
where $\Xcal_\Fcal$ and $\Xcal_{\Tcal[-1]}$ are the direct system of torsion and torsion-free parts of $\Xcal$ with respect to $(\Fcal,\Tcal[-1])$, respectively. If we take the direct limit, by Lemma~\ref{lem:derived colim basic}, we obtain an exact sequence
$$
0 = {\varinjlim}_1\Xcal_\Fcal \to {\varinjlim}_1\Xcal \to {\varinjlim}_1\Xcal_{\Tcal[-1]}=0.
$$
and the same lemma tells us (since, by \cite[Proposition 3.3]{PS}, $\Hcal_{\t^-}$ is AB4), that $\Hcal_{\t^-}$ is also AB5.

Suppose conversely that $\Hcal_{\t^-}$ is AB5. Then we can apply the previous arguments to $\Hcal_{\t^-}$ with torsion pair $(\Fcal,\Tcal[-1])$. As the HRS-tilt is equivalent to $\Hcal$ with $(\Tcal,\Fcal)$, it follows that the latter torsion pair is of finite type.

Now assume that $\Dcal$ is compactly generated, with $\Dcal^c$ denoting its subcategory of compact objects. Suppose that (1) holds true. In order to prove that  $\Hcal_{\t^-}$ is a Grothendieck category, it remains to exhibit a generator. We claim that $\coprod H^0_{\t^-}(C)$, where the coproduct runs over all isoclasses of compact objects $C$ in $\Dcal$, is a generator of $\Hcal_{\t^-}$. Indeed, given any $X$ in $\Hcal_{\t^-}$, the canonical map
$$\Phi\colon\coprod C^{(\Hom_\Dcal(C,X))}\longrightarrow X$$
is a pure epimorphism. Note that $H^0_{\t^-}\colon\Dcal\longrightarrow\Hcal_{\t^-}$ sends pure triangles in $\Dcal$ to short exact sequences by~\cite[Corollary 2.5]{Krause-TC}. Therefore, $H^0_{\t^-}(\Phi)$ is an epimorphism, which proves the claim.
\end{proof}
}

\subsection{Mutation of pure-injective cosilting objects} We are now ready to examine mutation of pure-injective cosilting objects. We will see that this setting is somewhat nicer than the general setting explored in Section 3.

In view of Proposition~\ref{prop:general existence of mutations}, we begin by investigating cosilting torsion pairs in Grothendieck hearts.
As indicated in Example~\ref{expl:cosiltR}, when $\mathfrak{t}=(\Tcal,\Fcal)$ is a cosilting torsion pair in the heart $\Hcal$ of a t-structure $\mathbb{T}$ in $\Dcal$, and $\sigma$ is a cosilting object such that $\mathbb{T}_{\mathfrak{t}^-}=\mathbb{T}_\sigma$, it is interesting to study properties of the object $C=H^0_\mathbb{T}(\sigma)$. As the following lemma says, this object in fact determines the torsion pair $\mathfrak{t}$ and hence the cosilting object $\sigma$ up to equivalence. To this end, recall that an object $C$ in a complete abelian category $\Hcal$ is called \textbf{quasi-cotilting} if $\Cogen{C}=\gen{\Cogen{C}}\cap {^{\perp_1}C}$ (one can also define quasi-tilting objects and prove a dual analogous version of the following lemma, but it is not needed here).

\begin{lemma} \label{lem:quasi-cotilting}
Let $\Dcal$ be a triangulated category with products and $\mathbb{T}=(\Xcal,\Ycal)$ be a t-structure with heart $\Hcal$.
Suppose that $\Hcal$ is complete and that $\mathfrak{t}=(\Tcal,\Fcal)$ is a cosilting torsion pair in $\Hcal$, and $\sigma$ is a cosilting object such that $\mathbb{T}_{\mathfrak{t}^-}=\mathbb{T}_\sigma$. Then the object $C=H^0_\mathbb{T}(\sigma)$ is quasi-cotilting in $\Hcal$ and $\Fcal=\Cogen C$.
\end{lemma}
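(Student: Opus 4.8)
The plan is to unpack the defining data of a right HRS-tilt and use the equivalence $H^0_\mathbb{T}(\sigma) = C$ together with the description of $\Hcal_{\mathfrak{t}^-}$ from the preliminaries. Since $\mathbb{T}_{\mathfrak{t}^-} = \mathbb{T}_\sigma$, the heart $\Hcal_\sigma$ equals $\Hcal_{\mathfrak{t}^-} = \Fcal \star \Tcal[-1]$, and by Proposition~\ref{prop:summary cosilting}(2) the object $H^0_\sigma(\sigma)$ is an injective cogenerator of $\Hcal_\sigma$. The cohomological functors $H^0_\mathbb{T}$ and $H^0_{\mathfrak{t}^-}=H^0_\sigma$ are related, by Remark~\ref{rem:inversetilt}, via the torsion pair $(\Fcal,\Tcal[-1])$ on $\Hcal_{\mathfrak{t}^-}$; in particular, for $\sigma$ lying in $\Ycal'=\Ycal_{\mathfrak{t}^-}$ we have that $H^0_\sigma(\sigma)$ sits in a short exact sequence in $\Hcal_{\mathfrak{t}^-}$ with torsion part $H^{-1}_\mathbb{T}(\sigma)$-related terms, but in fact since $\sigma\in\Ycal$ one gets directly $C=H^0_\mathbb{T}(\sigma)=H^0_\sigma(\sigma)$ as an object, only the ambient abelian structure differs. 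So the first step is to identify $C$ with the injective cogenerator $H^0_\sigma(\sigma)$ of $\Hcal_\sigma$, viewed inside $\Hcal$.

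Next I would compute $\Cogen C$ and $^{\perp_1}C$ taken in $\Hcal$. The key input is that $C$ is an injective cogenerator of the abelian category $\Hcal_{\mathfrak{t}^-}=\Fcal\star\Tcal[-1]$, whose torsion-free class (with respect to $(\Fcal,\Tcal[-1])$) is $\Tcal[-1]$ and whose injective objects are precisely $\Prod C$ by Proposition~\ref{prop:summary cosilting}(2). I would argue that $\Cogen_{\Hcal}(C)=\Fcal$: one inclusion follows because every object of $\Fcal$ embeds into a product of copies of $C$ already in $\Hcal_{\mathfrak{t}^-}$ (as $C$ is a cogenerator there and $\Fcal$ is contained in $\Hcal_{\mathfrak{t}^-}$), and monomorphisms in $\Hcal_{\mathfrak{t}^-}$ landing in the torsion part... here one must be careful, so let me instead say: products in $\Hcal_{\mathfrak{t}^-}$ and $\Hcal$ need not agree, so I would pass through the triangulated category, using that $\Prod_\Dcal(\sigma)$ computes products of $C$ in both hearts up to the respective torsion/coreflection, and that $\Fcal = \Hcal\cap\Ycal'$ by Proposition~\ref{prop:intermediate}(2). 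Then $X\in\Cogen_\Hcal(C)$ iff $X$ embeds in a product of copies of $\sigma$ in $\Dcal$, computed in $\Hcal$, iff $X\in\Ycal'\cap\Hcal=\Fcal$; the reverse containment uses that $\sigma$ cogenerates $\Dcal$. This gives $\Fcal=\Cogen C$.

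For the quasi-cotilting property I must show $\Cogen C = \gen(\Cogen C)\cap{}^{\perp_1}C$, with all operations in $\Hcal$. The inclusion $\subseteq$ is automatic once we know $\Cogen C=\Fcal$ is closed under quotients intersected appropriately and that $\Fcal\subseteq{}^{\perp_1}C$ — the latter because, inside $\Hcal_{\mathfrak{t}^-}$, $C$ is injective so $\Ext^1_{\Hcal_{\mathfrak{t}^-}}(\Fcal,C)=0$, and the $\Ext^1$ computed in $\Hcal$ injects into (or agrees with, for the relevant degree) the one in $\Dcal$, namely $\Hom_\Dcal(-,\sigma[1])$, which vanishes on $\Fcal$ since $\Fcal\subseteq\Ycal'={}^{\perp_{>0}}\sigma$. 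For $\supseteq$, take $X\in\gen(\Cogen C)\cap{}^{\perp_1}C$; write $X$ as a quotient $F\twoheadrightarrow X$ with $F\in\Cogen C=\Fcal$, get a short exact sequence $0\to K\to F\to X\to 0$ in $\Hcal$, and apply $\Hom_\Hcal(-,C)$; since $\Ext^1_\Hcal(F,C)$ need not vanish I would instead use the torsion-pair structure: decompose $X$ with respect to $(\Tcal,\Fcal)$ and show the torsion part must be zero using $X\in{}^{\perp_1}C$ and a connecting-map argument, or alternatively realize $X\in\Ycal'\cap\Hcal$ directly from $X\in{}^{\perp_1}C={}^{\perp_1}\sigma$ (in $\Hcal$) plus $X\in\gen\Fcal$. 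The main obstacle is exactly this last point: controlling $\gen(\Cogen C)\cap{}^{\perp_1}C$ forces one to reconcile $\Ext^1$ and $\gen$ computed in the two different abelian categories $\Hcal$ and $\Hcal_{\mathfrak{t}^-}$, which share the torsion-free class $\Fcal=\Cogen C$ but have different torsion classes and possibly different (co)limits; I expect the cleanest route is to translate the whole statement into the triangulated category via Lemma~\ref{lem:cohomol1} and the orthogonality descriptions ${}^{\perp_{>0}}\sigma$, $\Prod\sigma=\Ycal'\cap\Ycal'^{\perp_1}$, so that both $\Cogen C$ and ${}^{\perp_1}C$ become intersections of coaisle-type conditions and the equality becomes a formal consequence of $\Hcal_{\mathfrak{t}^-}$ being the heart of $\mathbb{T}_\sigma$.
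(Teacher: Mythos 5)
Your proposal contains a genuine gap at its very first step, and this error propagates through the rest of the argument. You assert that since $\sigma\in\Ycal$, one has $C=H^0_\mathbb{T}(\sigma)=H^0_\sigma(\sigma)$ ``as an object, only the ambient abelian structure differs.'' This is false in general. Write $E:=H^0_\sigma(\sigma)$. Since $\sigma\in\Ycal_{\mathfrak{t}^-}$, the object $E$ is obtained as the $\Xcal_{\mathfrak{t}^-}[-1]$-coreflection of $\sigma$, whereas $C$ is the further $\Xcal[-1]$-coreflection (applied to $E$, since the aisles are nested: $\Xcal\subseteq\Xcal_{\mathfrak{t}^-}$). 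These are distinct objects of $\Dcal$ whenever $H^{-1}_\mathbb{T}(\sigma)\neq 0$ — concretely, when $R$ is a ring, $\mathbb{T}$ is the standard t-structure, and $\sigma$ is a two-term cosilting complex with nonzero $H^1$, you get $E=\sigma$ (up to quasi-isomorphism in the new heart) while $C=H^0(\sigma)$ is a module. In fact $C=H^0_\mathbb{T}(E)$ is the torsion part of $E$ with respect to the torsion pair $(\Fcal,\Tcal[-1])$ in $\Hcal_\sigma$ — a strictly smaller object.

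Because of this conflation, several downstream claims fail: $C$ is \emph{not} an injective cogenerator of $\Hcal_{\mathfrak{t}^-}$ (that role is played by $E$, which is genuinely injective there, while $C$ is merely its torsion part and lies in the torsion class $\Fcal$, which need not be closed under products in $\Hcal_\sigma$). So the inclusion $\Fcal\subseteq\Cogen_\Hcal C$ does not follow as you suggest, and neither does $\Ext^1_\Hcal(\Fcal,C)=0$ follow from ``$C$ is injective in $\Hcal_{\mathfrak{t}^-}$.'' You also suggest ${}^{\perp_1}C={}^{\perp_1}\sigma$ inside $\Hcal$ — but the coreflection triangle $Y[-2]\to C\to\sigma\to Y[-1]$ (with $Y\in\Ycal$) only gives the natural isomorphism $\Hom_\Hcal(-,C)\cong\Hom_\Dcal(-,\sigma)|_\Hcal$ in degree $0$; for degree $1$ (which is what ${}^{\perp_1}$ concerns) the term $\Hom_\Dcal(F[-1],Y[-1])\cong\Hom_\Dcal(F,Y)$ does not automatically vanish, and one needs a separate argument via that triangle together with $\sigma\in(\Ycal_{\mathfrak{t}^-})^{\perp_1}$. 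What the paper's proof supplies that your proposal lacks: (i) the recognition that $H^0_\mathbb{T}|_{\Hcal_\sigma}$ is a right adjoint (composition of the torsion-radical $\Hcal_\sigma\to\Fcal$ with the inclusion $\Fcal\to\Hcal$), which is the device that transports the monomorphism $F\hookrightarrow E^I$ in $\Hcal_\sigma$ to the monomorphism $F\hookrightarrow C^I$ in $\Hcal$; (ii) the explicit use of the triangle $Y[-2]\to C\to\sigma\to Y[-1]$ to kill $\Hom_\Dcal(F[-1],C)$; and (iii) the diagram chase that converts $\Ext^1_\Hcal(X,C)=0$ and $\Hom_\Dcal(F[-1],\sigma)=0$ into $\Hom_\Dcal(X[-1],\sigma)=0$, hence $H^{-1}_\sigma(X)=0$ and $X\in\Hcal\cap\Hcal_\sigma=\Fcal$. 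Your ``main obstacle'' paragraph correctly identifies where the difficulty lies, but the route you sketch cannot close the gap without first correctly relating $C$ to $E$.
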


\begin{proof}
We will adapt the argument for~\cite[Theorem 4.1(3)$\Rightarrow$(2)]{PSV}.
Recall that there is a torsion pair $(\Fcal,\Tcal[-1])$ in the heart $\Hcal_\sigma$ of $\mathbb{T}_{\mathfrak{t}^-}=(\Xcal_{\mathfrak{t}^-},\Ycal_{\mathfrak{t}^-})$, and that the torsion part of $X\in\Hcal_\sigma$ with respect to this torsion pair equals $H^0_\mathbb{T}(X)$ \cite[Corollary 2.2]{HRS}. Hence the functor $H^0_\mathbb{T}|_{\Hcal_\sigma}\colon \Hcal_\sigma\longrightarrow\Hcal$, being a composition of the right adjoints $\Hcal_\sigma\longrightarrow\mathcal\Fcal$ and $\mathrm{inc}\colon\Fcal\longrightarrow\Hcal$, is itself a right adjoint.
Since $E:=H^0_\sigma(\sigma)$ is an injective cogenerator of $\Hcal_\sigma$ by Proposition~\ref{prop:summary cosilting}, any object $F\in\Fcal$ admits a monomorphism $F\longrightarrow E^I$ for some set $I$, which induces a monomorphism $F=H^0_\mathbb{T}(F)\longrightarrow H^0_\mathbb{T}(E)^I$ in $\Hcal$. Moreover, since $\sigma\in\Ycal_{\mathfrak{t}^-}\subseteq\Ycal$, we observe that $E$ is an $\Xcal_{\mathfrak{t}^-}[-1]$-coreflection of $\sigma$ and $C$ is an $\Xcal[-1]$-coreflection of $\sigma$, which is the same as an $\Xcal[-1]$-coreflection of $E\in\Ycal_{\mathfrak{t}^-}$. In particular, $C=H^0_\mathbb{T}(E)$, the object $F$ embeds in $C^I$ and, hence, $\Fcal\subseteq\Cogen{C}$. Since clearly $C\in\Fcal$ and $\Fcal$ is closed under products and subobjects, we obtain the equality $\Fcal=\Cogen{C}$.

In order to prove that $C$ is quasi-cotilting, we first claim that $\Ext^1_\Hcal(F,C)=\Hom_\Dcal(F[-1],C)=0$ for each $F$ in $\Fcal=\Xcal[-1]\cap\Ycal_{\mathfrak{t}^-}$. To see this,  recall from the discussion above that there is a decomposition triangle 
 $$\xymatrix{Y[-2]\ar[r]&C\ar[r]&\sigma\ar[r]&Y[-1],}$$
where $Y\in\Ycal$ (and $C\in\Xcal[-1]$). Notice now that $\Hom_\Dcal(F[-1],Y[-2])=0$ since $F[-1]\in\Xcal[-2]$, and that $\Hom_\Dcal(F[-1],\sigma)=0$  as $F[-1]\in\Ycal_{\mathfrak{t}^-}[-1]$ and $\sigma\in(\Ycal_{\mathfrak{t}^-})^{\perp_1}$. It follows from the decomposition triangle that $\Hom_\Dcal(F[-1],C)=0$, proving the claim.

It remains to check that every object $X\in\gen{\Cogen{C}}\cap {^{\perp_1}C}$ lies in $\Fcal$.
Each such $X$ is  part of a short exact sequence in $\Hcal$ of the form
 $$\xymatrix{0\ar[r]&F'\ar[r]&F\ar[r]&X\ar[r]&0}$$
with $F',F\in\Fcal$.
 First note that since $C$ is an $\Xcal[-1]$-coreflection of $\sigma$, we have a natural isomorphism $\Hom_\Hcal(-,C)\cong\Hom_\Dcal(-,\sigma)|_\Hcal$. Applying $\Hom_\Hcal(-,C)$ to the above exact sequence and using this natural isomorphism, we obtain a commutative diagram with exact rows,
$$
\xymatrix{
\Hom_\Hcal(F,C)\ar[r]\ar[d]_-\cong&\Hom_\Hcal(F',C)\ar[r]\ar[d]^-\cong&\Ext^1_\Hcal(X,C)=0 \\
\Hom_\Dcal(F,\sigma)\ar[r]&\Hom_\Dcal(F',\sigma)\ar[r]&\Hom_\Dcal(X[-1],\sigma)\ar[r]&\Hom_\Dcal(F[-1],\sigma)=0.
}
$$
It follows that $\Hom_{\Hcal_\sigma}(H_\sigma^{-1}(X),E)\cong\Hom_\Dcal(X[-1],\sigma)=0$, so $H_\sigma^{-1}(X)=0$ and $X\in\Hcal\cap\Hcal_\sigma=\Fcal$.
\end{proof}

We now characterise cosilting torsion pairs in Grothendieck hearts coming from pure-injective cosilting objects. A similar result is proved in \cite[Theorem A]{PSV} (see also~\cite[Proposition 5.19]{PSV-cotilting}) when the ambient category $\Dcal$ is the derived category of $\Hcal$.

\begin{proposition}\label{prop:cover vs finite type}
Let $\sigma$ be a pure-injective cosilting object in a compactly generated triangulated category $\Dcal$. Let $\Hcal$ be the heart of the associated t-structure and let $\mathfrak{t}=(\Tcal,\Fcal)$ be a torsion pair in $\Hcal$. The following statements are equivalent.
\begin{enumerate}
\item The torsion pair $\mathfrak{t}$ is of finite type.
\item The torsion-free class $\Fcal$ is covering in $\Hcal$.
\item Any injective cogenerator of $\Hcal$ admits an $\Fcal$-cover.
\item There is a quasi-cotilting object $C$ in $\Hcal$ such that $\Fcal=\Cogen{C}$.
\item The torsion pair $\mathfrak{t}$ is a cosilting torsion pair  arising from a pure-injective cosilting object.
\item The torsion pair $\mathfrak{t}$ is cosilting.
\end{enumerate}
\end{proposition}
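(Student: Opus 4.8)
The starting point is Theorem~\ref{thm:purity and silting}(1): since $\sigma$ is pure-injective, $\Hcal=\Hcal_\sigma$ is a Grothendieck category (in particular complete, AB5, well-powered, with enough injectives) and $\mathbb{T}_\sigma=(\Xcal,\Ycal)$ is smashing and nondegenerate. Write $\mathbb{T}_{\mathfrak{t}^-}=(\Xcal',\Ycal')$ for the right HRS-tilt of $\mathbb{T}_\sigma$ at $\mathfrak{t}$, with heart $\Hcal_{\mathfrak{t}^-}$ carrying the torsion pair $(\Fcal,\Tcal[-1])$. The plan is to prove the chain of implications $(1)\Rightarrow(5)\Rightarrow(6)\Rightarrow(4)\Rightarrow(1)$ together with $(1)\Rightarrow(2)\Rightarrow(3)\Rightarrow(4)$, which gives all the equivalences.

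Most of these implications are soft. For $(1)\Rightarrow(5)$: if $\mathfrak{t}$ is of finite type then $\Fcal$ is closed under direct limits and finite coproducts, hence under arbitrary coproducts; as $\Ycal[-1]$ is closed under coproducts and $\Ycal'=\Fcal\star\Ycal[-1]$, the t-structure $\mathbb{T}_{\mathfrak{t}^-}$ is smashing, and it is nondegenerate because $\Ycal[-1]\subseteq\Ycal'\subseteq\Ycal$. By Theorem~\ref{thm:HRS-tilt AB5} the heart $\Hcal_{\mathfrak{t}^-}$ is Grothendieck, so by Theorem~\ref{thm:purity and silting}(1) it equals $\Hcal_\tau$ for a cosilting object $\tau$, which is pure-injective since its heart is Grothendieck; thus $\mathfrak{t}\in\CosiltP\Hcal$. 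Next, $(5)\Rightarrow(6)$ is immediate, and $(6)\Rightarrow(4)$ is Lemma~\ref{lem:quasi-cotilting} applied with $C=H^0_{\mathbb{T}_\sigma}(\tau)$, using that $\Hcal$ is complete. For $(1)\Rightarrow(2)$: a torsion-free class closed under direct limits is closed under products, direct limits and subobjects, and such a class in a Grothendieck category is covering by standard approximation theory (it is precovering by a theorem of El Bashir, and a precovering class closed under direct limits admits covers by Enochs). Finally $(2)\Rightarrow(3)$ is trivial.

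For $(3)\Rightarrow(4)$, let $\phi\colon F_0\to E$ be an $\Fcal$-cover of an injective cogenerator $E$ of $\Hcal$. Every $F\in\Fcal$ embeds into a power of $E$, and lifting this embedding through the corresponding power of $\phi$ (a product of $\Fcal$-precovers is again an $\Fcal$-precover) exhibits $F$ as a subobject of a product of copies of $F_0$; since $F_0\in\Fcal$ and $\Fcal$ is closed under products and subobjects, this yields $\Fcal=\Cogen{F_0}$. A Wakamatsu-type argument, using only that $\Fcal$ is extension-closed and $E$ injective, gives $\Ext^1_{\Hcal}(\Fcal,F_0)=0$; as $\Cogen{F_0}\subseteq\gen{\Cogen{F_0}}$ trivially, $C:=F_0$ is then quasi-cotilting with $\Fcal=\Cogen{C}$.

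The implication $(4)\Rightarrow(1)$ is the crux. Given a quasi-cotilting $C$ with $\Fcal=\Cogen{C}$, the plan is to manufacture a pure-injective cosilting object realising $\mathbb{T}_{\mathfrak{t}^-}$: choose an injective copresentation $0\to C\to I^0\to I^1$ in $\Hcal$, use the equivalence $\Prod\sigma\simeq\Inj\Hcal$ of Proposition~\ref{prop:summary cosilting}(2) to lift the map $I^0\to I^1$ to a morphism $f\colon J^0\to J^1$ in $\Dcal$ with $J^0,J^1\in\Prod\sigma$, and let $\tau$ be a suitable shift of $\cone{f}$; one then checks, via the quasi-cotilting property and Lemma~\ref{lem:cohomol1}, that $\tau$ is cosilting with $\mathbb{T}_\tau=\mathbb{T}_{\mathfrak{t}^-}$ and that $\tau$ is pure-injective, after which Theorem~\ref{thm:HRS-tilt AB5} forces $\mathfrak{t}$ to be of finite type. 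Alternatively, one may appeal to the definability of quasi-cotilting torsion-free classes established in the cotilting literature (cf.\ \cite{PSV,PSV-cotilting}) to conclude directly that $\Cogen{C}$ is closed under direct limits. I expect the main obstacle to be precisely this step: it is a Bazzoni-type phenomenon --- the torsion-free class cogenerated by a quasi-cotilting object is automatically of finite type --- and the delicate point in the construction is verifying that the object $\tau$ produced from $C$ is pure-injective, equivalently that $\Hcal_{\mathfrak{t}^-}$ is a Grothendieck category. This does not follow formally from $J^0$ and $J^1$ being pure-injective, and it is exactly where the compact generation of $\Dcal$ and the Grothendieck hypothesis on $\Hcal$ are used.
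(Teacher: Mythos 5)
The overall structure of your proof is the same as the paper's: the implication cycle $(1)\Rightarrow(2)\Rightarrow(3)\Rightarrow(4)\Rightarrow(1)$ together with $(1)\Rightarrow(5)\Rightarrow(6)\Rightarrow(4)$, with $(4)\Rightarrow(1)$ delegated to the result that a torsion-free class cogenerated by a quasi-cotilting object is of finite type (\cite[Theorem~A]{PSV}, which you correctly identify as the ``alternative''; the paper simply cites it, skipping the unnecessary self-contained construction of a cosilting complex from $C$). The implications $(1)\Rightarrow(2)$, $(2)\Rightarrow(3)$, $(1)\Rightarrow(5)$, $(5)\Rightarrow(6)$ and $(6)\Rightarrow(4)$ match the paper's.

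The genuine gap is in $(3)\Rightarrow(4)$. Recall that $C$ quasi-cotilting means the \emph{equality} $\Cogen{C}=\gen{\Cogen{C}}\cap {}^{\perp_1}C$; you only verify the inclusion $\Cogen{C}\subseteq\gen{\Cogen{C}}\cap{}^{\perp_1}C$ and declare victory. The reverse inclusion $\gen{\Cogen{C}}\cap{}^{\perp_1}C\subseteq\Cogen{C}$ is the substantial part of the proof, and it is not a formality. Moreover your candidate $C=F_0$ is too small: the paper takes $C=F\oplus K$, where $K=\ker(\phi)$ with $\phi\colon F\to E$ the $\Fcal$-cover, and this is essential. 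In the diagram chase proving the reverse inclusion, one starts from $X\in\gen{\Cogen{C}}\cap{}^{\perp_1}C$, a presentation $0\to L\to Y\to X\to 0$ with $Y\in\Cogen{F}$, and a map $\phi'\colon X\to E$; lifting through $\phi$ produces $d\colon Y\to F$ and an induced map $c\colon L\to K$, and it is exactly the hypothesis $\Ext^1_\Hcal(X,K)=0$ that lets one extend $c$ over $Y$ so as to correct $d$ into a map $\alpha\colon X\to F$ with $\phi\alpha=\phi'$, whence $X\in\Cogen{F}$. With $C=F_0$ alone the required $\Ext^1$-vanishing against $K$ is not available, so the lift does not exist and the argument breaks. (A smaller imprecision: you credit $\Ext^1_\Hcal(\Fcal,F_0)=0$ to Wakamatsu; Wakamatsu's lemma gives $\Ext^1_\Hcal(\Fcal,K)=0$, while $\Ext^1_\Hcal(\Fcal,F_0)=0$ is obtained by a separate splitting argument using injectivity of $E$ and minimality of the cover.) The fix is to set $C=F\oplus K$ and carry out the reverse inclusion as in the paper.
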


Before we prove the proposition, we point out an immediate corollary, which goes back to~\cite[Theorem 2.8]{Ba-cotilti-pi}, later generalised in~\cite[Theorem 3.9]{CoSt}.

\begin{corollary}\label{cor:cotilting pure-injective}
Let $\sigma$ be a pure-injective cosilting object in a compactly generated triangulated category $\Dcal$ and let $\Hcal$ be the heart of the associated t-structure. Then every cosilting torsion pair in $\Hcal$ arises from a pure-injective cosilting object, i.e.\ $\Cosilt{\Hcal}=\CosiltP{\Hcal}$.
\end{corollary}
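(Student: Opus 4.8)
The plan is to read the statement off Proposition~\ref{prop:cover vs finite type}; accordingly the argument is very short. The inclusion $\CosiltP{\Hcal}\subseteq\Cosilt{\Hcal}$ is immediate from the definitions, so it only remains to prove $\Cosilt{\Hcal}\subseteq\CosiltP{\Hcal}$. I would take an arbitrary cosilting torsion pair $\mathfrak{t}=(\Tcal,\Fcal)$ in $\Hcal$. Since by hypothesis $\Hcal$ is the heart of the t-structure associated to a pure-injective cosilting object $\sigma$ in the compactly generated triangulated category $\Dcal$, the standing assumptions of Proposition~\ref{prop:cover vs finite type} are satisfied for $\mathfrak{t}$. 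Condition~(6) of that proposition holds by the choice of $\mathfrak{t}$, hence so does the equivalent condition~(5), which states precisely that $\mathfrak{t}$ is a cosilting torsion pair arising from a pure-injective cosilting object; that is, $\mathfrak{t}\in\CosiltP{\Hcal}$. As $\mathfrak{t}$ was arbitrary, the two collections coincide.

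There is no genuine obstacle in the corollary itself: all the content lies in the chain of equivalences of Proposition~\ref{prop:cover vs finite type}, most notably the implication (6)$\Rightarrow$(5), which in turn rests on Theorem~\ref{thm:HRS-tilt AB5} (characterising when the right HRS-tilt has Grothendieck, equivalently AB5, heart) and on Lemma~\ref{lem:quasi-cotilting} (identifying the zero cohomology of the cosilting object attached to $\mathfrak{t}$ as a quasi-cotilting object $C$ with $\Fcal=\Cogen{C}$). The single point to keep in mind is expository rather than mathematical: the corollary is stated before the proof of Proposition~\ref{prop:cover vs finite type} is given, but since its statement is already available we are entitled to cite it. In this light the corollary is simply the translation of the proposition's equivalence into the language of the sets $\Cosilt{\Hcal}$ and $\CosiltP{\Hcal}$, obtained by applying the equivalence pointwise to every cosilting torsion pair in $\Hcal$.
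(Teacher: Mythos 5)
Your proof is correct and matches the paper's intent exactly: the paper states the corollary as an immediate consequence of Proposition~\ref{prop:cover vs finite type}, specifically the equivalence (5)$\Leftrightarrow$(6), without giving a separate argument. Your observation that the corollary is stated before the proposition's proof is given, yet may still cite it, is also in line with how the paper presents things.
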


\begin{proof}[Proof of Proposition~\ref{prop:cover vs finite type}]
Let $\mathbb{T}$ denote the t-structure associated to $\sigma$. Recall from Theorem~\ref{thm:purity and silting} that $\mathbb{T}$ is a smashing nondegenerate t-structure and that $\Hcal$ is a Grothendieck category. 

(1) $\Rightarrow$ (2): If $\Fcal$ is closed under direct limits, it follows from {\cite[Theorem~3.2]{ElBa}}  that $\Fcal$ is covering.

(2) $\Rightarrow$ (3): This is trivial.

(4) $\Rightarrow$ (1): This follows from \cite[Theorem A]{PSV}.

(3) $\Rightarrow$ (4): Let $E$ be an injective cogenerator of $\Hcal$ and let  $f\colon F\longrightarrow E$ be an $\Fcal$-cover. Consider the exact sequence
$$\xymatrix{0\ar[r]&K\ar[r]^h&F\ar[r]^f&E}.$$
We  {claim that $C=F\oplus K$ is quasi-cotilting. First note that
$\Fcal=\Cogen{F}=\Cogen{C}$.} Indeed $\Cogen{F}$ is clearly contained in
$\Fcal$ and, conversely, for any object $X$ in $\Fcal$, any given monomorphism $g\colon X\longrightarrow E^I$ (which exists for some set $I$) will factor through the precover $f^I$ via a monomorphism.

Next, we show the equality $\Cogen{C}=\gen{\Cogen{C}}\cap {}^{\perp_1}C$. Let us first verify that $\Cogen{C}\subseteq  {}^{\perp_1}C$. If $X$ lies in $\Fcal$, consider an exact sequence
$$\xymatrix{0\ar[r]&F\ar[r]^\alpha&Y\ar[r]^\beta&X\ar[r]&0}.$$
{S}ince $\Fcal=\Cogen{F}$ is extension-closed, there is a monomorphism $\epsilon\colon Y\longrightarrow F^J$ for some set $J$. Since $E$ is injective, the map $f$ extends along $\epsilon\circ\alpha$ to a map $g\colon F^{I}\longrightarrow E$, i.e. $g\circ\epsilon\circ\alpha=f$. Since $f$ is an $\Fcal$-(pre)cover of $E$, there is $h\colon F^I\longrightarrow F$ such that $f\circ h=g$. Finally, we observe that since $f$ is right minimal, $f\circ h\circ\epsilon\circ\alpha=f$ implies that $h\circ \epsilon\circ \alpha$ is an isomorphism and, therefore, $\alpha$ splits. This shows that $\Ext^1_\Hcal(X,F)=0$. The {condition} $\Ext^1_\Hcal(X,K)=0$ follows from {the fact that $f$ is an $\Fcal$-cover by a result known as Wakamatsu's Lemma}. This completes the proof of this inclusion.

Let now $X$ be an object in $\gen{\Cogen{F}}\cap {^{\perp_1}K}$ and consider a short exact sequence
$$
\xymatrix{0\ar[r]&L\ar[r]^a&Y\ar[r]^b&X\ar[r]&0}
$$
with $Y$ in $\Cogen{{F}}$. Let $\phi\colon X\longrightarrow E$ be a non-zero map. Then $\phi\circ b$ factors through $f$ since $f$ is an $\Fcal$-(pre)cover, i.e. there is map $d\colon Y\longrightarrow F$ such that $f\circ d=\phi\circ b$. This induces a map $c\colon L\longrightarrow K$ such that $h\circ c=d\circ a$. Since $\Ext^1_\Hcal(X,K)=0$, it follows that $\Hom_\Hcal(a,K)$ is surjective and, thus, there is a map $g\colon Y\longrightarrow K$ such that $g\circ a=c$. In summary, up to this moment, we have the {solid part of the} following diagram with exact rows
$$
\xymatrix{0\ar[r]&L\ar[r]^a\ar[d]^c&Y\ar[ld]^g\ar[r]^b\ar[d]^d&X\ar[r]\ar[d]^\phi\ar@{.>}[ld]^\alpha&0\\ 0\ar[r]&K\ar[r]^h&F\ar[r]^f&E}
$$
{Now a standard argument (using for example a dual version of \cite[Lemma 2.8]{STH14}) yields a map $\alpha\colon X\longrightarrow F$, indicated by the dotted arrow above, such that $f\circ \alpha=\phi$.}
{We have shown that $\Hom_\Hcal (X,f)$ is a surjective map. As above, we infer that any given monomorphism $g\colon X\longrightarrow E^I$  factors through  $f^I$ via a monomorphism, }
showing that $X$ lies in $\Cogen{F}$, as wanted.

(1) $\Rightarrow$ (5): If $\mathfrak{t}$ is of finite type, $\mathbb{T}_{\mathfrak{t}^-}$ is a smashing t-structure with a Grothendieck heart by Theorem~\ref{thm:HRS-tilt AB5}, so it corresponds to a pure-injective cosilting object in $\Dcal$ by Theorem~\ref{thm:purity and silting}.

(5) $\Rightarrow$ (6): This is trivial.

(6) $\Rightarrow$ (4): This is has been proved in Lemma~\ref{lem:quasi-cotilting}.
\end{proof}

For pure-injective cosilting objects in compactly generated triangulated categories, we can now provide a more convenient characterisation for the existence of  left or right mutation.
We first need some preliminary results on the existence of approximations.

\begin{lemma}\label{lem: covers and envelopes}
Let $\Acal$ be {a (not necessarily additive)} category, $\Rcal$ a covering class and $\Ical$ an enveloping class.  The class $\Rcal$ is closed under $\Ical$-envelopes if and only if the class $\Ical$ is closed under $\Rcal$-covers.
\end{lemma}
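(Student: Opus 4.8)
The plan is to prove both implications by tracking a single universal object through the two kinds of approximations. Suppose first that $\Rcal$ is closed under $\Ical$-envelopes, and let $I\in\Ical$; we want to show $I$ admits an $\Rcal$-cover lying in $\Ical$. Take an $\Rcal$-cover $r\colon R\to I$ (which exists since $\Rcal$ is covering), and then an $\Ical$-envelope $e\colon R\to J$ of $R$. By hypothesis $J\in\Rcal$. The idea is that $J$ — equipped with the natural map to $I$ — should again be an $\Rcal$-cover of $I$, because the cover $r$ factors through $e$ (as $I\in\Ical$ and $e$ is an $\Ical$-preenvelope, there is $s\colon J\to I$ with $se=r$), while $e$ itself factors through the $\Rcal$-precover $r$ (as $J\in\Rcal$; say $e=tr$ for $t\colon R\to J$). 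First I would check that $s\colon J\to I$ is an $\Rcal$-\emph{pre}cover: any $R'\to I$ with $R'\in\Rcal$ factors through $r$, hence through $s=r$-after-$e$; wait, more carefully it factors through $r$ and $r=se$, so it factors through $s$. Then minimality of $s$ follows from a two-sided minimality argument: the composite $te\colon R\to R$ satisfies $r(te)=\dots=r$ up to the identifications, forcing $te$ to be an isomorphism by minimality of $r$, and dually $et\colon J\to J$ satisfies $e(et)=\dots$, hmm, one uses that $e$ is a minimal preenvelope to conclude $et$ is an isomorphism, whence $t$ and $e$ are mutually inverse isomorphisms and $J\cong R$ \emph{already lies in $\Ical$} in a compatible way, and $s$ inherits minimality from $r$.

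Let me restate the core mechanism more cleanly, since that clumsy version hides the real point. Given an $\Rcal$-cover $r\colon R\to I$ with $I\in\Ical$, and an $\Ical$-envelope $e\colon R\to J$, one gets factorizations $r=se$ and $e=tr$. Then $r=se=s(tr)=(st)r$, and right-minimality of $r$ forces $st$ to be an automorphism of $R$; similarly $e=tr=t(se)=(ts)e$, and left-minimality of $e$ forces $ts$ to be an automorphism of $J$. Hence $s$ and $t$ are mutually inverse isomorphisms, so $J\in\Ical$ (as $J\cong R$ via a map under which $e$ corresponds to... no: we have $J\cong R$ as objects, but we do not a priori know $R\in\Ical$). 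The actual conclusion we want is that $R\in\Ical$: since $e\colon R\to J$ is an isomorphism and $\Ical$ is a full subcategory closed under isomorphism, $R\cong J$ and $J\in\Rcal$ — that gives $R\in\Rcal$, which we knew. To get $R\in\Ical$ we instead argue that $e$ being an isomorphism means $\id_R$ is (up to iso) an $\Ical$-envelope of $R$, hence $R$ is its own envelope, hence $R\in\Ical$. Thus the original $\Rcal$-cover $r\colon R\to I$ already has its source in $\Ical$, which is exactly the assertion that $\Ical$ is closed under $\Rcal$-covers.

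The reverse implication is formally dual: assume $\Ical$ is closed under $\Rcal$-covers, take $R\in\Rcal$ and an $\Ical$-envelope $e\colon R\to I$, and take an $\Rcal$-cover $r\colon R'\to I$ of $I$. By hypothesis $R'\in\Ical$. The same factorization-and-minimality juggling — $e$ factors through $r$ since $R\in\Rcal$... wait, $e$ has target $I$ and source $R\in\Rcal$, so $e$ factors through the precover $r$: $e=ru$ for some $u\colon R\to R'$; and $r$ factors through the preenvelope $e$ since $R'\in\Ical$: $r=ve$... but $r$ has target $I$ not inside $\Ical$, so this direction uses $e$ being a preenvelope of $R$ and $r\colon R'\to I$... no, one needs a map out of $R$ to apply the preenvelope property. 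Instead: $u\colon R\to R'$ and we want a retraction; consider $R'\xrightarrow{\text{?}}$. I would set this up by noting $ru=e$ and then, since $R'\in\Ical$ and $e$ is an $\Ical$-envelope, there is $w\colon I\to R'$... this needs $R'$ receiving a map from $R$, namely $u$, factoring through $e$: $u=we$. Then $e=ru=r(we)=(rw)e$ forces $rw\in\mathrm{Aut}(I)$ by left-minimality of $e$, and $u=we=w(ru)=(wr)u$... to conclude we need right-minimality, which $u$ need not have. The clean fix: $rw$ is an automorphism of $I$, so $r$ is a split epi onto... actually $rw=\id_I$ up to automorphism means $w$ is a split mono and $r$ a split epi; combined with $r$ being an $\Rcal$-cover (right minimal), $rw$ automorphism plus $r$ minimal gives $w$ iso... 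I'll phrase it as: $rw\in\mathrm{Aut}(I)$ and minimality of $r$ applied to $w(rw)^{-1}\cdot$-adjustment shows $r$ is iso onto, hence $I\cong R'\in\Ical$, i.e. $I\in\Ical$, so $\Rcal$ is closed under $\Ical$-envelopes. I expect the \textbf{main obstacle} to be exactly this bookkeeping: making the minimality arguments genuinely symmetric and airtight without accidentally assuming minimality of an auxiliary map (like $u$ or $t$) that one has not established — the resolution each time is to push the automorphism conclusion onto the map that \emph{is} known to be minimal ($r$ or $e$) and then deduce the other map is invertible, and finally invoke fullness and isomorphism-closure of $\Ical$ to transport membership.
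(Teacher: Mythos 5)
Your forward direction rests on a factorization $e = tr$ that the approximation properties do not provide. You correctly obtain $r = se$ for some $s\colon J\to I$ (because $e$ is an $\Ical$-preenvelope of $R$ and $r$ lands in $I\in\Ical$). But the precover property of $r\colon R\to I$ only tells you that maps from $\Rcal$-objects \emph{into} $I$ factor through $r$; it says nothing about $e\colon R\to J$, which has the wrong source and the wrong target for either approximation property to apply. So $t$ does not exist by any stated property, and the chain $r = (st)r$, $e = (ts)e$ cannot begin. (As written $t$ also has a type error: if $e = t\circ r$ then one needs $t\colon I\to J$, not $t\colon R\to J$.) Even granting a hypothetical $t\colon I\to J$, the step ``right-minimality of $r$ forces $st$ to be an automorphism of $R$'' misuses the definition: $st$ would be an endomorphism of the \emph{codomain} $I$, whereas right-minimality of $r\colon R\to I$ concerns endomorphisms $h$ of the \emph{domain} $R$ satisfying $rh=r$. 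And if $s,t$ were mutually inverse, the conclusion would be $I\cong J$, not $J\cong R$ as you write.

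The correct move, which the paper makes, is to apply the precover property of $r$ to the \emph{induced} map $s\colon J\to I$: since $J\in\Rcal$ by hypothesis, $s$ factors as $s = rk$ for some $k\colon J\to R$. Then $r = se = r(ke)$, and $ke\colon R\to R$ genuinely is an endomorphism of the domain, so right-minimality of $r$ gives $ke$ an isomorphism. This shows $e$ is a split monomorphism; the paper then finishes by observing that an enveloping class is closed under retracts (proved as a preliminary step), so $R$, being a retract of $J\in\Ical$, lies in $\Ical$. Alternatively, and this seems to be the argument you were reaching for: once $e$ is split mono with retraction $p:=(ke)^{-1}k$, the identity $(ep)e = e(pe)=e$ together with left-minimality of $e$ forces $ep$ to be an isomorphism, whence $p$ has both a left and a right inverse, $p$ is an isomorphism, $e = p^{-1}$, and so $R\cong J\in\Ical$. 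Your reverse direction is on firmer ground (the factorizations $e = ru$ and $u = we$ are legitimate, and $(rw)e=e$ correctly invokes left-minimality of $e$), but the concluding ``$I\in\Ical$'' is vacuous since $I$ is by definition the target of an $\Ical$-envelope; what you need, and can extract once $r$ is shown to be an isomorphism, is $I\cong R'\in\Rcal$, hence $I\in\Rcal$.
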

\begin{proof}
Suppose that $\Rcal$ is closed under $\Ical$-envelopes. First we show that $\Ical$ is closed under retracts.
Suppose that we have an object $J$ in $\Ical$ together with a retraction $\pi\colon J\to I$ and its left inverse $\iota\colon I\to J$.
Consider an $\Ical$-envelope $f \colon I \to E_\Ical(I)$ of $I$.
As $f$ is an $\Ical$-envelope, there exists a factorisation $\iota = gf$ for some $g \colon E_\Ical(I) \to J$.  Then $(\pi g) f = \pi \iota = \id$ and so $f$ has a left inverse.  Moreover, we have that $f(\pi g)f = f \pi \iota = f$ so, by the minimality of $f$, we have that $f(\pi g)$ is an isomorphism, so $f$ has a right inverse. Thus, we conclude that $f$ is an isomorphism and $I$ lies in $\Ical$. Now, let $h \colon C_\Rcal(I) \to I$ be an $\Rcal$-cover of an object $I$ in $\Ical$ and let $e \colon C_\Rcal(I) \to E_\Ical(C_\Rcal(I))$ be an $\Ical$-envelope of $C_\Rcal(I)$.  Since $e$ is an $\Ical$-envelope and $h$ is a morphism to $\Ical$, we have a factorisation $h = me$ for some $m \colon E_\Ical(C_\Rcal(I)) \to I$.  Moveover, since $E_\Ical(C_\Rcal(I))$ lies in $\Rcal$ and $h$ is an $\Rcal$-cover, there is a factorisation $m = hk$ for some $k \colon  E_\Ical(C_\Rcal(I)) \to C_\Rcal(I)$.  Then we have $hke = me = h$, and so {$ke$ is an isomorphism} because $h$ is minimal.  We have shown that $C_\Rcal(I)$ is a {retract} of $E_\Ical(C_\Rcal(I))$ and so $C_\Rcal(I)$ lies in $\Ical$. The converse statement is dual.
\end{proof}

\begin{lemma}\label{rem:approximate every injective}
Let $\sigma$ be a pure-injective cosilting object in a compactly generated triangulated category $\Dcal$, and $\Escr=\Prod\Escr$ a  subcategory of $\Prod{\sigma}$. Let $(\Scal,\Rcal)$ be the torsion pair in $\Hcal_\sigma$ cogenerated by~$H^0_\sigma(\Escr)$.
\begin{enumerate}
\item $\Escr$ is an enveloping class;
\item If  $(\Scal,\Rcal)$  is a cosilting torsion pair, then every object in $\Prod{\sigma}$ admits an $\Escr$-cover. 
\end{enumerate}
\end{lemma}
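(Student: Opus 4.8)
The plan is to transport both statements across the equivalence $\Prod{\sigma}\simeq\Inj{\Hcal_\sigma}$ and then work entirely inside the Grothendieck category $\Hcal_\sigma$. Since $\sigma$ is pure-injective, $\Hcal_\sigma$ is a Grothendieck category by Theorem~\ref{thm:purity and silting}(1), and by Proposition~\ref{prop:summary cosilting}(2) the functor $H^0_\sigma$ restricts to an equivalence $\Prod{\sigma}\simeq\Inj{\Hcal_\sigma}$; moreover the natural isomorphism $\Hom_\Dcal(-,\sigma)\cong\Hom_{\Hcal_\sigma}(H^0_\sigma(-),H^0_\sigma(\sigma))$ extends, by passing to summands of products, to a natural isomorphism $\Hom_\Dcal(X,-)|_{\Prod{\sigma}}\cong\Hom_{\Hcal_\sigma}(H^0_\sigma(X),-)\circ(H^0_\sigma|_{\Prod{\sigma}})$, valid for every $X\in\Dcal$. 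Because $\Escr=\Prod\Escr$ and the restricted functor preserves products and summands, $H^0_\sigma(\Escr)$ is closed under products and summands in $\Inj{\Hcal_\sigma}$, and an injective object lying in $\Cogen{H^0_\sigma(\Escr)}$ splits off such a product; hence $H^0_\sigma(\Escr)=\Rcal\cap\Inj{\Hcal_\sigma}$, where $\Rcal=\Cogen{H^0_\sigma(\Escr)}$. Putting these together, a morphism $X\to\varepsilon$ with $\varepsilon\in\Escr$ is an $\Escr$-precover, $\Escr$-cover, $\Escr$-preenvelope, or $\Escr$-envelope of $X$ in $\Dcal$ if and only if $H^0_\sigma(X)\to H^0_\sigma(\varepsilon)$ is, respectively, an $H^0_\sigma(\Escr)$-precover, cover, preenvelope, or envelope of $H^0_\sigma(X)$ in $\Hcal_\sigma$ (the minimality conditions match because $H^0_\sigma$ is fully faithful on $\Prod{\sigma}$, hence an isomorphism on the endomorphism rings involved). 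As every object of $\Hcal_\sigma$ has the form $H^0_\sigma(X)$, both assertions reduce to statements about the class of injectives $H^0_\sigma(\Escr)=\Rcal\cap\Inj{\Hcal_\sigma}$ in $\Hcal_\sigma$.

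For (1), I would argue that $H^0_\sigma(\Escr)$ is enveloping in $\Hcal_\sigma$. The torsion pair $(\Scal,\Rcal)$ is hereditary by Remark~\ref{rem:torsion torsion-free characterisation}, being cogenerated by a family of injective objects; consequently $\Rcal$ is closed under injective envelopes, since for $F\in\Rcal$ with injective envelope $F\hookrightarrow E(F)$ the torsion subobject satisfies $t(E(F))\cap F\in\Scal\cap\Rcal=0$, so $t(E(F))=0$ by essentiality. Given $M\in\Hcal_\sigma$, compose the $\Rcal$-reflection $M\to M/t(M)$ (with $t$ the torsion radical of $(\Scal,\Rcal)$) with an injective envelope $M/t(M)\hookrightarrow E(M/t(M))$; the target lies in $\Rcal\cap\Inj{\Hcal_\sigma}=H^0_\sigma(\Escr)$. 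This composite is an $H^0_\sigma(\Escr)$-preenvelope (any map from $M$ into an object of $H^0_\sigma(\Escr)\subseteq\Rcal$ kills $t(M)$, hence factors through $M/t(M)$, and then extends along the essential monomorphism into the injective object) and it is minimal (an endomorphism of $E(M/t(M))$ fixing the composite fixes the essential subobject $M/t(M)$, so is a monomorphism whose image is an injective subobject containing an essential subobject, hence an isomorphism). By the reduction, $\Escr$ is an enveloping class.

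For (2), assume $(\Scal,\Rcal)$ is a cosilting torsion pair. Then by Proposition~\ref{prop:cover vs finite type} it is of finite type and $\Rcal$ is a covering class in $\Hcal_\sigma$. I would now apply Lemma~\ref{lem: covers and envelopes} with the covering class $\Rcal$ and the enveloping class $\Inj{\Hcal_\sigma}$: since $\Rcal$ is closed under $\Inj{\Hcal_\sigma}$-envelopes, i.e.\ under injective envelopes (shown above), the class $\Inj{\Hcal_\sigma}$ is closed under $\Rcal$-covers. Hence the $\Rcal$-cover $h\colon R\to E$ of an injective object $E$ of $\Hcal_\sigma$ has $R$ injective, so $R\in\Rcal\cap\Inj{\Hcal_\sigma}=H^0_\sigma(\Escr)$; and an $\Rcal$-cover whose source already lies in $H^0_\sigma(\Escr)\subseteq\Rcal$ is automatically an $H^0_\sigma(\Escr)$-cover, because every map from an object of $H^0_\sigma(\Escr)$ to $E$ factors through $h$ (as $h$ is an $\Rcal$-precover) and $h$ is already right minimal. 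Thus every injective object of $\Hcal_\sigma$ admits an $H^0_\sigma(\Escr)$-cover, and by the reduction every object of $\Prod{\sigma}$ admits an $\Escr$-cover.

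I expect the only genuinely delicate point to be recognising the role of Lemma~\ref{lem: covers and envelopes}: it is exactly the mechanism that upgrades the $\Rcal$-cover of an injective object to a cover by an object that is simultaneously injective and torsion-free --- equivalently, an object of $H^0_\sigma(\Escr)$. The remaining ingredients (the transfer through $\Prod{\sigma}\simeq\Inj{\Hcal_\sigma}$, the finite-type criterion of Proposition~\ref{prop:cover vs finite type}, and the fact that hereditary torsion-free classes are closed under injective envelopes) are routine, though some care is needed to check that the transfer really does preserve minimality of approximations.
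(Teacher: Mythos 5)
Your proof is correct and follows essentially the same route as the paper: part (1) is the paper's one-sentence observation spelled out (envelope of the torsion-free part followed by injective envelope), and part (2) is the paper's argument that $\Rcal$ is covering by Proposition~\ref{prop:cover vs finite type}, that injectives are closed under $\Rcal$-covers by Lemma~\ref{lem: covers and envelopes} (using heredity of $(\Scal,\Rcal)$), and that these $\Rcal$-covers are $H^0_\sigma(\Escr)$-covers. The only presentational difference is that you package the passage between $\Dcal$ and $\Hcal_\sigma$ as an explicit transfer principle along the equivalence $\Prod{\sigma}\simeq\Inj{\Hcal_\sigma}$, whereas the paper cites Lemma~\ref{lem:approx prop} for the same transfer; both rest on the natural isomorphism $\Hom_\Dcal(X,E)\cong\Hom_{\Hcal_\sigma}(H^0_\sigma(X),H^0_\sigma(E))$ for $E\in\Prod\sigma$.
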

\begin{proof}
(1) Recall from Lemma~\ref{lem:cosilting enveloping} that $\Escr$ is preenveloping. Since $\sigma$ is pure-injective and $\Hcal_\sigma$ is a Grothendieck category, it follows that, in fact, $\Escr$ is enveloping.
Indeed, this is the same as proving that $H^0_\sigma(\Escr)$ is enveloping in $\Hcal_\sigma$, and such envelopes can be constructed as injective envelopes of the torsion-free part with respect to the hereditary torsion pair $(\Scal,\Rcal)$.

(2) By Proposition~\ref{prop:cover vs finite type}, the torsion-free class $\Rcal$ is  a covering class in $\Hcal_\sigma$. Hence, if $\gamma$ is in $\Prod{\sigma}$, the injective object $H^0_\sigma(\gamma)$ of $\Hcal_\sigma$ admits an  $\Rcal$-cover. Moreover,  since $(\Scal,\Rcal)$ is a hereditary torsion pair, $\Rcal$ is closed under injective envelopes and, thus, by Lemma \ref{lem: covers and envelopes}, injectives are closed under $\Rcal$-covers. The assertion then follows by Lemma~\ref{lem:approx prop}.
\end{proof}

The following theorem refines Proposition~\ref{prop:general existence of mutations} in case  the ambient category is compactly generated and the cosilting object  is pure-injective. It also says that  mutation of such objects is automatically again pure-injective.

\begin{theorem}\label{existence right mutation}\label{existence left mutation}
Let $\sigma$ be a pure-injective cosilting object in a compactly generated triangulated category $\Dcal$, and $\Escr=\Prod\Escr$ a  subcategory of $\Prod{\sigma}$. Let $(\Scal,\Rcal)$ be the torsion pair in $\Hcal_\sigma$ cogenerated by~$H^0_\sigma(\Escr)$.

\begin{enumerate}
\item The following statements are equivalent.
\begin{enumerate}
\item[(a)] $\sigma$ admits a  left mutation $\sigma'$  with respect to $\Escr$;
\item[(b)] the torsion class $\Scal={}^{\perp_0}H^0_\sigma(\Escr)$ in $\Hcal_\sigma$ is closed under products (that is, it is a TTF class);
\item[(c)] the object $\varepsilon_0\oplus \varepsilon_1$  arising from an $\Escr$-envelope 
 $\sigma\to \varepsilon_0$ and its cone $\varepsilon_1$ is a cosilting object.
\end{enumerate}
\item The following statements are equivalent.
\begin{enumerate}
\item[(a)] $\sigma$ admits a  right mutation $\sigma'$  with respect to $\Escr$;
\item[(b)] the torsion-free class $\Rcal=\Cogen{H^0_\sigma(\Escr)}$ in $\Hcal_\sigma$ is closed under direct limits;
\item[(c)] the cosilting object $\sigma$ admits an $\Escr$-cover.
\end{enumerate}
\end{enumerate}
In both cases, if the equivalent conditions are satisfied, any mutation $\sigma'$ as in \emph{(a)} is pure-injective.
\end{theorem}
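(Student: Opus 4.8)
The plan is to reduce both halves to statements about cosilting torsion pairs in the Grothendieck heart $\Hcal_\sigma$ by means of Proposition~\ref{prop:general existence of mutations}, and then to decide exactly which torsion pairs occur using the finite-type criteria of this section (Theorem~\ref{thm:HRS-tilt AB5} and Proposition~\ref{prop:cover vs finite type}) together with the approximation results Lemmas~\ref{lem:approx prop} and~\ref{rem:approximate every injective}. The bijection of Theorem~\ref{thm:purity and silting}(1) then delivers the closing pure-injectivity assertion essentially for free, because all the HRS-tilts in play will be shown to have Grothendieck hearts.

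I would treat the right mutation first. By Proposition~\ref{prop:general existence of mutations}(2), statement (a) is equivalent to the conjunction ``$\sigma$ admits an $\Escr$-precover and $(\Scal,\Rcal)$ is a cosilting torsion pair'', and by Proposition~\ref{prop:cover vs finite type} the latter condition is equivalent to $(\Scal,\Rcal)$ being of finite type, i.e.\ to (b); moreover, when it holds, Lemma~\ref{rem:approximate every injective}(2) supplies an $\Escr$-cover of every object of $\Prod\sigma$, in particular of $\sigma$, which also settles (b)$\Rightarrow$(c) and gives back the needed $\Escr$-precover. For (c)$\Rightarrow$(a), an $\Escr$-cover of $\sigma$ is in particular an $\Escr$-precover; transporting it through the equivalence $\Prod\sigma\simeq\Inj\Hcal_\sigma$ of Proposition~\ref{prop:summary cosilting}(2) and invoking Lemma~\ref{lem:approx prop} shows that $H^0_\sigma$ turns it into an $\Rcal$-cover of the injective cogenerator $H^0_\sigma(\sigma)$, so $(\Scal,\Rcal)$ is cosilting by Proposition~\ref{prop:cover vs finite type}, and Proposition~\ref{prop:general existence of mutations}(2) then yields (a). This closes the cycle for part (2).

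For the left mutation, Proposition~\ref{prop:general existence of mutations}(1) says (a) holds if and only if $\Hcal_\sigma$ carries a cosilting torsion pair of the form $(\Tcal,\Scal)$ with $\Scal={}^{\perp_0}H^0_\sigma(\Escr)$. Since $\Scal$ is automatically a hereditary torsion class (Remark~\ref{rem:torsion torsion-free characterisation}), the pair $(\Tcal,\Scal)$ is even \emph{defined} precisely when $\Scal$ is a torsion-free class, i.e.\ closed under products, which is exactly (b). Assuming (b), the torsion pair $(\Tcal,\Scal)$ is automatically of finite type, because any torsion class in the cocomplete heart $\Hcal_\sigma$ is closed under direct limits; hence, by Theorem~\ref{thm:HRS-tilt AB5}, the right HRS-tilt heart $\Hcal_{\mathfrak{t}^-}$ (with $\mathfrak{t}=(\Tcal,\Scal)$) is a Grothendieck category, and so is the left HRS-tilt heart $\Hcal_{\mathfrak{t}^+}=\Hcal_{\mathfrak{t}^-}[1]$ by Remark~\ref{rem:inversetilt}, while $\mathbb{T}_{\mathfrak{t}^+}$ inherits smashingness and nondegeneracy from $\mathbb{T}_\sigma$. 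Theorem~\ref{thm:purity and silting}(1) then makes $\mathbb{T}_{\mathfrak{t}^+}$ the t-structure of a pure-injective cosilting object, so $(\Tcal,\Scal)$ is a cosilting torsion pair and (a) follows; conversely (a) forces $(\Tcal,\Scal)$ to be a torsion pair, hence $\Scal$ to be closed under products, giving (b). For the tie-in with (c): an $\Escr$-envelope $\sigma\to\varepsilon_0$ exists by Lemma~\ref{rem:approximate every injective}(1) and is in particular an $\Escr$-preenvelope, so the ``independence of the triangle'' clause of Theorem~\ref{thm:mutation as HRS-tilt}(1) guarantees that the cone construction on this envelope realises a left mutation whenever one exists, and conversely that a cosilting object $\varepsilon_0\oplus\varepsilon_1$ produced this way \emph{is} a left mutation of $\sigma$ with respect to $\Escr$. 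Finally, in either case $\mathbb{T}_{\sigma'}$ is an HRS-tilt of $\mathbb{T}_\sigma$ at a torsion pair of finite type (the right HRS-tilt at $(\Scal,\Rcal)$, of finite type by part~(2)(b); the left HRS-tilt at $(\Tcal,\Scal)$, of finite type as above), so $\Hcal_{\sigma'}$ is Grothendieck by Theorem~\ref{thm:HRS-tilt AB5} and $\sigma'$ is pure-injective by Theorem~\ref{thm:purity and silting}(1); uniqueness up to equivalence is Corollary~\ref{cor:inverse mutations}.

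The step I expect to need the most care is the left-mutation side. Two points are delicate there: first, recognising that $(\Tcal,\Scal)$ is \emph{automatically} of finite type so that one may route through the right HRS-tilt via the shift $\mathbb{T}_{\mathfrak{t}^+}=\mathbb{T}_{\mathfrak{t}^-}[1]$ in order to apply Theorem~\ref{thm:HRS-tilt AB5}; and second, making sure throughout that the subcategory labelling the mutation really is $\Prod\sigma\cap\Prod{\sigma'}$ and that, in (c), it is the \emph{minimal} $\Escr$-preenvelope (the envelope, via Lemma~\ref{rem:approximate every injective}(1)) that is fed into the cone construction — feeding in an arbitrary $\Escr$-preenvelope, or an ill-chosen $\Escr$, can produce a cosilting object whose associated torsion pair is strictly larger than $(\Tcal,\Scal)$, so that it fails to be the intended left mutation of $\sigma$ with respect to $\Escr$.
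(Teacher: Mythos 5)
Your proof is correct and follows essentially the same route as the paper: both reduce parts (1) and (2) to Proposition~\ref{prop:general existence of mutations}, decide when the relevant torsion pair in $\Hcal_\sigma$ is cosilting via the finite-type characterisation of Proposition~\ref{prop:cover vs finite type} together with the approximation lemmas (Lemmas~\ref{lem:approx prop} and~\ref{rem:approximate every injective}), and obtain the closing pure-injectivity claim from Theorem~\ref{thm:HRS-tilt AB5} and Theorem~\ref{thm:purity and silting} (packaged in the paper as Corollary~\ref{cor:cotilting pure-injective}). The only cosmetic difference is in how (c) is tied to (a) and (b) on the left-mutation side --- you appeal to the final clause of Theorem~\ref{thm:mutation as HRS-tilt}(1), whereas the paper routes (a)$\Rightarrow$(c) through Lemma~\ref{lem:(co)generate} and (c)$\Rightarrow$(b) through Theorem~\ref{thm:mutation as HRS-tilt}(1) --- but the underlying content is the same.
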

\begin{proof}
The last assertion of the theorem is a consequence of Corollary \ref{cor:cotilting pure-injective}.

(1): (a)$\Rightarrow$(c):
By definition, there a cosilting object $\sigma'$ and an approximation triangle relating $\sigma$ and $\sigma'$. Since $\sigma''=\varepsilon_0\oplus \varepsilon_1$ is a direct summand of product of copies of $\sigma'$ and $\sigma''$ cogenerates of $\Dcal$, we infer from Lemma~\ref{lem:(co)generate} that $\Prod{\sigma''}=\Prod{\sigma'}$, which proves the claim.

The implications
(a)$\Rightarrow$(b) and (c)$\Rightarrow$(b) follow immediately from Theorem~\ref{thm:mutation as HRS-tilt}(1).

(b)$\Rightarrow$(a): Since ${}^{\perp_0} H^0_\sigma(\Escr)$ is a torsion class, $\mathfrak{t}:=(\Tcal,{}^{\perp_0} H^0_\sigma(\Escr))$   is a torsion pair of finite type in the Grothendieck category $\Hcal_\sigma$, hence  a cosilting torsion pair associated to a pure-injective cosilting object by Proposition~\ref{prop:cover vs finite type}.
It follows from Proposition~\ref{prop:general existence of mutations}(1) that $\sigma$ admits a left mutation $\sigma'$  with respect to~$\Escr$. 

(2): Recall from Theorem~\ref{thm:purity and silting}(1) that $\Hcal_\sigma$ is a Grothendieck category. We apply Proposition~\ref{prop:cover vs finite type} to the torsion pair $(\Scal, \Rcal)$ in $\Hcal_\sigma$.

(a)$\Leftrightarrow$(b):  
By Proposition~\ref{prop:cover vs finite type}, a torsion pair in $\Hcal_\sigma$ is cosilting if and only if it is of finite type, and the associated cosilting object must be pure-injective in this case. It  follows from Proposition~\ref{prop:general existence of mutations} and Lemma~\ref{rem:approximate every injective} that (b) amounts to the existence of a right mutation $\sigma'$ of $\sigma$ with respect to $\Escr$.

(b)$\Leftrightarrow$(c): This follows combining Proposition~\ref{prop:cover vs finite type} with Lemma~\ref{rem:approximate every injective} and Lemma~\ref{lem:approx prop}.
\end{proof}

\begin{example}\label{expl:krone}
Let $A$ be the path algebra over an algebraically closed field $k$ of the Kronecker quiver
$$\xymatrix{\bullet\ar@<-.7ex>[r]\ar@<.7ex>[r]&\bullet}$$
Recall that the finite-dimensional indecomposable regular modules form a tubular family $(\tube_x)_{x\in\mathbb{X}}$ indexed by the projective line $\mathbb{X}=\mathbb{P}^1(k)$. We pick a subset $P\subseteq\mathbb{X}$, denote by $\bar{P}=\mathbb{X}\setminus P$ its complement,  and consider the torsion pair $(\Tcal_P, \Fcal_P)$ in $\Mod{A}$ generated by $\tube_P=\bigcup_{x\in P} \tube_x$. If $P=\varnothing$, we take the torsion pair generated by the preinjective modules (whose torsion class is indeed contained in $\Tcal_Q$ for all $Q\ne\varnothing$). It is a cosilting torsion pair cogenerated by the cosilting (in fact, even cotilting) module
$$C_P=G\oplus\prod \{S[-\infty]\,\mid\, S\in \tube_P\}\oplus \coprod\{S[\infty]\,\mid\, S\in \tube_{\bar{P}}\},$$ where $S[-\infty]$ and $S[\infty]$ denote the adic and the Pr\"ufer module corresponding to the simple regular module $S$, respectively, and $G$ is the generic module. Notice that the corresponding cosilting complex $\sigma_P$ is quasi-isomorphic to $C_P$ and lies in the heart $\Hcal_{\sigma_P}=\sigma_P\,^{\perp_{\not=0}}$.
       
Let us look at the two extreme cases
$$C_\emptyset=G\oplus \coprod\{S[\infty]\,\mid\, S\in \tube_{\mathbb{X}}\},\quad\text{and}\quad C_{\mathbb{X}}=G\oplus\prod\{S[-\infty]\,\mid\, S\in \tube_\mathbb{X}\}.$$
For any $P\subseteq\mathbb{X}$, we have that  
$\sigma_P$ is a right mutation of $\sigma_{\emptyset}$ at the set
$$\Escr=\Add{G\oplus \coprod\{S[\infty]\,\mid\, S\in \tube_{\bar{P}}\}}$$
(if $P\ne\mathbb{X}$, we can also express $\Escr$ as $\Prod{\{S[\infty]\,\mid\, S\in \tube_{\bar{P}}\}}$). Indeed, we can construct an $\Escr$-cover of $\sigma_{\emptyset}$ from the canonical sequences
  $$0\longrightarrow S[-\infty]\longrightarrow G^{(I)}\longrightarrow S[\infty]\longrightarrow 0,\quad S\in\tube_P$$
as in~\cite[Lemma 2.4]{BuKr}, which are easily seen to be $\Escr$-covers since $\Ext^1_A(G,S[-\infty])=0$ and $S[-\infty]$ is indecomposable. When taking a product of these short exact sequences for all $S\in P$ together with the trivial short exact sequences $0\longrightarrow0\longrightarrow S[\infty]\longrightarrow S[\infty]\longrightarrow 0$ for all $S\in\bar{P}$, we obtain a short exact sequence of the form
$$0\longrightarrow \prod \{S[-\infty]\,\mid\, S\in \tube_P\}\longrightarrow G^{(J)}\oplus \prod\{S[\infty]\,\mid\, S\in \tube_{\bar{P}}\}\overset{\phi}\longrightarrow \prod_{S\in\mathbb{X}} S[\infty]\longrightarrow 0.$$
Since the middle term lies in $\Escr$, the map $\phi$ is an $\Escr$-precover. Moreover, the right hand side term is a cotilting module equivalent to $C_\varnothing$ (the generic module is a summand of the term by  \cite[Proposition 4]{Ri}) while the sum of the left hand and the middle terms is a cotilting module equivalent to $C_P$,
so the last short exact sequence yields an approximation triangle witnessing that $\sigma_P$ is a right mutation of $\sigma_\emptyset$.

On the other hand,  $\sigma_\mathbb{X}$ does {\it not} admit  right mutation at   $\Escr=\Prod {\{S[-\infty]\,\mid\, S\in \tube_P\}}$ for any non-empty subset $P\subset\mathbb{X}$. In fact,   condition (1)(b) in Theorem~\ref{existence right mutation} fails, due to the fact that  the generic module  $G$ is not contained in the torsion-free class $\Cogen\Escr$ in $\Hcal_{\sigma_{\mathbb{X}}}$, although it can be realised as a direct summand of a direct limit of a direct system $S[-\infty]\longrightarrow S[-\infty]\longrightarrow\cdots$ for any simple regular $S$ by \cite[Proposition 4]{Ri}.
 
Similarly,  $\sigma_\emptyset$ does not admit a left mutation at $\Escr=\Prod {\{S[\infty]\,\mid\, S\in \tube_P\}}$ for any proper subset $P\subset\mathbb{X}$, because condition (2)(b) in Theorem~\ref{existence left mutation} fails. Namely, ${}^{\perp_0}H^0_{\sigma_\emptyset}(\Escr)$ contains any $S[\infty]$ with $S\in\tube_{\bar{P}}$, but the generic module $G$ is not contained in  ${}^{\perp_0}H^0_{\sigma_\emptyset}(\Escr)$, although it can be realised as a direct summand of a direct product of $S[\infty]$ for any simple regular $S$ by \cite[Proposition 4]{Ri}.
\end{example}

\begin{example}\label{prop: simple hereditary}
Let $\Dcal$ be a compactly generated triangulated category and let $\sigma$ be a pure-injective cosilting object in $\Dcal$. Recall from Theorems~\ref{thm:purity and silting} and~\ref{existence right mutation} that the associated t-structure has a Grothendieck heart $\Hcal$, and right mutations of $\sigma$ bijectively correspond to hereditary torsion pair of finite type in $\Hcal$. When $\Dcal = \D{R}$ for a commutative noetherian ring $R$, it follows from \cite[Theorem 5.1]{AH1} that every two-term cosilting complex in $\D{R}$ is a right mutation of the injective minimal cogenerator of $\Mod{R}$.

When $\Dcal=\D{\Qcoh{\mathbb{X}}}$  for a noetherian scheme $\mathbb{X}$, a class of mutations of the injective cogenerator of $\Qcoh{\mathbb{X}}$  was  studied in~\cite[\S6]{CoSt} (albeit not in this terminology).
However, in this case there can be more $2$-term cosilting complexes than hereditary torsion pairs of finite type (see~\cite[Example 6.14]{CoSt}, which is closely related to Example~\ref{expl:krone} as $\D{\Qcoh{\mathbb{P}^1_k}}\simeq\D{A}$).

Moreover, all cotilting modules over a commutative noetherian ring, and more generally,  all  cotilting modules of cofinite type over an arbitrary commutative ring can be constructed as iterated right mutations of an injective cogenerator of $\Mod{R}$, see \cite[\S4]{STH14} and \cite[\S8]{HrSt}.
\end{example}

In the following proposition, we will use the notation $\Ind{\sigma}$ for the collection of isoclasses of indecomposable objects in $\Prod{\sigma}$ where $\sigma$ is a pure-injective cosilting object.  Observe that, by Proposition~\ref{prop:summary cosilting}, we have that $\Ind{\sigma}$ is a set because the isomorphism classes of the indecomposable injective objects in the Grothendieck category $\Hcal_{\sigma}$ form a set.
We will show that if pure-injective cosilting objects $\sigma$ and $\sigma'$ are related by a mutation, there is a natural bijection between $\Ind{\sigma}$ and $\Ind{\sigma'}$. While in the case of silting mutation of compact silting objects over a finite-dimensional algebra this was implicit from the very beginning (see for example~\cite[Corollary 2.28]{AI}), in the case of cosilting mutation in our context, this phenomenon was noticed for derived categories of modules over commutative noetherian rings~\cite[Theorem 5.4]{STH14} and quasi-coherent sheaves on noetherian schemes~\cite[Remark 6.5]{CoSt}.

\begin{proposition}\label{prop: bij ind summands}
Let $\sigma$ be a pure-injective cosilting object in a compactly generated triangulated category $\Dcal$ and $\Escr=\mathsf{Prod}(\Escr)$ a subcategory of $\Prod{\sigma}$ such that there is a right mutation of $\sigma$ at $\Escr$, say $\sigma'$. For each object $\alpha$ in $\Ind{\sigma}\setminus \Ind\Escr$, consider the triangle induced by an $\Escr$-cover $\Phi$ of $\alpha$
\begin{equation} \tag{$\Delta_3$} \label{eq:mutation ind summand}
\xymatrix{\alpha'\ar[r]^\Omega&e_0\ar[r]^\Phi&\alpha\ar[r]&\alpha'[1]}.
\end{equation}
Then, the assignment $\alpha\mapsto \alpha'$ defines a bijection between $\Ind{\sigma}\setminus \Ind\Escr$ and $\Ind{\sigma'}\setminus \Ind\Escr$ and the map $\Omega$ in each such triangle is an $\Escr$-envelope. As a consequence, there is a bijection between $\Ind{\sigma}$ and $\Ind{\sigma'}$.
\end{proposition}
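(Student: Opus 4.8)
The plan is to transport the whole statement into the Grothendieck hearts and exploit the equivalence $H^0_\sigma\colon\Prod{\sigma}\xrightarrow{\ \sim\ }\Inj{\Hcal_\sigma}$ of Proposition~\ref{prop:summary cosilting}(2), which identifies $\Ind{\sigma}$ with the (set of) indecomposable injectives of $\Hcal_\sigma$ and $\Ind{\Escr}$ with those lying in $H^0_\sigma(\Escr)=\Rcal\cap\Inj{\Hcal_\sigma}$. Since a right mutation of $\sigma$ at $\Escr$ exists, Theorem~\ref{existence right mutation}(2) gives that $\mathfrak{t}=(\Scal,\Rcal)$ is of finite type, that $\mathbb{T}_{\sigma'}=\mathbb{T}_{\mathfrak{t}^-}$, that $\sigma'$ is pure-injective, and that (Lemma~\ref{rem:approximate every injective}) every object of $\Prod{\sigma}$ admits an $\Escr$-cover. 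A first reduction: because $\Scal$ is the torsion class of a hereditary torsion pair in the Grothendieck category $\Hcal_\sigma$, it is closed under injective envelopes (the torsion part of an injective is injective, hence splits off), so every indecomposable injective of $\Hcal_\sigma$ lies either in $\Rcal$ or in $\Scal$; hence $\Ind{\sigma}\setminus\Ind{\Escr}$ is identified with the indecomposable injectives of $\Hcal_\sigma$ that lie in $\Scal$. The same reasoning in $\Hcal_{\sigma'}$ (where $\Escr\subseteq\Prod{\sigma'}$ is again a $\Prod$-closed class of injectives) identifies $\Ind{\sigma'}\setminus\Ind{\Escr}$ with the indecomposable injectives of $\Hcal_{\sigma'}$ not in $\Escr$.

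Next I would analyse the triangle~\eqref{eq:mutation ind summand} for a fixed $\alpha\in\Ind{\sigma}\setminus\Ind{\Escr}$ and an $\Escr$-cover $\Phi\colon e_0\to\alpha$. Since $\alpha,e_0\in\Prod{\sigma}\subseteq\Hcal_\sigma$, applying the cohomological functors $H^i_\sigma$ to~\eqref{eq:mutation ind summand} shows that $\alpha'$ has cohomology only in degrees $0$ and $1$ and fits in a four-term exact sequence $0\to H^0_\sigma(\alpha')\to H^0_\sigma(e_0)\xrightarrow{\ \phi\ }E\to H^1_\sigma(\alpha')\to 0$ in $\Hcal_\sigma$, where $E:=H^0_\sigma(\alpha)$ and $\phi:=H^0_\sigma(\Phi)$ is an $\Rcal$-cover of $E$ by Lemma~\ref{lem:approx prop} (whose proof applies verbatim with $\alpha$ in place of $\sigma$, as $H^0_\sigma(\alpha)$ is still injective). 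By heredity, $H^0_\sigma(\alpha')=\ker{\phi}\in\Rcal$, and by Remark~\ref{rem:inversetilt} $\alpha'\in\Hcal_{\sigma'}$ with torsion decomposition $0\to\ker{\phi}\to\alpha'\to(\coker{\phi})[-1]\to0$ for the torsion pair $(\Rcal,\Scal[-1])$ of $\Hcal_{\sigma'}$. To see $\alpha'\in\Prod{\sigma'}=\Ycal'\cap\Ycal'^{\perp_1}$ (Proposition~\ref{prop:summary cosilting}, with $(\Xcal',\Ycal')=\mathbb{T}_{\mathfrak{t}^-}$): we already have $\alpha'\in\Hcal_{\sigma'}\subseteq\Ycal'$, and for $Y\in\Ycal'$, rotating~\eqref{eq:mutation ind summand} and applying $\Hom_\Dcal(Y,-)$ reduces the claim $\Hom_\Dcal(Y,\alpha'[1])=0$, via $\Hom_\Dcal(Y,e_0[1])=0$, to the statement that every map $Y\to\alpha$ factors through $\Phi$ — which follows by applying $H^0_\sigma$, using $H^0_\sigma(Y)\in\Rcal$, the $\Rcal$-precover property of $\phi$, and the isomorphism $\Hom_\Dcal(Y,-)\cong\Hom_{\Hcal_\sigma}(H^0_\sigma(Y),H^0_\sigma(-))$ on $\Prod{\sigma}$. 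For the map $\Omega$: rotating to $\alpha[-1]\to\alpha'\xrightarrow{\Omega}e_0\to\alpha$ and applying $\Hom_\Dcal(-,e)$ with $e\in\Escr$, one has $\Hom_\Dcal(\alpha[-1],e)\cong\Ext^1_{\Hcal_\sigma}(\alpha,e)=0$ as $e$ is injective in $\Hcal_\sigma$, so $\Omega$ is an $\Escr$-preenvelope; it is minimal because $h\Omega=\Omega$ with $h\in\End(e_0)$ forces $h-\id$ to factor through $\Phi$ via some $\alpha\to e_0$, and $\Hom_{\Hcal_\sigma}(\alpha,e_0)=0$ since $\alpha\in\Scal$ and $e_0\in H^0_\sigma(\Escr)\subseteq\Rcal$. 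Finally $\alpha'\notin\Escr$, for if $\alpha'\in\Escr\subseteq\Hcal_\sigma$ then $\coker{\phi}=0$, so $0\to\alpha'\to e_0\xrightarrow{\phi}\alpha\to0$ is exact with $\alpha$ injective, hence split, forcing $\alpha\in\Escr$.

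For indecomposability I would first note that right-minimality of $\Phi$ rules out a nonzero $\Escr$-summand of $\alpha'$: if $\alpha'=e_0'\oplus\alpha''$ with $0\ne e_0'\in\Escr$, then the projection $\alpha'\to e_0'$ factors through the preenvelope $\Omega$, exhibiting $e_0'$ as a summand of $e_0$ on which $\Phi$ vanishes, contradicting the right-minimality of the cover $\Phi$. Since $\alpha'$ is injective in $\Hcal_{\sigma'}$ and the torsion class ${}^{\perp_0}\Escr$ of the hereditary torsion pair cogenerated by $\Escr$ in $\Hcal_{\sigma'}$ is closed under injective envelopes, $\alpha'$ is then the injective envelope of its ${}^{\perp_0}\Escr$-torsion part and lies in ${}^{\perp_0}\Escr$; indecomposability now follows from a standard minimal-approximation argument (as in~\cite[Theorem~5.4]{STH14} and~\cite[Remark~6.5]{CoSt}): the decomposition $0\to\ker{\phi}\to\alpha'\to(\coker{\phi})[-1]\to0$ together with the minimality of the $\Rcal$-cover $\phi$ forces $\alpha'$ to be uniform. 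For bijectivity, by Corollary~\ref{cor:inverse mutations} $\sigma$ is the left mutation of $\sigma'$ at $\Escr$, so Theorem~\ref{existence right mutation}(1) applied to $\sigma'$ makes every object of $\Prod{\sigma'}$ admit an $\Escr$-envelope; given $\beta\in\Ind{\sigma'}\setminus\Ind{\Escr}$ one forms the triangle $\beta\to e\to\beta''\to\beta[1]$ of an $\Escr$-envelope, and the arguments above, read symmetrically, show $\beta''\in\Ind{\sigma}\setminus\Ind{\Escr}$ with $e\to\beta''$ an $\Escr$-cover. The two assignments are mutually inverse: the preceding paragraph identifies $\Omega\colon\alpha'\to e_0$ as an $\Escr$-envelope of $\alpha'$ with cone $\alpha$, so $(\alpha')''\cong\alpha$, and dually $(\beta'')'\cong\beta$. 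Extending by the identity on $\Ind{\Escr}=\Ind{\sigma}\cap\Ind{\sigma'}$ yields the bijection $\Ind{\sigma}\leftrightarrow\Ind{\sigma'}$.

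The main obstacle is the indecomposability of $\alpha'$ in Step~3: unlike everything else, it genuinely requires the \emph{minimality} of the $\Escr$-cover (equivalently, of the torsion-free cover $\phi$), not merely the precover/preenvelope properties, and checking that the two-term object $\alpha'$ is uniform forces one to control the interaction of the two torsion pairs $(\Rcal,\Scal[-1])$ and $({}^{\perp_0}\Escr,\Cogen{\Escr})$ on $\Hcal_{\sigma'}$ carefully (note that $\Rcal$ is the torsion class of the first and is \emph{not} closed under subobjects in $\Hcal_{\sigma'}$, so $\alpha'$ is typically a genuine complex, not a shift of an object of $\Scal$). A secondary point that needs attention is verifying that the inputs used for the inverse map — in particular the existence of $\Escr$-envelopes for all objects of $\Prod{\sigma'}$ — really follow from the results already in hand.
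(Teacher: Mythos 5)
Your overall plan — transport the triangle into the Grothendieck hearts, use the equivalence $H^0_\sigma\colon\Prod\sigma\simeq\Inj{\Hcal_\sigma}$, and analyse the cohomology of $\alpha'$ — is a legitimate and workable framework, and your verification that $\Omega$ is an $\Escr$-envelope is actually rather nice (the observation that $\Hom_{\Hcal_\sigma}(\alpha,e_0)=0$ because $\alpha\in\Scal$ and $e_0\in\Rcal$ gives right-minimality for free). But there is a genuine gap exactly where you flag one: the indecomposability of $\alpha'$. You establish that $\alpha'$ has no nonzero summand in $\Escr$ and that it lies in ${}^{\perp_0}\Escr$, and then assert that ``a standard minimal-approximation argument \dots forces $\alpha'$ to be uniform,'' citing \cite[Theorem~5.4]{STH14} and \cite[Remark~6.5]{CoSt}. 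Those references are precisely the special cases (commutative noetherian rings, noetherian schemes) that this proposition is generalising, so invoking them is close to circular, and the argument they rely on is not carried over: as you yourself observe, $\Rcal$ is a torsion class of $(\Rcal,\Scal[-1])$ but is not closed under subobjects in $\Hcal_{\sigma'}$, so the torsion decomposition $0\to\ker\phi\to\alpha'\to(\coker\phi)[-1]\to 0$ does not by itself make $\alpha'$ uniform, and extracting uniformity from the right-minimality of $\phi$ requires an argument you do not give.

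The paper's proof takes a genuinely different and much shorter route to indecomposability, bypassing torsion pairs and uniformity entirely: it works in the additive category of morphisms (resp.\ of triangles) of $\Dcal$. Since $\Phi$ is a \emph{cover} of the \emph{indecomposable} object $\alpha$, the morphism $\Phi$ is indecomposable as an object of the morphism category (any summand $e_{0,i}\to 0$ would violate the cover property). Because a triangle completing a direct sum of maps is the direct sum of the completing triangles, the whole triangle $\Delta_3$ is indecomposable; hence so is $\Omega$, and hence so is $\alpha'$ (a nontrivial decomposition of $\alpha'$ would, via $\Escr$-envelopes of the summands, decompose $\Omega$). This also yields that $\Omega$ is an envelope at the same stroke. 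Note that you already have the ingredients needed to close your gap along these lines: you know $\Omega$ is an $\Escr$-envelope, you know $\Escr$-envelopes exist for every object of $\Prod{\sigma'}$, and you know $\alpha$ is indecomposable; a decomposition $\alpha'=\alpha'_1\oplus\alpha'_2$ would then force one of the cones of the $\Escr$-envelopes $\Omega_i$ to vanish, putting the corresponding $\alpha'_i$ in $\Escr$, which you have excluded. Replacing the uniformity sketch with this closing step would make your alternative proof complete. One minor further point: for the surjectivity half of the bijection, the existence of $\Escr$-envelopes on $\Prod{\sigma'}$ should be cited from Lemma~\ref{rem:approximate every injective}(1) (applied with $\sigma'$ in place of $\sigma$), not from Theorem~\ref{existence right mutation}(1), whose item~(c) presupposes the envelope rather than supplying it.
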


Note that by Corollary \ref{cor:inverse mutations} a result analogous to the one above is available for left mutations.

\begin{example} \label{expl:trivial mutation}
While reading the proof of the proposition, it is instructive to keep in mind that it also covers the case of trivial right mutations where $\Escr=0$ and $\sigma'=\sigma[-1]$.
\end{example}

\begin{proof}[Proof of Proposition~\ref{prop: bij ind summands}]
We first show that the assignment is well-defined. Let $\alpha$ be an indecomposable object in $\Prod{\sigma}\setminus \Escr$; this implies that $\alpha'\ne 0$.
Note also since $\Phi$ is an $\Escr$-cover of an indecomposable object, $\Phi$ is an indecomposable object in the category of morphisms in $\Dcal$. To see that, if $\Phi=\Phi_0\oplus\Phi_1$, then one of the $\Phi_i$ must be of the form $e_{0,i}\longrightarrow 0$, which contradicts the fact that $\Phi$ is a cover. Since the completion of a direct sum of maps to a triangle is isomorphic to the direct sum of the two triangles completing the summands, it follows that~\eqref{eq:mutation ind summand} is an indecomposable triangle (this makes sense since triangles in $\Dcal$ themselves form an additive category). Now it quickly follows that $\Omega$ is an $\Escr$-envelope. Indeed, an $\Escr$-envelope exists (Lemma~\ref{rem:approximate every injective}) and is a summand of $\Omega$. However, $\Omega$ must be indecomposable in the category of morphisms, or else \eqref{eq:mutation ind summand} could not be indecomposable in the category of triangles. By the same token, $\alpha'$ is indecomposable since otherwise that $\Escr$-envelope $\Omega$ could be expressed as a direct sum of $\Escr$-envelopes of summands of $\alpha'$. Finally, $\alpha'$ cannot lie in $\Escr$ as otherwise $\Omega$ had to be an isomorphism and $\alpha$ the zero object. This completes a proof of the fact that the assignment from the statement of the proposition is well-defined.

Further observe that, in particular, we have shown that $\alpha$ is determined up to isomorphism from $\alpha'$, and the assignment is injective. Regarding the surjectivity, suppose that $\alpha'$ is an indecomposable object in $\Prod{\sigma'}\setminus \Escr$. Since $\sigma'$ is (up to equivalence) the right mutation of $\sigma$ at $\Escr$, then $\sigma$ is (up to equivalence) the left mutation of $\sigma$ at $\Escr$ (see Corollary~\ref{cor:inverse mutations}). As every object in $\Prod{\sigma'}$ admits an $\Escr$-envelope (see Lemma~\ref{rem:approximate every injective}) we can take $\Omega\colon \alpha' \longrightarrow e_0$ to be this envelope and let $\alpha$ be its cone. We therefore obtain a triangle as in the statement of the proposition, and dual arguments to the ones presented above can be used to show that $\Phi$ is an $\Escr$-cover (which we know to exist, see Lemma~\ref{rem:approximate every injective}) and that $\alpha$ is indecomposable in $\Prod{\sigma}\setminus{\Escr}$, thus finishing the proof.
\end{proof}

\section{Silting mutation}\label{sec:silting mutation}
In this section, we state the dual results for silting objects. Then we establish a compatibility between silting and cosilting mutation, showing that cosilting mutation encompasses mutation between compact silting objects.

\begin{definition}\label{def:siltmut} Let $\Dcal$ be a triangulated category with coproducts. Let
$\sigma$ and $\sigma'$ be two silting objects in $\Dcal$,  and let $\Pscr=\Add{\sigma}\cap\Add{\sigma'}$. We say that
\begin{enumerate}
\item  $\sigma'$ is a \textbf{left mutation} of $\sigma$ if there is a triangle $\xymatrix{\sigma\ar[r]^\Phi& \varepsilon_0\ar[r]& \varepsilon_1\ar[r]& \sigma[1]}$ such that:
 \begin{itemize}
 \item $\Phi$ is a $\Pscr$-preenvelope  of $\sigma$ in $\Dcal$; and  
 \item $\varepsilon_0\oplus \varepsilon_1$ is a silting object equivalent to $\sigma'$.
 \end{itemize}
 \item $\sigma'$ is a \textbf{right mutation} of $\sigma$  if there is a triangle $\xymatrix{\sigma[-1]\ar[r]& \gamma_1\ar[r]& \gamma_0\ar[r]^\Phi&\sigma}$ such that:
 \begin{itemize}
 \item $\Phi$ is a $\Pscr$-precover  of $\sigma$ in $\Dcal$; and  
 \item $\gamma_0\oplus \gamma_1$ is a silting object equivalent to $\sigma'$.
 \end{itemize}
 \end{enumerate}
 We will also say that $\sigma'$ is a \textbf{left (or right) mutation} of $\sigma$ \textbf{with respect to $\Pscr$}.
\end{definition}

Silting mutation can also be expressed in terms of HRS-tilts. The proof of the following theorem is dual to the one of Theorem~\ref{thm:mutation as HRS-tilt}.

\begin{theorem}\label{thm:siltmutation as HRS}
Let $\Dcal$ be a triangulated category with coproducts, $\sigma$ and $\sigma'$ be two silting objects in $\Dcal$,  and let $\Pscr=\Add{\sigma}\cap\Add{\sigma'}$. Then we have that:
\begin{enumerate}
\item $\sigma'$ is a {left mutation} of $\sigma$ if and only if $\sigma$ admits a $\mathscr{P}$-preenvelope and
$\mathbb{T}_{\sigma'}$ is the left HRS-tilt of $\mathbb{T}_{\sigma}$ at the torsion pair $\mathfrak{t}=(\Gen{H^0_\sigma(\Pscr)}, \, H^0_\sigma(\Pscr)^{\perp_0})$ in $\Hcal_\sigma$; and
\item $\sigma'$ is a {right mutation} of $\sigma$ if and only if $H^0_\sigma(\Pscr)^{\perp_0}$ is closed under coproducts in $\Hcal_\sigma$ and
$\mathbb{T}_{\sigma'}$ is the right HRS-tilt of $\mathbb{T}_{\sigma}$ at the torsion pair $\mathfrak{t}=(H^0_\sigma(\Pscr)^{\perp_0},\Fcal)$ in $\Hcal_\sigma$.

\end{enumerate}
In both cases, the torsion pairs involved do not depend on the choice of the triangle in Definition~\ref{def:siltmut}.
\end{theorem}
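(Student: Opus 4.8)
The plan is to deduce Theorem~\ref{thm:siltmutation as HRS} from Theorem~\ref{thm:mutation as HRS-tilt} by a systematic dualisation, that is, by replacing $\Dcal$ with its opposite category (which has products) and interpreting every statement there. Under this passage a silting object of $\Dcal$ becomes a cosilting object of $\Dcal^{\mathrm{op}}$, $\Add{-}$ becomes $\Prod{-}$, projective objects become injective objects, $\Gen{-}$ becomes $\Cogen{-}$, the right-orthogonal operation $(-)^{\perp_0}$ becomes the left-orthogonal operation ${}^{\perp_0}(-)$, the two isomorphisms of Lemma~\ref{lem:cohomol1} are interchanged, parts (1) and (2) of Proposition~\ref{prop:summary cosilting} are interchanged, and --- what makes the \emph{shapes} of the mutation triangles match up --- left mutation is exchanged with right mutation and left HRS-tilt with right HRS-tilt. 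Thus part~(1) of the present theorem is the dual of part~(2) of Theorem~\ref{thm:mutation as HRS-tilt}, and part~(2) is the dual of part~(1). Throughout I write $\mathbb{T}_\sigma=(\Xcal,\Ycal)$ and $\mathbb{T}_{\sigma'}=(\Xcal',\Ycal')$ for the associated silting t-structures with hearts $\Hcal=\Hcal_\sigma$ and $\Hcal'=\Hcal_{\sigma'}$, and I use freely that, by Remark~\ref{rem:torsion torsion-free characterisation}, $\Hcal$ is complete, cocomplete and well-powered, that torsion classes in $\Hcal$ are precisely the subcategories closed under coproducts, extensions and quotients, and that $(\Gen{H^0_\sigma(\Pscr)},H^0_\sigma(\Pscr)^{\perp_0})$ is a cohereditary torsion pair in $\Hcal$.

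For the forward implication of each part I would start from the defining triangle of Definition~\ref{def:siltmut}, with $\tilde\sigma=\varepsilon_0\oplus\varepsilon_1$ (resp.\ $\gamma_0\oplus\gamma_1$) equivalent to $\sigma'$, and verify the ``intermediate'' inclusions $\Ycal\subseteq\Ycal'\subseteq\Ycal[1]$ in the left case and $\Ycal[-1]\subseteq\Ycal'\subseteq\Ycal$ in the right case; this is a short computation applying $\Hom_\Dcal(-,X)$ to the triangle and using that $\varepsilon_0$ (resp.\ $\gamma_0$) lies in $\Add{\sigma}$, exactly as in the cosilting case. Proposition~\ref{prop:intermediate} then presents $\mathbb{T}_{\sigma'}$ as the appropriate HRS-tilt of $\mathbb{T}_\sigma$ at a torsion pair $\mathfrak{t}=(\Tcal,\Fcal)$ given by intersections of hearts, and the bulk of the work is to identify this torsion pair. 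Using the relevant isomorphism of Lemma~\ref{lem:cohomol1} one first obtains $\Fcal=H^0_\sigma(\varepsilon_0)^{\perp_0}$ in part~(1) (resp.\ $\Tcal=H^0_\sigma(\gamma_0)^{\perp_0}$ in part~(2)), whereupon the cohereditary torsion pair above determines the complementary class. To pass from $\varepsilon_0$ (resp.\ $\gamma_0$) to $\Pscr$ one uses on the one hand that $\varepsilon_0\in\Pscr$ (resp.\ $\gamma_0\in\Pscr$), which gives one inclusion immediately, and on the other hand the silting analogue of Lemma~\ref{lem:approx prop} --- that $H^0_\sigma(\Phi)$ is a $\Gen{H^0_\sigma(\Pscr)}$-preenvelope (resp.\ an $H^0_\sigma(\Pscr)^{\perp_0}$-precover) of the projective generator $H^0_\sigma(\sigma)$ --- which gives the other; this auxiliary statement has the same proof as Lemma~\ref{lem:approx prop} with ``injective cogenerator'' replaced by ``projective generator''. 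As the resulting formulas $\Tcal=\Gen{H^0_\sigma(\Pscr)}$, $\Fcal=H^0_\sigma(\Pscr)^{\perp_0}$ (left case) and $\Tcal=H^0_\sigma(\Pscr)^{\perp_0}$ (right case) involve only $\Pscr=\Add{\sigma}\cap\Add{\sigma'}$, this simultaneously proves the final clause that the torsion pair is independent of the chosen triangle.

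For the converse implication I am handed $\mathbb{T}_{\sigma'}$ as the prescribed HRS-tilt of $\mathbb{T}_\sigma$ together with the extra hypothesis: in part~(1), that $\sigma$ admits a $\Pscr$-preenvelope $\Phi\colon\sigma\to\varepsilon_0$; in part~(2), that $H^0_\sigma(\Pscr)^{\perp_0}$ is closed under coproducts (this, together with the fact that $H^0_\sigma(\Pscr)$ consists of projectives, forces $H^0_\sigma(\Pscr)^{\perp_0}$ to be a TTF class, so the torsion pair $\mathfrak{t}=(H^0_\sigma(\Pscr)^{\perp_0},\Fcal)$ is well-defined, while the needed $\Pscr$-precover $\Phi\colon\gamma_0\to\sigma$ exists by Lemma~\ref{lem:cosilting enveloping}(2)(a)). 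I would complete $\Phi$ to a triangle, set $\tilde\sigma=\varepsilon_0\oplus\varepsilon_1$ (resp.\ $\gamma_0\oplus\gamma_1$), and show $\Add{\tilde\sigma}=\Add{\sigma'}$. Since $\sigma$, and hence $\tilde\sigma$, generates $\Dcal$, Lemma~\ref{lem:cosilting enveloping}(2)(b) reduces this to the inclusion $\Add{\tilde\sigma}\subseteq\Add{\sigma'}$, which by Proposition~\ref{prop:summary cosilting}(1) amounts to a cohomological condition on $\tilde\sigma$ relative to $\mathbb{T}_{\sigma'}$. That condition is checked by reading off the cohomology objects of $\varepsilon_0,\varepsilon_1$ (resp.\ $\gamma_0,\gamma_1$) from the triangle via the cohomological description of HRS-tilts in Remark~\ref{rem:inversetilt}, and by applying $\Hom_\Dcal(Y,-)$ to suitable rotations of the triangle for $Y$ ranging over the coaisle $\Ycal'$ --- all steps being word-for-word dual to the corresponding passages in the proof of Theorem~\ref{thm:mutation as HRS-tilt}.

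The argument involves no genuine obstacle beyond careful book-keeping, and the one point that truly deserves attention is tracking how the orthogonality conventions flip: the hereditary torsion pair $({}^{\perp_0}H^0_\sigma(\Escr),\Cogen{H^0_\sigma(\Escr)})$ appearing in the cosilting right-mutation case becomes the cohereditary $(\Gen{H^0_\sigma(\Pscr)},H^0_\sigma(\Pscr)^{\perp_0})$ here, and the hypothesis ``${}^{\perp_0}H^0_\sigma(\Escr)$ closed under products'' becomes ``$H^0_\sigma(\Pscr)^{\perp_0}$ closed under coproducts''. One should also state and prove the silting analogue of Lemma~\ref{lem:approx prop} before running the argument, since it is used in the identification of the torsion pair; everything else is diagram chasing transported to the opposite category.
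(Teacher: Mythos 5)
Your proposal is correct and takes precisely the paper's approach: the paper discharges this theorem with the single remark that ``the proof of the following theorem is dual to the one of Theorem~\ref{thm:mutation as HRS-tilt}'', and your plan is exactly to run that dualisation, identifying silting left (resp.\ right) mutation with cosilting right (resp.\ left) mutation, exchanging the hereditary pair $({}^{\perp_0}H^0_\sigma(\Escr),\Cogen{H^0_\sigma(\Escr)})$ for the cohereditary pair $(\Gen{H^0_\sigma(\Pscr)},H^0_\sigma(\Pscr)^{\perp_0})$, and noting that a dual version of Lemma~\ref{lem:approx prop} is the auxiliary input. You spell out the bookkeeping that the paper leaves implicit, and the translation table (including the part numbering flip between the two theorems) is accurate.
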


Again, as noted for cosilting mutation in Remark~\ref{rem:different kinds of torsion}, the torsion pairs involved have different flavours: left mutation will yield an HRS-tilt at a cohereditary torsion pair, while right mutation will give rise to an HRS-tilt at a torsion pair $(\Tcal,\Fcal)$ for which $\Tcal$ is a TTF class.

\begin{definition}
Let $\Dcal$ be a triangulated category with coproducts and $\mathbb{T}$ a t-structure with heart $\Hcal$. A torsion pair $\mathfrak{t}:=(\Tcal,\Fcal)$ in $\Hcal$ is said to be a \textbf{silting torsion pair} if and only if there is a silting object $\sigma$ in $\Dcal$ such that $\mathbb{T}_{\mathfrak{t}^+}=\mathbb{T}_\sigma$.
\end{definition}

\begin{example}\label{expl:siltR}
Let $R$ be a ring. The modules  $T=H^0(\sigma)$ arising as zero cohomologies of a silting complex $\sigma: P_{1}\to P_0$ of length two concentrated in cohomological degrees $-1$ and $0$ are precisely the \textbf{silting modules} introduced in \cite{AMV1}. The t-structure $\mathbb{T}_\sigma$ then coincides with the left HRS-tilt of the standard t-structure of $\D R$ at the torsion pair $\mathfrak{t}:=(\Gen T,T^{\perp_0})$ in $\Mod{R}$  generated by $T$. In other words, $\mathfrak{t}$ is a silting torsion pair in the sense of the definition above,
and as in Example~\ref{expl:cosiltR}, one can show that all silting torsion pairs in the sense of the definition are of this form.
\end{example}

Again, we can extract from the theorem above a criterion for the existence of a mutation with respect to a given subset  $\Pscr$. Once again, we omit the proof as it is dual to the proof of Proposition~\ref{prop:general existence of mutations}.

\begin{proposition}\label{prop:general existence of silting mutations}
Let $\Dcal$ be a triangulated category with coproducts.
  Let $\sigma$ be a silting object and $\Pscr=\Add\Pscr$  a subcategory of $\Add{\sigma}$. Then we have that:
\begin{enumerate}
\item $\sigma$ admits a   left mutation $\sigma'$  with respect to $\Pscr$ if and only if $\sigma$ admits a $\mathscr{P}$-preenvelope and the pair $(\Gen{H^0_\sigma(\Pscr)},\,H^0_\sigma(\Pscr)^{\perp_0})$ in $\Hcal_\sigma$ is a silting torsion pair; and
\item $\sigma$ admits a   right mutation $\sigma'$  with respect to $\Pscr$ if and only if  there is a silting torsion pair of the form $\mathfrak{t}:=(H^0_\sigma(\Pscr)^{\perp_0},\Fcal)$ in $\Hcal_\sigma$.
\end{enumerate}
\end{proposition}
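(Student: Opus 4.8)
The plan is to dualise, essentially verbatim, the proof of Proposition~\ref{prop:general existence of mutations}. Under the heart-in-coaisle convention of Section~\ref{sec: silting and cosilting}, statement~(1) (left silting mutation, with its hypothesis that a $\Pscr$-preenvelope exists) mirrors part~(2) of that proposition (right cosilting mutation, which needed an $\Escr$-precover), while statement~(2) (right silting mutation, no extra hypothesis) mirrors part~(1) (left cosilting mutation). In both cases the forward implication is immediate from Theorem~\ref{thm:siltmutation as HRS}: a left (respectively right) mutation $\sigma'$ of $\sigma$ with respect to $\Pscr$ forces $\mathbb{T}_{\sigma'}$ to be the left (respectively right) HRS-tilt of $\mathbb{T}_\sigma$ at the indicated torsion pair, so that torsion pair is silting, and in the left case Definition~\ref{def:siltmut} already supplies the required $\Pscr$-preenvelope. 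So all the work lies in the two converse directions, each of which reduces, again by Theorem~\ref{thm:siltmutation as HRS}, to the single verification $\Pscr=\Add{\sigma}\cap\Add{\sigma'}$.

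For (2), suppose $\mathfrak{t}=(H^0_\sigma(\Pscr)^{\perp_0},\Fcal)$ is a silting torsion pair, witnessed by a silting object $\sigma'$ with $\mathbb{T}_{\sigma'}=\mathbb{T}_{\mathfrak{t}^-}=(\Xcal',\Ycal')$. Since $\Pscr$ is precovering in $\Add{\sigma}$ by Lemma~\ref{lem:cosilting enveloping}(2)(a), pick a $\Pscr$-precover $\Phi\colon\gamma_0\to\sigma$, complete it to a triangle $\xymatrix{\sigma[-1]\ar[r]&\gamma_1\ar[r]&\gamma_0\ar[r]^\Phi&\sigma}$ and set $\tilde\sigma=\gamma_0\oplus\gamma_1$; as in the proof of Theorem~\ref{thm:siltmutation as HRS} one checks $\Add{\tilde\sigma}=\Add{\sigma'}$. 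It then remains to show $\Pscr=\Add{\sigma}\cap\Add{\sigma'}$, and this dualises the two inclusions in the proof of Proposition~\ref{prop:general existence of mutations}(1). Using $\Add{\sigma}=\Xcal\cap\Xcal^{\perp_1}$ from Proposition~\ref{prop:summary cosilting}(1), the cohomological description of $\mathbb{T}_{\mathfrak{t}^-}$ from Remark~\ref{rem:inversetilt}, and the dual of Remark~\ref{rem:torsion torsion-free characterisation} (cohereditary torsion pairs generated by families of projectives), the inclusion ``$\subseteq$'' follows by the diagram chase dual to the computation $\Hom_\Dcal(X,E[1])=0$ there, applied to a decomposition triangle of an arbitrary object of $\Xcal'$; and ``$\supseteq$'' follows by showing that any $X\in\Add{\sigma}\cap\Add{\sigma'}$ has $H^0_\sigma(X)$ projective in $\Hcal_\sigma$ and lying in ${}^{\perp_0}(H^0_\sigma(\Pscr)^{\perp_0})=\Gen{H^0_\sigma(\Pscr)}$, whence $H^0_\sigma(X)\in\Add{H^0_\sigma(\Pscr)}=H^0_\sigma(\Pscr)$ (using $\Pscr=\Add\Pscr$), so $X\in\Pscr$. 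Part (1) is handled identically, starting instead from a $\Pscr$-preenvelope of $\sigma$ (now assumed to exist), forming the triangle $\xymatrix{\sigma\ar[r]^\Phi&\varepsilon_0\ar[r]&\varepsilon_1\ar[r]&\sigma[1]}$, and running the diagram chases dual to those for right cosilting mutation, now with the TTF torsion-free class $H^0_\sigma(\Pscr)^{\perp_0}$ and the left HRS-tilt $\mathbb{T}_{\mathfrak{t}^+}$.

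The main obstacle is not mathematical depth but bookkeeping: one must consistently translate every ingredient of the cosilting argument ($\Ycal$, $\Prod$, $\Inj$, $\Cogen$, hereditary torsion pairs, injective cogenerators) into its silting counterpart ($\Xcal$, $\Add$, $\Proj$, $\Gen$, cohereditary torsion pairs, projective generators), and in particular keep straight the asymmetry in approximations — $\Pscr$-precovers in $\Add{\sigma}$ always exist by Lemma~\ref{lem:cosilting enveloping}(2)(a), whereas $\Pscr$-preenvelopes need not, which is precisely why (1) carries the extra hypothesis and (2) does not. Since the heart is placed in the coaisle, ``dual'' statements about silting t-structures genuinely live on the aisle side, so a little care is needed when invoking Proposition~\ref{prop:summary cosilting}(1) and Remark~\ref{rem:inversetilt}; beyond that the proof is a routine transcription and I would simply indicate, as the authors do, that it is dual to Proposition~\ref{prop:general existence of mutations}.
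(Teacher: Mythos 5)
The paper omits this proof, declaring it dual to Proposition~\ref{prop:general existence of mutations}, and that is precisely the programme you carry out: you correctly diagnose the crossed correspondence (left silting mutation $\leftrightarrow$ right cosilting mutation, right silting mutation $\leftrightarrow$ left cosilting mutation) forced by the heart-in-coaisle convention, and you correctly identify that $\Pscr$-precovers always exist while $\Pscr$-preenvelopes need not, which explains why the extra hypothesis sits in part~(1) rather than~(2). One small slip to correct when writing it out: Proposition~\ref{prop:summary cosilting}(1) gives $\Add{\sigma}={}^{\perp_1}\Xcal\cap\Xcal$ with $\Xcal=\sigma^{\perp_{>0}}$, not $\Xcal\cap\Xcal^{\perp_1}$ as you wrote.
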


\begin{remark}\label{rem:silting is preenveloping}
In the case where $\Dcal$ is a compactly generated triangulated category, the condition that $\sigma$ admits a $\mathscr{P}$-preenvelope is redundant.

Indeed, let $\mathbb{T}=\mathbb{T}_\sigma$ be a  t-structure associated to a silting object $\sigma$,
let $\mathfrak{t}=(\Tcal,\Fcal)$ be a silting torsion pair in the heart $\Hcal_\sigma$
and let $\gamma$ be a silting object in $\Dcal$ such that $\mathbb{T}_{\mathfrak{t}^+}=\mathbb{T}_\gamma$. We know from \cite[Proposition 3.8]{AMV5} (see also \cite[Theorem 3.2.4]{Bondarko2}) that $\gamma^{\perp_{>0}}$ is a TTF class. Let $\Phi\colon \sigma\longrightarrow B$ denote a $\gamma^{\perp_{>0}}$-preenvelope of $\sigma$. In particular, $H^0_\sigma(B)$ lies in $\Tcal$. We claim that $\phi:=H^0_\sigma(\Phi)$ is a $\Tcal$-preenvelope of the projective generator $H^0_\sigma(\sigma)$. To that end, suppose that $f\colon H^0_\sigma(\sigma)\longrightarrow T$ is a morphism in $\Hcal_\sigma$ with $T$ in $\Tcal$. Then the composition $f\circ \pi: \sigma\longrightarrow T$, where $\pi\colon \sigma\longrightarrow H^0_\sigma(\sigma)$ is the natural truncation map, factors through $\Phi$ (since $T$ lies in $\gamma^{\perp_{>0}}$). In other words, there is $\alpha\colon B\longrightarrow T$ such that $f\circ \pi=\alpha\circ \Phi$. If we apply $H^0_\sigma$ to this equality we get $f=H^0_\sigma(\alpha)\circ \phi$, as wanted.

Finally, if $\Tcal=\Gen{H^0_\sigma(\Pscr)}$ as in Proposition~\ref{prop:general existence of silting mutations}(1), there is an epimorphism $p\colon H^0_\sigma(P)\longrightarrow H^0_\sigma(B)$ in $\Hcal_\sigma$ with $P$ in $\Pscr$ and the preenvelope $\phi\colon H^0_\sigma(\sigma)\longrightarrow H^0_\sigma(B)$ factors through $p$ as $H^0_\sigma(\sigma)$ is projective in $\Hcal_\sigma$. Clearly, the resulting map $\phi'\colon H^0_\sigma(\sigma)\longrightarrow H^0_\sigma(P)$ is a $\Tcal$-preenvelope as well and there is a map $\Phi'\colon\sigma\longrightarrow P$ such that $\phi'=H^0_\sigma(\Phi')$ by Proposition~\ref{prop:summary cosilting}(1). Finally, $\Phi'$ is a $\Pscr$-preenvelope by arguments dual to those in Lemma~\ref{lem:approx prop}.
\end{remark}

Let us now delve into the connection between silting mutation and cosilting mutation. For this purpose we restrict ourselves to the setting of compactly generated triangulated categories. Recall that, in a compactly generated triangulated category $\Dcal$, for any compact object $K$ there is an object $\mathbb{BC}(K)$, called the \textbf{Brown-Comenetz dual of $K$}, such that
$$\Hom_\mathbb{Z}(\Hom_\Dcal(K,-),\mathbb{Q}/\mathbb{Z})\cong \Hom_\Dcal(-,\mathbb{BC}(K)).$$
Note that, since $K$ is compact, $\Hom_\mathbb{Z}(\Hom_\Dcal(K,-),\mathbb{Q}/\mathbb{Z})$ sends pure triangles to short exact sequences and, thus, $\mathbb{BC}(K)$ is pure-injective by \cite[Corollary~2.5]{Krause-TC}. The following lemma is an easy observation.

\begin{lemma}\label{siltcosilt}
Let $\Dcal$ be a compactly generated triangulated category and $\sigma$ a compact silting object. Then $\mathbb{BC}(\sigma)$ is a pure-injective cosilting object such that $\mathbb{T}_\sigma=\mathbb{T}_{\mathbb{BC}(\sigma)}$.
\end{lemma}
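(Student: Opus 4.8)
The plan is to show that $\mathbb{BC}(\sigma)$ is cosilting by verifying that its right orthogonal classes form exactly the t-structure $\mathbb{T}_\sigma$ associated to the compact silting object $\sigma$. Recall that since $\sigma$ is compact silting, $\mathbb{T}_\sigma=(\sigma^{\perp_{\geq 0}},\sigma^{\perp_{<0}})$ is a t-structure (with our conventions, after the shift adopted in Section~\ref{sec: silting and cosilting}). The key computational identity will come from the defining adjunction property of Brown--Comenetz duality: for any object $X$ in $\Dcal$ and any integer $i$,
$$\Hom_\Dcal(X,\mathbb{BC}(\sigma)[i])\cong\Hom_\Dcal(X,\mathbb{BC}(\sigma[-i]))\cong\Hom_\mathbb{Z}(\Hom_\Dcal(\sigma[-i],X),\mathbb{Q}/\mathbb{Z})\cong\Hom_\mathbb{Z}(\Hom_\Dcal(\sigma,X[i]),\mathbb{Q}/\mathbb{Z}).$$
Since $\mathbb{Q}/\mathbb{Z}$ is a faithfully injective $\mathbb{Z}$-module (it is an injective cogenerator of $\Mod{\mathbb{Z}}$), the group on the right vanishes if and only if $\Hom_\Dcal(\sigma,X[i])=0$.

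First I would use this identity to identify the orthogonal classes. For a fixed set of integers $I$, the class ${}^{\perp_I}\mathbb{BC}(\sigma)$ consists of those $X$ with $\Hom_\Dcal(X,\mathbb{BC}(\sigma)[i])=0$ for all $i\in I$, which by the displayed computation is exactly the class of $X$ with $\Hom_\Dcal(\sigma,X[i])=0$ for all $i\in I$, i.e.\ $\sigma^{\perp_I}$. Applying this with $I=\{\,i\leq 0\,\}$ gives ${}^{\perp_{\leq 0}}\mathbb{BC}(\sigma)=\sigma^{\perp_{\leq 0}}$ and with $I=\{\,i>0\,\}$ gives ${}^{\perp_{>0}}\mathbb{BC}(\sigma)=\sigma^{\perp_{>0}}$. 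Here one has to be slightly careful with the index conventions: the cosilting t-structure is $({}^{\perp_{\leq 0}}\mathbb{BC}(\sigma),{}^{\perp_{>0}}\mathbb{BC}(\sigma))$, while the silting t-structure $\mathbb{T}_\sigma$ is (with our shift convention) $(\sigma^{\perp_{>0}},\sigma^{\perp_{\leq 0}})$ --- one should double-check from the definitions in Section~\ref{sec: silting and cosilting} that the aisle of the silting t-structure matches the coaisle description we are after, but granting the conventions stated there, the pair $({}^{\perp_{\leq 0}}\mathbb{BC}(\sigma),{}^{\perp_{>0}}\mathbb{BC}(\sigma))$ coincides with $\mathbb{T}_\sigma$. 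Since $\mathbb{T}_\sigma$ is already known to be a t-structure, this simultaneously proves that $\mathbb{BC}(\sigma)$ is a cosilting object and that $\mathbb{T}_{\mathbb{BC}(\sigma)}=\mathbb{T}_\sigma$.

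Finally, pure-injectivity of $\mathbb{BC}(\sigma)$ is essentially already recorded in the paragraph preceding the lemma: because $\sigma$ is compact, the functor $\Hom_\Dcal(\sigma,-)$ commutes with coproducts, hence sends pure triangles to exact sequences of abelian groups, and applying the exact functor $\Hom_\mathbb{Z}(-,\mathbb{Q}/\mathbb{Z})$ shows $\Hom_\Dcal(-,\mathbb{BC}(\sigma))$ sends pure triangles to short exact sequences; by \cite[Corollary~2.5]{Krause-TC} this means $\mathbb{BC}(\sigma)$ is pure-injective. So the proof reduces to writing down the adjunction identity, tracking the index sets, and invoking that $\mathbb{Q}/\mathbb{Z}$ is an injective cogenerator.

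The only genuine obstacle I anticipate is bookkeeping: making sure the shift conventions for silting versus cosilting t-structures (the ``slight adaptation by a shift'' flagged in the Remark after Section~\ref{sec: silting and cosilting}) line up so that the equality $\mathbb{T}_{\mathbb{BC}(\sigma)}=\mathbb{T}_\sigma$ holds on the nose rather than up to a shift. Everything else is a formal consequence of the defining property of $\mathbb{BC}$ and the faithful injectivity of $\mathbb{Q}/\mathbb{Z}$.
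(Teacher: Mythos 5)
Your strategy is exactly the right one — this lemma is folklore and the paper offers no proof beyond the remark preceding it, so the argument has to be reconstructed from the defining property of $\mathbb{BC}$, the fact that $\mathbb{Q}/\mathbb{Z}$ is an injective cogenerator of $\Mod{\mathbb{Z}}$, and a careful index chase. The pure-injectivity part is also handled correctly and is indeed just a restatement of the paragraph preceding the lemma. However, the index bookkeeping that you flag as your ``only genuine obstacle'' is in fact where the argument breaks as written, and it breaks in a way that would, if followed literally, disprove the lemma rather than prove it.

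The first isomorphism in your display, $\mathbb{BC}(\sigma)[i]\cong\mathbb{BC}(\sigma[-i])$, is wrong: since $\mathbb{BC}$ is covariant, the correct identity is $\mathbb{BC}(\sigma)[i]\cong\mathbb{BC}(\sigma[i])$, as one checks from $\Hom_\Dcal(X,\mathbb{BC}(\sigma)[i])\cong\Hom_\Dcal(X[-i],\mathbb{BC}(\sigma))\cong\Hom_\mathbb{Z}(\Hom_\Dcal(\sigma,X[-i]),\mathbb{Q}/\mathbb{Z})$. With the corrected identity, the orthogonal class becomes ${}^{\perp_I}\mathbb{BC}(\sigma)=\sigma^{\perp_{-I}}$, so that ${}^{\perp_{\leq 0}}\mathbb{BC}(\sigma)=\sigma^{\perp_{\geq 0}}$ and ${}^{\perp_{>0}}\mathbb{BC}(\sigma)=\sigma^{\perp_{<0}}$, which matches the paper's convention $\mathbb{T}_\sigma=(\sigma^{\perp_{\geq 0}},\sigma^{\perp_{<0}})$ on the nose. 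Your version produces ${}^{\perp_I}\mathbb{BC}(\sigma)=\sigma^{\perp_I}$, giving the pair $(\sigma^{\perp_{\leq 0}},\sigma^{\perp_{>0}})$, which is \emph{not} equal to $\mathbb{T}_\sigma$; you also misremember the silting t-structure as $(\sigma^{\perp_{>0}},\sigma^{\perp_{\leq 0}})$ rather than $(\sigma^{\perp_{\geq 0}},\sigma^{\perp_{<0}})$, and even after that slip the two pairs you exhibit are opposite to one another, so the final claimed coincidence does not follow from what you wrote. The fix is local and the method is sound; the point is precisely that the sign in the $\mathbb{BC}$--shift compatibility is what makes the aisle of the cosilting t-structure land on the aisle (and not the coaisle) of the silting one, and that has to be checked rather than deferred.
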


Recall that in \cite[Section 4]{AI} the authors consider compact silting objects in compactly generated triangulated categories. To see that, for a compact object, the definition of silting given in \cite[Definition 4.1]{AI} is equivalent to the definition of silting given is Subsection \ref{sec: silting and cosilting}, we refer the reader to \cite[Corollary 4.7]{AI}.  The following theorem shows that the operation of mutation of compact silting objects defined in \cite{AI} is a special case of the operation of mutation of both silting objects and pure-injective cosilting objects.

\begin{theorem}\label{thm:compact silting mutation}
Let $\Dcal$ be a compactly generated triangulated category. Let $\sigma$ be a compact silting object in $\Dcal$ and $\mathfrak{p}=\add{\mathfrak{p}}$ a subcategory of $\add{\sigma}$ and define $\Pscr = \Add{\mathfrak{p}}$.
\begin{enumerate}
\item Any $\mathfrak{p}$-preenvelope of $\sigma$ is a $\Pscr$-preenvelope and any $\mathfrak{p}$-precover of $\sigma$ is a $\Pscr$-precover.
\item If $\sigma$ admits a $\mathfrak{p}$-preenvelope (respectively, a $\mathfrak{p}$-precover), then it has a compact left (respectively, right) mutation ${\sigma'}$ with respect to $\Pscr$. Moreover, the t-structure $\mathbb{T}_{\sigma'}$ is the cosilting t-structure associated to a pure-injective left  (respectively, right) mutation of the cosilting object $\mathbb{BC}(\sigma)$.
\end{enumerate}
\end{theorem}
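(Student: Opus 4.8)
The plan is to prove part (1) by a direct approximation argument and part (2) by running Aihara--Iyama's machinery on the silting side and then transporting the resulting HRS-tilt description to the cosilting side via Brown--Comenetz duality. For part (1) the precover assertion is purely formal: if $\Phi\colon\gamma_0\to\sigma$ is a $\mathfrak{p}$-precover and $g\colon P\to\sigma$ with $P$ a summand of a coproduct $\coprod_{i\in I}p_i$ of objects $p_i\in\mathfrak{p}$, then each component $p_i\to\sigma$ of the induced morphism $\coprod_{i\in I}p_i\to\sigma$ factors through $\Phi$ (since $p_i\in\mathfrak{p}$), and assembling these factorisations and precomposing with the split inclusion $P\to\coprod_{i\in I}p_i$ factors $g$ through $\Phi$. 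For the preenvelope assertion one uses compactness of $\sigma$: given $g\colon\sigma\to P$ with $P$ a summand of $\coprod_{i\in I}p_i$, the composite $\sigma\to P\hookrightarrow\coprod_{i\in I}p_i$ factors through a finite subcoproduct $\coprod_{i\in F}p_i$, which lies in $\mathfrak{p}=\add{\mathfrak{p}}$; a $\mathfrak{p}$-preenvelope factors every morphism from $\sigma$ to $\coprod_{i\in F}p_i$, hence factors $g$.

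For part (2) I treat the left case (the right case being dual). Completing a $\mathfrak{p}$-preenvelope $\Phi\colon\sigma\to\varepsilon_0$ to a triangle $\sigma\to\varepsilon_0\to\varepsilon_1\to\sigma[1]$ and setting $\sigma'=\varepsilon_0\oplus\varepsilon_1$, one first normalises $\Phi$ so that its component on the $\mathfrak{p}$-part of $\sigma$ is an identity; the triangle then becomes an Aihara--Iyama left mutation, so by \cite{AI} $\sigma'$ is a compact silting object (equivalent to the Aihara--Iyama mutation), and $\add{\sigma}\cap\add{\sigma'}=\mathfrak{p}$. By part (1) $\Phi$ is a $\Pscr$-preenvelope; moreover, since $\varepsilon_1\in\add{\sigma'}$ and $\sigma'$ is compact silting one has $\Hom_\Dcal(\varepsilon_1,Q[1])=0$ for every $Q\in\Add{\sigma'}$, so applying $\Hom_\Dcal(-,Q)$ to the triangle shows that $\Phi$ is even an $\Add{\sigma'}$-preenvelope. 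Together with the identification $\Pscr=\Add{\sigma}\cap\Add{\sigma'}$ (the inclusion $\subseteq$ being clear from $\mathfrak{p}\subseteq\add{\varepsilon_0}\subseteq\add{\sigma'}$, and $\supseteq$ following by promoting the Aihara--Iyama $\add$-level identity to $\Add$-level), Definition~\ref{def:siltmut} is met: $\sigma'$ is a compact left mutation of $\sigma$ with respect to $\Pscr$. Theorem~\ref{thm:siltmutation as HRS}(1) then records that $\mathbb{T}_{\sigma'}$ is the left HRS-tilt of $\mathbb{T}_\sigma$ at the torsion pair $(\Gen{H^0_\sigma(\Pscr)},\Scal)$ with $\Scal:=H^0_\sigma(\Pscr)^{\perp_0}$.

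It remains to exhibit a pure-injective left cosilting mutation of $\mathbb{BC}(\sigma)$ with t-structure $\mathbb{T}_{\sigma'}$. By Lemma~\ref{siltcosilt}, $\tau:=\mathbb{BC}(\sigma)$ is a pure-injective cosilting object with $\mathbb{T}_\tau=\mathbb{T}_\sigma$, so it has the same Grothendieck heart $\Hcal:=\Hcal_\sigma$; since $\sigma$ is compact, the objects of $H^0_\sigma(\mathfrak{p})$ are compact in $\Hcal$ (this uses that $\mathbb{T}_\sigma$ is smashing and the natural isomorphism of Proposition~\ref{prop:summary cosilting}(1)), so $\Scal=H^0_\sigma(\mathfrak{p})^{\perp_0}$ is closed under coproducts; being left orthogonal to projectives it is also closed under quotients, and as a torsion-free class it is closed under products and subobjects, so $\Scal$ is a TTF class and $(\Scal,\Scal^{\perp_0})$ is hereditary. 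Writing $H^0_\tau(\Escr):=\Scal^{\perp_0}\cap\Inj{\Hcal}$ for the corresponding subcategory $\Escr=\Prod{\Escr}\subseteq\Prod{\tau}$, one has $\Scal={}^{\perp_0}H^0_\tau(\Escr)$, so Theorem~\ref{existence left mutation}(1) yields a pure-injective left cosilting mutation $\tau'$ of $\tau$ with respect to $\Escr$, and Theorem~\ref{thm:mutation as HRS-tilt}(1) identifies $\mathbb{T}_{\tau'}$ with the left HRS-tilt of $\mathbb{T}_\tau=\mathbb{T}_\sigma$ at a torsion pair whose torsion-free class is $\Scal$. Since a torsion pair is determined by either of its halves, this is the same torsion pair as on the silting side, whence $\mathbb{T}_{\tau'}=\mathbb{T}_{\sigma'}$. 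The right case is identical except that the torsion pair produced by Theorem~\ref{thm:siltmutation as HRS}(2) is the hereditary one with torsion class $H^0_\sigma(\Pscr)^{\perp_0}$; here one needs its torsion-free class closed under direct limits, which is automatic because $\mathbb{T}_{\sigma'}=\mathbb{T}_{\mathbb{BC}(\sigma')}$ shows this torsion pair is cosilting, so Proposition~\ref{prop:cover vs finite type} forces it to be of finite type, and Theorem~\ref{existence right mutation}(2) then supplies the pure-injective right cosilting mutation, matched to $\mathbb{T}_{\sigma'}$ by the same determined-by-torsion-class argument.

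The main obstacle I anticipate is organisational: keeping track of the three torsion pairs in play --- the cohereditary one coming out of a left silting mutation, the hereditary one coming out of a right cosilting mutation, and the single TTF class $\Scal$ that sits as the torsion-free class of one adjacent pair and the torsion class of the other (as in Remark~\ref{rem:different kinds of torsion}) --- and threading Theorems~\ref{thm:siltmutation as HRS}, \ref{thm:mutation as HRS-tilt} and~\ref{existence left mutation} together through them. The one genuinely technical point inside this is the promotion of the Aihara--Iyama $\add$-level identity $\add{\sigma}\cap\add{\sigma'}=\mathfrak{p}$ to the $\Add$-level equality $\Pscr=\Add{\sigma}\cap\Add{\sigma'}$; once that is in hand, everything else is a bookkeeping application of results already available, namely the Aihara--Iyama mutation theorem, Lemma~\ref{siltcosilt}, and the (co)silting HRS-tilt dictionaries.
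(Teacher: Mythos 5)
Your argument is correct and follows the paper's overall route: build the compact silting mutation $\sigma'$ from the approximation triangle via Aihara--Iyama, read off the HRS-tilt on the silting side through Theorem~\ref{thm:siltmutation as HRS}, and transport through $\mathbb{BC}$ to the cosilting side. Where you diverge is in how $\Escr$ is produced. The paper simply sets $\Escr=\Prod{\mathbb{BC}(\varepsilon_0)}$ (resp.\ $\Prod{\mathbb{BC}(\gamma_0)}$) and matches torsion-free (resp.\ torsion) classes directly via the equivalence $\Hom_\Dcal(\varepsilon_0,X)=0\Leftrightarrow\Hom_\Dcal(X,\mathbb{BC}(\varepsilon_0))=0$ together with Proposition~\ref{prop:summary cosilting}; the closure under products needed in Theorem~\ref{thm:mutation as HRS-tilt}(1) then comes for free because $\Fcal=H^0_\sigma(\varepsilon_0)^{\perp_0}$ is already a torsion-free class on the silting side. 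You instead reconstruct $\Escr$ abstractly from the class $\Scal$, which forces a detour: you must first show that $(\Scal,\Scal^{\perp_0})$ is a hereditary torsion pair, and in particular that $\Scal$ is a torsion class, for which you invoke compactness of the objects $H^0_\sigma(\mathfrak{p})$ in $\Hcal_\sigma$ before Theorem~\ref{existence left mutation} and (in the right case) Proposition~\ref{prop:cover vs finite type} can be applied. That argument is valid but avoidable; the paper's explicit Brown--Comenetz identification sidesteps the compactness step and lands on a single application of Theorem~\ref{thm:mutation as HRS-tilt}. Lastly, the $\add$-to-$\Add$ promotion of $\add{\sigma}\cap\add{\sigma'}=\mathfrak{p}$ that you flag as the technical crux is the same implicit step the paper takes; it is resolved by the argument of Proposition~\ref{prop:general existence of silting mutations} (dual to Proposition~\ref{prop:general existence of mutations}) once the HRS-tilt relation and the $\Pscr$-preenvelope from part~(1) are in hand, so it is not a genuine gap in either account.
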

\begin{proof} (1): Let $\Phi\colon \sigma\longrightarrow \varepsilon_0$ be a $\mathfrak{p}$-preenvelope of $\sigma$ and let $f \colon \sigma \longrightarrow P$ be a map to an object $P$ in $\Pscr$.  Without loss of generality we may assume that $P$ is a coproduct of objects in $\mathfrak{p}$.  Since $\sigma$ is compact, $f$ factors through a finite subsum of objects in $\mathfrak{p}$ and hence through $\Phi$.  Dually, let $\Psi \colon \gamma_0 \longrightarrow \sigma$ be a $\mathfrak{p}$-precover and $g \colon P \longrightarrow \sigma$ a map from an object $P$ in $\Pscr$.  Again, without loss of generality assume that $P$ is a coproduct of objects in $\mathfrak{p}$.  Since a factorisation through $\Psi$ exists for each summand of $P$, the universal property of the coproduct yields a factorisation for $g$.

(2): If $\sigma$ admits a $\mathfrak{p}$-preenvelope $\Phi\colon \sigma\longrightarrow \varepsilon_0$, the triangle
$$\xymatrix{\sigma\ar[r]^\Phi& \varepsilon_0\ar[r]& \varepsilon_1\ar[r]& \sigma[1]}$$
yields a compact silting object $\sigma'=\varepsilon_0\oplus\varepsilon_1$ by \cite[Theorem 2.31]{AI}.  By (1) this is the left mutation of $\sigma$ with respect to $\Pscr$ in the sense of Definition~\ref{def:siltmut}.

By (the proof of) Theorem~\ref{thm:siltmutation as HRS}, the t-structure $\mathbb{T}_{\sigma'}$ is the left HRS-tilt of $\mathbb{T}_\sigma$ at the torsion pair in $\Hcal_\sigma$ with
 torsion-free class $\Fcal=H^0_\sigma(\Pscr)^{\perp_0}=H^0_\sigma(\varepsilon_0)^{\perp_0}$ in $\Hcal_\sigma$. By Proposition~\ref{prop:summary cosilting}, $\Fcal=\varepsilon_0^{\perp_0}\cap\Hcal_\sigma$ in $\Dcal$.
Consider now the Brown-Comenetz dual of $\varepsilon_0$ and define $\Escr:=\Prod{\mathbb{BC}(\varepsilon_0)}$. Note that 
$\Fcal={}^{\perp_0} {\mathbb{BC}(\varepsilon_0)}\cap\Hcal_\sigma$ in $\Dcal$ since $\Hom_\Dcal(\varepsilon_0,X)=0$ if and only if $\Hom_\Dcal(X,\mathbb{BC}(\varepsilon_0))=0$ and, thanks to Proposition~\ref{prop:summary cosilting}, we infer that $\Fcal={}^{\perp_0} H^0_\sigma(\Escr)$ in $\Hcal_\sigma$. Recall from Lemma~\ref{siltcosilt} that $\mathbb{T}_\sigma=\mathbb{T}_{\mathbb{BC}(\sigma)}$ and $\mathbb{T}_{\sigma'}=\mathbb{T}_{\mathbb{BC}(\sigma')}$. By Theorem~\ref{thm:mutation as HRS-tilt}(1)
the t-structure $\mathbb{T}_{\sigma'}$ then coincides with the one associated to a  left mutation of $\mathbb{BC}(\sigma)$ with respect to $\Escr$.

Now we turn to the dual case.  If $\gamma_0\longrightarrow \sigma$ is a $\mathfrak{p}$-precover, then we see with analogous arguments that there is a compact silting object $\sigma'$ such that
$\mathbb{T}_{\sigma'}$ is the right HRS-tilt of $\mathbb{T}_\sigma$ at the torsion pair in $\Hcal_\sigma$ with
 torsion class  $\Tcal=H^0_\sigma(\gamma_0)^{\perp_0}={}^{\perp_0}\mathbb{BC}(\gamma_0)$, and Lemma~\ref{siltcosilt} yields again that $\mathbb{T}_\sigma=\mathbb{T}_{\mathbb{BC}(\sigma)}$ and $\mathbb{T}_{\sigma'}=\mathbb{T}_{\mathbb{BC}(\sigma')}$. Note that $\Tcal={}^{\perp_0} H^0_\sigma(\Escr)$ where $\Escr=\Prod{\mathbb{BC}(\gamma_0)}\subseteq\Prod{\mathbb{BC}(\sigma)}$.
Hence $\mathbb{T}_{\sigma'}$ is the right HRS-tilt of $\mathbb{T}_\sigma$ at the cosilting torsion pair cogenerated by $H^0_\sigma(\Escr)$.  Moreover, $\mathbb{BC}(\sigma)$  has an $\Escr$-cover by Lemma~\ref{rem:approximate every injective}. So we infer from Theorem~\ref{thm:mutation as HRS-tilt}(2) that the t-structure $\mathbb{T}_{\sigma'}$ coincides with the one associated to a  right mutation of $\mathbb{BC}(\sigma)$ with respect to $\Escr$.
\end{proof}

\section{Mutation and localisation}\label{sec:local}

In this section we will show that, in nice enough contexts, mutation can be understood as three step process: first restrict the t-structures (to certain subcategories); then shift one of the restricted t-structures; finally glue them back together. In order to prove this, we need to review some ideas concerning restricting and gluing.  

\subsection{Restricting and gluing along (co)localising sequences}
A sequence of exact functors between triangulated (respectively, abelian categories)
$$\xymatrix{\Bcal\ar[r]^{F}&\Dcal\ar[r]^{G}&\Ccal}$$
is said to be a \textbf{short exact sequence} if $F$ is fully faithful, {the Verdier quotient (respectively, the Serre quotient)} $\Dcal/\Bcal$ is well-defined, there are an equivalence $L\colon \Dcal/\Bcal \longrightarrow \Ccal$ and a natural {isomorphism} $\theta\colon G\longrightarrow L\circ q$ where $q\colon \Dcal\longrightarrow \Dcal/\Bcal$.

A short exact sequence  of triangulated (respectively, abelian) categories as above is said to be a \textbf{localising sequence}
if both $F$ and $G$ admit right adjoints. Dually, it is said to be a \textbf{colocalising sequence}
if both $F$ and $G$ admit left adjoints. A short exact sequence is said to be a \textbf{recollement} if it is both a localising and a colocalising sequence. Note that a localising sequence of triangulated categories
$$\xymatrix{\Bcal\ar[r]^{i_*}&\Dcal\ar[r]^{j^*}\ar@/^1.5pc/[l]_{i^!}&\Ccal\ar@/^1.5pc/[l]_{j_*}}$$
can be transformed into a colocalising sequence
$$\xymatrix{\Ccal\ar[r]^{j_*}&\Dcal\ar[r]^{i^!}\ar@/_1.5pc/[l]_{j^*}&\Bcal\ar@/_1.5pc/[l]_{i_*}}.$$
However, the same observation does not hold for abelian categories. This is due to the fact that in the triangulated setting adjoints of exact functors are exact, while this is not the case in the abelian setting. Recall that, for abelian categories, quotient functors by \textbf{Serre subcategories} (i.e. subcategories closed under subobjects, quotient objects and extensions) are always exact.

\begin{definition}
Given a short exact sequence of triangulated categories
$\xymatrix{\Bcal\ar[r]^{F}&\Dcal\ar[r]^{G}&\Ccal}$,
we say that a t-structure $\mathbb{T}=(\Xcal,\Ycal)$ in $\Dcal$ \textbf{restricts along the exact sequence} if $(\Xcal\cap \essim{F},\Ycal\cap \essim{F})$ is a t-structure in $\essim{F}$ and $(G(\Xcal),G(\Ycal))$ is a t-structure in $\Ccal$.
\end{definition}

\begin{proposition}\label{restricts}
Let $\mathbb{T}$ be a t-structure in a triangulated category $\Dcal$ and suppose there is a short exact sequence of triangulated categories
$$\xymatrix{\Bcal\ar[r]^{F}&\Dcal\ar[r]^{G}&\Ccal}$$
\begin{enumerate}
\item\cite[Lemma 3.3]{CR} The following statements are equivalent for a t-structure $\mathbb{T}$.
\begin{enumerate}
\item $\mathbb{T}$ restricts along the exact sequence.
\item ${G(\mathbb{T})}:=(G(\Xcal),G(\Ycal))$ is a t-structure in $\Ccal$.
\item ${\mathbb{T}\cap \essim{F}}:=(\Xcal\cap \essim{F},\Ycal\cap \essim{F})$ is a t-structure in $\essim{F}$, and the heart of this t-structure is a Serre subcategory of the heart of $\mathbb{T}$.
\end{enumerate}
\item If $\mathbb{T}$ restricts along the exact sequence, then there is an induced short exact sequence of abelian categories formed by the associated hearts. If the exact sequence of triangulated categories is a localising, respectively colocalising, sequence, then so is the corresponding sequence of hearts.
\end{enumerate}
\end{proposition}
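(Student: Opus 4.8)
Part~(1) is cited from \cite{CR}, so the work is in part~(2). Write $\Hcal=\Hcal_{\mathbb{T}}$ for the heart of $\mathbb{T}=(\Xcal,\Ycal)$, and let $\Hcal_\Bcal$, $\Hcal_\Ccal$ denote the hearts of the restricted t-structures $\mathbb{T}\cap\essim{F}$ in $\essim{F}$ and $G(\mathbb{T})$ in $\Ccal$. The inclusion $\essim{F}\hookrightarrow\Dcal$ and the functor $G$ are t-exact (they send aisle to aisle and coaisle to coaisle, by the definition of ``restricts'' together with Proposition~\ref{restricts}(1)); since a short exact sequence in a heart is the same thing as a triangle with all three terms in that heart, $F$ and $G$ restrict to \emph{exact} functors $\bar F\colon\Hcal_\Bcal\to\Hcal$ and $\bar G\colon\Hcal\to\Hcal_\Ccal$. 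Here $\bar F$ is fully faithful (being a restriction of $F$), and by Proposition~\ref{restricts}(1)(c) its essential image is the Serre subcategory $\Hcal\cap\essim{F}=\Hcal_\Bcal$ of $\Hcal$; we identify $\Hcal_\Bcal$ with this subcategory. Moreover, applying the triangulated functor $G$ to truncation triangles and invoking uniqueness of such triangles yields natural isomorphisms $G\circ H^n_{\mathbb{T}}\cong H^n_{G(\mathbb{T})}\circ G$, and $\bar G$ annihilates $\Hcal_\Bcal$ because $\essim{F}=\ker G$.

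So the plan is: first factor $\bar G=\tilde G\circ\pi$ through the Serre quotient $\pi\colon\Hcal\to\Hcal/\Hcal_\Bcal$ and show $\tilde G\colon\Hcal/\Hcal_\Bcal\to\Hcal_\Ccal$ is an equivalence; then produce the adjoints on hearts in the localising and colocalising cases. Essential surjectivity of $\tilde G$ is immediate from $G\circ H^0_{\mathbb{T}}\cong H^0_{G(\mathbb{T})}\circ G$: any $C\in\Hcal_\Ccal$ is $C\cong GD$ for some $D\in\Dcal$ (as $G$ is a Verdier quotient up to equivalence, hence essentially surjective), and then $\bar G(H^0_{\mathbb{T}}(D))\cong H^0_{G(\mathbb{T})}(C)=C$.

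Fullness and faithfulness of $\tilde G$ is the technical core: one compares the calculus of (right) fractions presenting $\Hom_\Ccal(GX,GY)=\Hom_{\Dcal/\Bcal}(X,Y)$ with the fractions presenting $\Hom_{\Hcal/\Hcal_\Bcal}(X,Y)$. The key point is that for any morphism $s$ in $\Dcal$ with $\cone{s}\in\Bcal$, the long exact cohomology sequence together with Proposition~\ref{restricts}(1)(c) (the $\mathbb{T}$-cohomology of objects of $\Bcal$ lies in the Serre subcategory $\Hcal_\Bcal$ of $\Hcal$) forces $H^0_{\mathbb{T}}(s)$ to have kernel and cokernel in $\Hcal_\Bcal$, hence to be invertible in $\Hcal/\Hcal_\Bcal$. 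For fullness: given $\alpha\colon GX\to GY$, represent it as a right fraction $X\xleftarrow{s}Z\xrightarrow{a}Y$ with $\cone{s}\in\Bcal$; then $\overline{H^0_{\mathbb{T}}(a)}\circ\overline{H^0_{\mathbb{T}}(s)}^{-1}$ is a morphism $X\to Y$ in $\Hcal/\Hcal_\Bcal$, and applying $\tilde G$ and using $G\circ H^0_{\mathbb{T}}\cong H^0_{G(\mathbb{T})}\circ G$ together with invertibility of $G(s)$ in $\Ccal$ (its cone is $G(\cone{s})=0$) and $H^0_{G(\mathbb{T})}(\alpha)=\alpha$ recovers $\alpha$. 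For faithfulness: if a morphism $f$ of $\Hcal$ has $G(f)=0$ in $\Ccal$, the calculus of fractions yields $s$ with $\cone{s}\in\Bcal$ and $f\circ s=0$, so $f$ factors through $H^0_{\mathbb{T}}(\cone{s})\in\Hcal_\Bcal$ and hence vanishes in $\Hcal/\Hcal_\Bcal$. Thus $\tilde G$ is an equivalence and $\Hcal_\Bcal\xrightarrow{\bar F}\Hcal\xrightarrow{\bar G}\Hcal_\Ccal$ is a short exact sequence of abelian categories. I expect the bookkeeping in matching the two calculi of fractions — checking well-definedness of the comparison and its compatibility with $\tilde G$ — to be the main obstacle; notably, no boundedness hypothesis is needed, since only the cohomology of $\cone{s}$ in degrees $0$ and $-1$ ever enters.

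Finally, the addendum. In the localising case $F$ and $G$ admit right adjoints $F^\rho,G^\rho$, and I would check that $\bar F$ and $\bar G$ admit right adjoints $H^0_{\mathbb{T}\cap\essim{F}}\circ F^\rho|_{\Hcal}$ and $H^0_{\mathbb{T}}\circ G^\rho|_{\Hcal_\Ccal}$: since a t-exact triangulated functor has a left t-exact right adjoint, $F^\rho$ and $G^\rho$ send $\Hcal$, respectively $\Hcal_\Ccal$, into the relevant coaisles, so the adjunction isomorphisms on hearts follow by combining the original adjunctions with the first isomorphism of Lemma~\ref{lem:cohomol1}. The colocalising case is entirely dual, using the left adjoints $F^\lambda,G^\lambda$ (which are right t-exact, with values in the relevant aisles) and the second isomorphism of Lemma~\ref{lem:cohomol1}. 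This supplies the required adjoints and completes part~(2).
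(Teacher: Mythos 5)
Your proposal is correct and follows essentially the same route as the paper: reduce to identifying $\Bcal$ with $\essim F$, restrict $F$ and $G$ to the hearts, and build the right (resp.\ left) adjoints of $\overline F$ and $\overline G$ by composing the ambient adjoints with the appropriate cohomological truncation, using $t$-exactness of $F,G$ to see the adjoints are left (resp.\ right) $t$-exact and then invoking the Hom-isomorphisms of Lemma~\ref{lem:cohomol1}. The one place you diverge is the short exact sequence of hearts itself: the paper simply cites \cite[Lemma 3.9]{CR} (and \cite[\S1.4]{BBD}, \cite[Proposition 2.5]{BR}) for this, whereas you sketch a direct calculus-of-fractions proof that $\overline G$ factors through the Serre quotient $\Hcal/\Hcal_\Bcal$ and induces an equivalence; that argument is correct (the key input is, as you say, that $H^0_\mathbb T$ of a morphism whose cone lies in $\Bcal$ has kernel and cokernel in the Serre subcategory $\Hcal_\Bcal$, by the long exact cohomology sequence). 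One small point worth flagging: forming $\Hcal/\Hcal_\Bcal$ requires it to exist as a genuine category, which is part of what is to be proved here; your argument in fact delivers this, since the Hom-sets of the putative quotient are identified with those of $\Hcal_\Ccal$, but it is cleaner to phrase the conclusion as ``$\overline G$ has the universal property of the Serre quotient'' rather than to presuppose the quotient.
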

\begin{proof}
For simplicity, since $F$ is fully faithful, we identify $\Bcal$ with $\essim{F}$ and assume, without loss of generality, that $F$ is the inclusion functor. Denote by $\Hcal$ the heart of $\mathbb{T}$ and by $\Hcal_\Bcal$ and $\Hcal_\Ccal$ the hearts of $G(\mathbb{T})$ and $\mathbb{T}\cap \essim{F}$ respectively. Define $\overline{F}\colon \Hcal_\Bcal\longrightarrow \Hcal$ to be the restriction of $F$ to $\Hcal_\Bcal$ and $\overline{G}\colon\Hcal\longrightarrow \Hcal_\Ccal$ to be the restriction of $G$ to $\Hcal$. Note that these functors are well-defined by the construction of the t-structures $G(\mathbb{T})$ and $\mathbb{T}\cap \essim{F}$. {Under the equivalent conditions of (1), it follows as in \cite[Lemma 3.9]{CR}  (see also \cite[Section 1.4]{BBD}, \cite[Proposition 2.5]{BR})  that} there is a short exact sequence of abelian categories
$$\xymatrix{\Hcal_\Bcal\ar[r]^{\overline{F}}&\Hcal\ar[r]^{\overline{G}}&\Hcal_\Ccal}.$$

{Next} we observe that if $F$ has a right adjoint, then so does $\overline{F}$. Indeed, if $R\colon \Dcal\longrightarrow \Bcal$ is a right adjoint to $F$, $Y$ is an object of {$\Bcal$, and $D$ is an object of $\Dcal$, then we have a canonical isomorphism
$$
\Hom_\Dcal(F(Y),D) \cong \Hom_\Bcal(Y,R(D)).
$$
In particular, $\Hom_\Bcal(Y,R(D))=0$ for each $Y\in\Xcal\cap \essim{F}$ whenever $D\in\Ycal$. It immediately follows that $R(\Ycal)\subseteq\Ycal\cap \essim{F}$ and that we have canonical isomorphisms
$$
\Hom_\Hcal(\overline{F}(Y),D) = \Hom_\Dcal(F(Y),D) \cong \Hom_\Bcal(Y,R(D)) \cong \Hom_{\Hcal_\Bcal}(Y,H^0_{\mathbb{T}_\Bcal}(R(D)))
$$
for any $Y\in\Hcal_\Bcal$ and $D\in\Hcal$.}
Therefore the following composition is a right adjoint to $\overline{F}$:
$$
\xymatrix{\Hcal\ar[r]^{\inc}&\Dcal\ar[r]^R&\Bcal\ar[r]^{H^0_{\mathbb{T}_\Bcal}}&\Hcal_\Bcal.}
$$
{An analogous argument shows that if $S\colon\Ccal\to\Dcal$ is a right adjoint to $G$, then the composition
$$
\xymatrix{\Hcal_\Ccal\ar[r]^{\inc}&\Ccal\ar[r]^S&\Bcal\ar[r]^{H^0_{\mathbb{T}}}&\Hcal}
$$
is a right adjoint to $\overline{G}\colon\Hcal\to\Hcal_\Ccal$. Indeed, if $X\in G(\Ycal)$, then $\Hom_\Dcal(D,S(X))\cong\Hom_\Ccal(G(D),X)=0$ for any $D\in\Xcal$. Thus, $S(G(\Ycal))\subseteq\Ycal$ and, for any $D$ in $\Hcal$ and $X$ in $\Hcal_\Ccal$, we have canonical isomorphisms
$$
\Hom_{\Hcal_\Ccal}(\overline{G}(D),X) = \Hom_\Ccal(G(D),X) \cong \Hom_\Dcal(D,S(X)) \cong \Hom_\Hcal(D,H^0_{\mathbb{T}}(S(X))).
$$

Finally, the assertion for left adjoints follows in similar fashion.}
\end{proof}

\begin{remark}\label{rem:glue}
Let $\mathbb{T}=(\Xcal,\Ycal)$ be a t-structure in a triangulated category $\Dcal$ which restricts along a short exact sequence of triangulated categories
$$\xymatrix{\Bcal\ar[r]^{i_*}&\Dcal\ar[r]^{j^*}&\Ccal}.$$
Then it follows from \cite{BBD} that:
\begin{enumerate} 
\item if $\mathbb{T}$ is a localising sequence, then $\Ycal=(\Ycal\cap\essim{i_*})\star j_*j^*(\Ycal)$, where $j_*$ is the right adjoint of $j^*$;
\item if $\mathbb{T}$ is a colocalising sequence, then $\Xcal=j_!j^*\Xcal\star (\Xcal\cap\essim{i_*})$, where $j_!$ is the left adjoint of $j^*$.
\end{enumerate}
\end{remark}

\subsection{Mutation and localising sequences}
{We are now going to explore the relation between mutation of pure-injective cosilting (respectively, pure-projective silting) objects and categorical localisations. First of all, given a cosilting object $\sigma$, we show that sets of pure-injective objects in $\Prod\sigma$  induce localising sequences both at triangulated and abelian level.}

\begin{proposition}\label{prop:restrict cosilting}
Let $\sigma$ be a cosilting object in a compactly generated triangulated category $\Dcal$. Let {$\Escr=\Prod\Escr$} be a  subcategory of $\Prod{\sigma}$ in which every object is pure-injective. Then there is a torsion pair $({}^{\perp_\mathbb{Z}}\Escr,\Ccal_\Escr)$ in $\Dcal$ and, thus, a localis{ing} sequence
$$\xymatrix{{}^{\perp_\mathbb{Z}}\Escr\ar[r]^{i_*}&\Dcal\ar[r]^{j^*}\ar@/^1.5pc/[l]_{i^!}&\Ccal_\Escr\ar@/^1.5pc/[l]_{j_*}}$$
where $i_*$ and $j_*$ are the inclusion functors. Moreover, the cosilting t-structure $\mathbb{T}_\sigma$ restricts along the sequence, thus giving rise to a localising sequence of abelian categories
$$\xymatrix{\Scal\ar[r]^{\overline{i_*}}&\Hcal_\sigma\ar[r]^{\overline{j^*}}\ar@/^1.5pc/[l]_{\overline{i^!}}&\Scal^{\perp_{0,1}}\ar@/^1.5pc/[l]_{\overline{j_*}}}$$
where $\overline{i_*}$ and $\overline{j_*}$ are the inclusion functors{, $\Scal={}^{\perp_0} H^0_\sigma(\Escr)$ is a hereditary torsion class}, and $\Scal^{\perp_{0,1}}$ is the associated Giraud subcategory in $\Hcal_\sigma$.
\end{proposition}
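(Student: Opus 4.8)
The plan is to build everything out of the hereditary torsion class $\Scal={}^{\perp_0}H^0_\sigma(\Escr)$ in the Grothendieck-style heart $\Hcal_\sigma$, lift it to $\Dcal$, and then check compatibility of the cosilting $t$-structure with the resulting sequence. First I would record that, since every object of $\Escr$ is pure-injective, $\Escr$ is a localizing (smashing) subcategory of $\Dcal$ in a suitable sense: the class ${}^{\perp_{\mathbb Z}}\Escr$ is a triangulated subcategory closed under coproducts, and one checks that $({}^{\perp_{\mathbb Z}}\Escr,\Ccal_\Escr)$, with $\Ccal_\Escr:=({}^{\perp_{\mathbb Z}}\Escr)^{\perp_0}$, is a (stable) torsion pair in $\Dcal$ — i.e.\ every $D$ sits in a triangle $i_*i^!D\to D\to j_*j^*D\to$. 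The key input here is that pure-injectivity of the objects of $\Escr$ guarantees that $\Escr$ is closed under products and that Brown representability applies, so the inclusion $i_*\colon {}^{\perp_{\mathbb Z}}\Escr\hookrightarrow\Dcal$ has a right adjoint $i^!$ and the quotient functor $j^*$ has a fully faithful right adjoint $j_*$ with essential image $\Ccal_\Escr$; this gives the localizing sequence of triangulated categories. (One should note $\Escr$ cogenerates $\Ccal_\Escr$, so $\Ccal_\Escr$ is again compactly generated, or at least has products, which is all that is needed.)

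Next I would show that $\mathbb{T}_\sigma$ restricts along this sequence, using the criterion of Proposition~\ref{restricts}(1). The cleanest route is to verify condition (c): namely that $\mathbb{T}_\sigma\cap{}^{\perp_{\mathbb Z}}\Escr$ is a $t$-structure on ${}^{\perp_{\mathbb Z}}\Escr$ whose heart is a \emph{Serre} subcategory of $\Hcal_\sigma$. Here I claim this heart is exactly $\Scal={}^{\perp_0}H^0_\sigma(\Escr)$: an object $W\in\Hcal_\sigma$ lies in ${}^{\perp_{\mathbb Z}}\Escr$ iff $\Hom_\Dcal(W,E[i])=0$ for all $i$ and all $E\in\Escr$; since $W\in\Hcal_\sigma\subseteq\Ycal$ and $E\in\Prod\sigma\subseteq\Ycal\cap\Ycal^{\perp_1}$, all the $i\neq 0$ vanishing is automatic, and Lemma~\ref{lem:cohomol1} identifies the $i=0$ condition with $\Hom_{\Hcal_\sigma}(W,H^0_\sigma(E))=0$, i.e.\ $W\in\Scal$. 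Being left orthogonal to a family of injectives, $\Scal$ is a hereditary torsion class (Remark~\ref{rem:torsion torsion-free characterisation}), in particular closed under subobjects, quotients and extensions — hence a Serre subcategory of $\Hcal_\sigma$. This also shows $\Hcal_\sigma\cap{}^{\perp_{\mathbb Z}}\Escr[n]=\Scal$ for every shift, which is precisely what one needs to see that $(\Xcal_\sigma\cap{}^{\perp_{\mathbb Z}}\Escr,\ \Ycal_\sigma\cap{}^{\perp_{\mathbb Z}}\Escr)$ is a $t$-structure there with heart $\Scal$; equivalently one invokes that a Serre subcategory of the heart that is "closed under the truncation" yields such a restriction. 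By Proposition~\ref{restricts}(2) this produces the short exact sequence of hearts, and since the triangulated sequence is localizing, so is the abelian one: $\overline{i_*}$ is the inclusion $\Scal\hookrightarrow\Hcal_\sigma$, its right adjoint $\overline{i^!}$ is the torsion-radical functor for $\Scal$, $\overline{j^*}$ is the Serre quotient $\Hcal_\sigma\to\Hcal_\sigma/\Scal$, and $\overline{j_*}$ its fully faithful right adjoint.

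Finally I would identify the Serre quotient side explicitly. The essential image of $\overline{j_*}$ is, by general localization theory (Gabriel), the Giraud subcategory of $\Scal$-closed objects, i.e.\ those $Y\in\Hcal_\sigma$ with $\Hom_{\Hcal_\sigma}(S,Y)=0=\Ext^1_{\Hcal_\sigma}(S,Y)$ for all $S\in\Scal$; since $\Scal$ is hereditary this is exactly $\Scal^{\perp_{0,1}}$, and one checks it agrees with $G(\Ycal_\sigma)$-heart coming from $\Ccal_\Escr$ under $\overline{j^*}$. The main obstacle, as I see it, is the very first step: verifying cleanly that a $\Prod$-closed subcategory $\Escr$ of $\Prod\sigma$ consisting of pure-injectives generates a \emph{localizing} (and not merely a semi-orthogonal) sequence in $\Dcal$ — i.e.\ producing the right adjoints $i^!$ and $j_*$. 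This needs pure-injectivity in an essential way (so that $\Escr^{\perp_0}$ and the coreflection behave well) together with compact generation of $\Dcal$; everything downstream is then a fairly mechanical application of Proposition~\ref{restricts} and standard Gabriel localization.
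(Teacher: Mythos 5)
Your overall strategy matches the paper's: first produce the torsion pair $({}^{\perp_{\mathbb{Z}}}\Escr,\Ccal_\Escr)$ in $\Dcal$, then verify that $\mathbb{T}_\sigma$ restricts via Proposition~\ref{restricts} with heart $\Scal = {}^{\perp_0}H^0_\sigma(\Escr)$, and finally identify the quotient heart as the Giraud subcategory. Your identification of $\Hcal_\sigma\cap{}^{\perp_{\mathbb{Z}}}\Escr$ with $\Scal$ using Lemma~\ref{lem:cohomol1} and the containments $\Hcal_\sigma\subseteq\Ycal$, $\Prod\sigma\subseteq\Ycal\cap\Ycal^{\perp_1}$ is exactly right and is also what the paper does.

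However, there are two gaps. The first you acknowledge yourself: the existence of the triangulated localising sequence. Brown representability does not directly apply to ${}^{\perp_{\mathbb{Z}}}\Escr$ (it is not obviously well generated). The paper instead cites a concrete result, namely~\cite[Corollary 6.9]{SS20}, which uses pure-injectivity of the objects of $\Escr$ together with compact generation of $\Dcal$ in an essential way. You correctly flag this as the main obstacle but do not resolve it.

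The second gap is one you appear not to notice. To apply Proposition~\ref{restricts}(1)(c) you must first show that $(\Xcal\cap{}^{\perp_{\mathbb{Z}}}\Escr,\ \Ycal\cap{}^{\perp_{\mathbb{Z}}}\Escr)$ actually is a t-structure on ${}^{\perp_{\mathbb{Z}}}\Escr$, that is, that the truncation triangle of every $D\in{}^{\perp_{\mathbb{Z}}}\Escr$ stays inside ${}^{\perp_{\mathbb{Z}}}\Escr$. You only compute what the heart would be (and the remark ``$\Hcal_\sigma\cap{}^{\perp_{\mathbb{Z}}}\Escr[n]=\Scal$ for every shift'' is vacuous, since ${}^{\perp_{\mathbb{Z}}}\Escr$ is shift-invariant). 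Knowing the intersection with the heart does not control arbitrary objects of ${}^{\perp_{\mathbb{Z}}}\Escr$, since most such objects lie in no shifted heart; ``closed under the truncation'' is precisely the thing to be proved, not something one can merely ``invoke''. The paper's argument is short but substantive: since $\Escr\subseteq\Prod\sigma$, one has $\Xcal\subseteq{}^{\perp_{\leq 0}}\Escr$, which is suspended, and $\Ycal\subseteq{}^{\perp_{>0}}\Escr$, which is cosuspended; from ${}^{\perp_{\mathbb{Z}}}\Escr={}^{\perp_{\leq 0}}\Escr\cap{}^{\perp_{>0}}\Escr$ and a rotation of the truncation triangle one concludes both truncations of $D$ lie in ${}^{\perp_{\mathbb{Z}}}\Escr$. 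This step should be spelled out.
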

\begin{proof}
First, we observe that since $\Escr$ is made of pure-injective objects, {\cite[Corollary 6.9]{SS20}} shows that there is a torsion pair of the form $({}^{\perp_{\mathbb{Z}}}\Escr,\Ccal_\Escr)$ in $\Dcal$. This is equivalent to the existence of the claimed localisation sequence. We now show that $\mathbb{T}_\sigma=(\Xcal,\Ycal)$ restricts along this exact sequence using Proposition \ref{restricts}. Indeed, if $D$ lies in ${}^{\perp_{\mathbb{Z}}}\Escr$, consider its truncation triangle for $\mathbb{T}_\sigma$
$$\xymatrix{x(D)\ar[r]&D\ar[r]&y(D)\ar[r]&x(X)[1]}.$$
Clearly ${}^{\perp_{\mathbb{Z}}}\Escr={}^{\perp_{>0}}\Escr\cap {}^{\perp_{\leq 0}}\Escr$ and, since $\Escr$ is contained in $\Prod{\sigma}$, we have that $\Xcal\subseteq {}^{\perp_{\leq 0}}\Escr$ and $\Ycal\subseteq {}^{\perp_{>0}}\Escr$ and, thus, $x(D)$ lies in ${}^{\perp_{\leq 0}}\Escr$ and $y(D)$ lies in ${}^{\perp_{> 0}}\Escr$. Moreover, since ${}^{\perp_{\leq 0}}\Escr$ is suspended, $y(D)$ also lies in ${}^{\perp_{\leq 0}}\Escr$, and since ${}^{\perp_{> 0}}\Escr$ is cosuspended,  $x(D)$ lies in ${}^{\perp_{> 0}}\Escr$. This shows that the truncation triangle restricts to ${}^{\perp_{\mathbb{Z}}}\Escr$. Furthermore, note that $\Hcal_\sigma\cap {}^{\perp_{\mathbb{Z}}}\Escr$ is the subcategory of objects $X$ in $\Hcal_\sigma$ such that $\Hom_\Hcal(X,H^0_\sigma(E))=0$ for all $E$ in $\Escr$. {Recall that  $H^0_\sigma(E)$ is injective for all $E$ in $\Escr$ and, therefore, $\Hcal_\sigma\cap {}^{\perp_{\mathbb{Z}}}\Escr={}^{\perp_0} H^0_\sigma(\Escr)=\Scal$ is a hereditary torsion class in $\Hcal_\sigma$}. Our claim then follows from Proposition \ref{restricts} and from the fact that the right adjoint of the localising functor in a short exact sequence of abelian categories identifies the quotient category with the Giraud subcategory $\Scal^{\perp_{0,1}}$ associated to $\Scal$.
\end{proof}

Next, as promised earlier, we prove that if $\sigma$ and $\sigma'$ are pure-injective cosilting objects which are mutation of each other, the associated t-structures can be described in terms of some operations along the localising sequence induced by $\Escr=\Prod\sigma\cap\Prod{\sigma'}$.


\begin{lemma}\label{mutation restricts}
Let $\sigma$ and ${\sigma'}$ be pure-injective cosilting objects in a compactly generated triangulated category $\Dcal$. Let $\mathbb{T}_\sigma=(\Xcal,\Ycal)$ and $\mathbb{T}_{\sigma'}=(\Xcal',\Ycal')$ be the associated cosilting t-structures and let
$$\xymatrix{{}^{\perp_\mathbb{Z}}\Escr\ar[r]^{i_*}&\Dcal\ar[r]^{j^*}&\Ccal_\Escr}$$
be the localising sequence induced by $\Escr=\Prod{\sigma}\cap\Prod{{\sigma'}}$.
\begin{enumerate}
\item If {${\sigma'}$ is a right mutation of $\sigma$}, then we have the following equalities {for the restricted} t-structures:
\begin{enumerate}
\item $\mathbb{T}_{\sigma'}\cap{}^{\perp_{\mathbb{Z}}}\Escr=(\mathbb{T}_\sigma\cap{}^{\perp_{\mathbb{Z}}}\Escr)[-1]$ in ${}^{\perp_{\mathbb{Z}}}\Escr$; and
\item $j^*(\mathbb{T}_{\sigma'})=j^*(\mathbb{T}_{\sigma})$ in $\Ccal_\Escr$.
\end{enumerate}
\item If {${\sigma'}$ is a left mutation of $\sigma$}, then we have the following equalities {for the restricted} t-structures:
\begin{enumerate}
\item $\mathbb{T}_{\sigma'}\cap{}^{\perp_{\mathbb{Z}}}\Escr=(\mathbb{T}_\sigma\cap{}^{\perp_{\mathbb{Z}}}\Escr)[1]$ in ${}^{\perp_{\mathbb{Z}}}\Escr$; and
\item $j^*(\mathbb{T}_{\sigma'})=j^*(\mathbb{T}_{\sigma})$ in $\Ccal_\Escr$.
\end{enumerate}
\end{enumerate}
\end{lemma}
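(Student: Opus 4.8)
The plan is to combine Proposition~\ref{prop:restrict cosilting} with Theorem~\ref{thm:mutation as HRS-tilt}(2), reducing the left-mutation case to the right-mutation one via Corollary~\ref{cor:inverse mutations}. First I would note that both $\mathbb{T}_\sigma$ and $\mathbb{T}_{\sigma'}$ restrict along the given localising sequence: for $\mathbb{T}_\sigma$ this is Proposition~\ref{prop:restrict cosilting}, and for $\mathbb{T}_{\sigma'}$ one applies the same proposition, noting that $\Escr=\Prod\Escr$, that $\Escr\subseteq\Prod{\sigma'}$, and that every object of $\Escr$ is pure-injective (being a summand of a product of copies of $\sigma'$); crucially the localising sequence $({}^{\perp_\mathbb{Z}}\Escr,\Ccal_\Escr)$ depends only on $\Escr$, hence is the same for both. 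Thus $\mathbb{T}_\sigma\cap{}^{\perp_\mathbb{Z}}\Escr$, $\mathbb{T}_{\sigma'}\cap{}^{\perp_\mathbb{Z}}\Escr$, $j^*(\mathbb{T}_\sigma)$ and $j^*(\mathbb{T}_{\sigma'})$ are all genuine t-structures, so in each of (a) and (b) it suffices to compare \emph{aisles}, since the aisle of a t-structure determines its coaisle. Assume now that $\sigma'$ is a right mutation of $\sigma$; by Theorem~\ref{thm:mutation as HRS-tilt}(2), $\mathbb{T}_{\sigma'}$ is the right HRS-tilt of $\mathbb{T}_\sigma$ at $\mathfrak{t}=(\Scal,\Rcal)$ with $\Scal={}^{\perp_0}H^0_\sigma(\Escr)$, so by construction $\Xcal'=\Xcal\star\Scal$ and $\Ycal[-1]\subseteq\Ycal'\subseteq\Ycal$, equivalently $\Xcal\subseteq\Xcal'\subseteq\Xcal[-1]$.

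For statement (b), I would use that $j^*$ annihilates $\essim{i_*}={}^{\perp_\mathbb{Z}}\Escr$, which contains $\Scal$ thanks to the identification $\Scal=\Hcal_\sigma\cap{}^{\perp_\mathbb{Z}}\Escr$ recorded in Proposition~\ref{prop:restrict cosilting}. Applying $j^*$ to the defining triangles of the objects of $\Xcal\star\Scal$ then gives $j^*(\Xcal')=j^*(\Xcal)$; since $j^*(\mathbb{T}_\sigma)$ and $j^*(\mathbb{T}_{\sigma'})$ are t-structures with equal aisle, they coincide.

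For statement (a), using that ${}^{\perp_\mathbb{Z}}\Escr$ is shift-invariant, the asserted equality of restricted t-structures amounts to $\Xcal'\cap{}^{\perp_\mathbb{Z}}\Escr=\Xcal[-1]\cap{}^{\perp_\mathbb{Z}}\Escr$. The inclusion ``$\subseteq$'' is immediate from $\Xcal'\subseteq\Xcal[-1]$. For ``$\supseteq$'', take $X\in\Xcal[-1]\cap{}^{\perp_\mathbb{Z}}\Escr$. By nondegeneracy of $\mathbb{T}_\sigma$ (Theorem~\ref{thm:purity and silting}), $X\in\Xcal[-1]$ gives $H^k_\sigma(X)=0$ for all $k>0$, so by Remark~\ref{rem:inversetilt} it remains to show $H^0_\sigma(X)\in\Scal$. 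I would consider the truncation triangle $x\to X\to H^0_\sigma(X)\to x[1]$ of $X$ with respect to $\mathbb{T}_\sigma$, where $x\in\Xcal$ and the third term lies in $\Hcal_\sigma$ and equals $H^0_\sigma(X)$ precisely because $X$ has no cohomology in positive degrees. Applying $\Hom_\Dcal(-,E)$ for $E\in\Escr\subseteq\Prod{\sigma}\subseteq\Ycal$ (the last inclusion by Proposition~\ref{prop:summary cosilting}) and using $\Hom_\Dcal(\Xcal,\Ycal)=0$ yields $\Hom_\Dcal(H^0_\sigma(X),E)\cong\Hom_\Dcal(X,E)=0$; by Lemma~\ref{lem:cohomol1} the left-hand side equals $\Hom_{\Hcal_\sigma}(H^0_\sigma(X),H^0_\sigma(E))$. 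Hence $H^0_\sigma(X)\in{}^{\perp_0}H^0_\sigma(\Escr)=\Scal$ and $X\in\Xcal'$, giving the desired equality of aisles and thus of restricted t-structures.

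Finally, for part (2) I would invoke Corollary~\ref{cor:inverse mutations}: $\sigma'$ is a left mutation of $\sigma$ if and only if $\sigma$ is a right mutation of $\sigma'$, and the subcategory $\Escr=\Prod\sigma\cap\Prod{\sigma'}$ is unchanged; applying part (1) with the roles of $\sigma$ and $\sigma'$ interchanged then gives $\mathbb{T}_{\sigma'}\cap{}^{\perp_\mathbb{Z}}\Escr=(\mathbb{T}_\sigma\cap{}^{\perp_\mathbb{Z}}\Escr)[1]$ and $j^*(\mathbb{T}_{\sigma'})=j^*(\mathbb{T}_\sigma)$. I expect the only genuinely delicate point to be the argument for (a): identifying the third term of the truncation triangle with $H^0_\sigma(X)$ and carrying out the orthogonality bookkeeping; everything else is a matter of unwinding the definitions of HRS-tilt, localising sequence and (co)silting t-structure.
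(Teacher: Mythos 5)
Your proof is correct, and it follows a somewhat different path from the paper's argument. Both proofs rely on the same two inputs—Proposition~\ref{prop:restrict cosilting} (to know that each t-structure restricts, giving t-structures on ${}^{\perp_\mathbb{Z}}\Escr$ and $\Ccal_\Escr$) and Theorem~\ref{thm:mutation as HRS-tilt}(2) (to identify $\mathbb{T}_{\sigma'}$ as the right HRS-tilt at $\mathfrak{t}=(\Scal,\Rcal)$)—and both reduce part (2) to part (1) via Corollary~\ref{cor:inverse mutations}. The difference is in what is compared. The paper compares the restricted t-structures through their \emph{hearts}: it shows that the heart of $\mathbb{T}_{\sigma'}\cap{}^{\perp_\mathbb{Z}}\Escr$ equals $\Scal[-1]$ (by analysing the torsion pair $(\Rcal,\Scal[-1])$ in $\Hcal_{\sigma'}$), and then, using the nesting $\Ycal[-1]\subseteq\Ycal'\subseteq\Ycal$ and the intermediate-t-structure bijection, deduces (1)(a); for (1)(b), it shows $j_*j^*(\Ycal)=j_*j^*(\Ycal')$ by truncating an object of $j_*j^*(\Ycal)$ against $\mathbb{T}_{\sigma'}$ and arguing that the aisle part lies in $\Scal\subseteq{}^{\perp_\mathbb{Z}}\Escr$ and so vanishes. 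You instead compare \emph{aisles} directly: for (b), the decomposition $\Xcal'=\Xcal\star\Scal$ together with the fact that $j^*$ kills $\Scal\subseteq{}^{\perp_\mathbb{Z}}\Escr$ immediately gives $j^*(\Xcal')=j^*(\Xcal)$, which is arguably shorter than the paper's argument; for (a), the cohomological description of the tilted aisle (Remark~\ref{rem:inversetilt}) reduces the claim to $H^0_\sigma(X)\in\Scal$ for $X\in\Xcal[-1]\cap{}^{\perp_\mathbb{Z}}\Escr$, which you check by an orthogonality computation on the truncation triangle. Both routes use the same facts; yours trades the paper's heart-level analysis for an aisle-level computation, and the bookkeeping you flag as the delicate point (identifying the cone of the truncation with $H^0_\sigma(X)$, and checking both $\Hom(x(X)[1],E)$ and $\Hom(x(X),E)$ vanish) is handled correctly.
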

\begin{proof}
We prove (1). The assertion (2) follows analogously, taking into account Corollary~\ref{cor:inverse mutations}(3).

By assumption we know that $\mathbb{T}_{\sigma'}$ is the right HRS-tilt of $\mathbb{T}_{\sigma}$ at the torsion pair $(\Scal, \Rcal)$ in $\Hcal_\sigma$.
Recall   from Proposition~\ref{prop:restrict cosilting} that the t-structures $\mathbb{T}_\sigma$ and $\mathbb{T}_{\sigma'}$ restrict along the localising sequence determined by $\Escr$, and
the heart of $\mathbb{T}_\sigma\cap{}^{\perp_{\mathbb{Z}}}\Escr$ is $\Hcal_{\sigma}\cap{}^{\perp_{\mathbb{Z}}}\mathscr{E}={}^{\perp_0} H^0_\sigma(\Escr)=\Scal$. We show that the heart of $\mathbb{T}_{\sigma'}\cap{}^{\perp_{\mathbb{Z}}}\Escr$
coincides with $\Scal[-1]$. To this end, we consider the torsion  pair
$(\Rcal,\Scal[-1])$ in $\Hcal_{\sigma'}$.
It is clear that
$\Scal[-1]\subseteq \Hcal_{\sigma'}\cap{}^{\perp_{\mathbb{Z}}}\mathscr{E}$.
For the converse, we pick an object $X$ in $\Hcal_{\sigma'}\cap{}^{\perp_{\mathbb{Z}}}\mathscr{E}$ with torsion decomposition  
$$\xymatrix{0\ar[r]&r(X)\ar[r]&X\ar[r]&X/r(X)\ar[r]&0}$$
where $r(X)$ is in $\Rcal$,  and $X/r(X)$ is in $\Scal[-1]$ and thus in ${}^{\perp_{\mathbb{Z}}}\mathscr{E}\cap\Hcal_{\sigma'}$. Since this exact sequence yields a triangle in $\Dcal$ and since ${}^{\perp_{\mathbb{Z}}}\mathscr{E}$ is triangulated, it follows that $r(X)$ lies in ${}^{\perp_{\mathbb{Z}}}\mathscr{E}$. But then $r(X)$ lies in $\Rcal$ and in  $\Hcal_\sigma\cap{}^{\perp_{\mathbb{Z}}}\Escr=\Scal$. We conclude that  $r(X)=0$, as wanted.

Now, combining the fact that the heart of $\mathbb{T}_{\sigma'}\cap{}^{\perp_{\mathbb{Z}}}\Escr$
lies in $\Ycal[-1]$ with the inclusions $\Ycal[-1]\subseteq \Ycal'\subseteq \Ycal$, we easily obtain the equality in (1)(a).
Next, we check the equality in (1)(b). Since the right adjoint $j_*$ of the quotient functor $j^*$ is fully faithful, this amounts to verifying $j_*j^*(\Ycal)=j_*j^*(\Ycal')$. Observe that $\Ycal'\subseteq \Ycal$ implies $j_*j^*(\Ycal)\supseteq j_*j^*(\Ycal')$. For the reverse inclusion, we pick  an object $X$ in $j_*j^*(\Ycal)$ and consider a truncation triangle with respect to the t-structure $\mathbb{T}_{\sigma'}$,
$$\xymatrix{A\ar[r]&X\ar[r]&B\ar[r]&A[1]}$$
with $A$ in $\Xcal'$ and $B$ in $\Ycal'$. Again, since $j_*j^*(\Ycal')\subseteq \Ycal'\subseteq  \Ycal$ and since the latter is cosuspended, we have that $A$ lies in $\Xcal'\cap \Ycal$, which coincides with the torsion class $\Scal={}^{\perp_0}H^0_\sigma(\Escr)$ of $\Hcal_\sigma$. But this means that $A$ lies in ${}^{\perp_{\mathbb{Z}}}\Escr$ and, in particular, it has no maps to objects in the essential image of $j_*$, where $X$ lies. Hence $X$ is isomorphic to $B$, proving the desired equality.
\end{proof}

Now we can give a precise description of mutated cosilting t-structures associated to pure-injective cosilting objects in terms of localisation.

\begin{theorem}\label{existence right mutation2}
Let $\sigma$ be a pure-injective cosilting object in a compactly generated triangulated category $\Dcal$ with associated t-structure $\mathbb{T}_\sigma=(\Xcal, \Ycal)$. Let $\Escr=\Prod\Escr$ be  a  subcategory  of $\Prod{\sigma}$, and consider the localising sequence induced by $\Escr$
$$\xymatrix{{}^{\perp_\mathbb{Z}}\Escr\ar[r]^{i_*}&\Dcal\ar[r]^{j^*}&\Ccal_\Escr}.$$
\begin{enumerate}
\item $\sigma$ admits a right mutation  with respect to $\Escr$ if and only if there is a cosilting object $\sigma'$ with associated coaisle $\Ycal'=(\Ycal\cap {}^{\perp_\mathbb{Z}}\Escr)[-1]\star j_*j^*(\Ycal)$.
\item $\sigma$ admits a left mutation with respect to $\Escr$ if and only if
 there is a cosilting object $\sigma'$ with associated coaisle $\Ycal'=(\Ycal\cap {}^{\perp_\mathbb{Z}}\Escr)[1]\star j_*j^*(\Ycal)$.
\end{enumerate}
\end{theorem}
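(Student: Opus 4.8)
The plan is to combine three ingredients already available in the excerpt: the characterisation of mutation via HRS-tilts (Theorem~\ref{thm:mutation as HRS-tilt}), the fact that pure-injective cosilting t-structures restrict along the localising sequence induced by $\Escr$ (Proposition~\ref{prop:restrict cosilting}), and the gluing formula for coaisles along a localising sequence (Remark~\ref{rem:glue}(1)). The strategy is, in each of (1) and (2), to prove the two implications separately: ``only if'' uses Lemma~\ref{mutation restricts} together with the gluing formula, while ``if'' runs the same identities backwards to pin down $\mathbb{T}_{\sigma'}$ as exactly the HRS-tilt demanded by Theorem~\ref{thm:mutation as HRS-tilt}.

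For the ``only if'' direction of (1), suppose $\sigma$ admits a right mutation $\sigma'$ with respect to $\Escr$; by Theorem~\ref{existence right mutation}(2) $\sigma'$ may be taken pure-injective, and by Theorem~\ref{thm:mutation as HRS-tilt}(2) we have $\Escr = \Prod\sigma\cap\Prod{\sigma'}$, so the localising sequence in the statement is the one induced by $\Prod\sigma\cap\Prod{\sigma'}$ and Lemma~\ref{mutation restricts}(1) applies. Since $\mathbb{T}_{\sigma'}$ restricts along this sequence by Proposition~\ref{prop:restrict cosilting}, Remark~\ref{rem:glue}(1) gives $\Ycal' = (\Ycal'\cap{}^{\perp_\mathbb{Z}}\Escr)\star j_*j^*(\Ycal')$. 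Now Lemma~\ref{mutation restricts}(1)(a) identifies $\Ycal'\cap{}^{\perp_\mathbb{Z}}\Escr$ as the coaisle of $(\mathbb{T}_\sigma\cap{}^{\perp_\mathbb{Z}}\Escr)[-1]$, i.e.\ $(\Ycal\cap{}^{\perp_\mathbb{Z}}\Escr)[-1]$, and Lemma~\ref{mutation restricts}(1)(b) gives $j^*(\Ycal')=j^*(\Ycal)$, hence $j_*j^*(\Ycal')=j_*j^*(\Ycal)$. Substituting yields the desired formula $\Ycal'=(\Ycal\cap{}^{\perp_\mathbb{Z}}\Escr)[-1]\star j_*j^*(\Ycal)$. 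Part (2) is entirely analogous, using Lemma~\ref{mutation restricts}(2) and shifting by $[1]$ instead of $[-1]$.

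For the ``if'' direction of (1): assume there is a cosilting object $\sigma'$ with $\Ycal'=(\Ycal\cap{}^{\perp_\mathbb{Z}}\Escr)[-1]\star j_*j^*(\Ycal)$. I first check the inclusion $\Ycal[-1]\subseteq\Ycal'\subseteq\Ycal$ so that, by Proposition~\ref{prop:intermediate}(2), $\mathbb{T}_{\sigma'}$ is a right HRS-tilt of $\mathbb{T}_\sigma$; one side is immediate since $(\Ycal\cap{}^{\perp_\mathbb{Z}}\Escr)[-1]\subseteq\Ycal[-1]$ and $j_*j^*(\Ycal)\subseteq\Ycal$ (the coaisle $\Ycal$ is cosuspended and closed under the reflection $j_*j^*$ because $\Ycal\cap{}^{\perp_\mathbb{Z}}\Escr$ is a coaisle in ${}^{\perp_\mathbb{Z}}\Escr$), and for the other side note $\Ycal[-1]=(\Ycal\cap{}^{\perp_\mathbb{Z}}\Escr)[-1]\star j_*j^*(\Ycal[-1])$ by Remark~\ref{rem:glue}(1) applied to $\mathbb{T}_\sigma[-1]$, together with $j_*j^*(\Ycal[-1])\subseteq j_*j^*(\Ycal)$. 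Then I identify the torsion pair $\mathfrak{t}$ in $\Hcal_\sigma$ at which $\mathbb{T}_{\sigma'}$ is the right HRS-tilt, using the inverse assignment of Proposition~\ref{prop:intermediate}(2): $\Tcal=\Hcal_\sigma\cap\Hcal_{\sigma'}[1]$, $\Fcal=\Hcal_\sigma\cap\Hcal_{\sigma'}$. The point is to show $\Fcal=\Cogen{H^0_\sigma(\Escr)}$, equivalently $\Tcal={}^{\perp_0}H^0_\sigma(\Escr)=\Scal$; this is done by arguing, exactly as in the proof of Lemma~\ref{mutation restricts}, that an object of $\Hcal_\sigma$ in the torsion class $\Tcal$ is precisely one whose shift lands in $\Ycal'$ below the first term, which via the explicit formula for $\Ycal'$ forces it to lie in ${}^{\perp_\mathbb{Z}}\Escr\cap\Hcal_\sigma=\Scal$, and conversely. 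Finally, to conclude that $\sigma'$ is genuinely a right mutation rather than merely an HRS-tilt, I invoke Theorem~\ref{thm:mutation as HRS-tilt}(2), whose remaining hypothesis --- that $\sigma$ admits an $\Escr$-precover --- is supplied by Theorem~\ref{existence right mutation}(2)(c) together with Lemma~\ref{rem:approximate every injective}, since the existence of the cosilting torsion pair $\mathfrak{t}=(\Scal,\Rcal)$ just established means $\Rcal$ is of finite type, hence covering. Part (2) is dual, using that ${}^{\perp_0}H^0_\sigma(\Escr)$ being a coaisle-compatible torsion class closed under products (guaranteed by the formula with $[1]$ and ${}^{\perp_\mathbb{Z}}\Escr$ triangulated) is exactly the TTF condition in Theorem~\ref{thm:mutation as HRS-tilt}(1).

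The main obstacle I anticipate is the bookkeeping in the ``if'' direction: one must be careful that the gluing decomposition $\Ycal'=(\Ycal'\cap{}^{\perp_\mathbb{Z}}\Escr)\star j_*j^*(\Ycal')$ coming from Remark~\ref{rem:glue}(1) (which presupposes $\mathbb{T}_{\sigma'}$ restricts along the sequence --- itself needing Proposition~\ref{prop:restrict cosilting}, hence $\sigma'$ pure-injective, which in turn follows from Corollary~\ref{cor:cotilting pure-injective} once we know $\mathbb{T}_{\sigma'}$ is an HRS-tilt of a Grothendieck heart) actually matches the given formula term by term, so that $\Ycal'\cap{}^{\perp_\mathbb{Z}}\Escr=(\Ycal\cap{}^{\perp_\mathbb{Z}}\Escr)[-1]$ and $j_*j^*(\Ycal')=j_*j^*(\Ycal)$ can be read off. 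This requires checking that the first summand of the given formula already lies in ${}^{\perp_\mathbb{Z}}\Escr$ and the second in the essential image of $j_*$, and uniqueness of such a decomposition; both are routine but must be done with the right orthogonality computations against $\Escr$.
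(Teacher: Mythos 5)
Your ``only if'' direction matches the paper exactly: Lemma~\ref{mutation restricts} identifies the two restricted pieces, and Remark~\ref{rem:glue}(1) glues them to recover $\Ycal'$. No issues there.

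For the ``if'' direction of (1), your plan is sound in outline (check $\Ycal[-1]\subseteq\Ycal'\subseteq\Ycal$, apply Proposition~\ref{prop:intermediate}(2), identify the torsion pair, then use Theorem~\ref{existence right mutation}), but the execution is shaky. The phrase ``whose shift lands in $\Ycal'$ below the first term'' does not parse, and more importantly you cannot safely read off $\Ycal'\cap{}^{\perp_\mathbb{Z}}\Escr$ and $j_*j^*(\Ycal')$ term-by-term from the formula: the gluing decomposition $\Ycal'=(\Ycal'\cap{}^{\perp_\mathbb{Z}}\Escr)\star j_*j^*(\Ycal')$ requires knowing in advance that $\mathbb{T}_{\sigma'}$ restricts along the localising sequence (hence essentially that $\Escr\subseteq\Prod{\sigma'}$), which is part of what is being proved. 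The paper avoids this circularity by working object-by-object: for $Y\in\Hcal_\sigma$ one applies the localising triangle $A\to Y\xrightarrow{\alpha} j_*j^*Y\to A[1]$, observes that $Y\in\Ycal'$ iff $A\in(\Ycal\cap{}^{\perp_\mathbb{Z}}\Escr)[-1]$ iff $H^0_\sigma(A)=0$ iff $H^0_\sigma(\alpha)$ is a monomorphism, and then recognises $H^0_\sigma(\alpha)$ as the Giraud reflection of $Y$ (Proposition~\ref{prop:restrict cosilting}), which is a monomorphism precisely when $Y\in\Rcal$.

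The real gap is part (2). You assert that closure of $\Scal={}^{\perp_0}H^0_\sigma(\Escr)$ under products is ``guaranteed by the formula with $[1]$ and ${}^{\perp_\mathbb{Z}}\Escr$ triangulated''. That is not an argument. The class ${}^{\perp_\mathbb{Z}}\Escr$ being closed under products in $\Dcal$ does not make $\Scal=\Hcal_\sigma\cap{}^{\perp_\mathbb{Z}}\Escr$ closed under products in $\Hcal_\sigma$: products in the heart are computed as $H^0_\sigma$ of products in $\Dcal$, and $H^0_\sigma$ of a product of objects in ${}^{\perp_\mathbb{Z}}\Escr$ has no reason to remain in ${}^{\perp_\mathbb{Z}}\Escr$ --- indeed ${}^{\perp_0}I$ for a single injective $I$ is in general not a TTF class. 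The paper establishes closure under products by the nontrivial computation $\Scal=H^{-1}_\sigma(\Ycal')$ (using $j_*j^*(\Ycal)\subseteq\Ycal$ to kill the second summand's contribution), combined with the observation that $\Ycal'\subseteq\Ycal[1]$ and that $H^{-1}_\sigma$ restricted to $\Ycal[1]$ preserves products, so $\Scal$ inherits closure under products from the coaisle $\Ycal'$. Without an argument of this kind, condition (1)(b) of Theorem~\ref{existence right mutation} is not verified and your part (2) does not go through.
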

\begin{proof}
Recall that a mutation of a pure-injective cosilting object is again pure-injective by Theorem~\ref{existence right mutation}.
The only-if part in (1) and (2) follows directly from Lemma \ref{mutation restricts} and Remark~\ref{rem:glue}. For the if-part, let us consider the hereditary torsion pair $(\Scal,\Rcal)$ in $\Hcal_\sigma$ cogenerated by $H^0_\sigma(\Escr)$ and verify the conditions (1)(b) and (2)(b) in Theorem~\ref{existence right mutation}, respectively.
In fact, we show in both cases that $\sigma'$ is the corresponding mutation.

(1): We have to show that
the torsion-free class $\Rcal$ is closed under direct limits. By  Proposition~\ref{prop:cover vs finite type} this amounts to proving  that $(\Scal,\Rcal)$ is a cosilting torsion pair. Indeed, we claim that  $\mathbb{T}_{\sigma'}$ is a right HRS-tilt of $\mathbb{T}_\sigma$ at $(\Scal,\Rcal)$. For a proof, we apply Proposition~\ref{prop:intermediate} and  verify that $\Ycal[-1]\subseteq\Ycal'\subseteq\Ycal$ and $\Rcal=\Hcal_\sigma\cap\Hcal_{\sigma'}$.

Observe first that $\Ycal'\subseteq\Ycal$ since $j_*j^*(\Ycal)\subseteq\Ycal$ by Proposition~\ref{prop:restrict cosilting} and Remark~\ref{rem:glue}. On the other hand, let $Y$ be an object in $\Ycal[-1]$ and
consider the triangle associated to $Y$ given by the localising sequence induced by $\Escr$,
\begin{equation}\tag{$\Delta_4$}\label{eq:loc-Escr}
\xymatrix{A\ar[r]& Y\ar[r]^{\alpha\ \ }& j_*j^*Y\ar[r]& A[1]}.
\end{equation}
As $j_*j^*Y\subseteq\Ycal[-1]\subseteq\Ycal$ and, consequently, $A\in(\Ycal\cap {}^{\perp_\mathbb{Z}}\Escr)[-1]$, we have $\Ycal[-1]\subseteq\Ycal'$. 

We now finish the proof of part (1) by showing that $\Hcal_\sigma\cap \Ycal'=\Hcal_\sigma\cap\Hcal_{\sigma'}=\Rcal$.
To that end, let $Y$ be an object in $\Hcal_\sigma$ and consider the triangle \eqref{eq:loc-Escr} associated to $Y$ given by the localising sequence induced by $\Escr$. By assumption, $Y$ lies in $\Ycal'$ if and only if $A$ lies in $(\Ycal\cap {}^{\perp_\mathbb{Z}}\Escr)[-1]$. Now, applying the functor $H^0_\sigma$ to the triangle, and using that $j_*j^*(\Ycal)\subseteq\Ycal$, we see that $A$  lies in $\Ycal$, and we get an exact sequence
$$\xymatrix{0\ar[r]&H^0_\sigma(A)\ar[r]&Y\ar[r]^{H^0_\sigma(\alpha)\ \ \ \ \ \ }&H^0_\sigma(j_*j^*Y)}.$$
Observe that $A$ lies in $(\Ycal\cap {}^{\perp_\mathbb{Z}}\Escr)[-1]$ if and only if $H^0_\sigma(A)=0$, which means that $H^0_\sigma(\alpha)$ is a monomorphism. But $H^0_\sigma(\alpha)$ is the reflection of $Y$ in the Giraud subcategory $\Scal^{\perp_{0,1}}$ (see Proposition~\ref{prop:restrict cosilting}) and, hence, it is a monomorphism if and only if $Y$ lies in the torsion-free class $\Rcal$.

(2): We have to show that the torsion class  $\Scal$  is closed under direct products.
 For this purpose, it suffices to show that $\Scal=H^{-1}_\sigma(\Ycal')$. Indeed, $\Ycal'$ is contained in $\Ycal[1]$, and by definition of products in the heart $\Hcal_\sigma$, the functor $H^{-1}_\sigma$ sends products in $\Ycal[1]$ to products in $\Hcal_\sigma$. Hence $H^{-1}_\sigma(\Ycal')$ is closed under products, because so is $\Ycal'$ being the coaisle of a  t-structure.
 
Now, as $j_*j^*(\Ycal)$ is contained in $\Ycal$, we have by assumption
$$H^{-1}_\sigma(\Ycal')=H^{-1}_\sigma((\Ycal\cap {}^{\perp_\mathbb{Z}}\Escr)[1]\star j_*j^*(\Ycal))=H^{-1}_\sigma((\Ycal\cap {}^{\perp_\mathbb{Z}}\Escr)[1])=H^0_\sigma(\Ycal\cap{}^{\perp_\mathbb{Z}}\Escr)$$
which clearly consists of the objects $X$ in $\Hcal_\sigma$ such that $\Hom_\Dcal(X,\Escr)=0$. Since $\Escr$ is contained in $\Prod{\sigma}$, the latter are precisely the objects of $\Scal={}^{\perp_0}H^0_\sigma(\Escr)$, as wanted.
\end{proof}

\begin{example}\label{expl:krone2}
We continue Example~\ref{expl:krone} over the Kronecker algebra $A$.
Let  $P\subset\mathbb{X}$, and consider the right mutation $\sigma_P$ of $\sigma_{\emptyset}$
 at the set
 $\Escr=\Prod{\{S[\infty]\,\mid\, S\in \tube_{\bar{P}}\}}.$ Notice that the  heart $\Hcal$ associated with $\sigma_{\emptyset}$ is equivalent to the category of quasi-coherent sheaves over the projective line $\mathbb{X}$, where the simple sheaves are in bijection with  the simple regular $A$-modules.  Moreover,
 the hereditary torsion pair  $(\Scal,\Rcal)$ in $\Hcal$ cogenerated by $\Escr$ corresponds to the torsion pair generated by the simple sheaves which are determined by the family of tubes $\tube_P=\bigcup_{x\in P} \tube_x$. Combining Proposition~\ref{prop:restrict cosilting} with \cite[Corollary 5.8]{ak1} we obtain a localising sequence  of hearts
 $$\xymatrix{\Scal=\varinjlim_\Hcal\tube_P\ar[r]^{}&\Hcal\ar[r]^{}&\Mod{A_P}}$$
 where  $A_P$ is a hereditary ring obtained as the universal localisation of $A$ at the modules from $\tube_P$.  
On the other hand, the heart $\Hcal'$ associated with the mutation $\sigma_{P}$ is a locally coherent Grothendieck category which is  neither hereditary nor locally noetherian if $P\ne\varnothing$.
\end{example}

Finally, we turn to the dual case. We first need a technical lemma.

\begin{lemma} \label{lem:definability from pure-proj}
Let $\Dcal$ be a compactly generated triangulated category and $P\in\Dcal$ a pure-projective object. Then there exists a set of maps $\Ical$ between compact objects such that
\[ P^{\perp_0} = \{ X\in\Dcal \mid \Hom_\Dcal(f,X)=0 \textrm{ for each } f\in\Ical \} \]
\end{lemma}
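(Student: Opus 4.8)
The plan is to first reduce orthogonality against the (possibly non-compact) object $P$ to orthogonality against an idempotent endomorphism of a coproduct of compact objects, and then to decompose that idempotent into its components, which are honest morphisms between compact objects.

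Since $P$ is pure-projective, it lies in $\Add{\Dcal^c}$ by \cite[Lemma 8.1]{Bel}; so there are a set $J$, compact objects $C_j$ for $j\in J$, and morphisms $a\colon P\to Q$ and $b\colon Q\to P$ with $Q=\coprod_{j\in J}C_j$ and $ba=\id$. Put $g:=ab\colon Q\to Q$, an idempotent. The first step is to observe that, for every $X\in\Dcal$, one has $\Hom_\Dcal(P,X)=0$ if and only if $\Hom_\Dcal(g,X)=0$. Indeed, $\Hom_\Dcal(g,X)=\Hom_\Dcal(b,X)\circ\Hom_\Dcal(a,X)$, where $\Hom_\Dcal(a,X)$ is a split epimorphism (with section $\Hom_\Dcal(b,X)$, since $\Hom_\Dcal(a,X)\circ\Hom_\Dcal(b,X)=\Hom_\Dcal(ba,X)=\id$); hence the image of $\Hom_\Dcal(g,X)$ is isomorphic to $\Hom_\Dcal(P,X)$, and one vanishes exactly when the other does.

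The second step breaks $g$ up. Let $\iota_j\colon C_j\to Q$ denote the coproduct inclusions. Since $C_j$ is compact, $g\iota_j\colon C_j\to Q$ factors through a finite subcoproduct $Q_j$ of $Q$; being a finite coproduct of compact objects, $Q_j$ is itself compact, and $\iota_{Q_j}\colon Q_j\to Q$ is a split monomorphism. Write $g\iota_j=\iota_{Q_j}\,h^{(j)}$ with $h^{(j)}\colon C_j\to Q_j$. I then claim that $\Hom_\Dcal(g,X)=0$ if and only if $\Hom_\Dcal(h^{(j)},X)=0$ for all $j\in J$. To see this, $\Hom_\Dcal(g,X)=0$ says $\varphi g=0$ for every $\varphi\colon Q\to X$; since the $\iota_j$ are jointly epimorphic, this is equivalent to $\varphi g\iota_j=(\varphi\iota_{Q_j})\,h^{(j)}=0$ for all $\varphi$ and all $j$. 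Because $Q_j$ is a direct summand of $Q$, the morphisms $\varphi\iota_{Q_j}$ run through all of $\Hom_\Dcal(Q_j,X)$ as $\varphi$ varies, so the displayed condition is precisely $\Hom_\Dcal(h^{(j)},X)=0$ for every $j$.

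Putting the two steps together, $\Ical:=\{\,h^{(j)}\mid j\in J\,\}$ is a set of morphisms between compact objects with $P^{\perp_0}=\{\,X\in\Dcal\mid\Hom_\Dcal(f,X)=0\text{ for each }f\in\Ical\,\}$, which is the assertion. No step is a genuine obstacle; the points that require a little care are the two bookkeeping facts used in the last step --- that a morphism out of $Q=\coprod_jC_j$ vanishes iff all its restrictions along the $\iota_j$ do, and that a split inclusion of a summand $Q_j$ lets one replace an arbitrary $\varphi\colon Q\to X$ by an arbitrary morphism $Q_j\to X$ --- together with the remark that finite coproducts of compacts are compact, so that $\Ical$ indeed consists of maps in $\Dcal^c$ and is indexed by the set $J$.
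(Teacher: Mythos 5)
Your proof is correct and takes a genuinely different, more elementary route than the paper's. The paper reduces via Kaplansky's theorem to the case where $\mathbf{y}P$ is countably generated, writes $\Hom_\Dcal(P,-)$ as an inverse limit of a tower $\Hom_\Dcal(C_n,-)$, and then runs a Mittag-Leffler-style argument (via the auxiliary morphisms $g_{n+1}$ with $g_{n+1}f_{n+1}f_n=f_n$ and~\cite[Lemma 4.5]{SaSt}) to identify the image of the limit map $\Hom_\Dcal(P,X)\to\Hom_\Dcal(C_n,X)$ with $\im{f_n^*}$, so that $\Ical=\{f_n\}$ works. You instead start from the retract presentation $ba=\id_P$ with $Q=\coprod_{j\in J}C_j$, pass to the idempotent $g=ab$ (using that $\Hom_\Dcal(a,X)$ is split epi and $\Hom_\Dcal(b,X)$ split mono, so $\Hom_\Dcal(g,X)=0$ iff $\Hom_\Dcal(P,X)=0$), and then factor each component $g\iota_j$ through a compact finite subcoproduct $Q_j$. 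The split inclusion $Q_j\hookrightarrow Q$ lets you trade arbitrary $\varphi\colon Q\to X$ for arbitrary $\psi\colon Q_j\to X$, giving $\Hom_\Dcal(g,X)=0$ iff $\Hom_\Dcal(h^{(j)},X)=0$ for all $j$. Both arguments rest on the same two facts --- pure-projectives form $\Add{\Dcal^c}$, and maps from compacts to coproducts factor through finite subcoproducts --- but yours bypasses Kaplansky, the sequential reduction, and the inverse-limit bookkeeping, and is the cleaner deduction. The only thing worth making explicit in a polished write-up is that a finite subcoproduct $Q_j$ is automatically a direct summand of $Q$ (its complement being the coproduct over $J\setminus$ the finite subset), which is what furnishes the split mono $\iota_{Q_j}$ used in the last step.
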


\begin{proof}
Recall from~\S\ref{sec: silting and cosilting} that the category of pure-projective objects is equivalent to that of projective $\Dcal^c$-modules via $\mathbf{y}\colon \Dcal\longrightarrow \Mod{\Dcal^c}$. By a Theorem of Kaplansky, every projective module is a direct sum of countably generated ones~\cite[Lemma 36.3]{Mitchell}, so we can without loss of generality assume that $\mathbf{y}P$ is countably generated.
In particular, there is a sequence of compact objects of $\Dcal$,
\[ \xymatrix{C_1\ar[r]^-{f_1}&C_2\ar[r]^-{f_2}&C_3\ar[r]^-{f_3}&\cdots} \]
such that $\mathbf{y}P=\varinjlim\mathbf{y}C_n$ and, given any $X\in\Dcal$, we have isomorphisms
\[
\Hom_\Dcal(P,X) \cong
\Hom_{\Mod{\Dcal^c}}(\mathbf{y}P,\mathbf{y}X) \cong
\varprojlim \Hom_{\Mod{\Dcal^c}}(\mathbf{y}C_n,\mathbf{y}X) \cong
\varprojlim \Hom_\Dcal(C_n,X).
\]
(the outer isomorphisms follow by the Yoneda lemma and the fact that $\mathbf{y}$ preserves coproducts and summands).
Moreover, as one can trace back to \cite[Theorem 1.9]{Wh} and as is explained in~\cite[\S1]{HerPr}, up to passing to a cofinal subsystem, one can assume that there exist morphisms
\[ \xymatrix{C_2\ar@{<-}[r]^-{g_2}&C_3\ar@{<-}[r]^-{g_3}&C_4\ar@{<-}[r]^-{g_4}&\cdots} \]
such that $g_{n+1}f_{n+1}f_n = f_n$ for each $n>0$.

We claim that, given any $X\in\Dcal$, we have $\Hom_\Dcal(P,X)=0$ if and only if $\Hom_\Dcal(f_n,X)=0$ for each $n>0$. The if part being clear, we focus on the only if-part. Applying $\Hom_\Dcal(-,X)$ to the direct system above, we obtain an inverse system of abelian groups
\[ \xymatrix{\Hom_\Dcal(C_1,X)\ar@{<-}[r]^-{f_1^*}&\Hom_\Dcal(C_2,X)\ar@{<-}[r]^-{f_2^*}&\Hom_\Dcal(C_3,X)\ar@{<-}[r]^-{f_3^*}&\cdots} \]
and one readily verifies that $\im{f_n^*}=\im{f_n^*f_{n+1}^*}$ for each $n>0
$, as $f_n^*(h)=hf_n=hg_{n+1}f_{n+1}f_n=f_n^*f_{n+1}^*(hg_{n+1})$ for any $h\colon C_{n+1}\to X$. Using the same argument as in~\cite[Lemma 4.5]{SaSt}, we deduce that also the image of the limit map
\[ \Hom_\Dcal(C_n,X) \longleftarrow \varprojlim \Hom_\Dcal(C_n,X) \cong \Hom_\Dcal(P,X) \]
equals $\im{f_n^*}$ for each $n>0$. Hence, if $\Hom_\Dcal(P,X)=0$, then $\im{f_n^*}=0$ for each $n>0$, or in other words $\Hom_\Dcal(f_n,X)=0$ for each $n>0$.
\end{proof}

Now we see that pure-projective silting objects even induce recollements of triangulated categories and of the associated hearts.

\begin{proposition}\label{restrict silting}
Let $\sigma$ be a silting object in an compactly generated triangulated category $\Dcal$. Let {$\Pscr=\Add\Pscr$} be a subcategory of $\Add{\sigma}$ in which every object is pure-projective. Then there is a TTF triple $(\Scal_\Pscr,{\Pscr}^{\perp_\mathbb{Z}},\Ccal_\Pscr)$ in $\Dcal$ and, thus, a recollement
$$\xymatrix{{\Pscr}^{\perp_\mathbb{Z}}\ar[r]^{i_*}&\Dcal\ar[r]^{j^*}\ar@/_1.5pc/[l]_{i^*}\ar@/^1.5pc/[l]_{i^!}&\Scal_\Pscr\ar@/_1.5pc/[l]_{j_!}\ar@/^1.5pc/[l]_{j_*}}$$
where $i_*$ and $j_!$ are the inclusion functors. Moreover, the silting t-structure $\mathbb{T}_\sigma$ restricts along the sequence, thus giving rise to a recollement of abelian categories
$$\xymatrix{\Tcal\ar[r]^{\overline{i_*}}&\Hcal_\sigma\ar[r]^{\overline{j^*}}\ar@/^1.5pc/[l]_{\overline{i^!}}\ar@/_1.5pc/[l]_{\overline{i^*}}&\Tcal^{\perp_{0,1}}\ar@/^1.5pc/[l]_{\overline{j_*}}\ar@/_1.5pc/[l]_{\overline{j_!}}}$$
where $\overline{i_*}$ and $\overline{j_*}$ are the inclusion functors{, $\Tcal=H^0_\sigma(\Pscr)^{\perp_0}$ is a TTF class}, and $\Tcal^{\perp_{0,1}}$ is the associated Giraud subcategory in $\Hcal_\sigma$.

If $\sigma$ and $\sigma'$ are pure-projective silting objects with associated t-structures  $\mathbb{T}_\sigma=(\Xcal,\Ycal)$ and $\mathbb{T}_{\sigma'}=(\Xcal',\Ycal')$,  and $\sigma'$ is a left mutation of $\sigma$ at $\Pscr$,  then we have $\Xcal'=j_!j^*(\Xcal)\star (\Xcal\cap \mathscr{P}^{\perp_{\mathbb{Z}}})[1]$, while in the case of right mutation we have $\Xcal'=j_!j^*(\Xcal)\star (\Xcal\cap \mathscr{P}^{\perp_{\mathbb{Z}}})[-1]$.
\end{proposition}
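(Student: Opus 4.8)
The plan is to derive all three assertions by dualising the cosilting results of Section~\ref{sec:local} --- Proposition~\ref{prop:restrict cosilting}, Lemma~\ref{mutation restricts} and Theorem~\ref{existence right mutation2} --- the one genuinely new feature being that pure-projectivity of $\Pscr$ promotes the localising sequence of (the dual of) Proposition~\ref{prop:restrict cosilting} to a full recollement. Concretely, dually to Proposition~\ref{prop:restrict cosilting}, applying \cite[Corollary~6.9]{SS20} to the pure-projective class $\Pscr$ gives a torsion pair $(\Scal_\Pscr,\Pscr^{\perp_\mathbb{Z}})$ in $\Dcal$, so that $\Scal_\Pscr={}^{\perp_\mathbb{Z}}(\Pscr^{\perp_\mathbb{Z}})$ and the inclusion $\Pscr^{\perp_\mathbb{Z}}\hookrightarrow\Dcal$ has a left adjoint. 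To produce the second torsion pair $(\Pscr^{\perp_\mathbb{Z}},\Ccal_\Pscr)$ I would invoke Lemma~\ref{lem:definability from pure-proj}: writing each $P\in\Pscr$ as a summand of a pure-projective object, the lemma presents $P^{\perp_0}$, hence $\Pscr^{\perp_\mathbb{Z}}$, as the class annihilated by $\Hom_\Dcal(f,-)$ for a \emph{set} of maps $f$ between compact objects; since each such functor commutes with coproducts, $\Pscr^{\perp_\mathbb{Z}}$ is closed under coproducts, and being a right-orthogonal class it is also closed under products and triangles. Thus $\Pscr^{\perp_\mathbb{Z}}$ is a smashing localising subcategory of the compactly generated $\Dcal$, its inclusion admits a right adjoint as well, and $(\Pscr^{\perp_\mathbb{Z}},\Ccal_\Pscr)$ with $\Ccal_\Pscr=(\Pscr^{\perp_\mathbb{Z}})^{\perp_\mathbb{Z}}$ is a torsion pair; all three classes being closed under shifts, we get the TTF triple, equivalently the displayed recollement with $i_*,j_!$ the inclusions. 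I expect this step --- showing that the cosilting-dual localising sequence genuinely completes to a recollement, i.e.\ that $\Pscr^{\perp_\mathbb{Z}}$ is closed under coproducts --- to be the main obstacle, and it is precisely here that the hypothesis on $\Pscr$ is used essentially.

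Next I would treat the restriction of $\mathbb{T}_\sigma$, dualising Proposition~\ref{prop:restrict cosilting}. Using $\Pscr\subseteq\Add\sigma$ and the orthogonality relations of Proposition~\ref{prop:summary cosilting}, one checks that the aisle $\Xcal$ and coaisle $\Ycal$ of $\mathbb{T}_\sigma$ lie in complementary one-sided orthogonals of $\Pscr$, one suspended and one cosuspended, so the $\mathbb{T}_\sigma$-truncation triangle of any object of $\Pscr^{\perp_\mathbb{Z}}$ stays in $\Pscr^{\perp_\mathbb{Z}}$; hence $\mathbb{T}_\sigma$ restricts. By Proposition~\ref{restricts} this induces a short exact sequence of hearts, which --- the triangulated sequence being a recollement --- is a recollement of abelian categories, the quotient heart being a Giraud subcategory. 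It remains to identify the kernel: the equality $\Hcal_\sigma\cap\Pscr^{\perp_\mathbb{Z}}=H^0_\sigma(\Pscr)^{\perp_0}$ is dual to the computation in Proposition~\ref{prop:restrict cosilting} (via Lemma~\ref{lem:cohomol1} and $\Pscr\subseteq\Add\sigma$). This class is the torsion-free part of the cohereditary torsion pair generated by the projective class $H^0_\sigma(\Pscr)$ (Remark~\ref{rem:torsion torsion-free characterisation}); and, being the kernel of an abelian recollement, it is simultaneously reflective and coreflective in the complete, cocomplete, well-powered $\Hcal_\sigma$, hence closed under products and coproducts, so by the characterisation recalled in Remark~\ref{rem:torsion torsion-free characterisation} it is a TTF class $\Tcal$, with $\Tcal^{\perp_{0,1}}$ the Giraud subcategory.

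Finally, for the mutation statement, assume $\sigma,\sigma'$ are pure-projective silting objects and $\sigma'$ is a left mutation of $\sigma$ at $\Pscr=\Add\sigma\cap\Add\sigma'$. Applying the parts just proved to both $\sigma$ and $\sigma'$ (note $\Pscr\subseteq\Add\sigma'$), both $\mathbb{T}_\sigma$ and $\mathbb{T}_{\sigma'}$ restrict along the recollement attached to $\Pscr$. By Theorem~\ref{thm:siltmutation as HRS}(1), $\mathbb{T}_{\sigma'}$ is the left HRS-tilt of $\mathbb{T}_\sigma$ at $(\Gen{H^0_\sigma(\Pscr)},\Tcal)$, so $\Xcal[1]\subseteq\Xcal'\subseteq\Xcal$. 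Dualising Lemma~\ref{mutation restricts}(2), I would then show: the heart of $\mathbb{T}_{\sigma'}\cap\Pscr^{\perp_\mathbb{Z}}$ equals $\Tcal[1]$, i.e.\ the shift of the heart of $\mathbb{T}_\sigma\cap\Pscr^{\perp_\mathbb{Z}}$, which together with the restriction of the inclusions $\Xcal[1]\subseteq\Xcal'\subseteq\Xcal$ and Proposition~\ref{prop:intermediate} forces $\Xcal'\cap\Pscr^{\perp_\mathbb{Z}}=(\Xcal\cap\Pscr^{\perp_\mathbb{Z}})[1]$; and $j^*(\mathbb{T}_{\sigma'})=j^*(\mathbb{T}_\sigma)$, proved as the corresponding assertion in Lemma~\ref{mutation restricts} (the relevant boundary subcategory $\Ycal'\cap\Xcal=\Tcal[1]$ lies in $\Pscr^{\perp_\mathbb{Z}}$, hence receives no nonzero maps from $\essim{j_!}=\Scal_\Pscr={}^{\perp_\mathbb{Z}}(\Pscr^{\perp_\mathbb{Z}})$). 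Feeding these two facts into Remark~\ref{rem:glue}(2) applied to the restricted $\mathbb{T}_{\sigma'}$ (the sequence being colocalising, $j_!$ the inclusion of $\Scal_\Pscr$) gives
$$\Xcal'=j_!j^*(\Xcal')\star(\Xcal'\cap\Pscr^{\perp_\mathbb{Z}})=j_!j^*(\Xcal)\star(\Xcal\cap\Pscr^{\perp_\mathbb{Z}})[1],$$
which is the claimed formula. The right-mutation case is entirely parallel, the left HRS-tilt being replaced by a right HRS-tilt --- which shifts the restricted heart, and hence the restricted aisle, by $[-1]$ instead of $[1]$.
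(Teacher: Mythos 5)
The overall architecture of your argument matches the paper's: construct the TTF triple, show the t-structure restricts (dualising Proposition~\ref{prop:restrict cosilting}), pass to the abelian recollement, then dualise Lemma~\ref{mutation restricts} together with Remark~\ref{rem:glue} for the aisle formula. The second and third paragraphs are essentially what the paper does. But the first paragraph — where you build the TTF triple — contains the step the paper regards as the actual content of the proposition, and it is precisely there that your argument has a gap.

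Your construction of the \emph{first} torsion pair $(\Scal_\Pscr,\Pscr^{\perp_\mathbb{Z}})$ relies on ``applying \cite[Corollary~6.9]{SS20} to the pure-projective class $\Pscr$, dually.'' This is not a valid dualisation: compactly generated triangulated categories are not closed under passage to the opposite, and \cite[Corollary~6.9]{SS20} (used in Proposition~\ref{prop:restrict cosilting} for a class $\Escr$ of \emph{pure-injectives}) is proved by Brown representability for contravariant functors, which has no formal covariant mirror image in this generality. The paper instead proceeds asymmetrically: Lemma~\ref{lem:definability from pure-proj} exhibits $\Pscr^{\perp_\mathbb{Z}}$ as the vanishing locus of a set of maps between compacts, hence a \emph{definable} subcategory, and therefore preenveloping by \cite[Proposition~3.11]{Krause-TC}; then \cite[Proposition~5.1]{Nee-adj} (Neeman's adjoint functor theorem, the correct covariant substitute for Brown representability here) produces the left adjoint $i^*$ to the inclusion, giving the colocalising sequence and with it the first torsion pair. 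Note that in your proposal Lemma~\ref{lem:definability from pure-proj} is only used later, for closure under coproducts — which in any case follows already from $\Pscr\subseteq\Add{\Dcal^c}$ — so its genuine role (definability, hence preenveloping) is missed.

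Your argument for the \emph{second} torsion pair, declaring $\Pscr^{\perp_\mathbb{Z}}$ a ``smashing localising subcategory'' from coproduct- and triangle-closure, is also incomplete: a localising subcategory of a compactly generated category need not be reflective, and coproduct-closure alone does not produce the right adjoint $i^!$. What the paper does, having the colocalising sequence in hand, is invoke \cite[Proposition~2.6 and Lemma~4.1]{Krause-TC} (or \cite[Proposition~6.3]{LV}) to conclude that $\Pscr^{\perp_\mathbb{Z}}$ is itself compactly generated and that the sequence is also localising; the right adjoints then exist by Brown representability for $\Pscr^{\perp_\mathbb{Z}}$ and $\Scal_\Pscr$. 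Once you replace your first paragraph with this chain --- definability $\Rightarrow$ preenveloping $\Rightarrow$ Neeman $\Rightarrow$ colocalising sequence $\Rightarrow$ compact generation of $\Pscr^{\perp_\mathbb{Z}}$ $\Rightarrow$ recollement --- the rest of your proof goes through as written.
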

\begin{proof}
The class $\Pscr^{\perp_\mathbb{Z}}$ is preenveloping by Lemma~\ref{lem:definability from pure-proj} and~\cite[Proposition 3.11]{Krause-TC}, and the inclusion $i_*\colon{\Pscr}^{\perp_\mathbb{Z}}\longrightarrow\Dscr$ has a left adjoint $i^*$ by~\cite[Proposition 5.1]{Nee-adj}. In fact, the latter tells us that there is colocalising sequence
$$\xymatrix{{\Pscr}^{\perp_\mathbb{Z}}\ar[r]^{i_*}&\Dcal\ar[r]^{j^*}\ar@/_1.5pc/[l]_{i^*}&\Scal_\Pscr\ar@/_1.5pc/[l]_{j_!}}$$
It follows from that ${\Pscr}^{\perp_\mathbb{Z}}$ is itself a compactly generated triangulated category and that the sequence is also localising; see~\cite[Proposition 2.6 and Lemma~4.1]{Krause-TC} (see also \cite[Proposition 6.3]{LV}).
Hence, we get the recollement. The proof that the t-structure restricts along the recollement is analogous to Proposition~\ref{prop:restrict cosilting}. From \cite{BBD} it follows that there is such a recollement of hearts. Finally, the last statement is shown with arguments similar to those in the proof of Lemma~\ref{mutation restricts}.
\end{proof}

\section{Mutation of torsion pairs}
\label{sec:mutation and torsion pairs}

We now wish to consider mutations of cosilting torsion pairs.  We will make use of the  notation set up in Definition \ref{def: Cosilt}. We will also freely use the fact that a cosilting object $\sigma'$ is a right mutation of a cosilting object $\sigma$ if and only if $\sigma$ is a left mutation of $\sigma'$, cf.~Corollary~\ref{cor:inverse mutations}.

\begin{definition}\label{def: mutation of tp}
Let  $\Dcal$ be a triangulated category with products and coproducts and $\mathbb{T}$ a t-structure with heart $\Hcal$. Furthermore, let $\mathfrak{u}=(\Ucal, \Vcal)$ and $\mathfrak{t}=(\Tcal, \Fcal)$ be in $\Cosilt \Hcal$ with associated cosilting objects $\sigma_\mathfrak{u}$ and $\sigma_\mathfrak{t}$ in $\Dcal$. If $\sigma_\mathfrak{t}$ is a right mutation of $\sigma_\mathfrak{u}$, then we will say that $\mathfrak{t}$ is a \textbf{right mutation} of $\mathfrak{u}$ and $\mathfrak{u}$ is a \textbf{left mutation} of $\mathfrak{t}$.
\end{definition}

By Corollary~\ref{cor:inverse mutations}, the definition is independent of the cosilting objects we choose to represent $\mathfrak{u}$ and $\mathfrak{t}$.

\subsection{Inclusions of torsion pairs and filtration triples}
If a cosilting torsion pair $\mathfrak{t}=(\Tcal, \Fcal)$ is a right mutation of some cosilting torsion pair $\mathfrak{u}=(\Ucal, \Vcal)$, then, by Theorem~\ref{thm:mutation as HRS-tilt}, we have that $\mathbb{T}_{\mathfrak{t}^-}$ is a right HRS-tilt of $\mathbb{T}_{\mathfrak{u}^-}$.  By Proposition~\ref{prop:intermediate}, we have that $\Xcal_{\mathfrak{u}^-} \subseteq \Xcal_{\mathfrak{t}^-}$ and so $\Ucal \subseteq \Tcal$.  Consequently, we begin by studying such nested torsion pairs, which are known to give rise to filtrations (see \cite{BKT}).

\begin{definition}
Let $\Ucal, \Scal, \Fcal$ be full subcategories of an abelian category $\Acal$.  We will call $(\Ucal,\Scal,\Fcal)$ a \textbf{filtration triple} if $\Hom_\Acal(\Ucal, \Scal) = 0$, $\Hom_\Acal(\Ucal, \Fcal) =0$, $\Hom_\Acal(\Scal, \Fcal) =0$ and, for every object $X$ in $\Acal$, there exists a filtration \[ 0 = X_0 \subseteq X_1 \subseteq X_2 \subseteq X_3 = X\] such that $X_1/X_0 = X_1$ lies in $\Ucal$, $X_2/X_1$ lies in $\Scal$ and $X_3/X_2 = X/X_2$ lies in $\Fcal$.
\end{definition}

\begin{proposition}\label{prop: tp vs triples}
Let $\Acal$ be an abelian category.  There is a bijection between:
\begin{enumerate}
\item pairs of torsion pairs $(\Ucal, \Vcal)$, $(\Tcal, \Fcal)$ in $\Acal$ with $\Ucal \subseteq \Tcal$; and
\item filtration triples $(\Ucal, \Scal, \Fcal)$ in $\Acal$.
\end{enumerate}  The mutually inverse bijections are given by:
\[
(\Ucal, \Vcal), (\Tcal, \Fcal) \mapsto (\Ucal, \Vcal\cap\Tcal, \Fcal)
\quad\text{and}\quad
(\Ucal, \Scal, \Fcal) \mapsto (\Ucal, \Scal\star\Fcal), (\Ucal\star\Scal, \Fcal).
\]

\end{proposition}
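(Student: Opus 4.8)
The plan is to verify that the two assignments are well-defined and mutually inverse, which is essentially a diagram-chasing exercise with short exact sequences. First I would check that starting from a pair of nested torsion pairs $(\Ucal,\Vcal)$ and $(\Tcal,\Fcal)$ with $\Ucal\subseteq\Tcal$, the triple $(\Ucal,\Scal,\Fcal)$ with $\Scal:=\Vcal\cap\Tcal$ is indeed a filtration triple. The three vanishing conditions are immediate: $\Hom_\Acal(\Ucal,\Scal)\subseteq\Hom_\Acal(\Ucal,\Vcal)=0$ since $\Ucal\subseteq\Tcal$ forces $\Vcal\supseteq\Fcal$ hence actually we use $\Scal\subseteq\Vcal$ directly; $\Hom_\Acal(\Ucal,\Fcal)=0$ because $\Fcal\subseteq\Vcal$ (as $\Ucal\subseteq\Tcal$ implies $\Fcal=\Tcal^{\perp_0}\subseteq\Ucal^{\perp_0}=\Vcal$); and $\Hom_\Acal(\Scal,\Fcal)\subseteq\Hom_\Acal(\Tcal,\Fcal)=0$. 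For the filtration of an arbitrary $X$: take the torsion sequence $0\to X_1\to X\to X'\to 0$ with $X_1\in\Ucal$, $X'\in\Vcal$ with respect to $(\Ucal,\Vcal)$; then apply the torsion sequence of $X'$ with respect to $(\Tcal,\Fcal)$ to get $0\to \bar X\to X'\to X_3/X_2\to 0$ with $\bar X\in\Tcal$ and $X_3/X_2\in\Fcal$; one then checks $\bar X\in\Vcal\cap\Tcal=\Scal$ (it is a subobject of $X'\in\Vcal$, and $\Vcal$ is closed under subobjects), and pulls $\bar X$ back to a subobject $X_2\subseteq X$ containing $X_1$ with $X_2/X_1\cong\bar X$.

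Next I would check the reverse assignment: given a filtration triple $(\Ucal,\Scal,\Fcal)$, that $(\Ucal,\Scal\star\Fcal)$ and $(\Ucal\star\Scal,\Fcal)$ are torsion pairs with $\Ucal\subseteq\Ucal\star\Scal$. The last inclusion is clear (take the trivial extension $0\in\Scal$). For $(\Ucal,\Scal\star\Fcal)$ to be a torsion pair I would verify $\Hom_\Acal(\Ucal,\Scal\star\Fcal)=0$ — this follows by applying $\Hom_\Acal(U,-)$ to a short exact sequence $0\to S\to Y\to F\to 0$ and using $\Hom_\Acal(\Ucal,\Scal)=0=\Hom_\Acal(\Ucal,\Fcal)$ — and the factorisation property $\Acal=\Ucal\star(\Scal\star\Fcal)$, which is immediate from the three-step filtration since $\Scal\star\Fcal$ is extension-closed. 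The idempotent-completeness of $\Scal\star\Fcal$ is the only slightly delicate point; it follows from the fact that any torsion-free class defined by a vanishing condition $\Ucal^{\perp_0}$ is idempotent-complete, once we show $\Scal\star\Fcal=\Ucal^{\perp_0}$. Dually one treats $(\Ucal\star\Scal,\Fcal)$, showing $\Ucal\star\Scal$ is extension-closed (using $\Hom_\Acal(\Scal,\Ucal)$? — no, rather $\Ucal\star\Scal$ is extension-closed because $\Hom(\Ucal,\Scal)=0$ makes $\Ucal\star\Scal=(\Scal\star\Fcal)^{?}$; more cleanly $\Ucal\star\Scal$ equals ${}^{\perp_0}\Fcal$).

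The cleanest route, and the one I would actually write, is to identify the classes directly: show $\Scal\star\Fcal={}^{\perp_0}\Ucal$-complement, i.e. $\Scal\star\Fcal=\{X : \Hom_\Acal(\Ucal,X)=0\text{ and }X\text{ has a filtration }0\subseteq X_2\subseteq X\text{ with }X_2\in\Scal,\,X/X_2\in\Fcal\}$, and similarly $\Ucal\star\Scal={}^{\perp_0}\Fcal$. Concretely: if $\Hom_\Acal(\Ucal,X)=0$, applying the three-step filtration of $X$ gives $X_1\in\Ucal$ with $X_1\hookrightarrow X$, forcing $X_1=0$, hence $X=X_3/X_1\in\Scal\star\Fcal$; conversely $\Scal\star\Fcal\subseteq\Ucal^{\perp_0}$ was noted above. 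This shows $(\Ucal,\Scal\star\Fcal)$ is the torsion pair cogenerated by $\Scal\star\Fcal$, hence genuinely a torsion pair, and dually for the other. Finally, for the round-trip identities: starting from $(\Ucal,\Vcal),(\Tcal,\Fcal)$ we get $\Scal=\Vcal\cap\Tcal$ and must check $\Vcal=\Scal\star\Fcal$ and $\Tcal=\Ucal\star\Scal$ — both follow by filtering an object of $\Vcal$ (resp. $\Tcal$) and observing the relevant pieces land in the right classes, using $\Fcal\subseteq\Vcal$ and $\Ucal\subseteq\Tcal$; starting from $(\Ucal,\Scal,\Fcal)$ we must recover $\Scal=(\Scal\star\Fcal)\cap(\Ucal\star\Scal)$, where $\subseteq$ is trivial and $\supseteq$ follows because an object $X$ in the intersection has, by the filtration coming from $(\Ucal,\Scal\star\Fcal)$, trivial $\Ucal$-part and, being in $\Ucal\star\Scal$, trivial $\Fcal$-part, so $X\in\Scal$. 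The main obstacle is bookkeeping — making sure every filtration step lands in the claimed subcategory and that subobjects/quotients of objects in $\Vcal,\Fcal,\Tcal,\Ucal$ behave as needed; but since we may invoke Remark~\ref{rem:torsion torsion-free characterisation} (torsion classes closed under quotients/extensions/coproducts, torsion-free classes under subobjects/extensions/products) in the relevant ambient hearts, no real difficulty arises, and in a general abelian category $\Acal$ the statement is purely formal.
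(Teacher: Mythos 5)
Your proof is essentially correct and follows the same route as the paper's: verify the three Hom-vanishing conditions, build the filtration from the two torsion decompositions of an arbitrary object, and for the reverse direction check the orthogonality and factorisation properties. Your filtration step is a mild cosmetic variant (you quotient by the $(\Ucal,\Vcal)$-torsion part and then filter the quotient with $(\Tcal,\Fcal)$, while the paper compares the two torsion subobjects $X_1\subseteq X_2$ of $X$ directly), and you are slightly more thorough than the paper's written proof in that you explicitly verify idempotent-completeness via the identifications $\Scal\star\Fcal=\Ucal^{\perp_0}$, $\Ucal\star\Scal={}^{\perp_0}\Fcal$ and spell out both round-trip identities, which the paper leaves tacit. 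One small slip to flag: you appeal to Remark~\ref{rem:torsion torsion-free characterisation} for closure properties, but that remark is specific to hearts of (co)silting t-structures, not arbitrary abelian categories; fortunately the only facts you actually use, namely that torsion classes are closed under quotients and extensions and torsion-free classes under subobjects and extensions, hold for any torsion pair in any abelian category directly from the definition, so nothing in your argument breaks.
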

\begin{proof}
Let $(\Ucal, \Vcal)$, $(\Tcal, \Fcal)$ be a pair of torsion pairs in $\Acal$ such that $\Ucal \subseteq \Tcal$.  We show that $(\Ucal, \Vcal\cap\Tcal, \Fcal)$ is a filtration triple in $\Acal$.  Let $\Scal := \Vcal\cap\Tcal$.  Firstly, the Hom-orthogonality conditions are clear because $\Hom_\Acal(\Ucal, \Vcal) = 0 = \Hom_\Acal(\Tcal, \Fcal)$.  Let $X$ be an arbitrary object in $\Acal$ {with torsion decompositions 
$$
0\longrightarrow X_1\longrightarrow X\longrightarrow X/X_1\longrightarrow 0 
\quad\text{and}\quad
0\longrightarrow X_2\longrightarrow X\longrightarrow X/X_2\longrightarrow 0
$$
where $X_1$ is contained in $\Ucal$, $X/X_1$ is in $\Vcal$, $X_2$ is in $\Tcal$ and $X/X_2$ is in $\Fcal$.
  Then  $X_2/X_1 $ is the torsion-free part of $X_2$ with respect to $(\Ucal, \Vcal)$ and is a quotient of $X_2$, and so $X_2/X_1$ lies in $\Scal$. Hence
 $0=X_0\subseteq X_1\subseteq X_2\subseteq X$ is the desired filtration of $X$.}
  
Let $(\Ucal, \Scal, \Fcal)$ be a filtration triple in $\Acal$.  We show that $(\Ucal\star\Scal, \Fcal)$ is a torsion pair in $\Acal$; the proof that $(\Ucal, \Scal\star\Fcal)$ is a torsion pair is similar.  If we consider objects $U$ in $\Ucal$, $S$ in $\Scal$ and an exact sequence 
$$0 \longrightarrow U \longrightarrow X \longrightarrow S \longrightarrow 0$$ 
and we apply $\Hom_\Acal(-, \Fcal)$, then we obtain that $\Hom_\Acal(X, \Fcal) =0$ and hence $\Hom_\Acal(\Ucal\star\Scal, \Fcal) = 0$.  Moreover, by the definition of filtration triple, for each object $A$ in $\Acal$, we have a short exact sequence 
$$0 \longrightarrow A_2 \longrightarrow A \longrightarrow A/A_2 \longrightarrow 0$$
with $A/A_2$ in $\Fcal$, $A_2$ in $\Ucal\star\Scal$ because there is a short exact sequence 
$$0 \longrightarrow A_1 \longrightarrow A_2 \longrightarrow A_1/A_2 \longrightarrow 0$$ 
where $A_1$ lies in $\Ucal$ and $A_1/A_2$ lies in $\Scal$.  Thus $(\Ucal\star\Scal, \Fcal)$ is a torsion pair.
\end{proof}

\begin{proposition}\label{Prop: HRS tilt of triples}
Let $\Hcal$ be the heart of a t-structure {in a triangulated category $\Dcal$} and let $(\Ucal, \Scal, \Fcal)$ be a filtration triple in $\Hcal$.  The following statements hold. 
\begin{enumerate}
\item $(\Fcal, \Ucal[-1], \Scal[-1])$ is a filtration triple in $\Hcal_{\mathfrak{t}^-}$ where $\mathfrak{t}=(\Ucal\star\Scal, \Fcal)$.
\item $(\Scal, \Fcal, \Ucal[-1])$ is a filtration triple in $\Hcal_{\mathfrak{u}^-}$ where $\mathfrak{u}=(\Ucal, \Scal\star\Fcal)$.
\end{enumerate}
\end{proposition}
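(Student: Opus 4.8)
The plan is to verify the two statements by a direct unravelling of the HRS-tilt construction, using Proposition~\ref{prop: tp vs triples} to convert between filtration triples and nested torsion pairs at every stage. First I would recall, from Remark~\ref{rem:inversetilt} and Proposition~\ref{prop:intermediate}(2), that for a torsion pair $\mathfrak{t}=(\Tcal,\Fcal)$ in $\Hcal$ the right HRS-tilted heart is $\Hcal_{\mathfrak{t}^-}=\Fcal\star\Tcal[-1]$, it carries the torsion pair $(\Fcal,\Tcal[-1])$, and the passage $\mathbb{T}\mapsto\mathbb{T}_{\mathfrak{t}^-}$ is an order-preserving bijection onto the t-structures $\mathbb{T}'$ with $\Ycal[-1]\subseteq\Ycal'\subseteq\Ycal$. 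Concretely, if $(\Ucal,\Vcal)$ and $(\Tcal,\Fcal)$ are two torsion pairs in $\Hcal$ with $\Ucal\subseteq\Tcal$, they correspond to two t-structures $\mathbb{T}_{\mathfrak{u}^-}$, $\mathbb{T}_{\mathfrak{t}^-}$ sitting between $\mathbb{T}$ and $\mathbb{T}[-1]$ with $\mathbb{T}_{\mathfrak{u}^-}$ the ``lower'' one; then $\mathbb{T}_{\mathfrak{t}^-}$ is the right HRS-tilt of $\mathbb{T}_{\mathfrak{u}^-}$ at the torsion pair in $\Hcal_{\mathfrak{u}^-}$ obtained from Proposition~\ref{prop:intermediate}(2), namely $(\Hcal_{\mathfrak{u}^-}\cap\Hcal_{\mathfrak{t}^-}[1],\,\Hcal_{\mathfrak{u}^-}\cap\Hcal_{\mathfrak{t}^-})$.

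For part (1): we start from the filtration triple $(\Ucal,\Scal,\Fcal)$ in $\Hcal$, whose associated pair of nested torsion pairs (via Proposition~\ref{prop: tp vs triples}) is $\mathfrak{u}'=(\Ucal,\Scal\star\Fcal)$ and $\mathfrak{t}=(\Ucal\star\Scal,\Fcal)$ with $\Ucal\subseteq\Ucal\star\Scal$. Tilting at $\mathfrak{t}=(\Tcal,\Fcal)$ with $\Tcal=\Ucal\star\Scal$ produces $\Hcal_{\mathfrak{t}^-}=\Fcal\star(\Ucal\star\Scal)[-1]$ with its canonical torsion pair $(\Fcal,(\Ucal\star\Scal)[-1])$. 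Now I would observe that the image of the t-structure $\mathbb{T}_{\mathfrak{u}'^-}$ under the bijection of Proposition~\ref{prop:intermediate}(2) applied to $\mathbb{T}_{\mathfrak{t}^-}$ is exactly the pair $(\Hcal_{\mathfrak{t}^-}\cap\Hcal_{\mathfrak{u}'^-}[1],\Hcal_{\mathfrak{t}^-}\cap\Hcal_{\mathfrak{u}'^-})$, and a short computation with the explicit descriptions $\Hcal_{\mathfrak{u}'^-}=(\Scal\star\Fcal)\star\Ucal[-1]$ and $\Hcal_{\mathfrak{t}^-}=\Fcal\star(\Ucal\star\Scal)[-1]$ identifies this with the torsion pair $(\Fcal\star\Scal[-1],\,\Ucal[-1])$ in $\Hcal_{\mathfrak{t}^-}$. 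Finally, feeding this nested-torsion-pair data back through Proposition~\ref{prop: tp vs triples} — now inside the ambient abelian category $\Hcal_{\mathfrak{t}^-}$ — yields precisely the filtration triple $(\Fcal,\Scal[-1],\Ucal[-1])$. Hmm, I should double-check the middle term: the middle of the triple is $\Vcal\cap\Tcal$ in the notation of Proposition~\ref{prop: tp vs triples}, here $(\Fcal\star\Scal[-1])\cap(\Ucal\star\Scal)[-1]$, and one checks this equals $\Scal[-1]$ by Hom-orthogonality; so the triple is $(\Fcal,\Scal[-1],\Ucal[-1])$, matching the statement. Part (2) is entirely parallel: one tilts instead at $\mathfrak{u}=(\Ucal,\Scal\star\Fcal)$, obtaining $\Hcal_{\mathfrak{u}^-}=(\Scal\star\Fcal)\star\Ucal[-1]$ with torsion pair $(\Scal\star\Fcal,\Ucal[-1])$, and the same bookkeeping with the nested torsion pair $(\Ucal\star\Scal,\Fcal)$ viewed inside $\Hcal_{\mathfrak{u}^-}$ produces the filtration triple $(\Scal,\Fcal,\Ucal[-1])$.

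The only genuinely non-formal points are the verifications that the relevant subcategories of the tilted heart are closed under the operations needed (e.g.\ that $\Fcal\star\Scal[-1]$ really is a torsion class in $\Hcal_{\mathfrak{t}^-}$, equivalently that it is closed under extensions, quotients and — where applicable — coproducts), and the Hom-vanishing statements $\Hom_{\Hcal_{\mathfrak{t}^-}}(\Fcal,\Scal[-1])=0$ and $\Hom_{\Hcal_{\mathfrak{t}^-}}(\Scal[-1],\Ucal[-1])=0$; the latter reduces, by fullness of the heart inside $\Dcal$ together with Lemma~\ref{lem:cohomol1}, to the original orthogonality $\Hom_\Hcal(\Scal,\Ucal)=0$ together with a vanishing $\Hom_\Dcal(\Fcal,\Scal[-1])=0$ coming from the fact that $\Fcal\subseteq\Ycal_{\mathfrak{t}^-}[-1]$-type truncation bounds. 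I expect the main obstacle to be precisely this last degree-shift computation — keeping careful track of which shifts of $\Ucal,\Scal,\Fcal$ land in the aisle versus the coaisle of $\mathbb{T}_{\mathfrak{t}^-}$ — since everything else is an application of Propositions~\ref{prop:intermediate} and~\ref{prop: tp vs triples} already established in the excerpt. In fact, the cleanest route may be to avoid ad hoc diagram chases entirely and argue purely at the level of the bijections: once one knows (Proposition~\ref{prop:intermediate}(2)) that right HRS-tilting is a monotone bijection onto an interval of t-structures and that both $\mathbb{T}_{\mathfrak{u}^-}$ and $\mathbb{T}_{\mathfrak{t}^-}$ lie in it, the filtration-triple description of any two nested torsion pairs in the tilted heart is forced, and one only needs to compute the three subcategories, which the explicit formulas for the tilted hearts make immediate.
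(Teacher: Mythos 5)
Your global plan --- reduce everything to the bijections in Propositions~\ref{prop:intermediate} and~\ref{prop: tp vs triples} and let them do the work --- is a legitimate alternative to the paper's argument, but an orientation error in the main computation produces the wrong filtration triple, and the error propagates to an untrue conclusion.

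The slip is in the direction of the tilt between $\mathbb{T}_{\mathfrak{u}'^-}$ and $\mathbb{T}_{\mathfrak{t}^-}$. Since $\Ucal\subseteq\Ucal\star\Scal$, the coaisles satisfy $\Ycal_{\mathfrak{t}^-}\subseteq\Ycal_{\mathfrak{u}'^-}\subseteq\Ycal\subseteq\Ycal_{\mathfrak{t}^-}[1]$, so $\mathbb{T}_{\mathfrak{u}'^-}$ is a \emph{left} HRS-tilt of $\mathbb{T}_{\mathfrak{t}^-}$, not a right one. You should therefore invoke Proposition~\ref{prop:intermediate}(1) with $\mathbb{T}=\mathbb{T}_{\mathfrak{t}^-}$ and $\mathbb{T}'=\mathbb{T}_{\mathfrak{u}'^-}$, which gives the torsion pair $(\Hcal_{\mathfrak{t}^-}\cap\Hcal_{\mathfrak{u}'^-},\,\Hcal_{\mathfrak{t}^-}\cap\Hcal_{\mathfrak{u}'^-}[-1])$ in $\Hcal_{\mathfrak{t}^-}$. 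The formula you wrote, borrowed from part~(2), would require $\Ycal_{\mathfrak{u}'^-}\subseteq\Ycal_{\mathfrak{t}^-}$, which fails; and in fact $\Hcal_{\mathfrak{t}^-}\cap\Hcal_{\mathfrak{u}'^-}[1]=0$, since its objects would have $H^0_{\mathbb T}$-cohomology in $\Ucal\cap\Fcal=0$. Computing the correct intersections with the cohomology functor of the original t-structure gives $\Hcal_{\mathfrak{t}^-}\cap\Hcal_{\mathfrak{u}'^-}=\Fcal\star\Ucal[-1]$ and $\Hcal_{\mathfrak{t}^-}\cap\Hcal_{\mathfrak{u}'^-}[-1]=\Scal[-1]$, so the auxiliary torsion pair is $(\Fcal\star\Ucal[-1],\Scal[-1])$, not the $(\Fcal\star\Scal[-1],\Ucal[-1])$ you found. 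Pairing it with the canonical torsion pair $(\Fcal,(\Ucal\star\Scal)[-1])$ and applying Proposition~\ref{prop: tp vs triples} then gives the triple $(\Fcal,\Ucal[-1],\Scal[-1])$, exactly as the Proposition claims. You instead arrive at $(\Fcal,\Scal[-1],\Ucal[-1])$ and assert that it ``matches the statement'' --- it does not, and that discrepancy should have been the red flag. Indeed $(\Fcal,\Scal[-1],\Ucal[-1])$ is generally \emph{not} a filtration triple: it would require $\Hom_\Hcal(\Scal,\Ucal)=0$, and this is precisely what you list as ``the original orthogonality'' in your final paragraph, but it is not among the axioms of the filtration triple $(\Ucal,\Scal,\Fcal)$, which give $\Hom_\Hcal(\Ucal,\Scal)=0$ in the opposite direction.

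For comparison, the paper's proof does not pass through the bijections at all: it checks the three required Hom-vanishing conditions in $\Hcal_{\mathfrak{t}^-}$ directly (the two involving $\Fcal$ via the canonical torsion pair $(\Fcal,(\Ucal\star\Scal)[-1])$, the third by shifting $\Hom_\Hcal(\Ucal,\Scal)=0$), and it constructs the required filtration of an arbitrary object of $\Hcal_{\mathfrak{t}^-}$ by an explicit pullback diagram. Your route, once corrected as above, would work and is arguably more conceptual; as written, it delivers a false triple.
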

\begin{proof}
{We prove statement {(1)}; the proof of (2) is similar.}  We show that $(\Fcal, \Ucal[-1], \Scal[-1])$ is a filtration triple in $\Hcal_{\mathfrak{t}^-}$. First we prove the Hom-orthogonality conditions: \[\Hom_{\Hcal_{\mathfrak{t}^-}}(\Ucal[-1], \Scal[-1]) = \Hom_{\Dcal}(\Ucal[-1], \Scal[-1]) \cong \Hom_{\Dcal}(\Ucal, \Scal) = \Hom_{\Hcal}(\Ucal, \Scal){=0}. \] 
 Also, since $\Ucal,\Scal \subset \Tcal:=\Ucal\star\Scal$ and $(\Fcal, \Tcal[-1])$ is a torsion pair in $\Hcal_{\mathfrak{t}^-}$, it follows that $$\Hom_{\Hcal_{\mathfrak{t}^-}}(\Fcal, \Ucal[-1]) = 0 = \Hom_{\Hcal_{\mathfrak{t}^-}}(\Fcal, \Scal[-1]).$$
Next we prove the existence of a filtration by $(\Fcal, \Ucal[-1], \Scal[-1])$ for an arbitrary object $Y\in \Hcal_{\mathfrak{t}^-}$.  The torsion pair $(\Fcal, \Tcal[-1])$ with $\Tcal[-1] = \Ucal[-1]\star\Scal[-1]$ induces the following commutative diagram in $\Hcal_{\mathfrak{t}^-}$ with exact rows and columns, where the object $X$ is obtained as a pullback. 
\[ \xymatrix{
& & 0 \ar[d] & 0 \ar[d] & \\
0 \ar[r] & F \ar@{=}[d] \ar[r] & X \ar[d] \ar[r] & U[-1] \ar[d] \ar[r] & 0 \\
0 \ar[r] & F \ar[r] & Y \ar[d] \ar[r] & T[-1] \ar[d] \ar[r] & 0 \\
& & S[-1] \ar[d] \ar@{=}[r] & S[-1] \ar[d] & \\
& & 0 & 0 &
}\] 
Then, the sequence $0 = Y_0 \subseteq F \subseteq X \subseteq Y$ is a $(\Fcal, \Ucal[-1], \Scal[-1])$-filtration of $Y$.
\end{proof}
Let us summarise the situation in Proposition~\ref{Prop: HRS tilt of triples} as follows
$$\xymatrix{
  {(\Ucal,\Scal,\Fcal)\text{ in }\Hcal}\ar@{~>}[d]_(.45){\mathfrak{u}^-} \ar@{~>}[r]^(.45){\mathfrak{t}^-\quad} &  \; (\Fcal,\Ucal[-1],\Scal[-1])\text{ in }\Hcal_{\mathfrak{t}^-}   \\
(\Scal,\Fcal,\Ucal[-1])\text{ in }  \Hcal_{\mathfrak{u}^-}  }$$
  
We are now going to see that this diagram can be completed to a commutative triangle.  Recall that the collection of torsion pairs in an abelian category $\Hcal$ has a partial order: we say that $\mathfrak{u}$ is less than $\mathfrak{t}$ if $\Ucal \subseteq \Tcal$. This gives rise to a poset which we denote by  $\tors\Hcal$. 

\begin{proposition}\label{prop: HRS-tilt triangle}
\label{cor: tp in mod = tp in heart}
Let $\Hcal$ be the heart of a t-structure $\mathbb{T}=(\Xcal, \Ycal)$ in a triangulated category $\Dcal$, and let $\mathfrak{u}=(\Ucal,\Vcal)$ be a torsion pair in $\Hcal$.
The assignment taking $\mathfrak{t}=(\Tcal, \Fcal)$ to $\mathfrak{s} = (\Tcal\cap \Vcal, \: \Fcal\star \Ucal[-1])$ induces an  order-preserving bijection between:
\begin{enumerate}
\item torsion pairs $\mathfrak{t}=(\Tcal, \Fcal)$ in $\Hcal$ with $\Ucal\subseteq\Tcal$; and
\item torsion pairs $\mathfrak{s}=(\Scal, \Rcal)$ in $\Hcal_{\mathfrak{u}^-}$ with $\Scal\subseteq\Vcal$.
\end{enumerate}
Moreover, if $\mathfrak{t}$ and $\mathfrak{s}$ correspond to each other under this bijection, then $(\mathbb{T}_{\mathfrak{u}^-})_{\mathfrak{s}^-}=\mathbb{T}_{\mathfrak{t}^-}$. That is, we have a commutative diagram
$$\xymatrix{
  \mathbb{T}\ar@{~>}[d]_{\mathfrak{u}^-} \ar@{~>}[r]^{\mathfrak{t}^-} &  \; \mathbb{T}_{\mathfrak{t}^-}   \\
  \mathbb{T}_{\mathfrak{u}^-}  \ar@{~>}[ur]_{\mathfrak{s}^-}
}$$
\end{proposition}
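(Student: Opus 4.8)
The plan is to realise both families of torsion pairs as parametrising, via the HRS-tilt bijection of Proposition~\ref{prop:intermediate}(2), one and the same interval of t-structures in $\Dcal$, namely those $\mathbb{T}'=(\Xcal',\Ycal')$ with $\Ycal[-1]\subseteq\Ycal'\subseteq\Ycal_{\mathfrak{u}^-}$. Once that is established, the composite of the two parametrisations is the desired bijection, and by construction it carries $\mathfrak{t}$ to the unique $\mathfrak{s}$ with $(\mathbb{T}_{\mathfrak{u}^-})_{\mathfrak{s}^-}=\mathbb{T}_{\mathfrak{t}^-}$, which is exactly the asserted commutative triangle.

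First I would record an order dictionary. For a torsion pair $\mathfrak{t}=(\Tcal,\Fcal)$ in $\Hcal$ one always has $\Ycal[-1]\subseteq\Ycal_{\mathfrak{t}^-}\subseteq\Ycal$, and $\Ucal\subseteq\Tcal$ is equivalent to $\Fcal\subseteq\Vcal$, which in turn is equivalent to $\Ycal_{\mathfrak{t}^-}=\Fcal\star\Ycal[-1]\subseteq\Vcal\star\Ycal[-1]=\Ycal_{\mathfrak{u}^-}$ (the last equivalence using $\Hcal\cap\Ycal_{\mathfrak{t}^-}=\Fcal$). Hence Proposition~\ref{prop:intermediate}(2) for the base $\mathbb{T}$ cuts down to a bijection between item~(1) and the interval above. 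For item~(2), Remark~\ref{rem:inversetilt} gives $\mathbb{T}[-1]=(\mathbb{T}_{\mathfrak{u}^-})_{\mathfrak{v}^-}$ for the torsion pair $\mathfrak{v}=(\Vcal,\Ucal[-1])$ of $\Hcal_{\mathfrak{u}^-}$, whose right HRS-tilt therefore has coaisle $\Ycal[-1]$; applying Proposition~\ref{prop:intermediate}(2) to the base $\mathbb{T}_{\mathfrak{u}^-}$ and using $\Ycal_{\mathfrak{u}^-}[-1]\subseteq\Ycal[-1]$, a torsion pair $\mathfrak{s}=(\Scal,\Rcal)$ of $\Hcal_{\mathfrak{u}^-}$ satisfies $\Scal\subseteq\Vcal$ (equivalently $\mathfrak{s}\le\mathfrak{v}$, equivalently $\Rcal\supseteq\Ucal[-1]$) precisely when $\Ycal[-1]\subseteq\Ycal_{\mathfrak{s}^-}\subseteq\Ycal_{\mathfrak{u}^-}$. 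So item~(2) is parametrised by the same interval; composing, the bijection $\mathfrak{t}\leftrightarrow\mathfrak{s}$ drops out, it is order-preserving because on both sides the order corresponds to reverse inclusion of coaisles, and $\mathbb{T}_{\mathfrak{t}^-}=(\mathbb{T}_{\mathfrak{u}^-})_{\mathfrak{s}^-}$ holds by construction.

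It remains to match this abstract bijection with the displayed formula. Running $\mathbb{T}'=\mathbb{T}_{\mathfrak{t}^-}$ through the inverse of Proposition~\ref{prop:intermediate}(2) for the base $\mathbb{T}_{\mathfrak{u}^-}$ yields $\Scal=\Hcal_{\mathfrak{u}^-}\cap\Hcal_{\mathfrak{t}^-}[1]$ and $\Rcal=\Hcal_{\mathfrak{u}^-}\cap\Hcal_{\mathfrak{t}^-}$, and one identifies these with $\Tcal\cap\Vcal$ and $\Fcal\star\Ucal[-1]$ using $\Hcal_{\mathfrak{u}^-}=\Vcal\star\Ucal[-1]$, $\Hcal_{\mathfrak{t}^-}=\Fcal\star\Tcal[-1]$, $\Hcal_{\mathfrak{t}^-}[1]=\Fcal[1]\star\Tcal$ (the heart of $\mathbb{T}_{\mathfrak{t}^+}$), the equalities $\Hcal\cap\Hcal_{\mathfrak{t}^-}=\Fcal$ and $\Hcal\cap\Hcal_{\mathfrak{t}^+}=\Tcal$ from Proposition~\ref{prop:intermediate}, and the decomposition $\Vcal=(\Tcal\cap\Vcal)\star\Fcal$ provided by the filtration triple attached to $\mathfrak{u}\le\mathfrak{t}$ via Proposition~\ref{prop: tp vs triples}; the one slightly delicate inclusion, $\Rcal\subseteq\Fcal\star\Ucal[-1]$, is settled by writing $X\in\Rcal$ as an extension of some $U[-1]\in\Ucal[-1]$ by some $V\in\Vcal$, splitting off the $(\Tcal\cap\Vcal)$-part $S$ of $V$, and observing that $S$ is then a subobject of the torsion-free object $X$ of $\Hcal_{\mathfrak{u}^-}$ and hence vanishes. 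A computation-free alternative is to chase the chain of bijections of Propositions~\ref{prop: tp vs triples}, \ref{Prop: HRS tilt of triples}(2), \ref{prop: tp vs triples} on filtration triples, which carries $(\Ucal,\Tcal\cap\Vcal,\Fcal)$ in $\Hcal$ to $(\Tcal\cap\Vcal,\Fcal,\Ucal[-1])$ in $\Hcal_{\mathfrak{u}^-}$ and then to $(\Tcal\cap\Vcal,\Fcal\star\Ucal[-1])$; here the point that needs proof is that the assignment of Proposition~\ref{Prop: HRS tilt of triples}(2) is a bijection onto the filtration triples of $\Hcal_{\mathfrak{u}^-}$ with last term $\Ucal[-1]$, obtained by filtering an arbitrary object of $\Hcal$ first by $\mathfrak{u}$ and then, inside $\Hcal_{\mathfrak{u}^-}$, by $\mathfrak{s}$. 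I expect the main obstacle to be bookkeeping rather than depth: one must keep the many order-reversals --- of torsion classes, torsion-free classes, coaisles, and the two posets $\tors{\Hcal}$ and $\tors{\Hcal_{\mathfrak{u}^-}}$ --- consistent, and arrange the two applications of Proposition~\ref{prop:intermediate}(2) so their images truly coincide.
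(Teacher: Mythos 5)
Your proposal is correct, and the organisation is genuinely different from the paper's. The paper reformulates item (1) and item (2) as sets of filtration triples (via Proposition~\ref{prop: tp vs triples}), transports one to the other by Proposition~\ref{Prop: HRS tilt of triples}, and shows the transport is bijective by iterating it three times and invoking Remark~\ref{rem:inversetilt} to close the loop; it then proves the commutativity $(\mathbb{T}_{\mathfrak{u}^-})_{\mathfrak{s}^-}=\mathbb{T}_{\mathfrak{t}^-}$ by a direct one-line coaisle computation, $\Rcal\star\Ycal_{\mathfrak{u}^-}[-1]=\Fcal\star\Ucal[-1]\star\Vcal[-1]\star\Ycal[-2]=\Fcal\star\Hcal[-1]\star\Ycal[-2]=\Ycal_{\mathfrak{t}^-}$. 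You instead apply Proposition~\ref{prop:intermediate}(2) twice (with bases $\mathbb{T}$ and $\mathbb{T}_{\mathfrak{u}^-}$), observe via the order dictionary ($\Ucal\subseteq\Tcal\Leftrightarrow\Ycal_{\mathfrak{t}^-}\subseteq\Ycal_{\mathfrak{u}^-}$ and $\Scal\subseteq\Vcal\Leftrightarrow\Ycal[-1]\subseteq\Ycal_{\mathfrak{s}^-}$, the latter using $\Ycal[-1]=\Ycal_{\mathfrak{v}^-}$ for $\mathfrak{v}=(\Vcal,\Ucal[-1])$) that both items parametrise exactly the interval $\{\mathbb{T}'\mid\Ycal[-1]\subseteq\Ycal'\subseteq\Ycal_{\mathfrak{u}^-}\}$, and compose. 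The trade-off is clean: your factoring makes the commutative triangle (hence the whole ``moreover'' clause) hold tautologically, at the cost of then having to identify the induced bijection with the displayed formula $\mathfrak{s}=(\Tcal\cap\Vcal,\Fcal\star\Ucal[-1])$ via $\Scal=\Hcal_{\mathfrak{u}^-}\cap\Hcal_{\mathfrak{t}^-}[1]$ and $\Rcal=\Hcal_{\mathfrak{u}^-}\cap\Hcal_{\mathfrak{t}^-}$ — which is precisely the filtration-triple bookkeeping the paper does up front. The two routes expend roughly the same total effort; yours makes the diagram transparent, the paper's makes the formula transparent. Your treatment of the delicate inclusion $\Rcal\subseteq\Fcal\star\Ucal[-1]$ (splitting off the $\Scal$-part of $V$ and using torsion-freeness in $\Hcal_{\mathfrak{u}^-}$) is sound.
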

\begin{proof}
 In view of Proposition \ref{prop: tp vs triples}, the statement can be rephrased in terms of a bijection between filtration triples $(\Ucal, \Scal, \Fcal)$ in $\Hcal$ with $\Scal\star\Fcal=\Vcal$ and filtration triples $(\Scal, \Fcal, \Ucal[-1])$ in 
 $\Hcal_{\mathfrak{u}^-}$ with the same property.
By Proposition~\ref{Prop: HRS tilt of triples}, every  filtration triple $(\Ucal, \Scal, \Fcal)$ in $\Hcal$ induces a filtration triple $(\Scal, \Fcal, \Ucal[-1])$ in $\Hcal_{\mathfrak{u}^-}$, which in turn induces a filtration triple $(\Ucal[-1], \Scal[-1], \Fcal[-1])$ in $(\Hcal_{\mathfrak{u}^-})_{\mathbf{v}^-}$, where $\mathbf{v}=(\Vcal,\Ucal[-1])$ is the tilted torsion pair in 
$\Hcal_{\mathfrak{u}^-}$.  By Remark~\ref{rem:inversetilt} we have $(\Hcal_{\mathfrak{u}^-})_{\mathbf{v}^-}=\Hcal[-1]$,
and the latter filtration triple corresponds to the filtration triple $(\Ucal, \Scal, \Fcal)$ in $\Hcal$. This establishes the desired bijection, which is order-preserving by construction.

Next, we prove the stated equality of t-structures by comparing the coaisles. We have 
$\Ycal_{\mathfrak{t}^-}=\Fcal\star\Ycal[-1]$ and
$\Ycal_{\mathfrak{u}^-}=\Vcal\star\Ycal[-1]$. Then, keeping in mind that $\Rcal=\Fcal\star\Ucal[-1]$, we obtain
$$
(\Ycal_{\mathfrak{u}^-})_{\mathfrak{s}^-}=\Rcal\star\Ycal_{\mathfrak{u}^-}[-1]= \Fcal\star\Ucal[-1]\star \Vcal[-1]\star\Ycal[-2]=\Fcal\star\Hcal[-1]\star \Ycal[-2]=\Fcal\star\Ycal[-1]=\Ycal_{\mathfrak{t}^-}.
\qedhere
$$
\end{proof}

\subsection{Mutation of cosilting torsion pairs} In this subsection, we apply Proposition \ref{prop: HRS-tilt triangle} to cosilting torsion pairs and determine when they are related by mutation.

We will see that this can be expressed in terms of the notion of a wide subcategory. Recall that a full subcategory $\Wcal$ of an abelian category $\Hcal$ is an \textbf{exact abelian subcategory} if it is closed under kernels and cokernels.  The subcategory $\Wcal$ is \textbf{wide} if it is an exact abelian subcategory that is closed under extensions. We will need the following lemma.

\begin{lemma}\label{lem: wide}
Let $\Dcal$ be an arbitrary triangulated category and let $\Hcal$ and $\Hcal'$ be hearts of  t-structures $(\Xcal, \Ycal)$ and $(\Xcal', \Ycal')$ respectively.   Then the following statements hold.\begin{enumerate}
\item Let $\Wcal$ be a full subcategory of $\Hcal$.  Then $\Wcal$ is an exact abelian subcategory of $\Hcal$ if and only if $\Wcal\star\Wcal[1]\subseteq\Wcal[1]\star\Wcal$ (equivalently, the cone of every morphism $f$ in $\Wcal$ is contained in $\Wcal[1]\star\Wcal$).
\item Let $\Wcal$ be a full subcategory contained in $\Hcal \cap \Hcal'$. Then $\Wcal$ is an exact abelian subcategory of $\Hcal$ if and only if $\Wcal$ is an exact abelian subcategory of $\Hcal'$.
\item Let $\Wcal$ be a full subcategory contained in $\Hcal \cap \Hcal'$. Then $\Wcal$ is a wide subcategory of $\Hcal$ if and only if $\Wcal$ is a wide subcategory of $\Hcal'$.
\end{enumerate}
\end{lemma}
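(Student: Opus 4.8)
The plan is to prove (1) first — characterising exact abelian subcategories of a heart by a condition that involves only the triangulated structure of $\Dcal$ — and then to read off (2) and (3) as formal consequences, since that condition no longer refers to the particular heart. For (1), I would first record the elementary fact that, up to isomorphism, $\Wcal\star\Wcal[1]$ is exactly the class of cones of morphisms between objects of $\Wcal$: an object of $\Wcal\star\Wcal[1]$ sits in a triangle $W_0\to X\to W_1[1]\to W_0[1]$ with $W_0,W_1\in\Wcal$, and since every map $W_1[1]\to W_0[1]$ has the form $h[1]$, rotating backwards shows $X\cong\cone{h}$ for $h\colon W_1\to W_0$; conversely, the rotated triangle $B\to\cone{f}\to A[1]\to B[1]$ exhibits $\cone{f}\in\Wcal\star\Wcal[1]$ for any $f\colon A\to B$ in $\Wcal$. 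This already settles the parenthetical equivalence in (1).

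For the substance of (1), take $f\colon A\to B$ with $A,B\in\Wcal$ and factor $f=\iota\pi$ through its image $I=\im{f}$ formed in $\Hcal$, with $\pi$ epic and $\iota$ monic. The two defining short exact sequences of this factorisation are triangles $\ker{f}\to A\to I\to\ker{f}[1]$ and $I\to B\to\coker{f}\to I[1]$ in $\Dcal$, so the octahedral axiom applied to $f=\iota\pi$ produces a triangle
$$\xymatrix{\ker{f}[1]\ar[r]&\cone{f}\ar[r]&\coker{f}\ar[r]&\ker{f}[2].}$$
Because $\ker{f}[1]\in\Hcal[1]\subseteq\Xcal$ and $\coker{f}\in\Hcal\subseteq\Ycal$, this triangle is precisely the decomposition of $\cone{f}$ relative to the t-structure $(\Xcal,\Ycal)$, hence is unique up to (unique) isomorphism. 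If $\Wcal$ is an exact abelian subcategory then $\ker{f},\coker{f}\in\Wcal$, so the displayed triangle witnesses $\cone{f}\in\Wcal[1]\star\Wcal$, and therefore $\Wcal\star\Wcal[1]\subseteq\Wcal[1]\star\Wcal$. Conversely, if $\cone{f}\in\Wcal[1]\star\Wcal$ via some triangle $W_1[1]\to\cone{f}\to W_0\to W_1[2]$ with $W_0,W_1\in\Wcal$, then $W_1[1]\in\Xcal$ and $W_0\in\Ycal$, so uniqueness of the truncation triangle forces $W_1\cong\ker{f}$ and $W_0\cong\coker{f}$; hence $\Wcal$ is closed under kernels and cokernels.

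Part (2) is then immediate: when $\Wcal\subseteq\Hcal\cap\Hcal'$, $\Wcal$ is a strict full subcategory of each heart, and by (1) the statement that $\Wcal$ is exact abelian in $\Hcal$ and the statement that $\Wcal$ is exact abelian in $\Hcal'$ are both equivalent to the single intrinsic condition $\Wcal\star\Wcal[1]\subseteq\Wcal[1]\star\Wcal$. For (3) I would add the parallel observation that, for a full subcategory $\Wcal$ of a heart $\Hcal$, extension-closedness of $\Wcal$ inside $\Hcal$ is equivalent to $\Wcal\star\Wcal\subseteq\Wcal$ with $\star$ taken in $\Dcal$: since $\Hcal$ is extension-closed in $\Dcal$, any triangle $A\to Z\to B\to A[1]$ with $A,B\in\Hcal$ has $Z\in\Hcal$ and amounts to a short exact sequence $0\to A\to Z\to B\to 0$ in $\Hcal$, and every such sequence arises this way. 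Since both ingredients of being wide, namely being an exact abelian subcategory and being extension-closed, are thereby expressed intrinsically in $\Dcal$, part (3) follows from part (2).

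The step that needs genuine care is the converse direction of (1): one must identify the ad hoc $\Wcal[1]\star\Wcal$-presentation of $\cone{f}$ with the canonical t-structure truncation triangle, so that its uniqueness can be invoked. Everything else is bookkeeping with the octahedral axiom, and I emphasise that no nondegeneracy assumption on the t-structures is used — only the inclusions $\Hcal\subseteq\Ycal$, $\Hcal[1]\subseteq\Xcal$, and uniqueness of truncation triangles.
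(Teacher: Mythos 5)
Your proof is correct and takes essentially the same approach as the paper: reduce (2) and (3) to (1) since the cone condition is intrinsic to $\Dcal$, and for (1) identify the truncation triangle of $\cone{f}$ with $\ker{f}[1] \to \cone{f} \to \coker{f}$ and exploit its uniqueness. The only difference is that the paper simply cites this identification of $\ker{f}$ and $\coker{f}$ with the t-structure truncations of $\cone{f}$, whereas you derive it via the octahedral axiom applied to the image factorisation $f = \iota\pi$ --- a standard argument that the paper leaves implicit.
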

\begin{proof}
Statements (2) and (3) follow immediately from the first statement because then the required closure conditions depend only on the ambient triangulated category and not on the specific t-structures.  We therefore prove statement (1). Let $f$ be a morphism in $\Wcal$ and let $L := \cone{f}$.  Consider the triangle $L_\Xcal \to L \to L_\Ycal \to L_\Xcal[1]$ corresponding to the t-structure $(\Xcal, \Ycal)$.  Then $\ker{f} = L_\Xcal[-1]$ and $\coker{f}= L_\Ycal$, so the statement follows.
\end{proof}

Let us now consider the following situation.

\begin{setup}\label{setup general} 
Let $\Dcal$ be a compactly generated triangulated category with a t-structure $\mathbb{T} = (\Xcal, \Ycal)$ and let $\Hcal$ be the heart of $\mathbb{T}$. Let further $\mathfrak{u}=(\Ucal,\Vcal)$ and $\mathfrak{t}=(\Tcal, \Fcal)$ be torsion pairs in $\CosiltP\Hcal$  such that $\Ucal \subseteq \Tcal$.  Let us fix the following notation:
\begin{itemize}
\item $\mathfrak{s}=(\Scal, \Rcal)= (\Tcal\cap \Vcal, \: \Fcal\star \Ucal[-1])$ is the torsion pair in $\Hcal_{\mathfrak{u}^-}$ uniquely determined by $\mathfrak{u}$ and $\mathfrak{t}$ according to Proposition \ref{prop: HRS-tilt triangle};
\item $\mathfrak{r} = (\Rcal, \Scal[-1])$ is the tilted torsion pair of $\mathfrak{s}$ in $(\mathbb{T}_{\mathfrak{u}^-})_{\mathfrak{s}^-}=\mathbb{T}_{\mathfrak{t}^-}$.
\end{itemize}
\end{setup}
  
\noindent We can visualise the setup in the following commutative diagram:
$$\xymatrix{
  \mathbb{T}\ar@{~>}[d]_{\mathfrak{u}^-} \ar@{~>}[r]^{\mathfrak{t}^-} &  \; \mathbb{T}_{\mathfrak{t}^-}  \ar@<-1ex>@{~>}[dl]_{\mathfrak{r}^+}   \\
  \mathbb{T}_{\mathfrak{u}^-}  \ar@{~>}[ur]_{\mathfrak{s}^-}                     }$$

\begin{theorem}\label{thm: mutation general}
Suppose we are in Setup~\ref{setup general}. 
Then $\mathfrak{s}$ is in $\CosiltP{\Hcal_{\mathfrak{u}^-}}$ and $\mathfrak{r}$ is in 
$\CosiltP{\Hcal_{\mathfrak{t}^-}}$.  Moreover, the following statements are equivalent.
\begin{enumerate}
\item $\mathfrak{t}$ is a right mutation of $\mathfrak{u}$.
\item $\Scal$ is a wide subcategory of $\Hcal$.
\item $\mathfrak{s}=(\Scal, \Rcal)$ is a hereditary torsion pair in $\Hcal_{\mathfrak{u}^-}$.
\item $\Scal[-1]$ is a TTF class in $\Hcal_{\mathfrak{t}^-}$.
\end{enumerate}
\end{theorem}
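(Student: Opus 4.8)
The first assertions — that $\mathfrak{s}\in\CosiltP{\Hcal_{\mathfrak{u}^-}}$ and $\mathfrak{r}\in\CosiltP{\Hcal_{\mathfrak{t}^-}}$ — are independent of the equivalence (1)--(4) and should be dispatched first. Since $\mathfrak{u}$ lies in $\CosiltP\Hcal$, the heart $\Hcal_{\mathfrak{u}^-}$ is a Grothendieck category and $\mathbb{T}_{\mathfrak{u}^-}$ is smashing (Theorems~\ref{thm:purity and silting} and~\ref{existence right mutation}, via Theorem~\ref{thm:HRS-tilt AB5}). Now $\mathfrak{t}$ cosilting in $\Hcal$ means $\mathbb{T}_{\mathfrak{t}^-}$ is a cosilting t-structure; by the commutative diagram of Proposition~\ref{prop: HRS-tilt triangle} we have $\mathbb{T}_{\mathfrak{t}^-}=(\mathbb{T}_{\mathfrak{u}^-})_{\mathfrak{s}^-}$, i.e.\ $\mathfrak{s}$ is a cosilting torsion pair in the Grothendieck heart $\Hcal_{\mathfrak{u}^-}$. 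By Corollary~\ref{cor:cotilting pure-injective}, every cosilting torsion pair in $\Hcal_{\mathfrak{u}^-}$ arises from a pure-injective cosilting object, so $\mathfrak{s}\in\CosiltP{\Hcal_{\mathfrak{u}^-}}$. The statement for $\mathfrak{r}$ is symmetric: $\mathfrak{r}=(\Rcal,\Scal[-1])$ is the tilted torsion pair of $\mathfrak{s}$, so $(\mathbb{T}_{\mathfrak{t}^-})_{\mathfrak{r}^+}=\mathbb{T}_{\mathfrak{u}^-}$ by Remark~\ref{rem:inversetilt}, whence $\mathfrak{r}$ is cosilting in the Grothendieck heart $\Hcal_{\mathfrak{t}^-}$, hence in $\CosiltP{\Hcal_{\mathfrak{t}^-}}$ again by Corollary~\ref{cor:cotilting pure-injective}.

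For the equivalences I would prove $(1)\Leftrightarrow(3)\Leftrightarrow(4)$ first, as these are the "soft" implications, and treat $(2)\Leftrightarrow(3)$ as the main content. For $(1)\Leftrightarrow(3)$: by Definition~\ref{def: mutation of tp}, $\mathfrak{t}$ is a right mutation of $\mathfrak{u}$ iff $\sigma_\mathfrak{t}$ is a right mutation of $\sigma_\mathfrak{u}$; since $\mathbb{T}_{\mathfrak{t}^-}=(\mathbb{T}_{\mathfrak{u}^-})_{\mathfrak{s}^-}$ is the right HRS-tilt of $\mathbb{T}_{\mathfrak{u}^-}$ at $\mathfrak{s}$, Theorem~\ref{thm:mutation as HRS-tilt}(2) tells us this happens iff $\mathfrak{s}$ is (up to the identification of the theorem) the hereditary torsion pair cogenerated by $H^0_{\sigma_\mathfrak{u}}(\Escr)$ for $\Escr=\Prod{\sigma_\mathfrak{u}}\cap\Prod{\sigma_\mathfrak{t}}$, together with the existence of an $\Escr$-precover; but $\Hcal_{\mathfrak{u}^-}$ is Grothendieck and $\sigma_\mathfrak u$ pure-injective, so by Theorem~\ref{existence right mutation}(2) and Proposition~\ref{prop:cover vs finite type} the precover condition is automatic once the relevant torsion pair is of finite type — and since $\mathfrak{s}$ is always cosilting here (shown above), it is automatically of finite type (Proposition~\ref{prop:cover vs finite type}). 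Thus the real constraint is just that $\mathfrak{s}$ be \emph{hereditary}, which is exactly (3). The equivalence $(3)\Leftrightarrow(4)$ is purely order-theoretic: $\mathfrak{r}=(\Rcal,\Scal[-1])$ is the HRS-tilt of $\mathfrak{s}=(\Scal,\Rcal)$, and by Remark~\ref{rem:torsion torsion-free characterisation} a hereditary torsion pair is the same as one whose torsion class is also a torsion-free class (a TTF class), so $\Scal$ hereditary in $\Hcal_{\mathfrak{u}^-}$ translates, under the shift bookkeeping of Proposition~\ref{prop:intermediate}, into $\Scal[-1]$ being a TTF class in $\Hcal_{\mathfrak{t}^-}$.

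The heart of the argument is $(2)\Leftrightarrow(3)$, relating the wideness of $\Scal$ in $\Hcal$ to the heredity of $\mathfrak{s}$ in the tilted heart $\Hcal_{\mathfrak{u}^-}$. Here the key tool is Lemma~\ref{lem: wide}: $\Scal=\Tcal\cap\Vcal$ is contained in both $\Hcal$ and $\Hcal_{\mathfrak{u}^-}$ (it sits inside $\Vcal\subseteq\Hcal$ and, being the torsion class of $\mathfrak{s}$, inside $\Hcal_{\mathfrak{u}^-}$), so being wide in $\Hcal$ is equivalent to being wide in $\Hcal_{\mathfrak{u}^-}$ by Lemma~\ref{lem: wide}(3). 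It therefore suffices to show: a torsion class $\Scal$ of a torsion pair $\mathfrak{s}=(\Scal,\Rcal)$ in an abelian category is wide iff $\mathfrak{s}$ is hereditary. One direction is immediate: a hereditary torsion class is closed under subobjects, quotients (always), and extensions, hence abelian and wide. For the converse, suppose $\Scal$ is wide. Given $S\in\Scal$ and a subobject $S'\hookrightarrow S$ in $\Hcal_{\mathfrak{u}^-}$, decompose $S'$ via $\mathfrak{s}$ as $0\to t S'\to S'\to f S'\to 0$ with $t S'\in\Scal$, $f S'\in\Rcal$; the composite $t S'\hookrightarrow S'\hookrightarrow S$ is a monomorphism in $\Scal$, its cokernel $C$ lies in $\Scal$ (wideness), and $f S'$ embeds in $C$, forcing $f S'\in\Scal\cap\Rcal=0$, so $S'=t S'\in\Scal$. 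Hence $\Scal$ is closed under subobjects, i.e.\ $\mathfrak{s}$ is hereditary. The main obstacle is bookkeeping: making sure that $\Scal=\Tcal\cap\Vcal$ really is the torsion class of $\mathfrak{s}$ in $\Hcal_{\mathfrak{u}^-}$ (this is built into Setup~\ref{setup general} via Proposition~\ref{prop: HRS-tilt triangle}) and that one is allowed to transport the wideness hypothesis between $\Hcal$ and $\Hcal_{\mathfrak{u}^-}$, which is precisely what Lemma~\ref{lem: wide} licenses; once that is nailed down, the argument above is routine.
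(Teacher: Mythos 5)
Your overall strategy coincides with the paper's: establish the $\CosiltP$ claims via Proposition~\ref{prop: HRS-tilt triangle} and Corollary~\ref{cor:cotilting pure-injective}; get $(1)\Leftrightarrow(3)$ from Theorem~\ref{thm:mutation as HRS-tilt} together with Proposition~\ref{prop:cover vs finite type}; and transport wideness between hearts via Lemma~\ref{lem: wide}(3). The main discrepancy is which link to (4) you prove, and your argument for that link is flawed.

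Your claimed $(3)\Leftrightarrow(4)$ rests on the assertion, attributed to Remark~\ref{rem:torsion torsion-free characterisation}, that ``a hereditary torsion pair is the same as one whose torsion class is also a torsion-free class (a TTF class)''. This is false: a hereditary torsion class is closed under subobjects, quotients and extensions but not in general under products, so it need not be a torsion-free class (for example, torsion abelian groups in $\Mod\mathbb{Z}$). Remark~\ref{rem:torsion torsion-free characterisation} does not say this --- it only states that torsion pairs cogenerated by injectives are hereditary. The paper avoids this trap by proving $(2)\Leftrightarrow(4)$ instead: $\Scal$ wide in $\Hcal$ iff $\mathfrak{r}$ is cohereditary in $\Hcal_{\mathfrak{t}^-}$ (again by Lemma~\ref{lem: wide}), i.e.\ iff $\Scal[-1]$ is closed under quotients; and since $\Scal[-1]$ is already a torsion-free class in a Grothendieck heart (hence closed under coproducts, subobjects, extensions), closure under quotients is exactly what makes it a torsion class, i.e.\ a TTF class. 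You would do best to replace your $(3)\Leftrightarrow(4)$ with this $(2)\Leftrightarrow(4)$ argument, which is structurally parallel to your $(2)\Leftrightarrow(3)$.

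A smaller point on your $(2)\Rightarrow(3)$ (wide $\Rightarrow$ hereditary): from ``$fS'$ embeds in $C\in\Scal$'' you conclude $fS'\in\Scal\cap\Rcal=0$, but $fS'\in\Scal$ does not follow merely from being a subobject of an object of $\Scal$ --- that is precisely the closure under subobjects you are trying to establish. What saves the argument is that $fS'$ is the kernel of the quotient map $C\to C/fS'$ between objects of $\Scal$, so wideness gives $fS'\in\Scal$; this step must be made explicit. Simpler still, and as in the paper: for $S'\hookrightarrow S$ with $S\in\Scal$, the quotient $S/S'$ lies in $\Scal$ (torsion class), and $S'=\ker(S\to S/S')$ lies in $\Scal$ by wideness, with no torsion decomposition needed.
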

\begin{proof}
Let $\sigma_\mathfrak{t}$ and $\sigma_\mathfrak{u}$ denote the pure-injective cosilting objects in $\Dcal$ associated to $\mathfrak{u}$ and $\mathfrak{t}$ respectively.  The fact that $\mathfrak{s}$ is a cosilting torsion pair follows immediately from the fact that  
$(\mathbb{T}_{\mathfrak{u}^-})_{\mathfrak{s}^-}=\mathbb{T}_{\mathfrak{t}^-} = \mathbb{T}_{\sigma_\mathfrak{t}}$ by Proposition \ref{prop: HRS-tilt triangle}. Similarly, we have that $(\mathbb{T}_{\mathfrak{t}^-})_{\mathfrak{r}^-}=\mathbb{T}_{\mathfrak{u}^-}{[-1]} = \mathbb{T}_{\sigma_\mathfrak{u}}{[-1]}$ by Remark~\ref{rem:inversetilt}.  From Corollary~\ref{cor:cotilting pure-injective} we infer that
 $\mathfrak{s}$ is in $\CosiltP{\Hcal_{\mathfrak{u}^-}}$ and $\mathfrak{r}$ is in $\CosiltP{\Hcal_{\mathfrak{t}^-}}$.
 
(1)$\Leftrightarrow$(3):  Since $\sigma_\mathfrak{t}$ and $\sigma_\mathfrak{u}$ are pure-injective, the hearts $\Hcal_{\mathfrak{u}^-} = \Hcal_{\sigma_\mathfrak{u}}$ and  $\Hcal_{\mathfrak{t}^-} = \Hcal_{\sigma_\mathfrak{t}}$ are Grothendieck categories by Theorem~\ref{thm:purity and silting}.  It is well-known that, in a Grothendieck category, a torsion pair is hereditary if and only if it is cogenerated by a class of injective objects.

Now, if $\sigma_\mathfrak{t}$ is  a right mutation of $\sigma_\mathfrak{u}$, then we know from Theorem~\ref{thm:mutation as HRS-tilt} that the t-structure  $\mathbb{T}_{\sigma_\mathfrak{t}} = \mathbb{T}_{\mathfrak{t}^-}$ is   the right HRS-tilt of $\mathbb{T}_{\sigma_\mathfrak{u}} = \mathbb{T}_{\mathfrak{u}^-}$ at the torsion pair $({}^{\perp_0}H^0_{\mathfrak{u}^-}(\Escr),\,\Cogen{H^0_{\mathfrak{u}^-}(\Escr)})$ in $\Hcal_{\mathfrak{u}^-}$ induced by $\Escr=\Prod{\sigma_\mathfrak{u}}\cap\Prod{\sigma_\mathfrak{t}}$. It follows that the torsion pairs $\mathfrak{s}=(\Scal,\Rcal)$ and $({}^{\perp_0}H^0_{\mathfrak{u}^-}(\Escr),\,\Cogen{H^0_{\mathfrak{u}^-}(\Escr)})$ coincide.  We know that $H^0_{\mathfrak{u}^-}(\Escr)$ is a set of injective objects in $\Hcal_{\mathfrak{u}^-}$ (see Proposition~\ref{prop:summary cosilting}) and so we have shown that (3) holds. 

Conversely, suppose that $\Scal = {}^{\perp_0}\Ical$ for some class $\Ical = \Prod\Ical$ of injective objects in $\Hcal_{\mathfrak{u}^-}$. Then there is a class $\Escr = \Prod\Escr \subseteq\Prod{\sigma_\mathfrak{u}}$ such that $\Ical = H^0_{\mathfrak{u}^-}(\Escr)$ (see Proposition~\ref{prop:summary cosilting} again) and, moreover, $\mathfrak{s} = ({}^{\perp_0}H^0_{\mathfrak{u}^-}(\Escr), \Cogen{H^0_{\mathfrak{u}^-}(\Escr)})$ is a cosilting torsion pair in $\Hcal_{\mathfrak{u}^-}$ because $(\mathbb{T}_{\mathfrak{u}^-})_{\mathfrak{s}^-}=\mathbb{T}_{\mathfrak{t}^-} = \mathbb{T}_{\sigma_\mathfrak{t}}$  by Proposition \ref{prop: HRS-tilt triangle}. It follows from Proposition~\ref{prop:cover vs finite type} and Theorem~\ref{existence left mutation} that $\sigma_{\mathfrak{u}^-}$  admits a right mutation $\tilde{\sigma}$ with respect to $\Escr$, which is equivalent to $\sigma_\mathfrak{t}$ because   $\mathbb{T}_{\tilde\sigma}$ is the right HRS-tilt of $\mathbb{T}_{\mathfrak{u}^-}$ at $\mathfrak{s}$.

(2)$\Leftrightarrow$(3): Note that $\Scal$ is a full subcategory of $\Dcal$ that is contained in $\Hcal \cap \Hcal_{\mathfrak{u}^-}$ so, by Lemma \ref{lem: wide}(3), we have that $\Scal$ is a wide subcategory of $\Hcal$ if and only if $\Scal$ is a wide subcategory of $\Hcal_{\mathfrak{u}^-}$.  Since $\Scal$ is a torsion class in $\Hcal_{\mathfrak{u}^-}$, it is closed under extensions and quotients.  The equivalence of (2) and (3) then follows from the fact that $\mathfrak{s}$ is hereditary if and only if $\Scal$ is closed under subobjects in $\Hcal_{\mathfrak{u}^-}$ if and only if $\Scal$ is closed under kernels in $\Hcal_{\mathfrak{u}^-}$ if and only if $\Scal$ is a wide subcategory of $\Hcal_{\mathfrak{u}^-}$.

(2)$\Leftrightarrow$(4): By analogous arguments as in the previous paragraph, we observe that $\Scal$ is wide in $\Hcal$ if and only if $\mathfrak{r} = (\Rcal, \Scal[-1])$ is a cohereditary torsion pair in $\Hcal_{\mathfrak{t}^-}$, that is, if and only if $\Scal[-1]$ is closed under quotient objects in $\Hcal_{\mathfrak{t}^-}$. Since a torsion-free class in a Grothendieck category is always closed under coproducts, this happens if and only if $\Scal[-1]$ is a torsion class in $\Hcal_{\mathfrak{t}^-}$.
\end{proof}

Inspired by Theorem~\ref{thm: mutation general}(2),
we conclude the section with a useful criterion for when the intersection of a torsion and a torsion-free class is a wide subcategory. It is closely related to a construction of Ingalls and Thomas~\cite[\S2.3]{InTh}, which was generalised in~\cite[\S3]{MSt}.

\begin{proposition}\label{prop:wide intervals}
Let $\Hcal$ be an abelian category and $\mathfrak{u}=(\Ucal,\Vcal)$ and $\mathfrak{t}=(\Tcal, \Fcal)$ be torsion pairs in $\Hcal$ such that $\Ucal \subseteq \Tcal$. Then the following are equivalent:
\begin{enumerate}
\item $\Tcal\cap\Vcal$ is a wide subcategory of $\Hcal$.
\item If $g \colon T \to V$ is a map in $\Hcal$ with $T$ in $\Tcal$ and $V$ in $\Vcal$, then $\ker{g}$ lies in $\Tcal$ and $\coker{g}$ lies in $\Vcal$.
\end{enumerate}
\end{proposition}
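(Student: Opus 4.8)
The plan is to prove the equivalence by a direct Hom-orthogonality argument, using the characterisation of exact abelian subcategories via cones from Lemma~\ref{lem: wide}(1) together with the filtration triple machinery of Propositions~\ref{prop: tp vs triples} and~\ref{Prop: HRS tilt of triples}. Set $\Scal = \Tcal\cap\Vcal$; by Proposition~\ref{prop: tp vs triples} the triple $(\Ucal,\Scal,\Fcal)$ is a filtration triple, so in particular $\Scal$ is closed under extensions, $\Ucal\star\Scal=\Tcal$ is a torsion class and $\Scal\star\Fcal=\Vcal$ is a torsion-free class. Since $\Scal$ is a torsion class in the heart $\Hcal_{\mathfrak{u}^-}$ (this is exactly $(\Scal,\Rcal)$), being wide is equivalent to being closed under subobjects there, i.e.\ $\Scal$ is wide in $\Hcal$ if and only if $\ker(f)\in\Scal$ for every morphism $f$ between objects of $\Scal$; I would however argue directly inside $\Hcal$ to connect with condition (2).

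First I would prove (2)$\Rightarrow$(1). Assume the kernel/cokernel condition on maps $T\to V$. Given a morphism $f\colon S_1\to S_2$ with $S_1,S_2\in\Scal$, I need $\ker{f}$ and $\coker{f}$ in $\Scal$. Viewing $f$ as a map from $S_1\in\Tcal$ to $S_2\in\Vcal$, hypothesis (2) gives $\ker{f}\in\Tcal$ and $\coker{f}\in\Vcal$. But $\ker{f}$ is a subobject of $S_1\in\Vcal$, and $\Vcal$ is closed under subobjects, so $\ker{f}\in\Tcal\cap\Vcal=\Scal$; dually $\coker{f}$ is a quotient of $S_2\in\Tcal$, and $\Tcal$ is closed under quotients, so $\coker{f}\in\Scal$. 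Thus $\Scal$ is closed under kernels and cokernels, and being already extension-closed it is wide.

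For (1)$\Rightarrow$(2), assume $\Scal$ is wide and take $g\colon T\to V$ with $T\in\Tcal$, $V\in\Vcal$. Using the torsion decompositions, write the canonical epimorphism $T\twoheadrightarrow T/t_{\mathfrak u}(T)$ onto the torsion-free part with respect to $\mathfrak u$; since $T\in\Tcal=\Ucal\star\Scal$, the quotient $T/t_{\mathfrak u}(T)$ lies in $\Scal$, and $g$ kills $t_{\mathfrak u}(T)\in\Ucal$ because $V\in\Vcal$, so $g$ factors through $T/t_{\mathfrak u}(T)$. Dually, using $V\in\Vcal=\Scal\star\Fcal$, the image of $g$ (which is a quotient of $T\in\Tcal$, hence lands in a subobject $V'$ of $V$ with $V'\in\Tcal\cap\Vcal=\Scal$ — I would verify this using that $\Scal\star\Fcal$ is exactly $\Vcal$ and $V'=\im g$ maps to the torsion-free part trivially) actually lies in $\Scal$. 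So $g$ decomposes as $T\twoheadrightarrow S\xrightarrow{\bar g} V$ with $S\in\Scal$, and $\bar g$ is itself a monomorphism-composed-with-inclusion; more precisely $\im g\in\Scal$. Then $\ker g$ fits in $0\to\ker g\to T\to \im g\to 0$, and since $\im g\in\Scal\subseteq\Tcal$ while $\Tcal$ is extension-closed... that gives $\ker g\in\Tcal$ only if $\ker g\in\Tcal$, which I instead get from: $\ker g$ is the kernel of the map $T/t_{\mathfrak u}(T)\to \im g$ between objects of $\Scal$ (after quotienting by the common $\Ucal$-part), extended by $t_{\mathfrak u}(T)\in\Ucal$, so $\ker g\in\Ucal\star\Scal=\Tcal$. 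Symmetrically $\coker g\in\Scal\star\Fcal=\Vcal$.

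The main obstacle is the bookkeeping in (1)$\Rightarrow$(2): one must carefully reduce an arbitrary map $T\to V$ to a map between objects of $\Scal$ by stripping off the $\Ucal$-part of the source and the $\Fcal$-part of the target, then apply wideness of $\Scal$, and finally reassemble kernel and cokernel using that $\Ucal$ and $\Fcal$ are Serre-like on the relevant side (closure of $\Tcal$ under quotients, of $\Vcal$ under subobjects) to land back in $\Tcal$ and $\Vcal$ respectively. Once the reduction is set up cleanly, each individual closure claim is immediate from the definition of torsion pair and of filtration triple, so the argument stays short.
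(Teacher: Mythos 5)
Your proposal is correct and follows essentially the same approach as the paper: the (2)$\Rightarrow$(1) direction is identical, and the (1)$\Rightarrow$(2) direction uses the same strategy of stripping off the $\Ucal$-torsion of $T$ and the $\Fcal$-torsion-free quotient of $V$ to reduce $g$ to a morphism between objects of $\Scal$, then reassembling $\ker g$ and $\coker g$ via the resulting short exact sequences. The only cosmetic difference is that you phrase the reduction in terms of $\mathrm{Im}(g)$ (noting directly that it lies in $\Scal$) rather than factoring $g$ through the $\Scal$-subobject $S_2$ of $V$, but the underlying computations (pullback/pushout along the torsion exact sequences) are the same.
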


\begin{proof}
Let us denote $\Wcal:=\Tcal\cap\Vcal$.	

(1) $\Rightarrow$ (2): Let $g \colon T \to V$ be a morphism in $\Hcal$ with $T$ in $\Tcal$ and $V$ in $\Vcal$.  By Proposition \ref{prop: tp vs triples}, we have that $T$ lies in $\Ucal \star \Wcal$ and $V$ lies in $\Wcal \star \Fcal$, so there are short exact sequences 
\[\xymatrix{0 \ar[r]& U \ar[r]& T \ar[r]^b& S_1 \ar[r]& 0} \quad\text{and}\quad \xymatrix{0 \ar[r]& S_2 \ar[r]^a& V \ar[r]& F \ar[r]& 0}\] 
with $S_1$ and $S_2$ in $\Wcal$, $U$ in $\Ucal$ and $F$ in $\Fcal$.  Since $\Hom_A(U,V) =0$ and $\Hom_A(S_1, F) = 0$, we may use the kernel/cokernel properties to obtain a morphism $f \colon S_1 \to S_2$ such that $g=afb$.  By assumption we have that both $\ker{f}$ and $\coker{f}$ lie in $\Wcal$.  By taking the pullback of the canonical embedding $\ker{f} \to S_1$ along $b$, we obtain a short exact sequence 
$$\xymatrix{0 \ar[r]& U \ar[r]& K \ar[r]& \ker{f} \ar[r]& 0.}$$  By checking the universal property, it is straightforward to show that $K \cong \ker{g}$ and, hence, $\ker{g}$ lies in $\Ucal\star \Wcal=\Tcal$.  A dual argument yields that there is a short exact sequence 
$$\xymatrix{0 \ar[r]& \coker{f} \ar[r]& \coker{g} \ar[r]& F \ar[r]& 0}$$
and hence $\coker{g}$ lies in $\Wcal\star \Fcal=\Vcal$.

(2) $\Rightarrow$ (1): Let $g \colon T \to V$ be a map with $T$ and $V$ in $\Wcal$.  Then $\ker{g}$ lies in $\Tcal$ by assumption and $\ker{g}$ lies in $\Vcal$ because $\Vcal$ is a torsion-free class in $\Hcal$.  Therefore, $\ker{g}$ lies in $\Wcal$. Similarly, we have that $\coker{g}$ lies in $\Wcal$.
\end{proof}

\section{Mutations of torsion pairs in $\Db R$}\label{Sec Mut in small} 
In this section we will assume that $\Dcal$ is the derived category $\D R$ of a left coherent ring $R$.  We will use the techniques developed in the previous sections to study torsion pairs in hearts of t-structures in $\Db R$. In order to do that, we must first lift these t-structures to the whole derived category $\Dcal$ and then extend the torsion pairs from the original heart to the lifted heart. Let us begin with the process of extending torsion pairs within a locally coherent Grothendieck category (of which $\Mod R$ is, by assumption on $R$, an example). 

\begin{proposition}\cite[Lemma 4.4]{CBlfp}\label{thm: CB bijections}
Let $\Acal$ be a locally coherent Grothendieck category and $\mathfrak{t}=(\Tcal,\Fcal)$ a torsion pair in $\fp \Acal$. Then
\begin{enumerate}
\item the pair $\overrightarrow{\mathfrak{t}}=(\overrightarrow{\Tcal},\overrightarrow{\Fcal}):=({}^\perp(\Tcal^\perp),\Tcal^\perp)$ in $\Acal$ is a torsion pair, called the \textbf{lift of $\mathfrak{t}$};
\item the assignment of a torsion pair in $\fp \Acal$ to its lift in $\Acal$ induces a bijection between
\begin{enumerate}
\item torsion pairs in $\fp \Acal$;
\item torsion pairs $(\Xcal,\Ycal)$ of finite type in $\Acal$ such that $\Xcal\cap \fp\Acal$ is a torsion class in $\fp \Acal$.
\end{enumerate}
\end{enumerate}
If $\Acal$ is \textbf{locally noetherian} (i.e. if $\Acal$ is a Grothendieck category with a set of noetherian generators), then the assignement above establishes a bijection between torsion pairs in $\fp\Acal$ and torsion pairs of finite type in $\Acal$.
\end{proposition}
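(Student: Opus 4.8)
The plan is to reduce the statement to a known result about locally coherent Grothendieck categories — essentially the Crawley-Boevey correspondence cited as \cite[Lemma 4.4]{CBlfp} — and then verify the extra claim in the locally noetherian case. Since the first two parts are attributed directly to \cite{CBlfp}, the real content to be written out here concerns only the final sentence: when $\Acal$ is locally noetherian, \emph{every} torsion pair of finite type in $\Acal$ arises as a lift, i.e.\ the side condition ``$\Xcal\cap\fp\Acal$ is a torsion class in $\fp\Acal$'' in (2)(b) is automatic. First I would recall that in a locally noetherian Grothendieck category the finitely presented objects coincide with the noetherian objects, and $\fp\Acal$ is an abelian subcategory closed under subobjects, quotients and extensions in $\Acal$; moreover every object of $\Acal$ is a direct limit of its noetherian subobjects, so $\Acal = \varinjlim\fp\Acal$.

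The key step is then the following: let $(\Xcal,\Ycal)$ be a torsion pair of finite type in $\Acal$, and set $\Tcal := \Xcal\cap\fp\Acal$ and $\Fcal := \Ycal\cap\fp\Acal$. I would check that $(\Tcal,\Fcal)$ is a torsion pair in $\fp\Acal$. Hom-orthogonality is inherited from $\Acal$. For the decomposition axiom, take $M$ in $\fp\Acal$ and consider its torsion decomposition $0\to t(M)\to M\to M/t(M)\to 0$ in $\Acal$ with $t(M)\in\Xcal$ and $M/t(M)\in\Ycal$; since $\fp\Acal$ is closed under subobjects and quotients in the locally noetherian setting, both $t(M)$ and $M/t(M)$ lie in $\fp\Acal$, hence in $\Tcal$ and $\Fcal$ respectively. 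This shows $\fp\Acal = \Tcal\star\Fcal$, and idempotent-completeness of $\Tcal$, $\Fcal$ is clear. In particular $\Tcal = \Xcal\cap\fp\Acal$ is a torsion class in $\fp\Acal$, so the side condition in (2)(b) holds for \emph{every} finite-type torsion pair, and part (2) already gives a bijection between torsion pairs in $\fp\Acal$ and all torsion pairs of finite type in $\Acal$.

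Finally I would confirm that the two assignments are mutually inverse in this case: starting from $(\Xcal,\Ycal)$ of finite type, passing to $(\Tcal,\Fcal)$ in $\fp\Acal$ and then lifting recovers $(\Xcal,\Ycal)$ — this is precisely the content of the bijection in (2), since $(\Xcal,\Ycal)$ already satisfies the defining property of a lift (it is of finite type and its finitely presented part is a torsion class); conversely lifting $(\Tcal,\Fcal)$ and intersecting back with $\fp\Acal$ returns $(\Tcal,\Fcal)$ by the same correspondence. The only mild subtlety — and the step I expect to require the most care — is the claim that in a locally noetherian Grothendieck category $\fp\Acal$ is closed under subobjects, which is what makes the torsion decomposition of a finitely presented object stay inside $\fp\Acal$; this uses that finitely presented and noetherian coincide here, and that subobjects of noetherian objects are noetherian. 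Everything else is a formal unwinding of the general correspondence already established in parts (1) and (2).
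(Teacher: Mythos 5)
The paper itself gives no proof of this proposition; it simply cites \cite[Lemma 4.4]{CBlfp} for parts (1) and (2) and states the locally noetherian refinement without further comment. Your proposal correctly identifies that the only genuinely new content is the final sentence, and your verification of it is sound.

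The key step — that in a locally noetherian Grothendieck category $\fp\Acal$ coincides with the class of noetherian objects and is therefore closed under subobjects — is exactly the right thing to isolate. It is precisely this closure property that lets you conclude $t(M)\in\fp\Acal$ for $M\in\fp\Acal$, and in turn that the pair $(\Xcal\cap\fp\Acal,\ \Ycal\cap\fp\Acal)$ inherits the decomposition axiom from $\Acal$. Note that in a merely locally \emph{coherent} category this step fails: $\fp\Acal$ is abelian and closed under quotients and extensions but not under arbitrary subobjects, which is why the side condition ``$\Xcal\cap\fp\Acal$ is a torsion class in $\fp\Acal$'' in (2)(b) cannot be dropped in general. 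Your argument shows exactly that this side condition is automatic when $\Acal$ is locally noetherian, so (2) upgrades to the stated bijection with \emph{all} finite-type torsion pairs; the observation that the two assignments are then mutually inverse follows formally from (2). I see no gap.
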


Note that torsion pairs of finite type in $\Mod{R}$, for any ring $R$, are precisely the ones in $\CosiltP{\Mod{R}}$. This also holds for a more general class of hearts in $\D R$, see Proposition~\ref{prop:cover vs finite type}.

Let us now consider an analogous result for certain t-structures in $\Db R$. Recall that a t-structure $\mathbb{T} = (\Xcal, \Ycal)$ in $\D R$ or in $\Db R$ is called \textbf{intermediate} if there are integers $m\geq n$ such that $$\mathrm{D}^{\geq 0}[m] \subseteq  \Ycal \subseteq \mathrm{D}^{\geq 0}[n]$$ where $\mathrm{D}^{\geq 0}$ is the standard coaisle in $\D R$ or in $\Db R$, respectively. 

\begin{proposition}\cite[Lemma 3.1 and Corollary 4.2]{MZ}\label{MarksZvonareva}
Let $R$ be a left coherent ring and let $\mathbb{T}:=(\Ucal,\Vcal)$ be an intermediate t-structure in $\Db R$. Then 
\begin{enumerate}
\item the pair $\overrightarrow{\mathbb{T}}=(\overrightarrow{\Ucal},\overrightarrow{\Vcal}):=({}^\perp(\Ucal^\perp),\Ucal^\perp)$ in $\D R$ is a t-structure, called the \textbf{lift of $\mathbb{T}$};
\item the assignment of a t-structure in $\Db R$ to its lift in $\D R$ induces a bijection between
\begin{enumerate}
\item intermediate t-structures in $\Db R$;
\item intermediate, compactly generated t-structures in $\D R$ with a locally coherent heart $\overrightarrow{\Hcal}$ such that $\fp\overrightarrow{\Hcal}=\overrightarrow{\Hcal}\cap \Db R$.
\end{enumerate}
\end{enumerate}
\end{proposition}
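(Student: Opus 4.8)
The plan is to prove part~(1) by realising $\overrightarrow{\mathbb{T}}$ as a set‑generated aisle, and part~(2) by decomposing intermediate t‑structures into finitely many HRS‑tilts and checking that lifting is compatible with HRS‑tilting through the Crawley--Boevey correspondence of Proposition~\ref{thm: CB bijections}. For part~(1): since $R$ is left coherent, $\mod R$ and hence $\Db R$ are essentially small, so the aisle $\Ucal$ of $\mathbb{T}$ is generated, as a subcategory, by a set of objects. By the classical existence theorem for aisles generated by a set of objects in a triangulated category with coproducts, the smallest cocomplete, extension‑closed, suspended subcategory $\overrightarrow{\Ucal}$ of $\D R$ containing $\Ucal$ is the aisle of a t‑structure whose coaisle is $\Ucal^{\perp_{\leq 0}}=\Ucal^\perp$ (orthogonality taken in $\D R$; one passes between $\perp_{\leq 0}$ and $\perp_0$ using that $\Ucal$ is suspended). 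Since the aisle of any t‑structure is the left orthogonal of its coaisle, $\overrightarrow{\Ucal}={}^\perp(\overrightarrow{\Ucal}{}^\perp)={}^\perp(\Ucal^\perp)$, so this t‑structure is exactly $\overrightarrow{\mathbb{T}}$.

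For part~(2) I would first record a purely formal lemma, proved by a truncation argument inside $\D R$: the lifted t‑structure restricts to $\Db R$ and $\mathbb{T}=(\overrightarrow{\Ucal}\cap\Db R,\,\overrightarrow{\Vcal}\cap\Db R)$. Indeed, if $X\in\overrightarrow{\Ucal}\cap\Db R$ and $U\to X\to V\to U[1]$ is its $\mathbb{T}$‑truncation triangle in $\Db R$, then $V$ lies in $\overrightarrow{\Ucal}\cap\overrightarrow{\Vcal}=0$, whence $X\cong U\in\Ucal$, and dually for the coaisle. This already yields injectivity of the lifting assignment, the equality $\overrightarrow{\Hcal}\cap\Db R=\Hcal$ of hearts, and that $\overrightarrow{\mathbb{T}}$ is intermediate, since the bounds $\mathrm{D}^{\geq 0}[m]\subseteq\Vcal\subseteq\mathrm{D}^{\geq 0}[n]$ in $\Db R$ transfer to $\mathrm{D}^{\geq 0}[m]\subseteq\Ucal^\perp\subseteq\mathrm{D}^{\geq 0}[n]$ in $\D R$ because the standard t‑structure of $\D R$ restricts to $\Db R$ and $R$ generates $\D R$. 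The substantive part of well‑definedness is that $\overrightarrow{\mathbb{T}}$ is compactly generated with \emph{locally coherent} heart satisfying $\fp\overrightarrow{\Hcal}=\overrightarrow{\Hcal}\cap\Db R$; here I would induct on the width of the intermediate t‑structure. By iterating Proposition~\ref{prop:intermediate}, $\mathbb{T}$ is obtained from the standard t‑structure of $\Db R$ by finitely many HRS‑tilts at torsion pairs which, inductively, are of the form $(\Tcal,\Fcal)$ in $\fp\overrightarrow{\Hcal_k}$ for the lift $\overrightarrow{\mathbb{T}_k}$ of the preceding t‑structure. The key lemma is that the lift of such an HRS‑tilt of $\mathbb{T}_k$ coincides with the HRS‑tilt of $\overrightarrow{\mathbb{T}_k}$ at the Crawley--Boevey lift $\overrightarrow{(\Tcal,\Fcal)}$, which by Proposition~\ref{thm: CB bijections} is a torsion pair of finite type in $\overrightarrow{\Hcal_k}$ with finitely presented part $(\Tcal,\Fcal)$. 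Granting this, Theorem~\ref{thm:HRS-tilt AB5} shows the new heart is again Grothendieck; one then checks that it is locally coherent and that its finitely presented objects are exactly the HRS‑tilted bounded heart (using that $\overrightarrow{(\Tcal,\Fcal)}$‑torsion and ‑torsionfree parts of finitely presented objects are finitely presented), and one carries compact generation along, it being preserved by HRS‑tilts at finite‑type torsion pairs as in the discussion around Theorem~\ref{thm:HRS-tilt AB5}.

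For surjectivity: given an intermediate, compactly generated t‑structure $\mathbb{S}=(\Xcal,\Ycal)$ in $\D R$ with locally coherent heart $\Hcal$ and $\fp\Hcal=\Hcal\cap\Db R$, one first shows $\mathbb{S}$ restricts to $\Db R$ — the $\mathbb{S}$‑truncations of a bounded complex are cohomologically bounded because $\mathbb{S}$ is intermediate — yielding an intermediate t‑structure $\mathbb{T}=\mathbb{S}\cap\Db R$ in $\Db R$. Since $\mathbb{S}$ is compactly generated and every compact object of $\D R$ is a perfect complex, hence lies in $\Db R$ (as $R$ is coherent), a set of compact generators of $\Xcal$ lies in $\Xcal\cap\Db R$, so the aisle generated in $\D R$ by $\Xcal\cap\Db R$ is all of $\Xcal$; by part~(1) this aisle is $\overrightarrow{\Ucal}$, hence $\overrightarrow{\mathbb{T}}=\mathbb{S}$. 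Together with injectivity and well‑definedness this gives the bijection, inverse to restriction; its compatibility with inclusions of t‑structures is transparent from the aisle descriptions.

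The main obstacle is the key lemma of the second paragraph: controlling, under an HRS‑tilt at a finite‑type torsion pair, simultaneously the finitely presented objects of the new Grothendieck heart and the property of being a compactly generated t‑structure. This is precisely the point where the coherence of $R$, the Crawley--Boevey correspondence (Proposition~\ref{thm: CB bijections}), and Theorem~\ref{thm:HRS-tilt AB5} must be combined; everything else reduces to routine truncation bookkeeping in $\D R$ and $\Db R$.
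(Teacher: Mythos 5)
The paper does not actually prove this proposition---it is imported verbatim from Marks and Zvonareva (\cite[Lemma 3.1 and Corollary 4.2]{MZ}), so there is no argument in the paper for you to be compared against. Assessing your proposal on its own merits: part (1) is correct (generate the aisle by the essentially small set $\Ucal$; the coaisle $\Ucal^{\perp_{\le0}}$ collapses to $\Ucal^{\perp_0}$ because $\Ucal$ is suspended), and the restriction argument giving injectivity for part (2)---namely $\overrightarrow{\mathbb{T}}\cap\Db R=\mathbb{T}$ via the truncation-and-orthogonality trick, plus the transfer of intermediateness---is sound.

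The well-definedness half of (2) has a genuine gap. You must show that the lift $\overrightarrow{\mathbb{T}}$ actually lands in class (b): that its heart is \emph{locally coherent} with $\fp\overrightarrow{\Hcal}=\overrightarrow{\Hcal}\cap\Db R$, and that the t-structure is \emph{compactly generated}. Your induction on HRS-tilts could in principle deliver this, but both inductive claims are asserted rather than proved, and neither is routine. For local coherence, the assertion that the tilted heart of a locally coherent Grothendieck heart at a lifted finite-type torsion pair is again locally coherent with the predicted finitely presented objects is precisely the content of \cite[Proposition 5.1]{Saorin} (which the paper only invokes for the later Proposition~\ref{lift compatible HRS}); it involves a nontrivial control of direct limits in $\Fcal\star\Tcal[-1]$ and is of comparable depth to the statement you are trying to prove. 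For compact generation, the appeal to Theorem~\ref{thm:HRS-tilt AB5} does not help: that theorem concerns the heart being AB5/Grothendieck, and says nothing about the aisle being generated by compact objects of $\D R$. The point is that $\Ucal\subseteq\Db R$ generally contains non-perfect complexes, so it is not at all a priori clear that the smallest cocomplete aisle containing $\Ucal$ is generated by compacts---this is one of the substantive results of \cite{MZ}, not a bookkeeping step. You correctly identify this as the ``main obstacle,'' but you leave it unresolved, and as stated the chain of references does not close it. A smaller issue in the surjectivity argument: to conclude that the restriction $\mathbb{S}\cap\Db R$ is a t-structure \emph{in} $\Db R$ (not just in $\D^b(\Mod R)$) you also need the cohomologies of the $\mathbb{S}$-truncations of a bounded complex of finitely presented modules to be finitely presented, which uses the hypothesis $\fp\overrightarrow{\Hcal}=\overrightarrow{\Hcal}\cap\Db R$ more carefully than ``cohomologically bounded'' suggests.
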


Note that the t-structures in $\D R$ obtained as lifts of t-structures in $\Db R$ correspond to pure-injective cosilting objects (see \cite[Theorem 4.9]{AMV3}), that is, for every intermediate t-structure $\mathbb{T}$ in $\Db R$ with heart $\Hcal$, there is a pure-injective cosilting object $\sigma$ such that $\overrightarrow{\mathbb{T}}=\mathbb{T}_\sigma$. The t-structure $\overrightarrow{\mathbb{T}}=\mathbb{T}_\sigma$ has a locally coherent Grothendieck heart $\Hcal_\sigma$ with $\fp\Hcal_\sigma=\Hcal$. We will often denote this heart by $\overrightarrow{\Hcal}$, see the remark below.

\begin{remark}
Let us briefly justify the notation $\overrightarrow{(-)}$ used in the assignments discussed in the two theorems. In fact, if $\Hcal$ is a locally coherent Grothendieck category it is shown in \cite{CBlfp} that, for a torsion pair $(\Tcal,\Fcal)$ in $\fp\Hcal$, we have that ${}^\perp(\Tcal^\perp)$ and $\Tcal^\perp$ are the closure under direct limits of $\Tcal$ and $\Fcal$, respectively, inside $\Hcal$. Similarly, if $(\Ucal,\Vcal)$ is an intermediate t-structure in $\Db R$ for a left coherent ring $R$, it is shown in \cite{MZ} that ${}^\perp(\Ucal^\perp)$, $\Ucal^\perp$ and the heart ${}^\perp(\Ucal^\perp)[-1]\cap \Ucal^\perp$ are the closure under directed homotopy colimits of $\Ucal$, $\Vcal$ and of the heart $\Ucal[-1]\cap\Vcal$,  respectively, inside $\D R$. Recall that in the derived category of a ring, we may consider directed homotopy colimits as the derived functor of the direct limit functor.
\end{remark}

Finally, the following proposition relates the two lifts enunciated in the theorems above via HRS-tilting. This result is essentially contained in \cite[Proposition 5.1]{Saorin} and \cite[Proposition 5.1]{MZ}. We include a proof since the formulations therein are slightly different. The 

\begin{proposition}\label{lift compatible HRS}
Let $R$ be a left coherent ring and $\mathbb{T}=(\Xcal,\Ycal)$ an intermediate t-structure in $\Db R$ with heart $\Hcal$. Consider a torsion pair $\mathfrak{t}=(\Tcal,\Fcal)$ in $\Hcal$ and a torsion pair $\mathfrak{p}=(\Pcal,\Qcal)$ in the heart $\overrightarrow{\Hcal}$ of $\overrightarrow{\mathbb{T}}$ in $\D R$. Then $\mathfrak{p}$ is the lift of $\mathfrak{t}$ to $\overrightarrow\Hcal$ if and only if $\overrightarrow{\mathbb{T}}_{\mathfrak{p}^-}$ is the lift of $\mathbb{T}_{\mathfrak{t}^-}$ to $\D R$, i.e.
$$\mathfrak{p}=\overrightarrow{\mathfrak{t}}\Leftrightarrow \overrightarrow{\mathbb{T}}_{\mathfrak{p}^-}=\overrightarrow{\mathbb{T}_{\mathfrak{t}^-}}$$
In particular, for any torsion pair $\mathfrak{t}$ in $\Hcal$, the heart of $\overrightarrow{\mathbb{T}}_{\overrightarrow{\mathfrak{t}}^-}$ is a locally coherent Grothendieck category with $\Hcal_{\mathfrak{t}^-}$ as its subcategory of finitely presented objects.
\end{proposition}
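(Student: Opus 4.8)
The plan is to prove the biconditional by combining the two lifting bijections (Propositions~\ref{thm: CB bijections} and~\ref{MarksZvonareva}) with the compatibility of HRS-tilts with lifting, and then to extract the ``in particular'' statement as a direct consequence. Throughout, write $\overrightarrow{\Hcal}$ for the heart of $\overrightarrow{\mathbb{T}}$ in $\D R$; by Proposition~\ref{MarksZvonareva} this is a locally coherent Grothendieck category with $\fp\overrightarrow{\Hcal}=\Hcal$.

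First I would establish that both sides of the claimed equivalence describe the \emph{same} intermediate t-structure in $\Db R$, so that the real content is about matching up lifts. For the forward direction, assume $\mathfrak{p}=\overrightarrow{\mathfrak{t}}$. Since $\mathfrak{t}$ is a torsion pair in $\Hcal=\fp\overrightarrow{\Hcal}$, its lift $\overrightarrow{\mathfrak{t}}=({}^{\perp}(\Tcal^{\perp}),\Tcal^{\perp})$ is a torsion pair of finite type in $\overrightarrow{\Hcal}$ whose torsion class meets $\fp\overrightarrow{\Hcal}$ in $\Tcal$. Now take the right HRS-tilt $\overrightarrow{\mathbb{T}}_{\mathfrak{p}^-}$. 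By Theorem~\ref{thm:HRS-tilt AB5} (applied to the smashing t-structure $\overrightarrow{\mathbb{T}}$ in the compactly generated category $\D R$ with AB5 heart $\overrightarrow{\Hcal}$, using that $\overrightarrow{\mathfrak{t}}$ is of finite type), the heart $\overrightarrow{\Hcal}_{\mathfrak{p}^-}$ is again a locally coherent Grothendieck category, and one computes $\fp\overrightarrow{\Hcal}_{\mathfrak{p}^-}=\Fcal\star\Tcal[-1]=\Hcal_{\mathfrak{t}^-}$ by tracking finitely presented objects through the torsion pair $(\Fcal,\Tcal[-1])$ on $\overrightarrow{\Hcal}_{\mathfrak{p}^-}$ — here one needs that $\fp$ is closed under the relevant extensions and that the torsion and torsion-free parts of a finitely presented object (with respect to a finite-type lifted torsion pair) are again finitely presented, which is exactly the content of Proposition~\ref{thm: CB bijections}(2). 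Moreover $\overrightarrow{\mathbb{T}}_{\mathfrak{p}^-}$ is intermediate (it lies between $\overrightarrow{\mathbb{T}}[-1]$ and $\overrightarrow{\mathbb{T}}$, and $\overrightarrow{\mathbb{T}}$ is intermediate because $\mathbb{T}$ is, by Proposition~\ref{MarksZvonareva}) and compactly generated. Hence, by the uniqueness in Proposition~\ref{MarksZvonareva}(2), $\overrightarrow{\mathbb{T}}_{\mathfrak{p}^-}$ is the lift of the intermediate t-structure in $\Db R$ whose heart is $\Hcal_{\mathfrak{t}^-}$, i.e.\ the lift of $\mathbb{T}_{\mathfrak{t}^-}$; that gives $\overrightarrow{\mathbb{T}}_{\mathfrak{p}^-}=\overrightarrow{\mathbb{T}_{\mathfrak{t}^-}}$.

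For the converse, assume $\overrightarrow{\mathbb{T}}_{\mathfrak{p}^-}=\overrightarrow{\mathbb{T}_{\mathfrak{t}^-}}$. Restricting to $\Db R$ (which is recovered as $\fp$ of the heart, compatibly with the standard coaisle), this forces the torsion pairs $(\Qcal\cap\Db R,\ \Pcal\cap\Db R\,[-1])$ and $(\Fcal,\Tcal[-1])$ on the common heart $\Hcal_{\mathfrak{t}^-}$ to agree via Proposition~\ref{prop:intermediate}, and hence $\Pcal\cap\fp\overrightarrow{\Hcal}=\Tcal$ and $\Qcal\cap\fp\overrightarrow{\Hcal}=\Fcal$. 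But a torsion pair of finite type in $\overrightarrow{\Hcal}$ is determined by its restriction to $\fp\overrightarrow{\Hcal}$ (Proposition~\ref{thm: CB bijections}(2)), and one checks $\mathfrak{p}$ is of finite type because $\overrightarrow{\mathbb{T}}_{\mathfrak{p}^-}$ has a locally coherent Grothendieck heart (Theorem~\ref{thm:HRS-tilt AB5} again, in the reverse direction, using that the tilted heart being Grothendieck forces the torsion pair to be of finite type). Therefore $\mathfrak{p}=\overrightarrow{\mathfrak{t}}$ by the bijection in Proposition~\ref{thm: CB bijections}(2). Finally, the ``in particular'' assertion is immediate: given any torsion pair $\mathfrak{t}$ in $\Hcal$, apply the forward direction with $\mathfrak{p}=\overrightarrow{\mathfrak{t}}$ to conclude $\overrightarrow{\mathbb{T}}_{\overrightarrow{\mathfrak{t}}^-}=\overrightarrow{\mathbb{T}_{\mathfrak{t}^-}}$, which by Proposition~\ref{MarksZvonareva} has a locally coherent Grothendieck heart $\overrightarrow{\Hcal_{\mathfrak{t}^-}}$ with $\fp\overrightarrow{\Hcal_{\mathfrak{t}^-}}=\Hcal_{\mathfrak{t}^-}$.

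**Main obstacle.** I expect the delicate point to be the bookkeeping that $\fp\overrightarrow{\Hcal}_{\mathfrak{p}^-}=\Hcal_{\mathfrak{t}^-}$ and, more generally, that taking HRS-tilts commutes with the operation ``pass to finitely presented objects'' — i.e.\ that the torsion and torsion-free parts (with respect to a lifted, finite-type torsion pair) of a finitely presented object stay finitely presented, and that $\fp$ is closed under the extension $\Fcal\star\Tcal[-1]$ inside the tilted heart. This is essentially the interplay between Proposition~\ref{thm: CB bijections} and Proposition~\ref{MarksZvonareva} that is attributed to \cite{Saorin} and \cite{MZ}; once this is in hand, the rest is a formal diagram-chase through the two uniqueness statements. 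A secondary point requiring care is verifying that $\overrightarrow{\mathbb{T}}_{\mathfrak{p}^-}$ genuinely satisfies the hypotheses of Proposition~\ref{MarksZvonareva}(2)(b) — intermediate, compactly generated, locally coherent heart with the stated $\fp$-compatibility — but each of these follows from what has already been assembled (intermediacy from the inclusion $\overrightarrow{\Ycal}[-1]\subseteq\overrightarrow{\Ycal}_{\mathfrak{p}^-}\subseteq\overrightarrow{\Ycal}$ together with intermediacy of $\overrightarrow{\mathbb{T}}$, the Grothendieck/locally coherent properties from Theorem~\ref{thm:HRS-tilt AB5}, and compact generation from \cite[Theorem 4.9]{AMV3} since the heart is a Grothendieck category with injective cogenerator giving a pure-injective cosilting object).
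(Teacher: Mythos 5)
Your strategy departs from the paper's and, as written, has a genuine circularity. The paper proves the equivalence $\mathfrak{p}=\overrightarrow{\mathfrak{t}}\Leftrightarrow\overrightarrow{\mathbb{T}}_{\mathfrak{p}^-}=\overrightarrow{\mathbb{T}_{\mathfrak{t}^-}}$ by a \emph{direct comparison of aisles}: the key input is \cite[Lemma 5.7]{SSV}, which says $H^0_{\overrightarrow{\mathbb{T}}}$ sends directed homotopy colimits in $\D R$ to direct limits in $\overrightarrow{\Hcal}$. For the forward direction one presents any $X\in\overrightarrow{\Xcal_{\mathfrak{t}^-}}$ as a homotopy colimit of objects in $\Xcal_{\mathfrak{t}^-}$, applies $H^0_{\overrightarrow{\mathbb{T}}}$ to see $H^0_{\overrightarrow{\mathbb{T}}}(X)\in\overrightarrow{\Tcal}=\Pcal$, and uses nondegeneracy to conclude $\overrightarrow{\Xcal_{\mathfrak{t}^-}}\subseteq\overrightarrow{\Xcal}_{\mathfrak{p}^-}$; the reverse inclusion comes from $\overrightarrow{\Xcal},\overrightarrow{\Tcal}\subseteq\overrightarrow{\Xcal_{\mathfrak{t}^-}}$ and $\overrightarrow{\Xcal}_{\mathfrak{p}^-}=\overrightarrow{\Xcal}\star\Pcal$. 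The \emph{in particular} clause is then a trivial consequence of Proposition~\ref{MarksZvonareva}, applied \emph{after} the equivalence is known. Crucially, the paper's argument never needs to know anything about $\fp\overrightarrow{\Hcal}_{\mathfrak{p}^-}$ in the course of proving the equivalence.

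Your argument instead tries to verify that $\overrightarrow{\mathbb{T}}_{\mathfrak{p}^-}$ lies in class (b) of Proposition~\ref{MarksZvonareva}(2) and then invoke the bijection. This requires showing that $\overrightarrow{\Hcal}_{\mathfrak{p}^-}$ is \emph{locally coherent} with $\fp\overrightarrow{\Hcal}_{\mathfrak{p}^-}=\Hcal_{\mathfrak{t}^-}=\overrightarrow{\Hcal}_{\mathfrak{p}^-}\cap\Db R$ --- but that is precisely the content of the \emph{in particular} clause you are trying to prove. Theorem~\ref{thm:HRS-tilt AB5} only yields that $\overrightarrow{\Hcal}_{\mathfrak{p}^-}$ is Grothendieck, not locally coherent. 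And the claimed identification of $\fp\overrightarrow{\Hcal}_{\mathfrak{p}^-}$ with $\Fcal\star\Tcal[-1]$ is not ``exactly the content of Proposition~\ref{thm: CB bijections}(2)'': that proposition concerns torsion decompositions of finitely presented objects in $\overrightarrow{\Hcal}$, not in the \emph{tilted} heart, and it gives you neither that objects of $\Tcal[-1]$ are finitely presented in $\overrightarrow{\Hcal}_{\mathfrak{p}^-}$ nor that a finitely presented object of $\overrightarrow{\Hcal}_{\mathfrak{p}^-}$ has its torsion and torsion-free parts (with respect to $(\overrightarrow{\Fcal},\overrightarrow{\Tcal}[-1])$) in $\Fcal$ resp.\ $\Tcal[-1]$. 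You flag this as the ``main obstacle'' and point to the interplay between Propositions~\ref{thm: CB bijections} and~\ref{MarksZvonareva}, but that interplay does not supply the missing input; it is what the proposition (following \cite{Saorin} and \cite{MZ}) is \emph{establishing}. Without an independent proof of the $\fp$-computation --- which is substantial and is exactly what the paper sidesteps by going through homotopy colimits --- the argument does not close.
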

\begin{proof}
Suppose that $\overrightarrow{\mathbb{T}}_{\mathfrak{p}^-}=\overrightarrow{\mathbb{T}_{\mathfrak{t}^-}}$. 
Since $H^0_{\overrightarrow{\mathbb{T}}}$ sends directed homotopy colimits in $\D R$ to direct limits in $\overrightarrow{\Hcal}$ (see \cite[Lemma 5.7]{SSV}), it follows that an object $X$ of $\D R$ lies in $\overrightarrow{\Tcal}$ if and only if it lies in $\overrightarrow \Hcal\cap \overrightarrow{\Xcal_{\mathfrak{t}^-}}$. This latter intersection coincides by assumption with $\overrightarrow{\Xcal}_{\mathfrak{p}^-}\cap \overrightarrow{\Hcal}$ which is precisely $\Pcal$, thus proving the desired equality.

Conversely, suppose that $\mathfrak{p}=\overrightarrow{\mathfrak{t}}$. Let $X$ be an object in $\overrightarrow{\Xcal_{\mathfrak{t}^-}}$. Then, there is a directed coherent diagram (i.e.\ an object of the derived category $\D{\Mod{R}^I}$  of $I$-shaped diagrams of $R$-modules) that gives rise to $(X_i)_{i\in I}$ in $\Xcal_{\mathfrak{t}^-}^I$ such that $\mathbb{L}\varinjlim_{i\in I}X_i=X$. Again since $H^0_{\overrightarrow{\mathbb{T}}}$ sends directed homotopy colimits in $\D R$ to direct limits in $\overrightarrow{\Hcal}$, we have that $H^0_{\overrightarrow{\mathbb{T}}}(X)$ lies in $\overrightarrow{\Tcal}$, and this latter class coincides with $\Pcal$ by assumption. As a consequence, since $\overrightarrow{\mathbb{T}}$ is intermediate and, thus, nondegenerate, we have that $\overrightarrow{\Xcal_{\mathfrak{t}^-}}\subseteq \overrightarrow{\Xcal}_{\mathfrak{p}^-}$ (see also Remark \ref{rem:inversetilt}). Conversely, since both $\overrightarrow{\Tcal}$ and $\overrightarrow{\Xcal}$ are contained in $\overrightarrow{\Xcal_{\mathfrak{t}^-}}$, it follows that $\overrightarrow{\Xcal}_{\mathfrak{p}^-}=\overrightarrow{\Xcal}\star \Pcal=\overrightarrow{\Xcal}\star \overrightarrow{\Tcal}$ is contained in $\overrightarrow{\Xcal_{\mathfrak{t}^-}}$. The final statement follows from Proposition \ref{MarksZvonareva}.
\end{proof}

Informally, one of the implications of the statement above tells us that the tilt at the lifted torsion pair coincides with the lift of the tilted t-structure. In other words, the operations \textit{lift} and \textit{tilt}, when correctly interpreted, commute.

We are now ready to establish the setup with which we will work in this section.

\begin{setup}\label{setup: restricted}
Let $R$ be a left coherent ring and $\mathbb{T}$ an intermediate t-structure in $\Db R$ with heart $\Hcal$. Consider two torsion pairs $\mathfrak{u}=(\Ucal, \Vcal)$ and $\mathfrak{t}=(\Tcal, \Fcal)$ in $\Hcal$ with $\Ucal \subseteq \Tcal$ and let us fix the notation:
\begin{itemize}
\item $\sigma$ denotes a pure-injective cosilting object in $\D R$ such that $\overrightarrow{\mathbb{T}}=\mathbb{T}_\sigma$;
\item $\mathfrak{s}=(\Scal, \Rcal)= (\Tcal\cap \Vcal, \: \Fcal\star \Ucal[-1])$ denotes the torsion pair in $\Hcal_{\mathfrak{u}^-}$ determined by $\mathfrak{u}$ and $\mathfrak{t}$ by Proposition \ref{prop: HRS-tilt triangle};
\item $\mathfrak{r} = (\Rcal, \Scal[-1])$ denotes the tilted torsion pair of $\mathfrak{s}$ in $(\mathbb{T}_{\mathfrak{u}^-})_{\mathfrak{s}^-}=\mathbb{T}_{\mathfrak{t}^-}$. 
\end{itemize}
\end{setup}

An important example of Setup \ref{setup: restricted} is given by taking $\mathbb{T}$ to be the standard t-structure in $\D R$ and $\mathfrak{u}=(\Ucal, \Vcal)$ and $\mathfrak{t}=(\Tcal, \Fcal)$ any torsion pairs in $\mod R$ with $\Ucal\subseteq \Tcal$.  

The following lemma makes it clear that, in our setup, Proposition \ref{prop: HRS-tilt triangle} is compatible with the operations of lifting of torsion pairs and t-structures. This will be useful for us later on. 

\begin{lemma}\label{lem: s restricts} Suppose we are in Setup \ref{setup: restricted} and consider the torsion pair $\overrightarrow{\mathfrak{t}}$ and $\overrightarrow{\mathfrak{u}}$ in $\overrightarrow\Hcal$. Then the torsion pairs $(\overrightarrow\Tcal\cap \overrightarrow\Vcal,\overrightarrow\Fcal\star\overrightarrow\Ucal[-1])$ in $\overrightarrow{\Hcal}_{\overrightarrow{\mathfrak{u}}^-}$ and $(\overrightarrow\Fcal\star\overrightarrow\Ucal[-1],(\overrightarrow\Tcal\cap \overrightarrow\Vcal)[-1])$ in $\overrightarrow{\Hcal}_{\overrightarrow{\mathfrak{t}}^-}$ given by Proposition \ref{prop: HRS-tilt triangle} coincide with $\overrightarrow{\mathfrak{s}}$ and $\overrightarrow{\mathfrak{r}}$, respectively.
\end{lemma}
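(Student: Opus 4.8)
The plan is to deduce the statement formally from three results established earlier: Proposition~\ref{prop: HRS-tilt triangle} (used once inside $\Hcal$ and once inside the larger heart $\overrightarrow{\Hcal}$), Proposition~\ref{lift compatible HRS} (applied to the auxiliary intermediate t-structures $\mathbb{T}_{\mathfrak{u}^-}$ and $\mathbb{T}_{\mathfrak{t}^-}$ of $\Db R$), and Remark~\ref{rem:inversetilt} (to pass from the torsion pair producing $\mathfrak{s}$ to the one producing $\mathfrak{r}$). No genuinely new computation is required; all the content lies in tracking which lift equals which.

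First I would record the preliminaries. Since $\Ucal\subseteq\Tcal$ we get $\Tcal^\perp\subseteq\Ucal^\perp$ in $\overrightarrow{\Hcal}$, hence $\overrightarrow{\Ucal}={}^\perp(\Ucal^\perp)\subseteq{}^\perp(\Tcal^\perp)=\overrightarrow{\Tcal}$; thus Proposition~\ref{prop: HRS-tilt triangle} applies verbatim inside $\overrightarrow{\Hcal}$ to the nested pair $\overrightarrow{\mathfrak{u}}\le\overrightarrow{\mathfrak{t}}$. Writing $\mathfrak{s}'=(\overrightarrow{\Tcal}\cap\overrightarrow{\Vcal},\ \overrightarrow{\Fcal}\star\overrightarrow{\Ucal}[-1])$ for the torsion pair it produces in $\overrightarrow{\Hcal}_{\overrightarrow{\mathfrak{u}}^-}$ and $\mathfrak{r}'=(\overrightarrow{\Fcal}\star\overrightarrow{\Ucal}[-1],\ (\overrightarrow{\Tcal}\cap\overrightarrow{\Vcal})[-1])$ for its tilted companion in $\overrightarrow{\Hcal}_{\overrightarrow{\mathfrak{t}}^-}$, Proposition~\ref{prop: HRS-tilt triangle} together with Remark~\ref{rem:inversetilt} yields the two identities $(\overrightarrow{\mathbb{T}}_{\overrightarrow{\mathfrak{u}}^-})_{\mathfrak{s}'^-}=\overrightarrow{\mathbb{T}}_{\overrightarrow{\mathfrak{t}}^-}$ and $(\overrightarrow{\mathbb{T}}_{\overrightarrow{\mathfrak{t}}^-})_{\mathfrak{r}'^-}=\overrightarrow{\mathbb{T}}_{\overrightarrow{\mathfrak{u}}^-}[-1]$. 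I would then note that $\mathbb{T}_{\mathfrak{u}^-}$ and $\mathbb{T}_{\mathfrak{t}^-}$ are again intermediate t-structures in $\Db R$, since by construction the coaisle of a right HRS-tilt of $\mathbb{T}$ lies between $\Ycal[-1]$ and $\Ycal$, hence between two shifts of the standard coaisle of $\Db R$. Consequently Proposition~\ref{lift compatible HRS} is licensed for $\mathbb{T}_{\mathfrak{u}^-}$ and $\mathbb{T}_{\mathfrak{t}^-}$; applying it first to $\mathbb{T}$ together with $\mathfrak{u}$ and with $\mathfrak{t}$ gives $\overrightarrow{\mathbb{T}_{\mathfrak{u}^-}}=\overrightarrow{\mathbb{T}}_{\overrightarrow{\mathfrak{u}}^-}$ and $\overrightarrow{\mathbb{T}_{\mathfrak{t}^-}}=\overrightarrow{\mathbb{T}}_{\overrightarrow{\mathfrak{t}}^-}$, and in particular exhibits $\overrightarrow{\Hcal}_{\overrightarrow{\mathfrak{u}}^-}$ and $\overrightarrow{\Hcal}_{\overrightarrow{\mathfrak{t}}^-}$ as locally coherent Grothendieck categories whose subcategories of finitely presented objects are $\Hcal_{\mathfrak{u}^-}$ and $\Hcal_{\mathfrak{t}^-}$, so that the lifts $\overrightarrow{\mathfrak{s}}$ and $\overrightarrow{\mathfrak{r}}$ make sense in the first place.

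The core of the proof is then the identification $\overrightarrow{\mathfrak{s}}=\mathfrak{s}'$. Applying Proposition~\ref{lift compatible HRS} to the intermediate t-structure $\mathbb{T}_{\mathfrak{u}^-}$ and the torsion pair $\mathfrak{s}$ in its heart, a torsion pair $\mathfrak{p}$ in $\overrightarrow{\Hcal}_{\overrightarrow{\mathfrak{u}}^-}$ equals $\overrightarrow{\mathfrak{s}}$ exactly when $(\overrightarrow{\mathbb{T}}_{\overrightarrow{\mathfrak{u}}^-})_{\mathfrak{p}^-}=\overrightarrow{(\mathbb{T}_{\mathfrak{u}^-})_{\mathfrak{s}^-}}$; but Proposition~\ref{prop: HRS-tilt triangle} applied inside $\Hcal$ gives $(\mathbb{T}_{\mathfrak{u}^-})_{\mathfrak{s}^-}=\mathbb{T}_{\mathfrak{t}^-}$, so the right-hand side is $\overrightarrow{\mathbb{T}_{\mathfrak{t}^-}}=\overrightarrow{\mathbb{T}}_{\overrightarrow{\mathfrak{t}}^-}$. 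Since $\mathfrak{s}'$ already satisfies $(\overrightarrow{\mathbb{T}}_{\overrightarrow{\mathfrak{u}}^-})_{\mathfrak{s}'^-}=\overrightarrow{\mathbb{T}}_{\overrightarrow{\mathfrak{t}}^-}$ by the previous paragraph, the criterion forces $\overrightarrow{\mathfrak{s}}=\mathfrak{s}'$. I would then run the same argument one step higher to obtain $\overrightarrow{\mathfrak{r}}=\mathfrak{r}'$: apply Proposition~\ref{lift compatible HRS} to $\mathbb{T}_{\mathfrak{t}^-}$ and $\mathfrak{r}$, use Remark~\ref{rem:inversetilt} (which applies because $\mathfrak{r}=(\Rcal,\Scal[-1])$ is precisely the torsion pair attached to $\mathfrak{s}=(\Scal,\Rcal)$) to compute $(\mathbb{T}_{\mathfrak{t}^-})_{\mathfrak{r}^-}=\mathbb{T}_{\mathfrak{u}^-}[-1]$, and combine with the fact that lifting commutes with the shift, i.e.\ $\overrightarrow{\mathbb{T}_{\mathfrak{u}^-}[-1]}=\overrightarrow{\mathbb{T}}_{\overrightarrow{\mathfrak{u}}^-}[-1]$; then a torsion pair $\mathfrak{p}$ of $\overrightarrow{\Hcal}_{\overrightarrow{\mathfrak{t}}^-}$ equals $\overrightarrow{\mathfrak{r}}$ exactly when $(\overrightarrow{\mathbb{T}}_{\overrightarrow{\mathfrak{t}}^-})_{\mathfrak{p}^-}=\overrightarrow{\mathbb{T}}_{\overrightarrow{\mathfrak{u}}^-}[-1]$, which holds for $\mathfrak{r}'$ by the second paragraph.

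The only place where real care is needed — and hence the main obstacle — is the bookkeeping: one must keep straight the three distinct ambient hearts $\overrightarrow{\Hcal}$, $\overrightarrow{\Hcal}_{\overrightarrow{\mathfrak{u}}^-}$ and $\overrightarrow{\Hcal}_{\overrightarrow{\mathfrak{t}}^-}$, verify at each invocation that the t-structure being lifted is intermediate in $\Db R$ so that Propositions~\ref{MarksZvonareva} and~\ref{lift compatible HRS} apply, and match the \emph{original versus tilted} roles in Remark~\ref{rem:inversetilt} correctly when passing from $\mathfrak{s}$ to $\mathfrak{r}$. Once this dictionary is set up, the rest is formal.
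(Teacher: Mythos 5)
Your proposal is correct and follows essentially the same route as the paper's proof: both arguments identify $\overrightarrow{\Hcal}_{\overrightarrow{\mathfrak{u}}^-}=\overrightarrow{\Hcal_{\mathfrak{u}^-}}$ and $\overrightarrow{\Hcal}_{\overrightarrow{\mathfrak{t}}^-}=\overrightarrow{\Hcal_{\mathfrak{t}^-}}$ via Proposition~\ref{lift compatible HRS}, invoke the uniqueness of the connecting torsion pair from Proposition~\ref{prop: HRS-tilt triangle} applied in $\overrightarrow{\Hcal}$, and then use the equivalence in Proposition~\ref{lift compatible HRS} (applied to the intermediate t-structure $\mathbb{T}_{\mathfrak{u}^-}$ respectively $\mathbb{T}_{\mathfrak{t}^-}$) to pin down that torsion pair as $\overrightarrow{\mathfrak{s}}$ respectively $\overrightarrow{\mathfrak{r}}$. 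Your version merely spells out the bookkeeping — including the verification that $\mathbb{T}_{\mathfrak{u}^-}$ and $\mathbb{T}_{\mathfrak{t}^-}$ are intermediate and the ``analogous argument'' for $\mathfrak{r}$ — that the paper compresses.
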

\begin{proof}
Note that, by Proposition \ref{lift compatible HRS}, we have $\overrightarrow{\Hcal}_{\overrightarrow{\mathfrak{u}}^-}=\overrightarrow{\Hcal_{\mathfrak{u}^-}}$ and $\overrightarrow{\Hcal}_{\overrightarrow{\mathfrak{t}}^-}=\overrightarrow{\Hcal_{\mathfrak{t}^-}}$. By Proposition \ref{prop: HRS-tilt triangle}, the torsion pair for which a right HRS-tilt allows us to pass from $\overrightarrow{\Hcal_{\mathfrak{u}^-}}$ to $\overrightarrow{\Hcal_{\mathfrak{t}^-}}$ is uniquely determined as the torsion pair $\mathfrak{p}:=(\overrightarrow\Tcal\cap \overrightarrow\Vcal,\overrightarrow\Fcal\star\overrightarrow\Ucal[-1])$. On the other hand, it follows from Proposition \ref{lift compatible HRS} that this torsion pair must be $\overrightarrow{\mathfrak{s}}$. An analogous argument holds for the equality $(\overrightarrow\Fcal\star\overrightarrow\Ucal[-1],(\overrightarrow\Tcal\cap \overrightarrow\Vcal)[-1])=\overrightarrow{\mathfrak{r}}$ in $\overrightarrow{\mathbb{T}_{\mathbf{t}^-}}$.
\end{proof}

In the context of the Lemma above, when $\overrightarrow{\mathfrak{t}}$ is a right mutation of $\overrightarrow{\mathfrak{u}}$, the torsion pair $\overrightarrow{\mathfrak{s}}$ is a hereditary torsion pair of finite type, as shown in Theorem \ref{thm: mutation general} and Proposition~\ref{prop:cover vs finite type}. We will make use of the close relationship between such torsion pairs and the spectrum of locally coherent Grothendieck categories, which we summarise in the next theorem.

\begin{theorem}[\cite{KraLoc,Herzog}]\label{thm: Krause Herzog} Let $\Acal$ be a locally coherent Grothendieck category $\Acal$. The (isoclasses of) indecomposable injective objects form a topological space, $\Spec\Acal$, with a basis of open subsets given by sets of the form
$${\mathcal O}(C)=\{E\in\Spec\Acal\mid \Hom_\Hcal(C,E)\neq0\},\; C\in\fp\Acal.$$ 
There are bijections between:
\begin{enumerate}
\item[(a)] hereditary torsion pairs  of finite type in $\Acal$;
\item[(b)]  Serre subcategories of $\fp\Acal$; and
\item[(c)] open subsets of $\Spec \Acal$.
\end{enumerate}
The bijection between (a) and (b) is given by the assignments $(\Scal, \Rcal) \mapsto \Scal\cap\fp\Acal$ and $\Lcal \mapsto ({}^{\perp_0}(\Lcal^{\perp_0}), \Lcal^{\perp_0})$. The assignment (b)$\to$(c) takes a Serre subcategory 
$\Lcal$ to ${\mathcal O}=\{E\in\Spec\Acal\mid E\not\in\Lcal^{\perp_0}\}$. The assignment (c)$\to$(a) maps an open set $\mathcal O$ to the hereditary torsion pair $(\Scal,\Rcal)$ cogenerated by the complement ${\mathcal O}^c$. 
\end{theorem}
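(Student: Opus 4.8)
The plan is to establish the two bijections (a)$\leftrightarrow$(b) and (b)$\leftrightarrow$(c) separately and then compose them to obtain the description of (a)$\leftrightarrow$(c). The bijection (a)$\leftrightarrow$(b) is the hereditary instance of Proposition~\ref{thm: CB bijections}: under that bijection, hereditary torsion pairs of finite type in $\Acal$ correspond to those torsion classes in $\fp\Acal$ that are closed under subobjects, i.e.\ to Serre subcategories (a torsion class closed under subobjects being the same as a Serre subcategory), with the mutually inverse assignments $(\Scal,\Rcal)\mapsto\Scal\cap\fp\Acal$ and $\Lcal\mapsto({}^{\perp_0}(\Lcal^{\perp_0}),\Lcal^{\perp_0})$ provided by the cited proposition. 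So the real work is with $\Spec\Acal$.

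First I would verify that the sets $\mathcal{O}(C)$ form a basis of open sets. The key input is that each $E\in\Spec\Acal$ is a uniform object, since an indecomposable injective in a Grothendieck category has the property that any two of its nonzero subobjects meet nontrivially. One records the two elementary facts that $\mathcal{O}(C')\subseteq\mathcal{O}(C)$ whenever $C'$ is a quotient of $C$ (precompose a nonzero map along an epimorphism) and also whenever $C'$ is a subobject of $C$ (extend a nonzero map along a monomorphism into the injective object $E$). Given now $E\in\mathcal{O}(C)\cap\mathcal{O}(C')$, choose nonzero $f\colon C\to E$ and $g\colon C'\to E$; by uniformity $C'':=\im{f}\cap\im{g}\neq 0$, and $C''$ is finitely presented, being the kernel of the morphism $\im{f}\oplus\im{g}\to\im{f}+\im{g}$ between objects of $\fp\Acal$ (using that $\fp\Acal$ is abelian). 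Then $E\in\mathcal{O}(C'')\subseteq\mathcal{O}(C)\cap\mathcal{O}(C')$, so the $\mathcal{O}(C)$ form a basis; they cover $\Spec\Acal$ because $\fp\Acal$ generates $\Acal$.

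For (b)$\leftrightarrow$(c) I would send a Serre subcategory $\Lcal$ to the open set $\mathcal{O}_\Lcal:=\bigcup_{C\in\Lcal}\mathcal{O}(C)=\{E\in\Spec\Acal\mid E\notin\Lcal^{\perp_0}\}$, and an open set $\mathcal{O}$ to the hereditary torsion pair $(\Scal,\Rcal)$ cogenerated by the set of injective objects $\mathcal{O}^c$ and thence to $\Scal\cap\fp\Acal$. That one composite is the identity is direct: writing $\mathcal{O}=\bigcup_\alpha\mathcal{O}(C_\alpha)$, each $C_\alpha$ lies in $\Scal={}^{\perp_0}\mathcal{O}^c$ (it has no nonzero map to any point of $\mathcal{O}^c$), hence $C_\alpha\in\Scal\cap\fp\Acal$; and for any $C\in\Scal\cap\fp\Acal$ one has $\mathcal{O}(C)\cap\mathcal{O}^c=\varnothing$, i.e.\ $\mathcal{O}(C)\subseteq\mathcal{O}$, so $\mathcal{O}_{\Scal\cap\fp\Acal}=\mathcal{O}$. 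The substantial content is the reverse composite — that distinct Serre subcategories give distinct open sets, and that the torsion pair cogenerated by $\mathcal{O}^c$ is of finite type with $\Rcal=\Cogen{\mathcal{O}^c}=(\Scal\cap\fp\Acal)^{\perp_0}$ — and both reduce to the following detection lemma: for the hereditary finite-type torsion pair $(\varinjlim\Lcal,\Lcal^{\perp_0})$ attached to a Serre subcategory, every nonzero torsion-free object embeds into a product of torsion-free indecomposable injective objects, so in particular every nonzero finitely presented torsion-free object has a nonzero map to one of them. Granting this, if $C\in\Scal\cap\fp\Acal$ for $(\Scal,\Rcal)$ cogenerated by $(\mathcal{O}_\Lcal)^c$ but $C\notin\Lcal=\varinjlim\Lcal\cap\fp\Acal$, then its torsion-free part $C/tC$ maps nonzero to some $E''\in(\mathcal{O}_\Lcal)^c$, whence $\Hom_\Acal(C,E'')\neq 0$, contradicting $C\in{}^{\perp_0}(\mathcal{O}_\Lcal)^c$; so $\Lcal=\Scal\cap\fp\Acal$. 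The remaining finite-type assertion follows since, by the same lemma, $\mathcal{O}^c$ consists of all torsion-free indecomposable injectives and hence cogenerates the whole class $\Lcal^{\perp_0}$, using also that $\Lcal^{\perp_0}=(\varinjlim\Lcal)^{\perp_0}$, because a map from $\varinjlim S_i$ with $S_i\in\Lcal$ into a torsion-free object restricts to zero on every $S_i$ and therefore vanishes, the canonical map $\bigoplus_i S_i\to\varinjlim S_i$ being an epimorphism.

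I expect the detection lemma to be the main obstacle, and this is where I would lean on the structure theory of~\cite{Herzog,KraLoc}. The strategy I would follow is first to prove the purely formal fact that a locally finitely generated Grothendieck category admits a cogenerating set of indecomposable injective objects: given nonzero $X$, pick a nonzero finitely generated subobject $V\subseteq X$ and, by a Zorn's lemma argument that is legitimate because a finitely generated subobject lying in a directed union of subobjects lies in a member of it, choose $Y$ maximal with $V\not\subseteq Y$; then $X/Y$ is uniform, so its injective envelope is an indecomposable injective admitting a nonzero map from $X$, and running over all such $X$ and maps produces the cogenerating product. To upgrade this to torsion-free targets one passes to the Gabriel localization $\Acal/\varinjlim\Lcal$, which is again a locally coherent Grothendieck category since $\varinjlim\Lcal$ is of finite type; its indecomposable injectives are, via the section functor, exactly the torsion-free indecomposable injectives of $\Acal$, and a torsion-free object of $\Acal$ is carried fully faithfully and with $\Hom$'s preserved into $\Acal/\varinjlim\Lcal$, so the cogenerating-set statement there yields the detection lemma in $\Acal$. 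Finally, order-preservation of all three bijections is immediate from the constructions, larger Serre subcategories corresponding to larger open sets and to larger torsion classes, which completes the proof.
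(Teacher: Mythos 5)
This theorem is stated in the paper with a citation to \cite{KraLoc,Herzog} and without a proof of its own, so your proposal is being judged on its own terms. The overall architecture --- splitting into (a)$\leftrightarrow$(b) and (b)$\leftrightarrow$(c), the detection lemma via the Gabriel quotient $\Acal/\varinjlim\Lcal$, and the observation that $\Lcal^{\perp_0}=(\varinjlim\Lcal)^{\perp_0}$ is closed under direct limits because $\Lcal\subseteq\fp\Acal$ --- is reasonable, but two steps do not stand as written.

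The most concrete problem is in your verification that the $\mathcal{O}(C)$ form a basis. You claim that $C'':=\im{f}\cap\im{g}$ is finitely presented, being ``the kernel of the morphism $\im{f}\oplus\im{g}\to\im{f}+\im{g}$ between objects of $\fp\Acal$.'' But $\im{f}$ and $\im{g}$ are quotients of the finitely presented objects $C$ and $C'$, so a priori they are only finitely \emph{generated}; they are subobjects of $E$, which is not finitely presented, so the locally coherent characterisation (finitely generated subobjects of finitely presented objects are finitely presented) does not apply. There is therefore no reason for $\im{f}$, $\im{g}$, $\im{f}+\im{g}$, or their kernel to lie in $\fp\Acal$. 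In the locally noetherian case finitely generated equals finitely presented and your argument is fine, but in the locally coherent generality of the theorem this is exactly the delicate point: producing a suitable $C''\in\fp\Acal$ with $E\in\mathcal{O}(C'')\subseteq\mathcal{O}(C)\cap\mathcal{O}(C')$ requires more care, and the proofs in \cite{KraLoc,Herzog} obtain the basis property more structurally, essentially in tandem with the Serre-subcategory correspondence rather than by a direct intersection computation.

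There is a second, subtler gap in the opening reduction of (a)$\leftrightarrow$(b) to Proposition~\ref{thm: CB bijections}. That proposition pairs torsion pairs in $\fp\Acal$ with finite-type torsion pairs $(\Xcal,\Ycal)$ in $\Acal$ satisfying the \emph{extra} condition that $\Xcal\cap\fp\Acal$ is a torsion class in $\fp\Acal$. To restrict it to (a)$\leftrightarrow$(b) you must know that every hereditary finite-type torsion pair satisfies this condition --- equivalently, that the torsion radical of a finitely presented object is again finitely presented, or again that every Serre subcategory $\Lcal\subseteq\fp\Acal$ is the torsion class of some torsion pair in $\fp\Acal$. Your parenthetical ``a torsion class closed under subobjects being the same as a Serre subcategory'' supplies only one implication; the converse is genuinely part of what Herzog proves and is not automatic when $\fp\Acal$ is not noetherian. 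Both issues would need to be repaired before the proof closes.
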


Let us come back to Setup \ref{setup: restricted}. 
We are now in a position to show that, if the lifted torsion pairs $\overrightarrow{\mathfrak{t}}$ and $\overrightarrow{\mathfrak{u}}$ in $\overrightarrow{\Hcal}$ are related by mutation, then this mutation is controlled by objects of $\Hcal=\fp\overrightarrow{\Hcal}$.

\begin{theorem}\label{Thm: mutations are wide}
Suppose we are in Setup \ref{setup: restricted}.   The following statements are equivalent.
\begin{enumerate}
\item $\overrightarrow{\mathfrak{t}}$ is a right mutation of $\overrightarrow{\mathfrak{u}}$.
\item $\Scal$ is a wide subcategory of $\Hcal$.
\item If $g \colon T \to V$ is a map in $\Hcal$ with $T$ in $\Tcal$ and $V$ in $\Vcal$, then $\ker{g}$ lies in $\Tcal$ and $\coker{g}$ lies in $\Vcal$.
\end{enumerate}
\end{theorem}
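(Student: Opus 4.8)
The plan is to reduce the statement to the general criterion already available in the abstract setting (Theorem~\ref{thm: mutation general}) and to the module-theoretic characterisation of wideness (Proposition~\ref{prop:wide intervals}), using the lifting machinery of Propositions~\ref{thm: CB bijections}, \ref{MarksZvonareva} and \ref{lift compatible HRS}. First I would observe that the equivalence of (2) and (3) is immediate from Proposition~\ref{prop:wide intervals} applied verbatim to the abelian category $\Hcal$ and the torsion pairs $\mathfrak u=(\Ucal,\Vcal)$, $\mathfrak t=(\Tcal,\Fcal)$ with $\Ucal\subseteq\Tcal$; no lifting is needed for that part. So the real content is the equivalence of (1) with (2).

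For (1)$\Leftrightarrow$(2), the key point is to pass to the lifted heart $\overrightarrow{\Hcal}$, which is a locally coherent Grothendieck category with $\fp\overrightarrow{\Hcal}=\Hcal$, and in which $\overrightarrow{\mathbb T}=\mathbb T_\sigma$ for a pure-injective cosilting object $\sigma$. By definition (Definition~\ref{def: mutation of tp}), $\overrightarrow{\mathfrak t}$ is a right mutation of $\overrightarrow{\mathfrak u}$ exactly when the associated cosilting objects are related by a right mutation, so we are precisely in the situation of Setup~\ref{setup general} for the pair $\overrightarrow{\mathfrak u},\overrightarrow{\mathfrak t}$ in $\CosiltP{\overrightarrow{\Hcal}}$ (they lie there by the remark following Proposition~\ref{MarksZvonareva}, together with Corollary~\ref{cor:cotilting pure-injective}, since torsion pairs of finite type in a locally coherent Grothendieck category coming from lifts are cosilting). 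Theorem~\ref{thm: mutation general} then tells us that $\overrightarrow{\mathfrak t}$ is a right mutation of $\overrightarrow{\mathfrak u}$ if and only if the torsion class $\overrightarrow{\Scal}$ of the intermediate torsion pair (which by Lemma~\ref{lem: s restricts} is exactly $\overrightarrow{\Tcal}\cap\overrightarrow{\Vcal}$) is a wide subcategory of $\overrightarrow{\Hcal}$.

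It therefore remains to show: $\Tcal\cap\Vcal$ is wide in $\Hcal$ if and only if $\overrightarrow{\Tcal}\cap\overrightarrow{\Vcal}$ is wide in $\overrightarrow{\Hcal}$. Here I would argue as follows. Since $\overrightarrow{\Tcal}$ and $\overrightarrow{\Vcal}$ are the closures under direct limits in $\overrightarrow{\Hcal}$ of $\Tcal$ and $\Vcal$ respectively, and since $\Tcal=\overrightarrow{\Tcal}\cap\Hcal$, $\Vcal=\overrightarrow{\Vcal}\cap\Hcal$, the intersection behaves well: $\overrightarrow{\Tcal}\cap\overrightarrow{\Vcal}$ is a torsion-free class (being $\overrightarrow{\Vcal}$-subobjects lying in the torsion class $\overrightarrow{\Tcal}$) whose finitely presented objects are exactly $\Tcal\cap\Vcal$; more precisely I would use Proposition~\ref{prop:wide intervals} on both sides and transport the kernel/cokernel condition. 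For the direction ``$\overrightarrow{(\,)}$ wide $\Rightarrow$ small wide'', any map $g\colon T\to V$ in $\Hcal$ with $T\in\Tcal$, $V\in\Vcal$ is a map in $\overrightarrow{\Hcal}$ between objects of $\overrightarrow{\Tcal}$, $\overrightarrow{\Vcal}$, and $\ker g$, $\coker g$ computed in $\Hcal$ agree with those computed in $\overrightarrow{\Hcal}$ (as $\Hcal=\fp\overrightarrow{\Hcal}$ is an exact abelian subcategory closed under kernels and cokernels), so by the condition in $\overrightarrow{\Hcal}$ they lie in $\overrightarrow{\Tcal}\cap\Hcal=\Tcal$ and $\overrightarrow{\Vcal}\cap\Hcal=\Vcal$. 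For the converse, given $g\colon T\to V$ in $\overrightarrow{\Hcal}$ with $T\in\overrightarrow{\Tcal}$, $V\in\overrightarrow{\Vcal}$, write $T$ as a direct limit of objects of $\Tcal$ and $V$ as a direct limit of objects of $\Vcal$; one expresses $g$ as a direct limit of maps $g_i\colon T_i\to V_i$ in $\Hcal$ with $T_i\in\Tcal$, $V_i\in\Vcal$ (after refining the index categories), and since kernels and cokernels commute with the exact direct limits of the Grothendieck category $\overrightarrow{\Hcal}$, $\ker g=\varinjlim\ker g_i\in\overrightarrow{\Tcal}$ and $\coker g=\varinjlim\coker g_i\in\overrightarrow{\Vcal}$, giving condition (2) of Proposition~\ref{prop:wide intervals} in $\overrightarrow{\Hcal}$.

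The main obstacle I anticipate is the last step: writing an arbitrary morphism $g\colon T\to V$ between direct limits as a filtered colimit of morphisms between the defining subsystems. In a locally finitely presented Grothendieck category this is standard when the sources and targets are finitely presented, but $T$ and $V$ are merely direct limits of objects in $\Tcal$ and $\Vcal$, which need not be finitely presented; one has to be a little careful and may instead prefer to argue directly with the $\ker/\coker$ closure conditions via the torsion/torsion-free decompositions (as in Proposition~\ref{prop: tp vs triples}) and the fact that both $\overrightarrow{\Tcal}$ and $\overrightarrow{\Vcal}$ are closed under direct limits, reducing to the finitely presented case by approximating $g$ on a generating set. Alternatively, one can avoid this entirely by noting that $\overrightarrow{\Tcal}\cap\overrightarrow{\Vcal}$ is itself the lift (in the sense of Proposition~\ref{thm: CB bijections}) of $\Tcal\cap\Vcal$ once the latter is a torsion class in some localisation, and invoking that the lift of a Serre subcategory is Serre; but the cleanest route is the direct colimit argument sketched above.
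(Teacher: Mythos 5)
Your handling of the equivalence (2)$\Leftrightarrow$(3) via Proposition~\ref{prop:wide intervals}, and of the implication (1)$\Rightarrow$(2) (apply Theorem~\ref{thm: mutation general} to get $\overrightarrow{\Scal}$ wide in $\overrightarrow{\Hcal}$, then intersect with the exact abelian subcategory $\Hcal=\fp\overrightarrow{\Hcal}$ and use $\Scal=\overrightarrow{\Scal}\cap\Hcal$), matches the paper.

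The gap is in (2)$\Rightarrow$(1). You want to show that $\Scal$ wide in $\Hcal$ forces $\overrightarrow{\Scal}=\overrightarrow{\Tcal}\cap\overrightarrow{\Vcal}$ to be wide in $\overrightarrow{\Hcal}$, and your preferred route is to present an arbitrary morphism $g\colon T\to V$ in $\overrightarrow{\Hcal}$ (with $T\in\overrightarrow{\Tcal}$, $V\in\overrightarrow{\Vcal}$) as a filtered colimit of maps $g_i\colon T_i\to V_i$ with $T_i\in\Tcal$, $V_i\in\Vcal$, then pass the kernel/cokernel conditions through the exact colimit. But you only know $T=\varinjlim T_i$ and $V=\varinjlim V_j$ separately, over a priori unrelated index posets; producing a single filtered system of morphisms between finitely presented objects in the two classes that computes $g$ is a genuine construction (of comma/ind-object type), and as written it is asserted rather than proved. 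You flag this obstacle yourself, and the fallback you sketch (``argue directly on a generating set'' or ``lift of a Serre subcategory is Serre'') is not developed.

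The paper sidesteps the colimit-presentation issue entirely. Rather than lifting wideness from $\Hcal$ to $\overrightarrow{\Hcal}$ directly, it observes that $\Scal\subseteq\Hcal\cap\Hcal_{\mathfrak{u}^-}$ and invokes Lemma~\ref{lem: wide}(3) to transfer wideness of $\Scal$ from $\Hcal$ to $\Hcal_{\mathfrak{u}^-}$. In $\Hcal_{\mathfrak{u}^-}$ the class $\Scal$ is a \emph{torsion class} (this is exactly the extra structure your sketch is missing), so ``wide'' there means closed under quotients and hence under subobjects, i.e.\ $\Scal$ is a Serre subcategory of $\Hcal_{\mathfrak{u}^-}=\fp\overrightarrow{\Hcal_{\mathfrak{u}^-}}$. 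Theorem~\ref{thm: Krause Herzog} then produces a hereditary torsion pair of finite type in $\overrightarrow{\Hcal_{\mathfrak{u}^-}}$ with torsion class $\overrightarrow{\Scal}$, and Lemma~\ref{lem: s restricts} together with Theorem~\ref{thm: mutation general}(1)$\Leftrightarrow$(3) closes the loop. So the missing idea in your proposal is precisely the change of heart from $\Hcal$ to $\Hcal_{\mathfrak{u}^-}$ (where the torsion-class structure of $\Scal$ becomes available) followed by the Krause--Herzog correspondence; the vague remark at the end of your proposal gestures at this but doesn't carry it out.
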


\begin{proof}
(1)$\Rightarrow$(2): From Theorem~\ref{thm: mutation general} and Lemma~\ref{lem: s restricts}, the class $\overrightarrow{\Scal}$ is  a wide subcategory in $\overrightarrow{\Hcal}$. We prove that $\Scal=\overrightarrow{\Scal}\cap\Hcal$, thus showing that $\Scal$ is a wide subcategory of $\Hcal$. We have $\Scal=\Vcal\cap\Tcal=\overrightarrow{\Vcal}\cap\overrightarrow{\Tcal}\cap\Hcal$. The latter class coincides with $\overrightarrow{\Vcal}\cap\overrightarrow{\Tcal}\cap \Db{R}$ by Proposition \ref{MarksZvonareva} and, moreover, from Lemma \ref{lem: s restricts}, we have that it equals $\overrightarrow{S}\cap\Db{R}$. Using Proposition \ref{MarksZvonareva} again, we conclude our claim.

(2) $\Rightarrow$ (1): Both hearts $\Hcal$ and $\Hcal_{\mathfrak{u}^-}$ in $\Db R$ contain $\Wcal$. By Lemma~\ref{lem: wide}(3), our assumption implies that $\Wcal=\Scal\cap\Hcal_{\mathfrak{u}^-}$ is a wide subcategory of $\Hcal_{\mathfrak{u}^-}$. In fact, it is even a Serre subcategory:  it is closed under quotients in $\Hcal_{\mathfrak{u}^-}$ because $\Scal$ is a torsion class, and so it is also closed under subobjects. By Theorem \ref{thm: Krause Herzog} we then have that $\overrightarrow{\mathfrak{s}}$ is a hereditary torsion pair of finite type and, therefore, by Lemma \ref{lem: s restricts} and Theorem~\ref{thm: mutation general}, we conclude that $\overrightarrow{\mathfrak{t}}$ is a right mutation of $\overrightarrow{\mathfrak{u}}$.

(2) $\Leftrightarrow$ (3): This is an immediate consequence of Proposition~\ref{prop:wide intervals}.
\end{proof}

\section{Mutation and simple objects} \label{sec:mutation and simples}

In this section we specialise the results of Section \ref{Sec Mut in small} to the case where the heart $\Hcal$ (of an intermediate t-structure in $\Db R$, $R$ left coherent) is a length category. These hearts are known to occur frequently when $R$ is an artinian ring. In this setting we will show that mutation is controlled by simple objects. 

\subsection{Abelian length categories.} Recall that an abelian category $\Acal$ is called a \textbf{length category} if $\filt{\Scal}=\Acal$, where $\Scal$ is the set of simple objects in $\Acal$.  A Grothendieck category $\Gcal$ is called \textbf{locally finite} if it has a set of finite length generators. Recall that an object is of finite length if and only if it is both noetherian and artinian. By \cite[Proposition~8.2]{Popescu} we have that $\Gcal$ is locally finite if and only if $\fp\Gcal$ is a length category if and only if $\Gcal$ is locally noetherian and $\Filt{\Omega} = \Gcal$ where $\Omega$ is the set of simple objects in $\Gcal$.  In particular, if in Setup \ref{setup: restricted} $\Hcal$ is a length category, Proposition \ref{thm: CB bijections} tells us that every torsion pair of finite type in $\overrightarrow{\Hcal}$ is of the form $\overrightarrow{\mathfrak{v}}$, for a torsion pair $\mathfrak{v}$ in $\Hcal$. We will replace Setup \ref{setup: restricted} with the following.

\begin{setup}\label{setup: length} (= Setup \ref{setup: restricted} + $\Hcal$ length category) Let $R$ be a left coherent ring and $\mathbb{T}$ an intermediate t-structure in $\Db R$ whose heart $\Hcal$ is a length category. Consider two torsion pairs $\mathfrak{u}=(\Ucal, \Vcal)$ and $\mathfrak{t}=(\Tcal, \Fcal)$ in $\Hcal$ with $\Ucal \subseteq \Tcal$ and let us fix the notation:
\begin{itemize}
\item $\sigma$ denotes a pure-injective cosilting object in $\D R$ such that $\overrightarrow{\mathbb{T}}=\mathbb{T}_\sigma$;
\item $\mathfrak{s}=(\Scal, \Rcal)= (\Tcal\cap \Vcal, \: \Fcal\star \Ucal[-1])$ denotes the torsion pair in $\Hcal_{\mathfrak{u}^-}$ determined by $\mathfrak{u}$ and $\mathfrak{t}$ by Proposition \ref{prop: HRS-tilt triangle};
\item $\mathfrak{r} = (\Rcal, \Scal[-1])$ denotes the tilted torsion pair of $\mathfrak{s}$ in $(\mathbb{T}_{\mathfrak{u}^-})_{\mathfrak{s}^-}=\mathbb{T}_{\mathfrak{t}^-}$. 
\end{itemize}
\end{setup}  

An important example of Setup \ref{setup: length} is given by taking $\mathbb{T}$ to be the standard t-structure in $\Db R$, with $R$ being artinian, and any pair of torsion pairs $\mathfrak{u}=(\Ucal, \Vcal)$ and $\mathfrak{t}=(\Tcal, \Fcal)$ in $\mod R$ with $\Ucal \subseteq \Tcal$.  

\begin{remark} In view of Proposition \ref{thm: CB bijections}, mutation of torsion pairs in $\CosiltP{\overrightarrow{\Hcal}}$ admits an interpretation inside the lattice $\tors {\Hcal}$ of torsion classes in $\Hcal$ with partial order given by inclusion. We refer to \cite{DIRRT,Asai,BCZ,AP}  for details about the lattice structure of $\tors {\Hcal}$. 

 Following \cite[Section 6]{AP}, we will say that \textbf{$\mathfrak{t}$ is a right mutation of $\mathfrak{u}$} (and $\mathfrak{u}$ a left mutation of $\mathfrak{t}$) when we are in the situation of Theorem~\ref{Thm: mutations are wide}. Note also  that condition (2)  in that theorem, in the terminology of \cite{AP}, 
states that the interval ${[\Ucal, \Tcal]}$ is a wide interval of  $\tors{\Hcal}$.
\end{remark}

The following well-known theorem due to Ringel tells us that every object in a wide subcategory $\Wcal$ of $\Hcal$ admits a finite filtration by simple objects of $\Wcal$.  This is the point of view from which wide intervals are studied in \cite{AP}.    An object $X$ in $\Hcal$ is called a \textbf{brick} if $\End_{\Hcal}(X)$ is a skew-field.  A collection of bricks $\Omega$ in $\Hcal$ is called a \textbf{semibrick} if $\Hom_\Hcal(S, S') =0$ whenever $S$ and $S'$ are in $\Omega$ and $S\ne S'$.

\begin{theorem}[\cite{R}]\label{thm: ringel}
Let $\Acal$ be a length category.  If we assign to a wide subcategory $\Wcal$ of $\Acal$ the set $\Mcal$ of its simple objects, we obtain a semibrick $\Mcal$ such that $\Wcal = \filt{\Mcal}$. This yields a one-one correspondence between wide subcategories and semibricks  in $\Acal$. 
\end{theorem}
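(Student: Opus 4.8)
\textbf{Proof proposal for Theorem~\ref{thm: ringel} (Ringel's correspondence).}

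The plan is to prove the two directions separately: first that a wide subcategory of a length category $\Acal$ has a semibrick as its set of simple objects and is recovered as the filtration closure of that semibrick, and then that this assignment is a bijection by exhibiting the inverse. First I would observe that a wide subcategory $\Wcal$, being closed under kernels, cokernels and extensions, is itself an abelian category with the inherited exact structure, and that the inclusion $\Wcal\hookrightarrow\Acal$ is exact; hence a composition series of an object $X\in\Wcal$ computed in $\Wcal$ is also a composition series in $\Acal$, and its simple factors are objects that are simple \emph{in $\Wcal$}. Thus every $X\in\Wcal$ lies in $\filt{\Mcal}$, where $\Mcal$ denotes the set of isomorphism classes of simple objects of $\Wcal$; conversely $\Mcal\subseteq\Wcal$ and $\Wcal$ is extension-closed, so $\filt{\Mcal}\subseteq\Wcal$, giving $\Wcal=\filt{\Mcal}$. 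The key point here is that an object which is simple in $\Wcal$ need not be simple in $\Acal$, but it must be a \textbf{brick}: any nonzero endomorphism of a $\Wcal$-simple object has, in $\Wcal$, zero kernel and zero cokernel, hence is an isomorphism, so $\End_\Acal(S)$ is a skew-field. Finally, if $S\ne S'$ are non-isomorphic $\Wcal$-simple objects, any morphism $S\to S'$ has kernel and cokernel in $\Wcal$ and, by simplicity, is either $0$ or an isomorphism; being non-isomorphic forces it to be $0$. Hence $\Mcal$ is a semibrick.

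Next I would verify that the assignment is injective: if $\Wcal=\filt{\Mcal}$ then the simple objects of $\Wcal$ are precisely the members of $\Mcal$. Indeed $\Mcal$ consists of $\Wcal$-simple objects by the Hom-orthogonality just established (an object of $\Mcal$ has no proper nonzero subobject inside $\filt{\Mcal}$, since a subobject would, by the Jordan--H\"older filtration restricted, contain some $S'\in\Mcal$ as a subquotient, and $\Hom(S',S)=0$ for $S'\ne S$ while a copy of $S$ inside $S$ is all of $S$). Conversely any $\Wcal$-simple object lies in $\filt\Mcal$, so it has a filtration with factors in $\Mcal$; simplicity forces the filtration to have length one, i.e. the object is in $\Mcal$. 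This shows $\Wcal\mapsto\Mcal$ is a bijection from wide subcategories onto its image inside the collection of semibricks.

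It then remains to show surjectivity: every semibrick $\Mcal$ arises this way, i.e. $\filt{\Mcal}$ is a wide subcategory whose $\Wcal$-simples are exactly $\Mcal$. The main obstacle is precisely the proof that $\Wcal:=\filt{\Mcal}$ is closed under kernels and cokernels; closure under extensions is built into the definition of $\filt{(-)}$. For this one argues by induction on filtration length. Given $f\colon X\to Y$ with $X,Y\in\Wcal$, one uses the short exact sequences $0\to \im f\to Y\to\coker f\to0$ and $0\to\ker f\to X\to\im f\to0$ together with the fact that $\filt\Mcal$ is closed under extensions, reducing everything to: the kernel, image and cokernel of a morphism between two objects of $\Mcal$, and more generally of a morphism $S\to X$ or $X\to S$ with $S\in\Mcal$, all lie in $\filt\Mcal$. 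Using the semibrick orthogonality and a standard d\'evissage (peeling off a top or socle composition factor $S'\in\Mcal$ of $X$ and chasing the snake lemma), one checks that $\im f$ is either $0$ or a direct sum of copies of members of $\Mcal$ compatible with the filtrations, and then that $\ker f$ and $\coker f$ inherit $\Mcal$-filtrations; I expect the bookkeeping in this induction — tracking how composition factors distribute among kernel, image and cokernel across an extension — to be the one genuinely delicate step, and the place where the semibrick hypothesis (as opposed to an arbitrary set of bricks) is essential. Once $\Wcal=\filt\Mcal$ is known to be wide, the computation of its simple objects as exactly $\Mcal$ is the injectivity argument above, and the correspondence is established.
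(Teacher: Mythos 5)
The paper states this result with only a citation to \cite{R} and gives no proof, so there is no paper argument to compare against; I assess your proposal on its own. Your first direction — that the $\Wcal$-simple objects of a wide subcategory $\Wcal$ of $\Acal$ form a semibrick $\Mcal$ with $\Wcal = \filt{\Mcal}$ — is correct and complete: the exact full inclusion $\Wcal \hookrightarrow \Acal$ transports chains of subobjects, so $\Wcal$ is itself a length category and composition series exhibit $\Wcal = \filt{\Mcal}$, while $\Wcal$-simples are pairwise Hom-orthogonal bricks because kernels and cokernels of morphisms between them are computed inside $\Wcal$. The injectivity of $\Wcal\mapsto\Mcal$ is then immediate from $\Wcal=\filt{\Mcal}$, so the extra paragraph you devote to it is circling around the real issue, which is surjectivity.

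The genuine gap is the converse direction, which you explicitly leave as a sketch. Closure of $\filt{\Mcal}$ under kernels and cokernels for an arbitrary semibrick $\Mcal$ is the actual content of Ringel's theorem, and the ``bookkeeping'' you defer is not a detail to be expected to go through — it is the theorem. Closure under extensions only lets you \emph{reassemble} filtrations of $\ker{f}$, $\im{f}$ and $\coker{f}$ once you already know the building blocks lie in $\filt{\Mcal}$; it does not produce them, so the reduction as you state it is not yet a reduction. What has to be supplied is the exchange argument: first prove, by locating the least filtration layer that a map hits nontrivially and invoking the semibrick orthogonality, that for $S\in\Mcal$ and $X\in\filt{\Mcal}$ every nonzero morphism $S\to X$ is a monomorphism and every nonzero $X\to S$ is an epimorphism; then prove, by induction on the filtration length of $Y$, that $Y/f(S)\in\filt{\Mcal}$ for any monomorphism $f\colon S\hookrightarrow Y$ with $S\in\Mcal$ (and dually for kernels of epimorphisms onto $S$); finally handle a general $f\colon X\to Y$ by peeling off a bottom composition factor $S\subseteq X$, distinguishing whether $f|_S$ is zero or mono, and descending to $X/S$. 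Your identification of \emph{where} the semibrick hypothesis is used is right, but until this nested induction is carried out the proposal is a correct plan rather than a proof.
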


When a wide subcategory arises as in Theorem~\ref{Thm: mutations are wide}(2), it is possible to characterise its simple objects more precisely, and we do so in Lemma \ref{lem: simple bricks = ME}.  First we need the following definition.

\begin{definition}[\cite{AHL}]\label{def:almost torsion}
Let  $\mathfrak{u}=(\Ucal, \Vcal)$ be  torsion pair in an abelian category $\Hcal$. We say that a non-zero object $M$ in $\Vcal$ is  \textbf{almost torsion (for $\mathfrak{u}$)}
 if the following conditions are satisfied. 
\begin{enumerate}
\item[(i)]  All proper quotients of $M$ are in $\Ucal$.
\item[(ii)] For all short exact sequences $0 \to M \to Y \to Z \to 0$, with $Y$ in  $\Vcal$, we have that $Z$ lies in $\Vcal$.
\end{enumerate} 
\textbf{Almost torsion-free objects for $\mathfrak{u}$} are defined dually.
\end{definition}

\begin{remark}\label{rem: tf/t are simple}
(1) {\cite{AHL}, \cite[Theorem 2.3.6]{Ra}} If $\Hcal$ is the heart of a t-structure in an arbitrary triangulated category, then an object $M$ in $\Vcal$ is almost torsion if and only if $M$ becomes a (torsion) simple object in the tilted heart $\Hcal_{\mathfrak{u}^-}=\Vcal\star\Ucal[-1]$. The almost torsion-free objects for $\mathfrak{u}$ are precisely those $N$ in $\Ucal$ for which $N[-1]$ becomes a simple object in $\Hcal_{\mathfrak{u}^-}$.

(2) \cite[Section 2]{S} Let  $A$ be a finite-dimensional algebra and $\mathfrak{u}=(\Ucal, \Vcal)$  a torsion pair in $\mod A$ with lifted torsion pair $\overrightarrow{\mathfrak{u}}$ in $\Mod A$. The  finite-dimensional torsion-free, almost torsion modules for $\overrightarrow{\mathfrak{u}}$   coincide with the torsion-free, almost torsion modules for $\mathfrak{u}$ and are precisely the minimal extending modules defined in \cite{BCZ}. Moreover, all  torsion, almost torsion-free modules for  $\overrightarrow{\mathfrak{u}}$ are finite-dimensional and coincide with the  torsion, almost torsion-free modules for $\mathfrak{u}$, that is, with the minimal co-extending modules  from \cite{BCZ}. 

(3) It is  easy to check that the arguments in \cite{S} yield the same results for locally finite categories. In particular, in the situation of Setup \ref{setup: length}, every object $M$ which is torsion-free, almost torsion  for $\mathfrak{u}$ becomes a simple object in $\overrightarrow{\Hcal_{\mathfrak{u}^-}}$, and every object $N$ which is torsion, almost torsion-free  for $\mathfrak{t}$ gives rise to a simple object $N[-1]$ in $\overrightarrow{\Hcal_{\mathfrak{t}^-}}$.
\end{remark}

\begin{lemma}\label{lem: simple bricks = ME}
Suppose we are in Setup \ref{setup: length}.  If $\mathfrak{t}$ is a right mutation of $\mathfrak{u}$, then the following statements are equivalent for an object $B$ in $\Hcal$. \begin{enumerate}
\item $B$ is contained in the semibrick associated to the wide subcategory $\Scal$ of $\Hcal$.
\item $B$ is a torsion-free, almost torsion object for $\mathfrak{u}$ that belongs to $\Tcal$.
\item $B$ is a torsion, almost torsion-free object for $\mathfrak{t}$ that belongs to $\Vcal$.
\end{enumerate}
\end{lemma}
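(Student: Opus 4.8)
The strategy is to unwind the characterisations already available through the machinery of HRS-tilts. Recall that by Theorem~\ref{thm: mutation general} (combined with Lemma~\ref{lem: s restricts}), the hypothesis that $\mathfrak{t}$ is a right mutation of $\mathfrak{u}$ means exactly that $\Scal=\Tcal\cap\Vcal$ is a wide subcategory of $\Hcal$; so by Ringel's Theorem~\ref{thm: ringel} it is a length category in its own right, and the semibrick in (1) is its set of simple objects. The plan is to establish (1)$\Leftrightarrow$(2) and (1)$\Leftrightarrow$(3) by showing that simplicity of $B$ inside $\Scal$ translates, via Remark~\ref{rem: tf/t are simple}(1), into the statement that $B$ is simple in the tilted heart $\Hcal_{\mathfrak{u}^-}$ (equivalently $B[-1]$ is simple in $\Hcal_{\mathfrak{t}^-}$), and then match this against the almost torsion / almost torsion-free conditions.

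First I would prove (1)$\Leftrightarrow$(2). Since $\Scal$ is a torsion class in $\Hcal_{\mathfrak{u}^-}$ (it is the torsion part of $\mathfrak{s}$), an object $B\in\Scal\subseteq\Vcal$ is simple in $\Scal$ if and only if it is simple in $\Hcal_{\mathfrak{u}^-}$: indeed any subobject of $B$ taken in $\Hcal_{\mathfrak{u}^-}$ already lies in the torsion class $\Scal$, so there is no difference between "simple in $\Scal$" and "simple in $\Hcal_{\mathfrak{u}^-}$" for such $B$. By Remark~\ref{rem: tf/t are simple}(1), $B\in\Vcal$ becomes simple in $\Hcal_{\mathfrak{u}^-}=\Vcal\star\Ucal[-1]$ precisely when $B$ is torsion-free, almost torsion for $\mathfrak{u}$. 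Thus (1) says $B$ is torsion-free, almost torsion for $\mathfrak{u}$ \emph{and} $B\in\Scal=\Tcal\cap\Vcal$, i.e.\ $B\in\Tcal$; conversely if $B$ is torsion-free almost torsion for $\mathfrak{u}$ and lies in $\Tcal$, then $B\in\Tcal\cap\Vcal=\Scal$ and is simple there. The one small point to check carefully is that a torsion-free almost torsion object $B$ for $\mathfrak{u}$, which is automatically a brick (being simple in $\Hcal_{\mathfrak{u}^-}$), does lie in $\Vcal$; this is part of Definition~\ref{def:almost torsion}.

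The implication (1)$\Leftrightarrow$(3) is dual. Here one uses that $\mathfrak{r}=(\Rcal,\Scal[-1])$ is the tilted torsion pair in $\Hcal_{\mathfrak{t}^-}$, so $\Scal[-1]$ is the torsion-free class there, and an object $B[-1]$ with $B\in\Scal$ is cosimple in $\Scal[-1]$ iff it is simple in $\Hcal_{\mathfrak{t}^-}$ (same argument as above, now with a torsion-free class, whose subobjects computed in $\Hcal_{\mathfrak{t}^-}$ stay in $\Scal[-1]$). By Remark~\ref{rem: tf/t are simple}(1) applied to the torsion pair $\mathfrak{t}$, $B[-1]$ is simple in $\Hcal_{\mathfrak{t}^-}=\Fcal\star\Tcal[-1]$ exactly when $B$ is torsion, almost torsion-free for $\mathfrak{t}$; and the extra condition $B\in\Scal$ unravels to $B\in\Vcal$ (using $B\in\Tcal$ automatically, since $\Tcal\supseteq\Scal$, wait — rather: $B$ almost torsion-free for $\mathfrak{t}$ lies in $\Tcal$ by the dual of Definition~\ref{def:almost torsion}, so the only additional demand is $B\in\Vcal$, giving $B\in\Tcal\cap\Vcal=\Scal$). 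I would phrase this symmetrically with the previous paragraph. The main obstacle, modest as it is, lies in correctly identifying which "ambient" heart each simplicity statement is computed in and verifying the torsion/torsion-free membership claims that let one pass freely between "simple in $\Scal$" and "simple in the tilted heart"; once the dictionary of Remark~\ref{rem: tf/t are simple}(1) is invoked on both sides, the equivalences are immediate.
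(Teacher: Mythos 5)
Your strategy is genuinely different from the paper's. The paper proves (1)$\Leftrightarrow$(2) by a direct diagram chase: given a semibrick element $B$ and a proper quotient $N=B/K$, it decomposes $N$ along $\mathfrak{u}$, builds a pullback, and invokes the kernel/cokernel stability condition from Theorem~\ref{Thm: mutations are wide}(3) to force $N\in\Ucal$, then verifies condition (ii) likewise; the converse is a short argument that a proper nonzero subobject $K\in\Scal$ of $B$ would give $B/K\in\Ucal\cap\Vcal=0$. You instead funnel everything through the dictionary of Remark~\ref{rem: tf/t are simple}(1) between almost-torsion objects and simple objects in the tilted heart, which is cleaner and more conceptual, and makes the (1)$\Leftrightarrow$(3) half transparently dual. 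Both routes work; yours has the advantage of avoiding the pullback computations altogether.

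There is, however, a gap in your justification of the key reduction ``$B$ is simple in $\Scal$ iff $B$ is simple in $\Hcal_{\mathfrak{u}^-}$.'' You write that ``since $\Scal$ is a torsion class in $\Hcal_{\mathfrak{u}^-}$ \dots any subobject of $B$ taken in $\Hcal_{\mathfrak{u}^-}$ already lies in the torsion class $\Scal$.'' This does not follow from $\Scal$ merely being a torsion class: torsion classes are closed under quotients, extensions and coproducts, but \emph{not} under subobjects in general. What you actually need (and what is true here) is that $\Scal$ is a \emph{hereditary} torsion class in $\Hcal_{\mathfrak{u}^-}$, equivalently that $\Scal$ is wide. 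This is exactly what the mutation hypothesis gives you, via Theorem~\ref{Thm: mutations are wide} (combined with Lemma~\ref{lem: wide}(3) to pass between $\Hcal$ and $\Hcal_{\mathfrak{u}^-}$): since $\Scal$ is both wide and a torsion class, for any subobject $K\subseteq B$ in $\Hcal_{\mathfrak{u}^-}$ with $B\in\Scal$ one has $B/K\in\Scal$ by quotient closure and then $K=\ker(B\to B/K)\in\Scal$ by wideness. Once you insert this reference, the reduction is correct. Note that the dual half (1)$\Leftrightarrow$(3) does \emph{not} suffer from this issue, because there $\Scal[-1]$ is the torsion-\emph{free} class of $\mathfrak{r}$ in $\Hcal_{\mathfrak{t}^-}$, and torsion-free classes are automatically closed under subobjects; you should perhaps make that asymmetry explicit rather than calling it ``the same argument.''
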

\begin{proof}
Let $\Mcal$ denote the semibrick associated to $\Scal$, that is, the set of simple objects of $\Scal$.  We show the equivalence of (1) and (2).  The equivalence of (1) and (3) uses a dual argument.

We begin by showing that every $B$ in $\Mcal$ is torsion-free, almost torsion for $\mathfrak{u}$. Let $N \cong B/K$ be a proper factor of $B$; we wish to show that $N$ lies in $\Ucal$ and hence condition (i) from Definition~\ref{def:almost torsion} holds.  Let $U_N$ and $V_N$ be objects in $\Ucal$ and $\Vcal$ such that there is a short exact sequence $0\to U_N\to N \to V_N \to 0$. First suppose that $U_N\neq0$ and consider the pullback diagram:
\[ \xymatrix{
& & 0 \ar[d] & 0 \ar[d] & \\
0 \ar[r] & K \ar@{=}[d] \ar[r] & X \ar[d]^g \ar[r] & U_N \ar[d] \ar[r] & 0 \\
0 \ar[r] & K \ar[r]^h & B \ar[d]^f \ar[r]^l & N \ar[d] \ar[r] & 0 \\
& & V_N \ar[d] \ar@{=}[r] & V_N \ar[d] & \\
& & 0 & 0 &
}\]  By condition (3) of Theorem~\ref{Thm: mutations are wide}  applied to $f$, we have that $X$ lies in $\Scal$. Hence the morphism $g$ is an isomorphism and so $V_N = 0$.  That is, we have $N =U_N$, which lies in $\Ucal$. Now suppose that $U_N=0$ and so $N$ lies in $\Vcal$. We may apply condition (3) of Theorem~\ref{Thm: mutations are wide} to $l$ and so we have that $K$ lies in $\Scal$. Then $h$ is an isomorphism, that is, we have $N = 0$. Condition (ii) in Definition~\ref{def:almost torsion} follows immediately from statement (3) of Theorem~\ref{Thm: mutations are wide}.

Conversely, if $B$ in $\Hcal$ is torsion-free, almost torsion for $\mathfrak{u}$ and belongs to $\Tcal$, then certainly $B$ lies in $\Scal$, and every non-zero subobject $K$ in $\Scal$ of $B$ satisfies that $B/K$ lies in $\Scal\subseteq\Vcal$, but also in $\Ucal$ by condition (i) in Definition~\ref{def:almost torsion}, hence $K=B$. This shows that $B$ is a simple object of $\Scal$, hence it belongs to $\Mcal$.
\end{proof}

Let $\Acal$ be  a Grothendieck category, and let $\Omega$ be the set of isoclasses of simple objects in $\Acal$. Given a subset $\Omega'$ of $\Omega$, we consider the torsion pair generated by $\Omega'$.  It has the shape $(\Filt{\Omega'}, (\Omega')^{\perp_0})$, cf.~\cite[Proposition~VIII.3.2]{Stenstrom}. Moreover, it is hereditary because, for every simple $S$ in $\Omega$ and every object $M$ in $\Gcal$, there exists a non-zero map $S \to E(M)$ if and only if  $S$ embeds in $M$, and so $(\Omega')^{\perp_0}$ is closed under injective envelopes.  By \cite[Lemma VIII.2.4]{Stenstrom}, the torsion pairs generated by subsets of $\Omega$ are precisely the hereditary torsion pairs of the form $(\Scal, \Rcal)$ with $\Scal \subseteq \Filt{\Omega}$; we call such a pair a \textbf{simple torsion pair}.  Observe that an object $M$ is contained in $\Filt\Omega$ if and only if every non-zero quotient of $M$ has a non-zero socle, cf.~~\cite[Proposition~VII.2.5]{Stenstrom}.  For the sake of the next result, we will say that a TTF class $\Fcal$ is a \textbf{simple TTF class} if $\Fcal = \Filt{\Omega'}$ for some set $\Omega' \subseteq \Omega$.

\begin{theorem}\label{Thm: f.d. mutations are simple}
Suppose we are in Setup \ref{setup: length}.  Let $\Mcal$ be the set of all torsion-free, almost torsion objects for $\mathfrak{u}$ which belong to $\Tcal$.  Let $\Ncal$ be the set of all torsion, almost torsion-free objects for $\mathfrak{t}$ which belong to $\Vcal$.  The following statements are equivalent.\begin{enumerate}
\item $\mathfrak{t}$ is a right mutation of $\mathfrak{u}$.
\item $\Scal=\filt{\Mcal}$ in $\Hcal$.
\item $\Scal=\filt{\Ncal}$ in $\Hcal$.
\item The pair $\overrightarrow{\mathfrak{s}}$ is a simple (hereditary) torsion pair in $\overrightarrow{\Hcal_{\mathfrak{u}^-}}$. 
\item The class $\overrightarrow{\Scal}[-1]$ is a simple TTF class in $\overrightarrow{\Hcal_{\mathfrak{t}^-}}$.
\end{enumerate}
\end{theorem}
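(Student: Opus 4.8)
The plan is to use statement (1) as a hub and to reduce everything to facts already available for the \emph{lifted} torsion pairs. Recall first from the Remark preceding the theorem that ``$\mathfrak{t}$ is a right mutation of $\mathfrak{u}$'' is by definition the situation of Theorem~\ref{Thm: mutations are wide}; hence (1) is equivalent to ``$\Scal$ is a wide subcategory of $\Hcal$'', and also — applying Theorem~\ref{thm: mutation general} to the lifted data together with Lemma~\ref{lem: s restricts}, which identifies the torsion pairs produced by Proposition~\ref{prop: HRS-tilt triangle} in $\overrightarrow{\Hcal_{\mathfrak{u}^-}}$ and $\overrightarrow{\Hcal_{\mathfrak{t}^-}}$ with $\overrightarrow{\mathfrak{s}}$ and $\overrightarrow{\mathfrak{r}}$ — to each of ``$\overrightarrow{\mathfrak{s}}$ is hereditary in $\overrightarrow{\Hcal_{\mathfrak{u}^-}}$'' and ``$\overrightarrow{\Scal}[-1]$ is a TTF class in $\overrightarrow{\Hcal_{\mathfrak{t}^-}}$''. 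A standing preliminary is that $\Hcal_{\mathfrak{u}^-}$ and $\Hcal_{\mathfrak{t}^-}$ are again length categories: every object $X$ of $\Hcal_{\mathfrak{u}^-}$ sits in a short exact sequence $0\to V\to X\to U[-1]\to0$ with $V,U$ finite-length objects of $\Hcal$, and applying the cohomology functors of $\mathbb{T}$ to any chain of subobjects of $X$ forces both the ascending and descending chain conditions. By Proposition~\ref{lift compatible HRS} and~\cite{Popescu} the lifted hearts $\overrightarrow{\Hcal_{\mathfrak{u}^-}}$, $\overrightarrow{\Hcal_{\mathfrak{t}^-}}$ are therefore locally finite Grothendieck categories whose finitely presented objects are $\Hcal_{\mathfrak{u}^-}$, $\Hcal_{\mathfrak{t}^-}$; in particular their simple objects coincide with the simple objects of those length categories, and by Remark~\ref{rem: tf/t are simple} the simple objects lying in $\Scal$ (resp.\ in $\Scal[-1]$) are precisely the torsion-free, almost torsion objects for $\mathfrak{u}$ belonging to $\Tcal$, i.e.\ $\Mcal$ (resp.\ the shifts $\Ncal[-1]$).

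For (1)$\Leftrightarrow$(2): if (1) holds, then $\Scal$ is wide in $\Hcal$, so by Ringel's theorem (Theorem~\ref{thm: ringel}) $\Scal=\filt{\Mcal'}$ where $\Mcal'$ is the semibrick formed by the simple objects of $\Scal$; by Lemma~\ref{lem: simple bricks = ME}, $\Mcal'=\Mcal$, so $\Scal=\filt{\Mcal}$. Conversely, assume $\Scal=\filt{\Mcal}$. By Remark~\ref{rem: tf/t are simple}(3) every $M\in\Mcal$ becomes a simple object of $\overrightarrow{\Hcal_{\mathfrak{u}^-}}$; since all hearts in question are full subcategories of $\D R$, this shows that each $M$ is a brick in $\Hcal$ and that $\Hom_\Hcal(M,M')=0$ for distinct $M,M'\in\Mcal$, so $\Mcal$ is a semibrick. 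Then $\filt{\Mcal}$ is wide by Theorem~\ref{thm: ringel}, hence so is $\Scal$, which is (1). The equivalence (1)$\Leftrightarrow$(3) is the mirror image: the same argument with $\mathfrak{u}$ replaced by $\mathfrak{t}$, $\Mcal$ by $\Ncal$, and the ``almost torsion-free'' halves of Lemma~\ref{lem: simple bricks = ME} and Remark~\ref{rem: tf/t are simple} (each $N\in\Ncal$ giving a simple object $N[-1]$ of $\overrightarrow{\Hcal_{\mathfrak{t}^-}}$) applies verbatim.

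For (1)$\Leftrightarrow$(4) and (1)$\Leftrightarrow$(5): a simple hereditary torsion pair is in particular hereditary, and a simple TTF class is in particular a TTF class, so (4)$\Rightarrow$(1) and (5)$\Rightarrow$(1) follow from the equivalent forms of (1) recalled above. For the converses, assume (1). Then $\overrightarrow{\mathfrak{s}}$ is hereditary, and it is of finite type since it is a lift (Proposition~\ref{thm: CB bijections}), so its torsion class $\overrightarrow{\Scal}$ is a hereditary torsion class in the locally finite category $\overrightarrow{\Hcal_{\mathfrak{u}^-}}$. As $\overrightarrow{\Hcal_{\mathfrak{u}^-}}$ equals the $\Filt$-closure of its set of simple objects, every object of $\overrightarrow{\Scal}$ is filtered by simple subquotients, all of which lie in $\overrightarrow{\Scal}$ by heredity; hence $\overrightarrow{\Scal}$ is the $\Filt$-closure of $\overrightarrow{\Scal}\cap\fp\cap(\text{simples})=\Mcal$, so $\overrightarrow{\mathfrak{s}}$ is the torsion pair generated by the set of simple objects $\Mcal$, i.e.\ a simple torsion pair (cf.\ Theorem~\ref{thm: Krause Herzog}), which is (4). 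The same reasoning applies to $\overrightarrow{\Scal}[-1]$ in $\overrightarrow{\Hcal_{\mathfrak{t}^-}}$: under (1) it is a TTF class, hence — being a torsion-free class — closed under subobjects, so the torsion pair having it as torsion class is hereditary; thus $\overrightarrow{\Scal}[-1]$ is the $\Filt$-closure of its simple objects $\Ncal[-1]$, i.e.\ a simple TTF class, which is (5). (Alternatively, (1)$\Leftrightarrow$(5) follows from (1)$\Leftrightarrow$(4) applied to the mirror situation in which $\mathfrak{u}$ is a left mutation of $\mathfrak{t}$, exchanging the roles of $\mathfrak{s}$ and $\mathfrak{r}$ and of heredity and coheredity.)

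I expect the main obstacle to be the precise dictionary between the ``small'' and ``large'' pictures: one must (a) confirm that the tilted hearts $\Hcal_{\mathfrak{u}^-}$, $\Hcal_{\mathfrak{t}^-}$ are length categories, so that Ringel's theorem, Theorem~\ref{thm: Krause Herzog}, and the locally noetherian lifting bijection all apply; (b) identify the simple objects of $\Scal$ inside $\Hcal_{\mathfrak{u}^-}$ (resp.\ of $\Scal[-1]$ inside $\Hcal_{\mathfrak{t}^-}$) with $\Mcal$ (resp.\ $\Ncal[-1]$) via Remark~\ref{rem: tf/t are simple} — here one checks that a simple object of $\Hcal_{\mathfrak{u}^-}$ lying in $\Scal\subseteq\Vcal$ is necessarily torsion, hence almost torsion for $\mathfrak{u}$, and belongs to $\Tcal$; and (c) observe that in a locally finite Grothendieck category every hereditary torsion pair is a simple torsion pair and every TTF class is a simple TTF class, which is what collapses ``hereditary'' to (4) and ``TTF'' to (5). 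Once these points are settled, the whole chain of equivalences assembles formally from Theorems~\ref{thm: mutation general}, \ref{Thm: mutations are wide}, \ref{thm: ringel}, \ref{thm: Krause Herzog}, Lemmas~\ref{lem: s restricts}, \ref{lem: simple bricks = ME}, and Remark~\ref{rem: tf/t are simple}.
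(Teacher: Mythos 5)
Your reduction of (1)$\Leftrightarrow$(2)$\Leftrightarrow$(3) is correct and matches the paper's route (Theorem~\ref{Thm: mutations are wide} $+$ Ringel $+$ Lemma~\ref{lem: simple bricks = ME}), and your semibrick argument for (2)$\Rightarrow$(1) is sound. The implications (4)$\Rightarrow$(1) and (5)$\Rightarrow$(1) are also fine.

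The gap is in your ``standing preliminary'' and in the use you make of it for (1)$\Rightarrow$(4) and (1)$\Rightarrow$(5). You assert that $\Hcal_{\mathfrak{u}^-}$ and $\Hcal_{\mathfrak{t}^-}$ are again length categories, and hence that $\overrightarrow{\Hcal_{\mathfrak{u}^-}}$ and $\overrightarrow{\Hcal_{\mathfrak{t}^-}}$ are locally finite and equal the $\Filt$-closure of their simple objects. This is false in general, even in Setup~\ref{setup: length}. The paper itself supplies a counterexample: in Example~\ref{expl:krone2}, over the Kronecker algebra, the heart $\Hcal'=\overrightarrow{\Hcal_{\mathfrak{u}^-}}$ associated to the mutation $\sigma_P$ with $P\neq\varnothing$ is explicitly stated to be \emph{not locally noetherian}, hence not locally finite, so $\Hcal_{\mathfrak{u}^-}=\fp\Hcal'$ is not a length category. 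Your sketch of the argument (``applying the cohomology functors of $\mathbb{T}$ to any chain of subobjects'') does not actually work: $H^0_\mathbb{T}|_{\Hcal_{\mathfrak{u}^-}}$ is left exact and controls the torsion part, but $H^{-1}_\mathbb{T}|_{\Hcal_{\mathfrak{u}^-}}$ is only right exact and sends a descending chain of subobjects of $X$ to a sequence of quotients, not subobjects, of $U=H^{-1}_\mathbb{T}(X)$, so there is no chain-condition transfer. Your implication (1)$\Rightarrow$(4) uses this false local finiteness essentially: you write ``As $\overrightarrow{\Hcal_{\mathfrak{u}^-}}$ equals the $\Filt$-closure of its set of simple objects, every object of $\overrightarrow{\Scal}$ is filtered by simple subquotients,'' and the same reasoning recurs for (1)$\Rightarrow$(5). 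Your observation (c), that every hereditary torsion pair (resp.\ TTF class) in a locally finite Grothendieck category is simple, is correct, but the hypothesis it relies on fails here.

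The way around this, used in the paper for (2)$\Rightarrow$(4), is to avoid chain conditions in $\overrightarrow{\Hcal_{\mathfrak{u}^-}}$ altogether and compute $\overrightarrow{\Scal}$ directly from the torsion-free class. Since $\Scal=\filt{\Mcal}$ in $\Hcal$ and $\Scal\subseteq\Hcal\cap\Hcal_{\mathfrak{u}^-}$, the same short exact sequences show $\Scal=\filt{\Mcal}$ in $\Hcal_{\mathfrak{u}^-}$; by Proposition~\ref{thm: CB bijections} the lift has torsion-free class $\overrightarrow{\Rcal}=\Scal^{\perp_0}=\Mcal^{\perp_0}$, and the latter is precisely the torsion-free class of the simple hereditary torsion pair generated in $\overrightarrow{\Hcal_{\mathfrak{u}^-}}$ by the set of simples $\Mcal$ (Remark~\ref{rem: tf/t are simple}), so $\overrightarrow{\mathfrak{s}}=(\Filt{\Mcal},\Mcal^{\perp_0})$ without ever invoking local finiteness. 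The implication to (5) then follows by shifting this filtration, since $\overrightarrow{\Scal}[-1]=\Filt{\Mcal[-1]}$ in $\overrightarrow{\Hcal_{\mathfrak{t}^-}}$. You should replace the ``length category'' preliminary by this direct computation.
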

\begin{proof}
The equivalence of the first three statements follows immediately from Theorem~\ref{Thm: mutations are wide}, Theorem~\ref{thm: ringel} and Lemma~\ref{lem: simple bricks = ME}.  

(2)$\Rightarrow$(4): First observe that, since $\Scal$ is extension-closed, condition (2) implies that $\Scal$ also coincides with $\filt{\Mcal}$ in $\Hcal_{\mathfrak{u}^-}$. Hence, we have that $\overrightarrow{\Rcal} = \Scal^{\perp_0} = \Mcal^{\perp_0}$ in $\overrightarrow{\Hcal_{\mathfrak{u}^-}}$, and the latter is the torsion-free class in a simple hereditary torsion pair in $\overrightarrow{\Hcal_{\mathfrak{u}^-}}$ by Remark~\ref{rem: tf/t are simple}. Thus, $\overrightarrow{\mathfrak{s}}$ is indeed a simple torsion pair in $\overrightarrow{\Hcal_{\mathfrak{u}^-}}$.

(4)$\Rightarrow$(5): We know from Theorem~\ref{thm: mutation general} that $\overrightarrow{\Scal}[-1]$ is a TTF class in $\overrightarrow{\Hcal_{\mathfrak{t}^-}}$. By assumption, there exists a set of simple objects $\Omega'$ in $\overrightarrow{\Hcal_{\mathfrak{u}^-}}$ such that $\overrightarrow{\Scal} = \Filt{\Omega'}$.  It follows easily from the definitions that the torsion, almost torsion-free objects for a hereditary torsion pair coincide with the torsion simple objects.  Thus, the objects in $\overrightarrow{\Scal}$ that are almost torsion-free coincide with $\Mcal$. By Remark \ref{rem: tf/t are simple}, we have that the objects $\Omega'[-1]$ are simple in $\overrightarrow{\Hcal_{\mathfrak{t}^-}} = (\overrightarrow{\Hcal_{\mathfrak{u}^-}})_{\overrightarrow{\mathfrak{s}}^-}$.  Since, considering the subcategory $\Filt{\Omega'}$ of $\overrightarrow{\Hcal_{\mathfrak{u}^-}}$ and the subcategory $\Filt{\Omega'[-1]}$ of $\overrightarrow{\Hcal_{\mathfrak{t}^-}}$, we have
$$\overrightarrow{\Scal}[-1] = (\Filt{\Omega'})[-1] = \Filt{\Omega'[-1]}$$ 
and, thus, $\overrightarrow{\Scal}[-1]$ is a simple TTF class in $\overrightarrow{\Hcal_{\mathfrak{t}^-}}$.

(5)$\Rightarrow$(1): This implication is immediate by Theorem~\ref{thm: mutation general}.  \end{proof}

Generalising results from~\cite{InTh} again, we obtain certain ``distinguished'' mutations of a torsion pair in a length category.

\begin{lemma}\label{lem:max wide intervals}
Suppose $\Hcal$ is a length category with a torsion pair $\mathfrak{v}=(\Xcal,\Ycal)$.
\begin{enumerate}
\item There is a torsion pair $\widecheck{\mathfrak{v}}=(\widecheck{\Xcal},\widecheck{\Ycal})$ in $\Hcal$ such that 
\[\widecheck{\Xcal}=\{X \in \Hcal \mid \text{ every } f \in \Hom_{\Hcal}( X, Y) \text{ with }  Y\in\Ycal \text{ has } \coker{f} \in\Ycal\}.\]
\item There is a torsion pair $\widehat{\mathfrak{v}}=(\widehat{\Xcal}, \widehat{\Ycal})$ in $\Hcal$ such that
\[\widehat{\Ycal}=\{Y \in \Hcal \mid \text{ every } f \in \Hom_{\Hcal}(X, Y) \text{ with }  X\in\Xcal \text{ has } \ker{f} \in\Xcal\}.\]
\end{enumerate}
Moreover, we have $\widehat{\Xcal}\subseteq\Xcal\subseteq\widecheck{\Xcal}$ and both $\widecheck{\Xcal}\cap\Ycal$ and $\Xcal\cap\widehat{\Ycal}$ are wide subcategories of $\Hcal$.
\end{lemma}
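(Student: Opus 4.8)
The strategy is to produce the two torsion pairs explicitly and then to apply Proposition~\ref{prop:wide intervals} to obtain wideness. For part (1), I will define $\widecheck{\Ycal}$ to be the class described in the statement and then \emph{define} $\widecheck{\Xcal}:={}^{\perp_0}\widecheck{\Ycal}$, so that half of the torsion-pair axioms holds by construction; the work is to show that $(\widecheck{\Xcal},\widecheck{\Ycal})$ is genuinely a torsion pair, i.e.\ that every object $M$ of $\Hcal$ sits in a short exact sequence $0\to X\to M\to Y\to 0$ with $X\in\widecheck{\Xcal}$ and $Y\in\widecheck{\Ycal}$, and that $\widecheck{\Xcal}$ then has the stated description. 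Here the key input is that $\Hcal$ is a length category: I will use the characterisation of torsion/torsion-free classes (torsion-free = closed under subobjects and extensions; torsion = closed under quotients and extensions, cf.~Remark~\ref{rem:torsion torsion-free characterisation}, which applies since a length category is complete, cocomplete and well-powered) to reduce the existence of the torsion pair to closure properties of the candidate class. Concretely, I will check that the displayed class $\widecheck{\Xcal}$ in part (1) is closed under quotients and extensions in $\Hcal$ --- closure under quotients is an easy diagram chase (a cokernel of a map out of a quotient of $X$ factors a cokernel of a map out of $X$), and closure under extensions follows from the snake lemma applied to a map from an extension of two objects in the class to an object of $\Ycal$. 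This gives a torsion class, and its associated torsion-free class is then shown to be exactly $\widecheck{\Ycal}$ by a short orthogonality argument using that $\Ycal\subseteq\widecheck{\Ycal}$. Part (2) is proved dually, defining $\widehat{\Xcal}$ to be the displayed class and $\widehat{\Ycal}:=\widehat{\Xcal}^{\perp_0}$, and checking that the displayed class is closed under subobjects and extensions.

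For the inclusions $\widehat{\Xcal}\subseteq\Xcal\subseteq\widecheck{\Xcal}$: the inclusion $\Xcal\subseteq\widecheck{\Xcal}$ is immediate from the definition of $\widecheck{\Xcal}$, since for $X\in\Xcal$ and $f\colon X\to Y$ with $Y\in\Ycal$ the cokernel $\coker f$ is a quotient of $Y\in\Ycal$, hence lies in $\Ycal$. Dually $\widehat{\Xcal}\subseteq\Xcal$: if $X\in\widehat{\Xcal}$, consider the canonical map into its torsion-free part (or rather use $\id_X$ composed with the torsion-free quotient) --- more precisely, take the torsion decomposition $0\to X'\to X\to X''\to 0$ with $X'\in\Xcal$, $X''\in\Ycal$; the composite $X\to X''$ has kernel $X'$; but applying the defining property of $\widehat{\Ycal}$ is the wrong direction, so instead I use: for $X\in\widehat{\Xcal}$ and any $Y\in\Ycal$, every $f\colon X\to Y$ has $\ker f\in\widehat{\Xcal}\subseteq$(whatever we are proving); the clean argument is that $\widehat{\Ycal}\supseteq\Ycal$ forces $\widehat{\Xcal}={}^{\perp_0}\widehat{\Ycal}\subseteq{}^{\perp_0}\Ycal=\Xcal$. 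Symmetrically $\Ycal\subseteq\widecheck{\Ycal}$ gives $\Xcal={}^{\perp_0}\Ycal\supseteq{}^{\perp_0}\widecheck{\Ycal}=\widecheck{\Xcal}$, wait --- that is the wrong inclusion, so I must be careful: the correct statement is that $\widecheck{\Ycal}\supseteq\Ycal$, hence taking left orthogonals reverses to $\widecheck{\Xcal}={}^{\perp_0}\widecheck{\Ycal}\subseteq{}^{\perp_0}\Ycal=\Xcal$ --- which is the opposite of what is claimed. So in fact I need $\widecheck{\Ycal}\subseteq\Ycal$, i.e.\ the ``check'' torsion pair has a \emph{smaller} torsion-free class; let me instead simply note $\Ycal\subseteq\widecheck{\Ycal}$ is false in general and the correct containment to verify directly is $\Xcal\subseteq\widecheck{\Xcal}$ as above, from which $\widecheck{\Ycal}=\widecheck{\Xcal}^{\perp_0}\subseteq\Xcal^{\perp_0}=\Ycal$ follows, and symmetrically $\Xcal\subseteq\widehat{\Xcal}$ would follow --- but the claimed inclusion is $\widehat{\Xcal}\subseteq\Xcal$. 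The resolution is that $\widehat{\mathfrak v}$ is built from the torsion-free side: $\widehat{\Ycal}\supseteq\Ycal$ by inspection (if $Y\in\Ycal$ then any $f\colon X\to Y$ with $X\in\Xcal$ has $\ker f$ a subobject of $X\in\Xcal$, hence in $\Xcal$), and therefore $\widehat{\Xcal}={}^{\perp_0}\widehat{\Ycal}\subseteq{}^{\perp_0}\Ycal=\Xcal$. So the order of constructions matters: $\widecheck{\mathfrak v}$ is specified by its torsion class $\widecheck{\Xcal}$ (the displayed one) with $\Xcal\subseteq\widecheck{\Xcal}$ checked directly, while $\widehat{\mathfrak v}$ is specified by its torsion-free class $\widehat{\Ycal}$ (the displayed one) with $\Ycal\subseteq\widehat{\Ycal}$ checked directly; the two remaining inclusions then come for free by orthogonality.

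For the final wideness assertions: both $\widecheck{\Xcal}\cap\Ycal$ and $\Xcal\cap\widehat{\Ycal}$ are intersections of a torsion class with a torsion-free class of two nested torsion pairs ($\mathfrak v\le\widecheck{\mathfrak v}$ and $\widehat{\mathfrak v}\le\mathfrak v$ respectively). I will verify criterion (2) of Proposition~\ref{prop:wide intervals} in each case. For $\widecheck{\Xcal}\cap\Ycal$: given $g\colon T\to V$ with $T\in\widecheck{\Xcal}$ and $V\in\Ycal$, we need $\ker g\in\widecheck{\Xcal}$ and $\coker g\in\Ycal$; but $\coker g\in\Ycal$ is precisely the defining condition of $\widecheck{\Xcal}$ applied to $g$, and $\ker g$ is a subobject of $T$ --- here I use that $\widecheck{\Xcal}$, being a torsion class containing $\Xcal$, need not be closed under subobjects, so instead I run the defining condition: for any $Y'\in\Ycal$ and $h\colon\ker g\to Y'$, extend (using that we can form the pushout along $\ker g\hookrightarrow T$) ... actually the cleanest route is that $\widecheck{\Xcal}\cap\Ycal$ equals $\widecheck{\Xcal}\cap\widecheck{\Ycal}[\text{shifted}]$-type description via Proposition~\ref{prop: HRS-tilt triangle}/\ref{Prop: HRS tilt of triples}, but since we only want a direct length-category proof I will instead invoke Lemma~\ref{lem: wide}(1): it suffices to show the cone of any map in $\widecheck{\Xcal}\cap\Ycal$ lies in $(\widecheck{\Xcal}\cap\Ycal)[1]\star(\widecheck{\Xcal}\cap\Ycal)$, which unwinds to exactly the kernel/cokernel conditions above, and those hold by the two defining properties. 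The dual argument handles $\Xcal\cap\widehat{\Ycal}$. The main obstacle I anticipate is the bookkeeping in verifying that the displayed classes really are closed under extensions (the snake-lemma diagram chase) and getting the direction of all the orthogonality inclusions consistent; none of it is deep, but it is the kind of place where a sign or an inclusion can flip, so I would write those two diagram chases out carefully.
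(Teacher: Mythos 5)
Your architecture for producing the torsion pairs matches the paper's: after some initial label-switching that you correct in-flight, you settle on showing that the displayed class $\widecheck{\Xcal}$ in part~(1) is closed under quotients and extensions, conclude from the length hypothesis that it is a torsion class and set $\widecheck{\Ycal}:=\widecheck{\Xcal}^{\perp_0}$, and dually for part~(2); the extension argument via the snake/nine lemma is exactly what the paper does. Two of your ``immediate'' deductions are, however, false as written even though the conclusions are true. You justify $\Xcal\subseteq\widecheck{\Xcal}$ by saying $\coker{f}$ is a quotient of $Y\in\Ycal$ and hence lies in $\Ycal$, but torsion-free classes are not closed under quotients; and you justify $\Ycal\subseteq\widehat{\Ycal}$ by saying $\ker{f}$ is a subobject of $X\in\Xcal$ and hence lies in $\Xcal$, but torsion classes are not closed under subobjects. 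The correct (and shorter) reason in both cases is that $\Hom_\Hcal(\Xcal,\Ycal)=0$ forces $f=0$, so $\coker{f}\cong Y$ (resp.\ $\ker{f}\cong X$).

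The genuine gap is in the wideness verification. You reduce the wideness of $\widecheck{\Xcal}\cap\Ycal$ to showing that any $g\colon T\to V$ with $T\in\widecheck{\Xcal}$ and $V\in\Ycal$ has $\coker{g}\in\Ycal$ (immediate from the definition) and $\ker{g}\in\widecheck{\Xcal}$. For the latter you start the right argument --- pushing out $\ker{g}\hookrightarrow T$ along an arbitrary $h\colon\ker{g}\to Y'$ with $Y'\in\Ycal$ --- but then abandon it and assert that the kernel condition ``holds by the two defining properties''. It does not follow from the definition of $\widecheck{\Xcal}$ directly, since that definition constrains cokernels, not kernels. The pushout is exactly what converts the kernel question into a cokernel question: the pushout $P$ sits in a short exact sequence $0\to Y'\to P\to T/\ker{g}\to 0$ with $T/\ker{g}\cong\im{g}$ a subobject of $V\in\Ycal$, so $P\in\Ycal$; the induced map $T\to P$ has cokernel isomorphic to $\coker{h}$, and since $T\in\widecheck{\Xcal}$ and $P\in\Ycal$ the defining property gives $\coker{h}\in\Ycal$, whence $\ker{g}\in\widecheck{\Xcal}$. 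Note that the paper handles this step by citing the proof of~\cite[Proposition~2.12]{InTh}; your instinct to verify Proposition~\ref{prop:wide intervals}(2) directly is a reasonable self-contained alternative, but you must actually carry out the pushout argument rather than wave at it.
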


\begin{proof}
We prove only (1) and one half of the final statements. The others follow by a dual analogous argument.

It is easy to check that $\Xcal\subseteq\widecheck{\Xcal}$ and $\widecheck{\Xcal}$ is closed under quotients. To see that $\widecheck{\Xcal}$ is closed under extensions, consider an exact sequence $0\to X_1\to X\to X_2\to 0$ in $\Hcal$ such that $X_1,X_2\in\widecheck{\Xcal}$ and consider further a homomorphism $f\colon X\to Y$ with $Y\in\Ycal$. If we denote by $Y_1$ the image of the composition $X_1\to X\to Y$ and by $Y_2$ the cokernel of the same map, we obtain a commutative diagram with exact rows and columns
\[
\xymatrix{
0\ar[r] & X_1 \ar[r] \ar[d] & X\ar[r] \ar[d] & X_2\ar[r] \ar[d] & 0 \\
0\ar[r] & Y_1 \ar[r] \ar@{->>}[d] & Y\ar[r] \ar@{->>}[d] & Y_2\ar[r] \ar@{->>}[d] & 0 \\
0\ar[r] & C_1 \ar[r] & \coker{f} \ar[r] & C_2 \ar[r] & 0.
}
\]
Now $Y_1\in\Ycal$ since it is a subobject of $Y$ and $Y_2\in\Ycal$ since $X_1\in\widecheck{\Xcal}$. By inspecting the left and right columns, it follows that $C_1,C_2\in\Ycal$. As $\Ycal$ is closed under extensions, we also have $\coker{f}\in\Ycal$ and, hence, $X\in\widecheck{\Xcal}$. Since $\Hcal$ is a length category and $\widecheck{\Xcal}\subseteq\Hcal$ is closed under quotients and extensions, it is a torsion class in $\Hcal$.
Finally, the fact that $\widecheck{X}\cap\Ycal$ is a wide subcategory of $\Hcal$ follows by the argument for~\cite[Proposition 2.12]{InTh}.
\end{proof}
 

Combining the last two results with those from~\cite{AP} allows us to describe all right or left mutations of a torsion pair in terms of almost torsion objects or almost torsion-free objects, respectively. Moreover, we identify the ``distinguished'' mutations from Lemma~\ref{lem:max wide intervals} as ``extremal'' mutations of the given torsion pair.

\begin{corollary}
Suppose we are in Setup~\ref{setup: length}.
\begin{enumerate}
\item Let $\Mcal$ be a representative set of isomorphism classes of torsion-free, almost torsion objects for $\mathfrak{u}$.
Then the right mutations of $\mathfrak{u}$ bijectively correspond to subsets of $\Mcal$.
In particular, $\mathfrak{u}$ admits a proper right mutation if and only if there are torsion-free, almost torsion objects for $\mathfrak{u}$.
Moreover, if $\mathfrak{t}$ is a right mutation of $\mathfrak{u}$, then $\Ucal \subseteq \Tcal \subseteq \widecheck{\Ucal}$.
\item Let $\Ncal$ be a representative set of isomorphism classes of torsion, almost torsion-free objects for $\mathfrak{t}$.
Then the left mutations of $\mathfrak{t}$ bijectively correspond to subsets of $\Ncal$.
In particular, $\mathfrak{t}$ admits a proper left mutation if and only if there are torsion, almost torsion-free objects for $\mathfrak{t}$.
Moreover, if $\mathfrak{u}$ is a left mutation of $\mathfrak{t}$, then $\widehat{\Tcal}\subseteq\Ucal\subseteq\Tcal$.
\end{enumerate} 
\end{corollary}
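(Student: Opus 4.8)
The plan is to prove part~(1) in full; part~(2) then follows by the dual argument, applying the dual halves of Theorems~\ref{Thm: mutations are wide}(3) and~\ref{Thm: f.d. mutations are simple} and of Lemmas~\ref{lem: simple bricks = ME} and~\ref{lem:max wide intervals}, and using Corollary~\ref{cor:inverse mutations} to translate between ``$\mathfrak{t}$ is a right mutation of $\mathfrak{u}$'' and ``$\mathfrak{u}$ is a left mutation of $\mathfrak{t}$''. So fix $\mathfrak{u}=(\Ucal,\Vcal)$ and let $\Omega$ be a representative set of the isoclasses of simple objects of the Grothendieck category $\overrightarrow{\Hcal_{\mathfrak{u}^-}}$. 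Since $(\overrightarrow{\Vcal},\overrightarrow{\Ucal}[-1])$ is a torsion pair in $\overrightarrow{\Hcal_{\mathfrak{u}^-}}$ (Remark~\ref{rem:inversetilt} and Proposition~\ref{thm: CB bijections}), each object of $\Omega$ is torsion or torsion-free for it, and by Remark~\ref{rem: tf/t are simple}(1),(3) the torsion ones are precisely the members of the set $\Mcal$ of torsion-free, almost torsion objects for $\mathfrak{u}$. Recall that a simple hereditary torsion pair in $\overrightarrow{\Hcal_{\mathfrak{u}^-}}$ is, by definition, one generated by some $\Omega'\subseteq\Omega$; its torsion class is $\Filt{\Omega'}$, whose simple objects are exactly $\Omega'$, so these torsion pairs are parametrised bijectively by the subsets of $\Omega$, and $\Filt{\Omega'}\subseteq\overrightarrow{\Vcal}$ holds precisely when $\Omega'\subseteq\Mcal$ (using that $\overrightarrow{\Vcal}$ is closed under subobjects, extensions and direct limits).

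The bijection of part~(1) is now assembled as a composite. Given a right mutation $\mathfrak{t}=(\Tcal,\Fcal)$ of $\mathfrak{u}$, Theorem~\ref{Thm: f.d. mutations are simple} tells us that the associated torsion pair $\overrightarrow{\mathfrak{s}}$ in $\overrightarrow{\Hcal_{\mathfrak{u}^-}}$ is simple hereditary, and since $\Scal=\Tcal\cap\Vcal\subseteq\Vcal$ its torsion class lies in $\overrightarrow{\Vcal}$; hence it is generated by a subset $\Omega'\subseteq\Mcal$, and a direct check using Propositions~\ref{thm: CB bijections}, \ref{prop: HRS-tilt triangle}, Lemma~\ref{lem: s restricts} and Lemma~\ref{lem: simple bricks = ME} gives $\Omega'=\Mcal\cap\Tcal$. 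This assignment $\mathfrak{t}\mapsto\Omega'$ is injective, being the composite of the bijection $\mathfrak{t}\leftrightarrow\mathfrak{s}$ of Proposition~\ref{prop: HRS-tilt triangle}, the lift bijection of Proposition~\ref{thm: CB bijections} for $\overrightarrow{\Hcal_{\mathfrak{u}^-}}$ (applicable since $\overrightarrow{\mathfrak{s}}$ is of finite type when $\mathfrak{t}$ is a right mutation, cf.\ Theorem~\ref{thm: mutation general}, Proposition~\ref{prop:cover vs finite type} and Lemma~\ref{lem: s restricts}), and the parametrisation of simple hereditary torsion pairs by subsets of $\Omega$. For surjectivity I would start from an arbitrary $\Omega'\subseteq\Mcal$, form $\overrightarrow{\mathfrak{s}}:=(\Filt{\Omega'},(\Omega')^{\perp_0})$ in $\overrightarrow{\Hcal_{\mathfrak{u}^-}}$, note that $\filt{\Omega'}=\Filt{\Omega'}\cap\Hcal_{\mathfrak{u}^-}$ is a Serre subcategory of $\Hcal_{\mathfrak{u}^-}$, so that $\overrightarrow{\mathfrak{s}}$ is hereditary of finite type by Theorem~\ref{thm: Krause Herzog}, with torsion class inside $\overrightarrow{\Vcal}$, and then descend via Propositions~\ref{thm: CB bijections} and~\ref{prop: HRS-tilt triangle} to a torsion pair $\mathfrak{t}$ in $\Hcal$ with $\Ucal\subseteq\Tcal$ whose associated $\overrightarrow{\mathfrak{s}}$ (by Lemma~\ref{lem: s restricts}) is the one we built; Theorem~\ref{Thm: f.d. mutations are simple} then certifies that $\mathfrak{t}$ is a right mutation of $\mathfrak{u}$ mapping to $\Omega'$. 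The step I expect to be the main obstacle is precisely this surjectivity argument: keeping the chain of lift correspondences (Propositions~\ref{thm: CB bijections}, \ref{MarksZvonareva}, \ref{lift compatible HRS}, \ref{prop: HRS-tilt triangle}) coherent so that a prescribed subset of $\Mcal$ produces a torsion pair genuinely in $\Hcal$ and not merely in $\overrightarrow{\Hcal}$, and verifying that $\Filt{\Omega'}$ is a finite-type torsion class, so that it does descend.

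Finally, $\mathfrak{u}$ is its own trivial right mutation --- $\Tcal\cap\Vcal=\Ucal\cap\Vcal=0$ is wide --- and this corresponds to $\Omega'=\varnothing$, so proper right mutations of $\mathfrak{u}$ correspond to the nonempty subsets of $\Mcal$, which exist if and only if $\Mcal\neq\varnothing$; this gives the ``in particular'' clause. For the displayed inclusions, $\Ucal\subseteq\Tcal$ is built into the notion of right mutation, and for $\Tcal\subseteq\widecheck{\Ucal}$ I would take $T\in\Tcal$ and an arbitrary morphism $f\colon T\to V$ with $V\in\Vcal$: Theorem~\ref{Thm: mutations are wide}(3) forces $\coker{f}\in\Vcal$, which is exactly the defining condition of $\widecheck{\Ucal}$ in Lemma~\ref{lem:max wide intervals}(1) read with $\mathfrak{v}=\mathfrak{u}$. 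Part~(2) is obtained by the dual argument, with $\Ncal$ appearing as the set of torsion-free simple objects of $\overrightarrow{\Hcal_{\mathfrak{t}^-}}$ (via Theorem~\ref{Thm: f.d. mutations are simple} and the dual of Remark~\ref{rem: tf/t are simple}), the trivial left mutation of $\mathfrak{t}$ corresponding to $\varnothing$, and $\widehat{\Tcal}\subseteq\Ucal$ coming from the ``$\ker{g}\in\Tcal$'' half of Theorem~\ref{Thm: mutations are wide}(3) together with Lemma~\ref{lem:max wide intervals}(2).
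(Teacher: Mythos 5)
Your injectivity argument, the identification of the trivial mutation with $\Omega'=\varnothing$, and the ``moreover'' inclusions are all essentially the paper's. The real divergence is in the surjectivity of the correspondence. The paper argues internally in $\Hcal$: it views a subset $\Mcal'\subseteq\Mcal$ as a semibrick inside the length category $\Wcal_r(\Vcal)=\widecheck{\Ucal}\cap\Vcal$, obtains $\Scal=\filt{\Mcal'}$ as a hereditary torsion class there, and then cites Asai--Pfeifer (\cite{AP}, Theorems 4.2 and 6.6) to conclude directly that $\Tcal:=\Ucal\star\filt{\Mcal'}$ is a torsion class of $\Hcal$. Your route instead goes up to $\overrightarrow{\Hcal_{\mathfrak{u}^-}}$, forms $\overrightarrow{\mathfrak{s}}=(\Filt{\Omega'},(\Omega')^{\perp_0})$, checks that it is hereditary of finite type via Theorem~\ref{thm: Krause Herzog}, and then proposes to descend via Propositions~\ref{thm: CB bijections} and~\ref{prop: HRS-tilt triangle}.

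That descent is where there is a real gap, which you anticipate but do not close. Proposition~\ref{thm: CB bijections}(2) descends a finite-type torsion pair $(\Xcal,\Ycal)$ from a locally coherent Grothendieck category $\Acal$ to $\fp\Acal$ only under the hypothesis that $\Xcal\cap\fp\Acal$ is a \emph{torsion class} of $\fp\Acal$ (or unconditionally when $\Acal$ is locally noetherian). You establish that $\filt{\Omega'}=\Filt{\Omega'}\cap\Hcal_{\mathfrak{u}^-}$ is a \emph{Serre subcategory} of $\Hcal_{\mathfrak{u}^-}$, which is exactly what Theorem~\ref{thm: Krause Herzog} needs, but Serre does not imply torsion class; and $\Hcal_{\mathfrak{u}^-}$, unlike $\Hcal$, need not be a length category, nor $\overrightarrow{\Hcal_{\mathfrak{u}^-}}$ locally noetherian, so the torsion part of a finitely presented object with respect to the hereditary torsion pair generated by $\Omega'$ need not remain finitely presented. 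Hence neither clause of Proposition~\ref{thm: CB bijections} applies at the level of $\overrightarrow{\Hcal_{\mathfrak{u}^-}}$ without further argument. A way to repair the step while staying on your route: do not descend $\overrightarrow{\mathfrak{s}}$; instead apply the lifted HRS-triangle (Lemma~\ref{lem: s restricts}) to $\overrightarrow{\mathfrak{s}}$ and $\overrightarrow{\mathfrak{u}}$ to get $\overrightarrow{\mathfrak{t}}$ in $\overrightarrow{\Hcal}$ directly, show that $\overrightarrow{\mathfrak{t}}$ is of finite type by a double application of Theorem~\ref{thm:HRS-tilt AB5} (the right tilt at the finite-type $\overrightarrow{\mathfrak{s}}$ has Grothendieck heart), and then descend $\overrightarrow{\mathfrak{t}}$ using the locally noetherian clause of Proposition~\ref{thm: CB bijections}, which applies because $\Hcal$ being length forces $\overrightarrow{\Hcal}$ to be locally finite, hence locally noetherian. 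This closes the gap but costs rather more of the lifting machinery than the paper's direct appeal to~\cite{AP}, which never leaves $\Hcal$ for the surjectivity step.
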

\begin{proof}
We prove (1); the argument for (2) is dual. 
To start with, note that $\Mcal$ is a semibrick by Remark~\ref{rem: tf/t are simple}(1) (as it is a set of pairwise non-isomorphic simple objects in $\Hcal_{\mathfrak{u}^-}$), and so is any subset of~$\Mcal$.

The assignment between right mutations and subsets of $\Mcal$ can be described as follows. Given a right mutation $\mathfrak{t}=(\Tcal,\Fcal)$ of $\mathfrak{u}=(\Ucal,\Vcal)$, the intersection $\Scal=\Tcal\cap\Vcal$ is a wide subcategory of $\Hcal$ by Theorem~\ref{Thm: mutations are wide}, and so is of the form $\Scal=\filt{\Mcal'}$ for a unique subset $\Mcal'\subseteq\Mcal$ by Theorem~\ref{Thm: f.d. mutations are simple} (recall that simply $\Mcal'=\Mcal\cap\Scal$ by Theorem~\ref{thm: ringel}). This assignment is injective since one can recover $\Tcal$ from $\Mcal'$ as $\Tcal=\Ucal\star\filt{\Mcal'}$, see Proposition~\ref{prop: tp vs triples}.

On the other hand, if $\Mcal'\subseteq\Mcal\subseteq\Vcal$ is any subset, then $\Mcal'\subseteq\widecheck{\Ucal}$ by the very definition of almost torsion objects for $\mathfrak{u}$. Following~\cite[\S6]{AP}, we denote by $\Wcal_r(\Vcal):=\widecheck{\Ucal}\cap\Vcal$ the wide subcategory of $\Hcal$ obtained by applying Lemma~\ref{lem:max wide intervals} to $\mathfrak{u}$. Note that $\Wcal_r(\Vcal)$ is also a wide subcategory of $\Hcal_{\mathfrak{u}^-}$ by Lemma~\ref{lem: wide},  hence $\Mcal'$ becomes a set of simple objects in $\Wcal_r(\Vcal)$. As $\Wcal_r(\Vcal)$ is necessarily an abelian length category, $\Scal:=\filt{\Mcal'}$ is torsion class of a hereditary torsion pair in $\Wcal_r(\Vcal)$. Now it follows from~\cite[Theorems 4.2 and 6.6]{AP} that $\Tcal:=\Ucal\star\Scal$ is a torsion class in $\Hcal$ such that $\Vcal\cap\Tcal=\Scal=\filt{\Mcal'}$. Hence, the assignment from the previous paragraph is also surjective.
\end{proof}

\subsection{Irreducible mutations}  In this final subsection we consider irreducible mutations of torsion pairs.  The notation $\Ind{\sigma}$ and $\Ind{\sigma'}$ is used for the isoclasses of indecomposable objects in $\Prod{\sigma}$ and $\Prod{\sigma'}$ respectively. 

\begin{definition}  Suppose $\sigma$ and $\sigma'$ are pure-injective cosilting objects in a compactly generated triangulated category $\Dcal$ and let $\Escr$ be the class $\Prod{\sigma}\cap \Prod{\sigma'}$. Suppose $\sigma'$ is a right mutation of $\sigma$ or, equivalently, that $\sigma$ is a left mutation of $\sigma'$ (see Corollary~\ref{cor:inverse mutations}). We will say that $\sigma'$ is an \textbf{irreducible right mutation} of $\sigma$ if $|\Ind{\sigma}\setminus \Ind\Escr| = 1$.  We will say that $\sigma$ is an \textbf{irreducible left mutation} of $\sigma'$ if $|\Ind{\sigma'}\setminus \Ind\Escr| = 1$.
\end{definition}  

Recall that, by Proposition \ref{prop: bij ind summands}, there is a bijection between $\Ind{\sigma}\setminus \Ind\Escr$ and $\Ind{\sigma'}\setminus\Ind\Escr$ and so $\sigma'$ is an irreducible right mutation of $\sigma$ if and only if $\sigma$ is an irreducible left mutation of $\sigma'$. \\

\noindent\textbf{Notation:} Within Setup \ref{setup: length}, we fix some further notation that we use in the remainder of the section.
\begin{itemize}
\item Denote by $\sigma_\mathfrak{u}$ and $\sigma_\mathfrak{t}$ the cosilting objects in $\D R$ such that $\Hcal_{\sigma_{\mathfrak{u}}}=\overrightarrow{\Hcal_{\mathfrak{u}^-}}$ and $\Hcal_{\sigma_{\mathfrak{t}}}=\overrightarrow{\Hcal_{\mathfrak{t}^-}}$. 

\item In the case where $\mathfrak{t}$ is a right mutation of $\mathfrak{u}$, let $\Mcal$ be the semibrick associated to $\Scal$. For each $M$ in $\Mcal$, we know from Remark \ref{rem: tf/t are simple} and Lemma~\ref{lem: simple bricks = ME} that 
\begin{itemize}
\item $M$ is a simple object in $\overrightarrow{\Hcal_{\mathfrak{u}^-}}$; we denote by $\sigma_M$ the object in $\Ind{\sigma_\mathfrak{u}}$ such that $H^0_{\sigma_\mathfrak{u}}(\sigma_M)$ is the injective envelope of $M$ in the locally coherent Grothendieck category ${\Hcal_{\sigma_\mathfrak{u}}}$. 
\item $M[-1]$ is a simple object in $\overrightarrow{\Hcal_{\mathfrak{t}^-}}$ and we similarly denote by $\sigma_{M[-1]}$ the object in  $\Ind{\sigma_\mathfrak{t}}$ such that $H^0_{\mathfrak{t}^-}(\sigma_{M[-1]})$ is the injective envelope of $M[-1]$ in the locally coherent Grothendieck category ${\Hcal_{\sigma_\mathfrak{t}}}$.
\end{itemize}
\end{itemize}

\begin{lemma}\label{lem: indecomp mutation}
Suppose we are in Setup \ref{setup: length} and that $\mathfrak{t}$ is a right mutation of $\mathfrak{u}$. Let ${\sigma_\mathfrak{u}}$ and $\sigma_\mathfrak{t}$ be a cosilting object associated to $\mathfrak{u}$ and $\mathfrak{t}$ respectively and consider $\Escr =\Prod{\sigma_\mathfrak{u}} \cap \Prod{\sigma_\mathfrak{t}}$.  Then: \begin{enumerate}
\item the set $\Ind{\sigma_\mathfrak{u}}\setminus\Ind\Escr$ coincides with $\{\sigma_M \mid M \in \Mcal\}$; and
\item the set $\Ind{\sigma_\mathfrak{t}}\setminus\Ind\Escr$ coincides with $\{\sigma_{M[-1]} \mid M \in \Mcal\}$.
\end{enumerate}
\end{lemma}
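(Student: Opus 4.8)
## Proof Plan for Lemma~\ref{lem: indecomp mutation}

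The plan is to exploit the identification of $\Prod{\sigma_\mathfrak{u}}$ with $\Inj{\Hcal_{\sigma_\mathfrak{u}}}$ and $\Prod{\sigma_\mathfrak{t}}$ with $\Inj{\Hcal_{\sigma_\mathfrak{t}}}$ via Proposition~\ref{prop:summary cosilting}(2), reducing the statement to a computation about indecomposable injectives in two locally coherent Grothendieck categories related by an HRS-tilt. First I would record that, since $\mathfrak{t}$ is a right mutation of $\mathfrak{u}$, Theorem~\ref{Thm: mutations are wide} tells us $\Scal$ is wide in $\Hcal$, and hence by Theorem~\ref{thm: mutation general} the torsion pair $\overrightarrow{\mathfrak{s}}=(\overrightarrow{\Scal},\overrightarrow{\Rcal})$ is hereditary of finite type in $\overrightarrow{\Hcal_{\mathfrak{u}^-}}=\Hcal_{\sigma_\mathfrak{u}}$, and $\overrightarrow{\Scal}[-1]$ is a TTF class in $\Hcal_{\sigma_\mathfrak{t}}$. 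By definition of mutation and Theorem~\ref{thm:mutation as HRS-tilt}(2), the class $\Escr$ is $\Prod{\sigma_\mathfrak{u}}\cap\Prod{\sigma_\mathfrak{t}}$ and corresponds under $H^0_{\sigma_\mathfrak{u}}$ to $\Inj{\Hcal_{\sigma_\mathfrak{u}}}\cap\Cogen{H^0_{\sigma_\mathfrak{u}}(\Escr)}$, i.e.\ the injective objects of $\Hcal_{\sigma_\mathfrak{u}}$ lying in the torsion-free class $\overrightarrow{\Rcal}$; equivalently, the injectives $E$ with no subobject in $\overrightarrow{\Scal}$.

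The heart of the argument is then: for a hereditary torsion pair $(\overrightarrow{\Scal},\overrightarrow{\Rcal})$ in a Grothendieck category, an indecomposable injective $E$ either lies in $\overrightarrow{\Rcal}$ or has an essential socle-type situation forcing it to be the injective envelope of an object of $\overrightarrow{\Scal}$ — more precisely, $\Ind{\sigma_\mathfrak{u}}\setminus\Ind\Escr$ consists exactly of the injective envelopes $E(M)$ with $M$ an indecomposable object of $\overrightarrow{\Scal}$ that cannot be enlarged inside $\overrightarrow{\Scal}$, and since $\overrightarrow{\Scal}=\filt{\Mcal}$ by Theorem~\ref{Thm: f.d. mutations are simple} with $\Mcal$ a semibrick, these $M$ are precisely the simple objects of $\overrightarrow{\Scal}$, i.e.\ the members of $\Mcal$. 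I would make this precise using Theorem~\ref{thm: Krause Herzog}: the open subset of $\Spec{\Hcal_{\sigma_\mathfrak{u}}}$ attached to $\overrightarrow{\Scal}$ is $\{E\mid E\notin\overrightarrow{\Rcal}\}=\Ind{\sigma_\mathfrak{u}}\setminus\Ind\Escr$, and one checks directly that this set is in bijection with isoclasses of simple objects of the length category $\overrightarrow{\Scal}$, via $E\mapsto\soc E$ intersected with (or the largest subobject inside) $\overrightarrow{\Scal}$, which is simple and lies in $\Mcal$. This gives part~(1). Part~(2) follows by the same reasoning applied to the TTF class $\overrightarrow{\Scal}[-1]$ inside $\Hcal_{\sigma_\mathfrak{t}}$: its simple objects are the $M[-1]$, $M\in\Mcal$ (by Remark~\ref{rem: tf/t are simple}(1), the shift of a simple object of $\Hcal_{\sigma_\mathfrak{u}}$ in $\overrightarrow{\Scal}$ is simple in $\Hcal_{\sigma_\mathfrak{t}}$), and their injective envelopes are by definition the $\sigma_{M[-1]}$. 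One also needs the symmetric description of $\Escr$ from the point of view of $\sigma_\mathfrak{t}$: by Corollary~\ref{cor:inverse mutations}, $\sigma_\mathfrak{u}$ is a left mutation of $\sigma_\mathfrak{t}$, and $\Escr$ corresponds under $H^0_{\sigma_\mathfrak{t}}$ to those injectives of $\Hcal_{\sigma_\mathfrak{t}}$ with no subobject in $\overrightarrow{\Scal}[-1]$, by Theorem~\ref{thm:mutation as HRS-tilt}(1) (where for a left mutation the relevant torsion class ${}^{\perp_0}H^0(\Escr)$ is exactly $\overrightarrow{\Scal}[-1]$).

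Finally I would note that consistency with Proposition~\ref{prop: bij ind summands} is automatic: the bijection $\alpha\mapsto\alpha'$ there sends $\sigma_M$ to the cone of its $\Escr$-cover, and one verifies this cone is $\sigma_{M[-1]}$ — indeed, applying $H^0_{\sigma_\mathfrak{u}}$ and $H^0_{\sigma_\mathfrak{t}}$ to the cover triangle of $\sigma_M$ and comparing with the torsion truncation of $E(M)$ with respect to $(\overrightarrow{\Scal},\overrightarrow{\Rcal})$ shows the cone realizes the injective envelope of $M[-1]$ in $\Hcal_{\sigma_\mathfrak{t}}$. This last compatibility check is the step I expect to be the main obstacle: it requires carefully tracking the cohomology functors of two different t-structures along the mutation triangle and invoking the description of $\overrightarrow{\Rcal}$-covers of injectives from Lemma~\ref{rem:approximate every injective} and Lemma~\ref{lem:approx prop}. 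Everything else is an application of the Krause–Herzog dictionary (Theorem~\ref{thm: Krause Herzog}) together with Ringel's theorem (Theorem~\ref{thm: ringel}) and the already-established equivalences $\Prod{\sigma}\simeq\Inj{\Hcal_\sigma}$.
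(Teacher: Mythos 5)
Your proof is correct and follows essentially the same route as the paper: identify $\Escr$ with the torsion-free injectives of $\Hcal_{\sigma_\mathfrak{u}}$ via the HRS-tilt characterisation of mutation (Theorem~\ref{thm:mutation as HRS-tilt}), apply the Krause--Herzog dictionary (Theorem~\ref{thm: Krause Herzog}) together with $\overrightarrow{\Scal}=\Filt{\Mcal}$ from Theorem~\ref{Thm: f.d. mutations are simple} to identify $\Ind{\sigma_\mathfrak{u}}\setminus\Ind\Escr$ with the injective envelopes of the simples in $\Mcal$, and then run the symmetric argument for $\sigma_\mathfrak{t}$ using the left-mutation description. The ``compatibility check'' with Proposition~\ref{prop: bij ind summands} that you flag as the main obstacle is not actually needed --- the paper proves parts (1) and (2) independently via the Krause--Herzog argument and never invokes that cover/cone bijection --- so you already have the complete proof and can drop the step you expected to be hardest.
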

\begin{proof}
(1) By Theorems~\ref{thm:mutation as HRS-tilt}(2) and \ref{Thm: f.d. mutations are simple}, we have that 
$$(\overrightarrow{\Scal},\overrightarrow{\Rcal})=({}^{\perp_0}H_{\sigma_{\mathfrak{u}}}^0(\Escr),\Cogen{H_{\sigma_{\mathfrak{u}}}^0(\Escr)})=(\Filt \Mcal, \Mcal^{\perp_0})$$
is a hereditary torsion pair of finite type in $\Hcal_{\sigma_\mathfrak{u}}$. By Theorem \ref{thm: Krause Herzog}
\[\mathcal O:=\{E\in\Spec{\Hcal_{\sigma_\mathfrak{u}}}\mid E\notin\Mcal^{\perp_0}\}\] 
is the associated open set in $\Spec{\Hcal_{\sigma_\mathfrak{u}}}$, and it clearly consists of the injective envelopes of the simples from $\Mcal$. In other words, we have that
$\mathcal O=\{H^0_{\sigma_\mathfrak{u}}(\sigma_M)\mid M\in \Mcal\}$. Since $H^0_{\sigma_\mathfrak{u}}(\Escr)$ is the class of torsion-free injective objects of $\Hcal_{\sigma_\mathfrak{u}}$, it follows that $H^0_{\sigma_\mathfrak{u}}$ induces a bijection between $\Ind{\sigma_\mathfrak{u}}\setminus\Ind\Escr$ and $\mathcal O$, and the claim is proven.

(2) It follows from the proof of Theorem \ref{Thm: f.d. mutations are simple}, Remark \ref{rem: tf/t are simple} and the fact that direct limits in both $\overrightarrow{\Hcal_{\mathfrak{t}^-}}$ and $\overrightarrow{\Hcal_{\mathfrak{u}^-}}$ are directed homotopy colimits (see \cite[Corollary 5.8]{SSV}) that $\overrightarrow{\Scal[-1]}=\overrightarrow{\Scal}[-1]=\Filt{\Ncal[-1]}$ in $\Hcal_{\mathfrak{t}^-}$. Recall that, by Lemma \ref{lem: s restricts}, $\overrightarrow{\mathbb{T}_{\mathfrak u^-}}$ is the left HRS-tilt of $\overrightarrow{\mathbb{T}_{\mathfrak t^-}}$ at the torsion pair $\overrightarrow{\mathfrak r}=(\overrightarrow{\Rcal},\overrightarrow{\Scal[-1])}$. Since $\mathfrak{u}$ is a left mutation of $\mathfrak{t}$, it follows from Theorem~\ref{thm:mutation as HRS-tilt}(1) that $\overrightarrow{\Scal[-1]}={}^{\perp_0}H_{\sigma_\mathfrak{t}}^0(\Escr)$. The same arguments as (1) yield that $\Ind{\sigma_\mathfrak{t}}\setminus\Ind\Escr$ coincides with $\{\sigma_{M[-1]} \mid M \in \Mcal\}$.
\end{proof}

\begin{remark}
It follows from Lemma \ref{lem: indecomp mutation} that right mutation of a torsion pair $\mathfrak{u}$ within Setup \ref{setup: length} consists of removing indecomposable summands of an associated cosilting object $\sigma_\mathfrak{u}$ in $\D R$ and replacing them with new ones.  Indeed, since $\sigma_M$ corresponds to the injective envelope of a simple object in $\Hcal_{\sigma_\mathfrak{u}}$ for every $M$ in $\Mcal$, it follows that $\sigma_M$ is a direct summand of every cosilting object equivalent to $\sigma_\mathfrak{u}$.  Similarly, for every $M$ in $\Mcal$, the indecomposable object $\sigma_{M[-1]}$ is a direct summand of every cosilting object corresponding to $\mathfrak{t}$. 
\end{remark}

As a corollary of the results above we are able to characterise minimal inclusions of torsion classes in length hearts $\Hcal$ (as in Setup \ref{setup: length}) in terms of irreducible mutations of the associated cosilting objects. Recall that if the inclusion $\Ucal\subseteq \Tcal$ is proper, it is said to be a \textbf{minimal inclusion of torsion classes} if for any other torsion class $\Xcal$ of $\Hcal$, if $\Ucal\subseteq \Xcal\subseteq \Tcal$ then either $\Ucal=\Xcal$ or $\Tcal=\Xcal$.

\begin{remark}\label{rmk: BCZ} If in Setup \ref{setup: length} we have $\Hcal=\mod A$ for a finite-dimensional algebra $A$, it is shown in \cite[Theorem 2.8]{BCZ} that if $\Ucal\subseteq \Tcal$ is a minimal inclusion of torsion classes, then $\Tcal$ can be built by adjoining to $\Ucal$ an indecomposable module satisfying certain properties. This module turns out to be precisely the unique torsion-free, almost torsion module for the torsion pair $\mathfrak{u}$, as shown in \cite{S}. It can easily be checked that these arguments hold also for an arbitrary length category $\Hcal$, by replacing the notion of dimension by length where necessary.
\end{remark}

\begin{corollary}\label{arrows} Suppose we are in Setup \ref{setup: length}.   The following statements are equivalent.
\begin{enumerate}
\item ${\sigma_{\mathfrak t}}$ is an {irreducible right mutation} of $\sigma_{\mathfrak{u}}$.
\item The class $\Scal$ coincides with $\filt{M}$ for a brick $M$ in $\Hcal$.
\item The inclusion $\Ucal\subseteq \Tcal$ is a minimal inclusion of torsion classes.
\end{enumerate}
\end{corollary}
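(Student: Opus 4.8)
The plan is to prove the cycle (1) $\Leftrightarrow$ (2) $\Leftrightarrow$ (3), using the structure theory already assembled. For (1) $\Leftrightarrow$ (2), I would first record that $\sigma_\mathfrak{t}$ is a right mutation of $\sigma_\mathfrak{u}$ if and only if $\overrightarrow{\mathfrak{t}}$ is a right mutation of $\overrightarrow{\mathfrak{u}}$: indeed, by Proposition~\ref{lift compatible HRS} we have $\overrightarrow{\mathbb{T}}_{\overrightarrow{\mathfrak{u}}^-}=\overrightarrow{\mathbb{T}_{\mathfrak{u}^-}}=\mathbb{T}_{\sigma_\mathfrak{u}}$ and likewise for $\mathfrak{t}$, so $\sigma_\mathfrak{u}$, $\sigma_\mathfrak{t}$ are equivalent to the cosilting objects attached to $\overrightarrow{\mathfrak{u}}$, $\overrightarrow{\mathfrak{t}}$, and mutation is invariant under equivalence by Corollary~\ref{cor:inverse mutations}. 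By Theorem~\ref{Thm: mutations are wide} this happens exactly when $\Scal$ is a wide subcategory of $\Hcal$. Assuming this, Theorem~\ref{thm: ringel} gives that the semibrick $\Mcal$ of simple objects of $\Scal$ satisfies $\Scal=\filt{\Mcal}$, and Lemma~\ref{lem: indecomp mutation}(1) identifies $\Ind{\sigma_\mathfrak{u}}\setminus\Ind\Escr$ with $\{\sigma_M\mid M\in\Mcal\}$; the assignment $M\mapsto\sigma_M$ is injective because non-isomorphic simple objects of the Grothendieck category $\Hcal_{\sigma_\mathfrak{u}}$ have non-isomorphic injective envelopes and $H^0_{\sigma_\mathfrak{u}}$ restricts to an equivalence $\Prod{\sigma_\mathfrak{u}}\simeq\Inj{\Hcal_{\sigma_\mathfrak{u}}}$ by Proposition~\ref{prop:summary cosilting}. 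Hence $|\Ind{\sigma_\mathfrak{u}}\setminus\Ind\Escr|=|\Mcal|$, so $\sigma_\mathfrak{t}$ is an irreducible right mutation of $\sigma_\mathfrak{u}$ iff $|\Mcal|=1$, i.e.\ iff $\Scal=\filt{M}$ for a single brick $M$ (every element of a semibrick being a brick). Conversely, if $\Scal=\filt{M}$ for a brick $M$, then $\{M\}$ is a semibrick, so $\Scal$ is wide by Theorem~\ref{thm: ringel}, $\sigma_\mathfrak{t}$ is a right mutation of $\sigma_\mathfrak{u}$ by the above, and $\Mcal=\{M\}$, so the mutation is irreducible.

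For (2) $\Rightarrow$ (3), assume $\Scal=\filt{M}$ for a brick $M$. Then $\Ucal\subsetneq\Tcal$, since $\Tcal\cap\Vcal=\Scal$ contains $M\ne 0$ whereas $\Ucal\cap\Vcal=0$. Let $\Xcal$ be a torsion class of $\Hcal$ with $\Ucal\subseteq\Xcal\subseteq\Tcal$. By Proposition~\ref{prop: tp vs triples}, applied to the nested pairs $\mathfrak{u}$ and $(\Xcal,\Xcal^{\perp_0})$, we have $\Xcal=\Ucal\star(\Xcal\cap\Vcal)$; moreover $\Xcal\cap\Vcal$ is a torsion class of the abelian length category $\Scal=\filt{M}$ (it is closed under quotients and extensions inside $\Scal$, and these are computed in $\Hcal$ because $\Scal$ is wide). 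Since $M$ is the unique simple object of $\filt{M}$ and every nonzero object of $\filt{M}$ surjects onto $M$, the only torsion classes of $\filt{M}$ are $0$ and $\filt{M}$. Thus $\Xcal\cap\Vcal\in\{0,\Scal\}$, so $\Xcal=\Ucal$ or $\Xcal=\Ucal\star\Scal=\Tcal$, proving that $\Ucal\subsetneq\Tcal$ is a minimal inclusion.

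For (3) $\Rightarrow$ (2), assume $\Ucal\subsetneq\Tcal$ is a minimal inclusion. By Remark~\ref{rmk: BCZ}, extending \cite[Theorem 2.8]{BCZ} and \cite{S} to the length category $\Hcal$, there is a unique torsion-free, almost torsion object $M$ for $\mathfrak{u}$; it lies in $\Tcal$, one has $\Tcal=\Ucal\star\filt{M}$, and $M$ is a brick since by Remark~\ref{rem: tf/t are simple}(1) it is a simple object of $\Hcal_{\mathfrak{u}^-}$, whence $\End_{\Hcal}(M)=\End_{\Hcal_{\mathfrak{u}^-}}(M)$ is a division ring. Then $M\in\Tcal\cap\Vcal=\Scal$ and $\filt{M}\subseteq\Scal$ because $\Scal$ is extension-closed. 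For the reverse inclusion, take $X\in\Scal=(\Ucal\star\filt{M})\cap\Vcal$ and write a short exact sequence $0\to U\to X\to W\to 0$ with $U\in\Ucal$ and $W\in\filt{M}$; since $\Vcal$ is closed under subobjects, $U\in\Ucal\cap\Vcal=0$, so $X\cong W\in\filt{M}$. Hence $\Scal=\filt{M}$ with $M$ a brick.

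The main obstacle is the implication (3) $\Rightarrow$ (2): it rests on the brick-labelling of covering relations in the lattice of torsion classes, which in the literature is formulated for module categories, so one must invoke its length-category generalisation (Remark~\ref{rmk: BCZ}), in particular the identity $\Tcal=\Ucal\star\filt{M}$. If one prefers not to cite $\Tcal=\Ucal\star\filt{M}$ and only uses $\Tcal=\Filt(\Ucal\cup\{M\})$, the inclusion $\Scal\subseteq\filt{M}$ can instead be obtained by induction on the length of a finite filtration of $X\in\Scal$ by objects of $\Ucal\cup\{M\}$: condition (ii) of Definition~\ref{def:almost torsion} forces any factor lying in $\Ucal$ (which is also a subquotient of the torsion-free object $X$) to be zero.
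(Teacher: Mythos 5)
Your proof of (1)$\Leftrightarrow$(2) follows essentially the same path as the paper's, combining Theorem~\ref{Thm: mutations are wide}, Theorem~\ref{thm: ringel} and Lemmas~\ref{lem: simple bricks = ME} and~\ref{lem: indecomp mutation}; the paper folds the (1)$\Rightarrow$(2) direction into its proof of (1)$\Rightarrow$(3), but the content is the same. Your (2)$\Rightarrow$(3), argued directly in $\Hcal$ via Proposition~\ref{prop: tp vs triples} and the observation that $\filt{M}$ has only the trivial torsion classes, is in effect a reformulation of the paper's (1)$\Rightarrow$(3), which works inside $\Hcal_{\mathfrak{u}^-}$ via Proposition~\ref{prop: HRS-tilt triangle}.

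The genuine problem is in your (3)$\Rightarrow$(2). The primary argument invokes the identity $\Tcal=\Ucal\star\filt{M}$, but Remark~\ref{rmk: BCZ} (via \cite{BCZ} and \cite{S}) only provides the existence of the torsion-free, almost torsion brick $M$ and that $\Tcal$ is the torsion class generated by $\Ucal\cup\{M\}$. By Proposition~\ref{prop: tp vs triples}, the identity $\Tcal=\Ucal\star\filt{M}$ is equivalent to $\Scal=\filt{M}$, which is precisely what is to be proven, so this step is circular unless one cites a strictly stronger labelling result than the paper relies on. You noticed this and proposed an inductive fallback, but the stated justification (that ``condition (ii) forces any factor lying in $\Ucal$ to be zero'') is not correct as phrased: intermediate factors of a $\Ucal\cup\{M\}$-filtration of $X\in\Vcal$ are arbitrary subquotients of $X$, need not lie in $\Vcal$, and hence $\Ucal\cap\Vcal=0$ does not apply to them. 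The induction does work, but must be run from the bottom: the lowest nonzero layer is a subobject of $X$, hence lies in $\Vcal$, hence cannot be a nonzero object of $\Ucal$ and must be isomorphic to $M$; one then peels it off using condition (ii) of Definition~\ref{def:almost torsion} to keep the quotient in $\Vcal$, and inducts on length. The paper's (3)$\Rightarrow$(2) sidesteps all of this: from the mere existence of $M$ one gets a nonzero torsion class $\Scal':=\filt{M}\subseteq\Scal$ in $\Hcal_{\mathfrak{u}^-}$; Proposition~\ref{prop: HRS-tilt triangle} produces $\Ucal\subsetneq\Tcal'\subseteq\Tcal$ in $\Hcal$, minimality forces $\Tcal'=\Tcal$, and injectivity of the bijection in Proposition~\ref{prop: HRS-tilt triangle} gives $\Scal'=\Scal$. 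This is both shorter and avoids having to re-derive the brick labelling $\Tcal\cap\Vcal=\filt{M}$ from scratch.
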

\begin{proof}
(2)$\Rightarrow$(1):  By Theorem \ref{thm: ringel}, we have that $\Scal$ is a wide subcategory of $\Hcal$ and so, by Theorem \ref{Thm: mutations are wide}, we have that $\sigma_{\mathfrak t}$ is a right mutation of $\sigma_\mathfrak{u}$. It follows from Lemma \ref{lem: simple bricks = ME} that $\Mcal=\{M\}$. By Lemma \ref{lem: indecomp mutation}, we have that $\sigma_{\mathfrak t}$ is an irreducible right mutation of $\sigma_\mathfrak{u}$.

(3)$\Rightarrow$(2):  By Remark \ref{rmk: BCZ}, there is a torsion-free, almost torsion object for $\mathfrak{u}$ in $\Scal$. By Remark \ref{rem: tf/t are simple}, it follows that $M$ is a simple object in $\Hcal_{\mathfrak{u}^-}$.  Thus $\Scal':=\filt{M}$ is a non-trivial torsion class in $\Hcal_{\mathfrak{u}^-}$ that is contained in $\Scal$. By Proposition \ref{prop: HRS-tilt triangle},  there exists a torsion class $\Ucal \subsetneq \Tcal' \subseteq \Tcal$ in $\Hcal$.  By assumption, we have that $\Tcal = \Tcal'$ and so, by another application of Proposition \ref{prop: HRS-tilt triangle}, we conclude that $\Scal = \Scal'$.  

(1)$\Rightarrow$(3):  Suppose (1) holds and consider a torsion class $\Xcal$ in $\Hcal$ such that $\Ucal \subseteq \Xcal \subseteq \Tcal$.  We must show that $\Ucal=\Xcal$ or $\Tcal=\Xcal$. By Proposition \ref{prop: HRS-tilt triangle}, this implies that there exists a torsion class $\Scal' \subseteq \Scal \subseteq \Vcal$ in $\mathcal H_{\mathfrak u^-}$ (given by $\Xcal\cap \Vcal$) and, by assumption, we have that $\Scal = \filt{S}$ for the unique (up to iso) simple object $S$ in $\Scal$. If $\Scal'$ is trivial, then by Proposition \ref{prop: HRS-tilt triangle}, $\Ucal=\Xcal$. Suppose that $\Scal'$ is non-trivial and let $X$ be a non-zero object of $\Scal'$. Since $\Scal'$ is contained in $\filt{S}$, $S$ is a quotient of $X$ and, thus, $S$ lies in $\Scal'$. This shows that $\Scal'=\Scal$ and, thus, that $\Tcal=\Xcal$, again by Proposition \ref{prop: HRS-tilt triangle}.
\end{proof}

\begin{example}\label{expl:krone3}
We revisit Example~\ref{expl:krone}. This time we consider the indecomposable  preprojective modules $P_n, n\in\mathbb N$ over the Kronecker algebra $A$ and the torsion pairs $\mathfrak{t}_n=({}^{\perp_0}P_n,\Cogen{P_n})$ cogenerated by them. It is well known that $\mathfrak{t}_n$ is an irreducible mutation of $\mathfrak{t}_{n+1}$, the corresponding wide subcategory is $^{\perp_0}P_n\cap\Cogen{P_{n+1}}\cap \mod A=\add {P_{n+1}}$. Notice that 
$^{\perp_0}P_n\cap\Cogen{P_{n+2}}\cap \mod A=\add {P_{n+1}\oplus P_{n+2}}$ is not wide, so
$\mathfrak{t}_n$ is not a mutation of $\mathfrak{t}_{n+2}$. This shows that a sequence of irreducible mutations is not a mutation in general.

 We can also rediscover the fact that  $\sigma_\mathbb{X}$ does {not} admit  right mutation. Indeed,     $\sigma_\mathbb{X}$ is associated with the torsion pair $\mathfrak{u}=(\Gen \tube, \Vcal)$ generated by all finite-dimensional indecomposable regular modules, so  any torsion pair $\mathfrak{t}=(\Tcal,\Fcal)$ in $\Cosilt A$ lying above   $\mathfrak{u}$  has the form $\mathfrak{t}=\mathfrak{t}_{n}$ for some $n$, and
 $\Tcal\cap\Vcal\cap\mod A=\add {P_{n+1}\oplus P_{n+2}\oplus\ldots}$ is clearly not wide.
 
 Finally, we remark that the set $\Escr$ in Lemma~\ref{lem: indecomp mutation} may differ from $\Prod{\{\sigma_M\mid M\in \Omega\setminus \Mcal\}}$ where $\Omega$ is the set of isoclasses of simple objects in $\Hcal_{\mathfrak{u}^-}$. To this end, we consider  $\sigma_P$ with $P=\mathbb{X}\setminus\{x\}$ for some $x\in\mathbb{X}$. Here $\Mcal$ only contains  the simple regular module $S$ corresponding to the tube $\tube_x$, and  the set $\Omega$ consists of the Pr\"ufer module $S_\infty$ and
 the adic modules corresponding to $\tube_P=\bigcup_{y\not=x}\tube_y$. Thus  the generic module $G$ belongs to the
 set $\Escr$, but not to $\Prod{\{\sigma_M\mid M\in \Omega\setminus \Mcal\}}$.
\end{example}


\end{document}